\definecolor{darkgreen}{rgb}{0.0, 0.6, 0.13}
\pgfplotsset{width=7cm,compat=1.8}
\definecolor{airforceblue}{rgb}{0.0, 0.30, 0.69}
\newtheorem{thm}{Theorem}[section]
 \newtheorem{prop}[thm]{Proposition}
 \newtheorem{lem}[thm]{Lemma}
\newtheorem{claim}[thm]{Claim}
 \theoremstyle{definition}
 \newtheorem{df}[thm]{Definition}
 \theoremstyle{remark}
 \newtheorem{rem}[thm]{Remark}
 \numberwithin{equation}{section}
\newcommand{\Sc}{\mathcal{S}}
\newcommand{\Nc}{\mathcal{N}}
\newcommand{\Lc}{\mathcal{L}}
\newcommand{\Ic}{\mathcal{I}}
\newcommand{\Cc}{\mathcal{C}}
\newcommand{\Dc}{\mathcal{D}}
\newcommand{\Ec}{\mathcal{E}}
\newcommand{\Fc}{\mathcal{F}}
\newcommand{\Pc}{\mathcal{P}}
\newcommand{\Qc}{\mathcal{Q}}
\newcommand{\Rc}{\mathcal{R}}
\newcommand{\Uc}{\mathcal{U}}
\newcommand{\Bc}{\mathcal{B}}
\newcommand{\Ac}{\mathcal{A}}
\newcommand{\Vc}{\mathcal{V}}
\newcommand{\Wc}{\mathcal{W}}
\renewcommand{\Mc}{\mathcal{M}} 
\newcommand{\Xc}{\mathcal{X}}
\newcommand{\Yc}{\mathcal{Y}}
\newcommand{\Zc}{\mathcal{Z}}
\newcommand{\Tb}{\mathbb{T}}
\newcommand{\Cb}{\mathbb{C}}
\newcommand{\Rb}{\mathbb{R}}
\newcommand{\Eb}{\mathbb{E}}
\newcommand{\Zb}{\mathbb{Z}}
\newcommand{\Pb}{\mathbb{P}}
\newcommand{\Ab}{\mathbb{A}}
\newcommand{\pf}{\mathfrak{p}}
\newcommand{\Hc}{\mathcal{H}}
\newcommand{\ff}{\mathfrak{f}}
\newcommand{\hf}{\mathfrak{h}}
\newcommand{\Xf}{\mathfrak{X}}
\newcommand{\Zf}{\mathfrak{Z}}
\newcommand{\Bf}{\mathfrak{B}}
\newcommand{\nf}{\mathfrak{n}}
\newcommand{\lf}{\mathfrak{l}}
\newcommand{\Nf}{\mathfrak{N}}
\newcommand{\mf}{\mathfrak{m}}
\newcommand{\Bs}{\mathscr{B}}
\newcommand{\Ms}{\mathscr{M}}
\newcommand{\Ns}{\mathscr{N}}
\newcommand{\Yf}{\mathfrak{Y}}
\newcommand{\Ps}{\mathscr{P}}
\newcommand{\Os}{\mathscr{O}}
\newcommand{\Ls}{\mathscr{L}}
\newcommand{\Rs}{\mathscr{R}}
\newcommand{\Vs}{\mathscr{V}}
\newcommand{\Ws}{\mathscr{W}}
\newcommand{\af}{\mathfrak{a}}
\newcommand{\bff}{\mathfrak{b}}
\newcommand{\cf}{\mathfrak{c}}
\newcommand{\dff}{\mathfrak{d}}
\newcommand{\ef}{\mathfrak{e}}
\newcommand{\gf}{\mathfrak{g}}
\newcommand{\iif}{\mathfrak{i}}
\newcommand{\jf}{\mathfrak{j}}
\newcommand{\kf}{\mathfrak{k}}
\newcommand{\of}{\mathfrak{o}}
\def\@tocline#1#2#3#4#5#6#7{\relax
  \ifnum #1>\c@tocdepth 
  \else
    \par \addpenalty\@secpenalty\addvspace{#2}
    \begingroup \hyphenpenalty\@M
    \@ifempty{#4}{
      \@tempdima\csname r@tocindent\number#1\endcsname\relax
    }{
      \@tempdima#4\relax
    }
    \parindent\z@ \leftskip#3\relax \advance\leftskip\@tempdima\relax
    \rightskip\@pnumwidth plus4em \parfillskip-\@pnumwidth
    #5\leavevmode\hskip-\@tempdima
      \ifcase #1
       \or\or \hskip 1em \or \hskip 2em \else \hskip 3em \fi
      #6\nobreak\relax
    \hfill\hbox to\@pnumwidth{\@tocpagenum{#7}}\par
    \nobreak
    \endgroup
  \fi}
\begin{document}
\author[Yu Deng]{Yu Deng$^1$}
\address{$^1$ Department of Mathematics, University of Southern California, Los Angeles,  CA 90089, USA }
\email{yudeng@usc.edu}
\thanks{$^1$Y. D. is funded in part by NSF DMS-1900251.}
\author[Andrea R. Nahmod]{Andrea R. Nahmod$^2$}
\address{$^2$ 
Department of Mathematics,  University of Massachusetts,  Amherst MA 01003}
\email{nahmod@math.umass.edu}
\thanks{$^2$A.N. is funded in part by NSF DMS-1800852 and the Simons Foundation Collaborations Grant on Wave Turbulence (Nahmod's Award ID 651469).}
\author[Haitian Yue]{Haitian Yue$^3$}
\address{$^3$Department of Mathematics, University of Southern California, Los Angeles,  CA 90089, USA}
\email{haitiany@usc.edu}
  \date{}
 \title{Random tensors, propagation of randomness, and nonlinear dispersive equations}
 \begin{abstract} The purpose of this paper is twofold. We introduce the theory of \emph{random tensors}, which naturally extends the method of \emph{random averaging operators} in our earlier work \cite{DNY}, to study the propagation of randomness under nonlinear \emph{dispersive} equations. By applying this theory we also solve Conjecture 1.7 in \cite{DNY}, and establish almost-sure local well-posedness for semilinear Schr\"{o}dinger equations in spaces that are subcritical in the \emph{probabilistic scaling}. The solution we find has an explicit expansion in terms of multilinear Gaussians with adapted random tensor coefficients.
 
In the random setting, the probabilistic scaling is the natural scaling for dispersive equations, and is \emph{different} from the natural scaling for parabolic equations. Our theory, which covers the full subcritical regime in the probabilistic scaling, can be viewed as the dispersive counterpart of the existing parabolic theories (regularity structure, para-controlled calculus and renormalization group techniques).
\end{abstract}
 \maketitle
 
\tableofcontents
 \section{Introduction}\label{intro} The study of partial differential equations with randomness has become an important and influential subject in the last few decades. In this work we will be concerned with a major topic of this subject, namely the local in time Cauchy problems with either random initial data or additive stochastic forcing.
 
 It is well known that in many situations, randomization or noise improves the behavior of solutions to PDEs. Usually this can be interpreted as \emph{generic} solutions being genuinely better than \emph{pathological} ones. This phenomenon, which has its roots in the various cancellation properties of independent random variables (e.g. Central Limit Theorem or Khintchine's inequality), has been extensively studied since the 70--80's. The key difficulty here is to analyze how the explicit randomness (given for example by a Wiener measure or Gaussian noise) \emph{propagates} under the flow of nonlinear PDEs.
 
In the past few years, there has been significant progress in the setting of singular parabolic stochastic equations (SPDEs): the development of the theory of \emph{regularity structures} of Hairer and the \emph{para-controlled calculus} of Gubinelli-Imkeller-Perkowski has led to tremendous success in local well-posedness theory, essentially completing the full picture in what is known as the \emph{subcritical} regime. Unfortunately, both theories rely crucially on the \emph{parabolic} nature of the equation, and have not achieved the same success in the other important class of PDEs, namely the \emph{dispersive} equations.

The purpose of this work is to develop a new theory, which we will call \emph{random tensors}, to fill this gap in the dispersive setting. This is a natural extension of the method of \emph{random averaging operators} in our earlier work \cite{DNY}, but is much more powerful. In fact, in the random setting dispersive equations have a natural scaling, which we call the probabilistic scaling (see Section \ref{heuristic}), that is \emph{different} from the parabolic one\footnote{See Remark \ref{remscale}, and the explanation in Section \ref{comparison}.}, and our method---just like the theory of regularity structures and the para-controlled calculus in the parabolic setting---provides the complete picture in the full subcritical regime with respect to this natural scaling.

In this work we will focus on the random data problem\footnote{Random data is a natural setting for dispersive equations (parallel to additive noise for parabolic ones) in view of the invariant measures. Of course one may also consider stochastic versions of (\ref{nls}), which are similar but correspond to different randomizations, see Section \ref{furthercomment}.} for the semilinear Schr\"{o}dinger equation, which is the most common (and most studied) nonlinear dispersive equation. Our method is general and can be applied to other dispersion relations (see Section \ref{furthercomment}).

The rest of this introduction is organized as follows. In Section \ref{setup} we describe the setup and state the main theorems. In Section \ref{heuristic} we present a heuristic scaling argument from \cite{DNY} to justify the notion of criticality in this work. In Section \ref{propagate} we briefly review the ideas in earlier works, and in Sections \ref{rao0} we discuss the method of random averaging operators in \cite{DNY}. Finally in Section \ref{randomtensors} we provide the motivation behind our theory of random tensors; the detailed explanation of this theory is left to Section \ref{intro2}.
\subsection{Setup and main results}\label{setup} Fix $d\geq 1$ and $p\geq 3$ odd, and assume $(d,p)\neq (1,3)$; in particular $d(p-1)\geq 4$. Consider the nonlinear Schr\"{o}dinger (NLS) equation on $\Rb_t\times\Tb_x^d$, where $\Tb^d=(\Rb/2\pi\Zb)^d$: \begin{equation}\label{nls}
\left\{
\begin{aligned}(i\partial_t+\Delta)u&=W^p(u),\\
u(0)&=f(\omega).
\end{aligned}
\right.
\end{equation} Here $f(\omega)$ is some choice of random initial data defined on an ambient probability space $(\Theta,\Bc,\Pb)$, $\omega\in\Theta$, and $W^p(u)$ is either $|u|^{p-1}u$ or its \emph{Wick ordering}, which will be defined precisely below. The Hamiltonian of (\ref{nls}) is linked to the $\Phi_d^{p+1}$ model in constructive quantum field theory.
\subsubsection{Almost-sure local well-posedness} In the context of almost-sure local well-posedness, the random initial data will be given by \begin{equation}\label{data0}f(\omega)=\sum_{k\in\Zb^d}\frac{g_k(\omega)}{\langle k\rangle^\alpha}e^{ik\cdot x},
 \end{equation} where $\{g_k(\omega)\}_{k\in\Zb^d}$ are i.i.d. centered normalized (complex) Gaussian random variables, and $\alpha$ is chosen as \begin{equation}\label{choicealpha}\alpha=s+\frac{d}{2},\qquad s>s_{pr}:=-\frac{1}{p-1}.\end{equation} This value $s_{pr}$ is the critical exponent for the \emph{probabilistic scaling}, which will be discussed in detail in Section \ref{heuristic}. It is always lower than $s_{cr}:=(d/2)-2/(p-1)$, which is the critical exponent for the usual (deterministic) scaling. The random data $f(\omega)$ defined by (\ref{data0}) almost surely belongs to $H^{s-}(\Tb^d):=\cap_{s'<s}H^{s'}(\Tb^d)$, but not $H^s(\Tb^d)$.
 
Our first main theorem, originally stated as Conjecture 1.7 in \cite{DNY}, proves that (\ref{nls}) is almost surely locally well-posed with random initial data (\ref{data0}), in the sense that canonical smooth approximations converge to a unique limit; here and throughout the paper, canonical smooth approximations always mean the ones described in Theorem \ref{main} and Remark \ref{strongrem} below. This result can be interpreted as almost sure local well-posedness in $H^{s-}(\Tb^d)$ with respect to the canonical Gaussian measure---the law of $f(\omega)$ defined by (\ref{data0})---for any $s>s_{pr}$, i.e. \emph{in the full probabilistically subcritical regime}.

To state the theorem, we need to define the canonical truncations and the associated Wick orderings. Given a dyadic number $N\geq 1$\footnote{We will assume $N\geq 1$ throughout, and only ``formally'' need to replace $N$ by $1/2$ in a few places.}, define the truncation operators $\Pi_N$ and $\Delta_N$ by
\begin{equation}\label{proj}(\Pi_Nu)_k=\Pi_Nu_k:=\mathbf{1}_{\langle k\rangle\leq N}\cdot u_k,\quad\Delta_N=\Pi_N-\Pi_{\frac{N}{2}},\end{equation} where $u_k$ represents the Fourier coefficient. For notational simplicity, we will identify $\Pi_N$ with the multiplier $\Pi_N(k)=\mathbf{1}_{\langle k\rangle\leq N}$, and similarly for $\Delta_N$. Define the expectation of truncated mass
\begin{equation}\label{truncmass}\sigma_N:=\Eb\fint_{\Tb^d}|\Pi_Nf(\omega)|^2=\sum_{\langle k\rangle\leq N}\frac{1}{\langle k\rangle^{2\alpha}},
\end{equation} and, for integers $r\geq 0$, the Wick-ordered monomials
\begin{equation}\label{wickpoly}
W_N^{2r}(u)=\sum_{j=0}^r(-1)^{r-j}{r\choose j}\frac{\sigma_N^{r-j}r!}{j!}|u|^{2j},\quad
W_N^{2r+1}(u)=\sum_{j=0}^r(-1)^{r-j}{{r+1}\choose {j+1}}\frac{\sigma_N^{r-j}r!}{j!}|u|^{2j}u,
\end{equation} where $\sigma_N$ is as in (\ref{truncmass}). The first main theorem is then stated as follows.
 \begin{thm}\label{main} Fix $s>s_{pr}$ and $\alpha$ as in (\ref{choicealpha}), and let $f(\omega)$ be as in (\ref{data0}). Let $u_N$ be the solution to the canonically truncated system \begin{equation}\label{nlstrunc}
\left\{
\begin{aligned}(i\partial_t+\Delta)u_N&=\Pi_NW_N^{p}(u_N),\\
u_N(0)&=\Pi_Nf(\omega).
\end{aligned}
\right.
\end{equation} Then, for $0<\tau\ll 1$, there exists a set $Z\subset\Theta$ with $\Pb(Z)\leq C_\theta e^{-\tau^{-\theta}}$, where $\theta$ is a small constant (ultimately determined by $(d,p,s)$, and independent of $\tau$) and $C_\theta$ is a constant determined by $\theta$, such that when $\omega\not\in Z$, the sequence $\{u_N\}$ converges, as $N\to\infty$, to a unique limit $u$ in $C_t^0H_x^{s-}[-\tau,\tau]$.

Moreover, for this $u$, the nonlinearity $W^p(u)$ in (\ref{nls}), which is the Wick ordering of $|u|^{p-1}u$, is well-defined as
\begin{equation}\label{wickpoly2}W^p(u)=\lim_{N\to\infty}W_N^p(\Pi_Nu)=\lim_{N\to\infty}\Pi_NW_N^p(\Pi_Nu)
\end{equation} in the sense of spacetime distributions (where both limits exist and are equal), and $u$ solves the equation (\ref{nls}) in the distributional sense. Finally this solution $u$ has an explicit expansion in terms of multilinear Gaussians with adapted random tensor coefficients; see (\ref{expansionofu}) for the precise form.
\end{thm}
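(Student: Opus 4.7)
The plan is to construct the solution via an explicit Ansatz, decomposing $u_N$ dyadically (using the projections $\Delta_{N_1}$ from (\ref{proj})) and writing at each scale $N_1 \leq N$
\begin{equation*}
\Delta_{N_1} u_N = \psi_{N_1} + w_{N_1}^{(N)}.
\end{equation*}
Here $\psi_{N_1}$ is a finite sum of multilinear iterates of the free Schr\"odinger evolution applied to the initial Gaussian data (\ref{data0}), indexed by ternary trees of bounded depth (the depth determined by $(d, p, s)$); the coefficients of these iterates are the \emph{random tensors} of the title, which are multilinear operators whose entries are measurable with respect to the natural filtration $\Fc_{N_1}$ generated by the Gaussian modes of frequency below $N_1$, so that $\psi_{N_1}$ is adapted in a strong sense. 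The remainder $w_{N_1}^{(N)}$ is to live in a stronger norm and be controlled by contraction.

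The core of the argument is a suite of multilinear estimates for these random tensors. Via the Wiener chaos decomposition and hypercontractivity, moment bounds on individual entries reduce to deterministic lattice-point counting on and near paraboloids. The harder task is to upgrade such pointwise moment bounds to operator-norm bounds on the tensors viewed as multilinear maps on $\ell^2$-based spaces; this requires a tensor-valued concentration inequality in the spirit of random matrix theory but adapted to Gaussian chaoses of higher order and to partial contractions. The criticality condition $s > s_{pr}$ enters at precisely the threshold where the probabilistic gain from independence beats the combinatorial loss from counting, uniformly across all tree configurations and all ways of pairing legs.

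With the tensor estimates in hand, substitute the Ansatz into (\ref{nlstrunc}) to obtain an equation for $w_{N_1}^{(N)}$, which I would solve in an $X^{s,b}$-type space adapted to the Schr\"odinger evolution. The nonlinearity splits into pure tree terms (bounded by the tensor estimates), terms linear or higher-order in the $w$'s (bounded by deterministic multilinear Strichartz estimates and absorbed by the contraction), and crucial ``high-low paired'' terms whose treatment is exactly what necessitates the full tensor formalism, rather than the random averaging operators of \cite{DNY}. The Wick ordering cancels the divergent diagonal contractions produced by the tree terms, giving bounds uniform in $N$. Off an exceptional set $Z$ on which some tensor estimate fails---of Gaussian tail probability $\leq C_\theta e^{-\tau^{-\theta}}$---the contraction closes simultaneously at all scales $N_1 \leq N$ on a window of length $\tau$. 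Comparison of $u_N$ with $u_{N'}$ for $N' > N$ proceeds by the same estimates, giving a Cauchy sequence in $C_t^0 H_x^{s-}$; the distributional identification of $\lim_N W_N^p(\Pi_N u)$ with the Wick-ordered nonlinearity $W^p(u)$ then follows from the tensor expansion, since Wick ordering removes exactly the mean-field contractions that would otherwise obstruct term-by-term convergence.

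The main obstacle is the operator-norm bounds on the random tensors at the sharp threshold $s_{pr}$. Unlike the parabolic theory, where the analogous bounds ultimately reduce to diagrammatic estimates on scalar random variables, the dispersive case genuinely requires multilinear operator-norm estimates, and the fine modulation structure in $X^{s,b}$ must be exploited in concert with the probabilistic gain across all contraction patterns of arbitrary (but bounded) depth. Closing these bounds at $s_{pr}$ for all tree configurations simultaneously---so that a single exceptional set $Z$ suffices, and all scales $N_1$ can be controlled together---is the central new technical point, and is where the theory of random tensors replaces and genuinely extends the random averaging operators of \cite{DNY}.
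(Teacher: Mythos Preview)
Your proposal captures the essential architecture of the paper's proof: an explicit ansatz consisting of multilinear Gaussian expressions with adapted random-tensor coefficients plus a smooth remainder, closed by operator-norm bounds on the tensors and a contraction for the remainder. A few structural elements of the actual proof are worth flagging, since they are not mere details and your sketch does not anticipate them.

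First, the paper does not work directly with $u_N$ and the Wick-ordered nonlinearity $W_N^p$. It first applies a gauge transform (multiplication by a phase depending on $\int_0^t \fint W_N^{p-1}(u_N)$), conditions on the norms $|g_k|$, and conjugates by the linear flow, so that the effective nonlinearity becomes a \emph{simple} polynomial in the sense of Definition~\ref{def:simple}: any pairing among $(k,k_1,\dots,k_q)$ must be over-paired. This preprocessing is what makes the counting estimates of Section~\ref{counting} usable and is how Wick ordering is actually exploited---not by cancelling diagonal contractions term-by-term as you suggest.

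Second, the expansion is not indexed by trees but by \emph{plants} $\Sc=(\Lc,\Vc,\Yc)$ (Definition~\ref{defstr}): a set $\Lc$ of leaves with pairings, a \emph{blossom set} $\Vc$, and a \emph{memory set} $\Yc$. The blossom set is the key new object: it records inputs that are copies of the remainder $z_{N'}$ at intermediate frequencies $N^\delta \ll N' \ll N$, too high to be absorbed into the low-frequency tensor coefficient and too low to be treated as a fresh Gaussian. Without this extension (Section~\ref{generalintro}), the contraction for $w$ will not close, because substituting the ansatz into the equation produces exactly such mixed terms.

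Third, the adaptedness is sharper than $\Fc_{N_1}$-measurability: the random tensor $h^{(\Sc,1)}$ at frequency $N$ is $\Bc_{N^{[\delta]}}$-measurable, i.e.\ depends only on Gaussians of frequency below $N^\delta$, while the free Gaussians in $\Psi_k[\Sc,h]$ live at frequencies between $N^\delta$ and $N$. This gap is what makes Proposition~\ref{gausscont0} applicable and is maintained inductively by the \emph{trimming} operation.

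Finally, the operator-norm bounds on merged tensors (Proposition~\ref{multibound0}) depend on the \emph{order} in which the constituent tensors are listed, and closing the induction requires choosing this order by a specific \emph{selection algorithm} (Propositions~\ref{algorithm1}--\ref{algorithm2}). This algorithm, together with the counting bounds of Proposition~\ref{final}, is the technical heart of the paper; ``tensor-valued concentration in the spirit of random matrix theory'' is the right intuition for Proposition~\ref{gausscont0}, but it is the selection algorithm that makes the inductive step go through at the sharp threshold $s_{pr}$.
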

  \begin{rem}\label{1dcubic} Theorem \ref{main} (and Theorem \ref{main2} below) can be shown for any rectangular torus $\Tb_\beta^d:=(\Rb/2\pi\beta_1\Zb)\times\cdots\times(\Rb/2\pi\beta_d\Zb)$, and for focusing nonlinearity, with almost the same proof.   
  
When $(d,p)=(1,3)$, instead of (\ref{wickpoly}) one should look at the completely non-resonant nonlinearity $(|u|^2-2\fint |u|^2)u$, since in this case $\|\Pi_Nu\|_{L^2}^2-\Eb\|\Pi_Nu\|_{L^2}^2$ does not converge as $N\to\infty$, see (\ref{devmass}). With this change, it is known, see \cite{GrH}, that (\ref{nls}) is deterministically locally well-posed in a Fourier-Lebesgue space which the data (\ref{data0}) almost surely belongs to (as in this case $s_{pr}=s_{cr}$), so Theorem \ref{main} remains true.
 \end{rem}
 \begin{rem}\label{needwick} The Wick ordering (\ref{wickpoly}) is crucial in Theorem \ref{main}, as our solution has infinite mass when $s_{pr}<s<0$; in this case the Wick-ordered NLS, rather than the original one, is the right equation to study. In fact, even in the simplest case $(d,p)=(1,3)$, the NLS without the renormalization as in Remark \ref{1dcubic} will have \emph{no solution} for \emph{any} infinite mass initial data; see \cite{GO}. As another example, for the dynamical $\Phi_2^4$ model (2D cubic heat equation with white noise forcing), the canonical smooth approximations will converge to a nontrivial limit only with Wick ordering, otherwise the limit would be identically zero for any initial condition (see \cite{ChW,HR+}).
\end{rem}
\begin{rem}\label{strongrem} Despite having low regularity $C_t^0H_x^{s-}$, the local solution constructed in Theorem \ref{main} is a \emph{strong} solution, as it is the unique limit of canonical smooth approximations. In fact, by slightly modifying our proof (which will not be done here for simplicity of presentation), we can obtain the following general convergence result:

Let $\varphi$ be a function on $\Rb^d$, $\varphi(0)=1$, and $\varphi$ is either Schwartz or equals the characteristic function $\mathbf{1}_{\mathbb{B}}$ of the unit ball $\mathbb{B}$ (the latter is the setting of Theorem \ref{main}). Let $\widetilde{\varphi}$ be either $1$ or $\mathbf{1}_{\mathbb{B}}$, in the latter case we assume $\varphi\equiv 0$ outside $\mathbb{B}$. Define $\mathtt{P}_\lambda(k)=\varphi(\lambda^{-1}k)$ and $\widetilde{\mathtt{P}}_\lambda(k)=\widetilde{\varphi}(\lambda^{-1}k)$. Let $W_{\lambda}^p$ be defined as in (\ref{wickpoly}) with $\sigma_N$ replaced by $\sigma_{\lambda}$, which is in turn defined as in (\ref{truncmass}) with $\Pi_N$ replaced by $\mathtt{P}_\lambda$. Consider the solution $u_{\lambda}$ to the system
\begin{equation}\label{nlstruncgen}
\left\{
\begin{aligned}(i\partial_t+\Delta)u_{\lambda}&=\widetilde{\mathtt{P}}_\lambda W_{\lambda}^{p}(u_{\lambda}),\\
u_{\lambda}(0)&=\mathtt{P}_\lambda f(\omega).
\end{aligned}
\right.
\end{equation}Then Theorem \ref{main} remains true with conclusion being the convergence of $\{u_{\lambda}\}$ as $\lambda\to\infty$. Here the exceptional set $Z$ and the limit $u$ \emph{do not depend on} the choice of $(\varphi,\widetilde{\varphi})$.

More precisely, there exists a random time $\mathtt{T}=\mathtt{T}(\omega)$ satisfying $\Pb(\mathtt{T}<\tau)\leq C_\theta e^{-\tau^{-\theta}}$ for any $\tau>0$, and a random function $u=u(t,x,\omega)$ defined for $|t|\leq \mathtt{T}(\omega)$, such that almost surely in $\omega$, we have $u_{\lambda}\to u$ in $C_t^0H_x^{s-}[-\mathtt{T},\mathtt{T}]$, as $\lambda\to\infty$, for \emph{any} choice of $(\varphi,\widetilde{\varphi})$.
 \end{rem}
\begin{rem} Although Theorem \ref{main} concerns singular (i.e. low regularity) data and short-time solutions, the fundamental issue here is to understand how the randomness structure\footnote{Such structure lives on high frequencies and fine scales. It becomes more explicit when considering low regularity solutions, and may be obscured by the dominant coarse scale profile in high regularity solutions.} propagates under the nonlinear Schr\"{o}dinger flow. With this understanding, we can easily obtain new results for regular data and long-time solutions, such as Theorem \ref{main2} below.
 \end{rem}
 \subsubsection{Long-time control for random homogeneous data} Consider the random homogeneous data, which is the random initial data given by
 \begin{equation}\label{data1}f_{\mathrm{ho}}(\omega)=N^{-\alpha}\sum_{k\in\Zb^d}\phi\big(\frac{k}{N}\big)g_k(\omega)e^{ik\cdot x};\qquad \alpha=s+\frac{d}{2},\,\,s>s_{pr}.
 \end{equation} Here $\phi$ is a fixed Schwartz function and $N$ is a \emph{fixed} large parameter. Compared to (\ref{data0}), which is a superposition of multiple scales, in (\ref{data1}) we have only one scale $N$ and the Fourier modes are uniformly distributed in the ball $\langle k\rangle\lesssim N$. Such random data in fact are, up to rescaling, the ones appearing in the derivation of \emph{wave kinetic equation} in weak turbulence problems \cite{BGHS,CG,DH}. Note that with high probability, $\|f_{\mathrm{ho}}(\omega)\|_{H^{s}}\sim 1$. The second main theorem is then stated as follows.
\begin{thm}\label{main2} Fix $(s,\alpha)$ and $f_{\mathrm{ho}}(\omega)$ as in (\ref{data1}), and $0<\nu<(p-1)(s-s_{pr})$. Let $u_{\mathrm{ho}}$ be the solution to the system \begin{equation}\label{nlstrunc2}
\left\{
\begin{aligned}(i\partial_T+\Delta)u_{\mathrm{ho}}&=|u_{\mathrm{ho}}|^{p-1}u_{\mathrm{ho}},\\
u_{\mathrm{ho}}(0)&=f_{\mathrm{ho}}(\omega).
\end{aligned}
\right.
\end{equation} Then there exists a set $Z\subset\Theta$ with $\Pb(Z)\leq C_\theta e^{-N^\theta}$, where $\theta$ is a small constant (ultimately determined by $(d,p,s)$, and independent of $N$) and $C_\theta$ is a constant determined by $\theta$, such that when $\omega\not\in Z$, the solution $u_{\mathrm{ho}}$ exists up to time $T=N^\nu$. Moreover, for some real valued gauge function $B(T)$, we have
\begin{equation}\label{longtimecon}\sup_{0\leq T\leq N^\nu}\|u_{\mathrm{ho}}(T)-e^{-iB(T)}e^{iT\Delta}u_{\mathrm{ho}}(0)\|_{H^s}\leq N^{-\theta}.
\end{equation}
\end{thm}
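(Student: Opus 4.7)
The plan is to reduce the long-time problem for $u_{\mathrm{ho}}$ to a Picard-type iteration in the random tensor framework of Theorem \ref{main}, but extended to the long time interval $[0, N^\nu]$. The key point is that the probabilistic gain of size $N^{-(p-1)(s-s_{pr})}$ per nonlinear iterate---stemming from the random cancellations among nonresonant frequency interactions of the linear evolution at frequency $\sim N$---beats the linear time growth $N^\nu$ precisely under the probabilistic subcriticality condition $\nu < (p-1)(s-s_{pr})$.

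I would first perform a gauge reduction $u_{\mathrm{ho}} = e^{-iB(T)}\tilde{u}$, where $B(T)$ is chosen to cancel the leading resonant (mass-type) contribution in $|\tilde{u}|^{p-1}\tilde{u}$. Writing $p = 2r + 1$, one has $|u|^{p-1}u = W^p(u) + \sum_{j=0}^{r-1} c_{j}\sigma^{r-j}|u|^{2j}u$ where $\sigma = \Eb\|f_{\mathrm{ho}}\|_{L^2}^{2}\sim N^{-2s}$; the leading correction $c_{0}\sigma^r\tilde{u}$ is pure gauge and is absorbed by $B(T) = c_{0}\sigma^r T$ plus lower-order corrections. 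The resulting equation for $\tilde{u}$ has nonlinearity equivalent to $W^p(\tilde{u})$ modulo intermediate Wick terms of lower polynomial degree, which can be handled by the same techniques with even more room. I would then expand $\tilde{u}(T) = \sum_{n\geq 0}\tilde{u}^{(n)}(T)$ iteratively from the Duhamel formula, with $\tilde{u}^{(0)}(T) = e^{iT\Delta}f_{\mathrm{ho}}$, so that each $\tilde{u}^{(n)}$ is a multilinear Gaussian expression of degree $n(p-1) + 1$ in $\{g_k\}$ with adapted random tensor coefficients as in Theorem \ref{main}. The core estimate is to show, via multilinear random tensor bounds with explicit tracking of the $N$-dependence, that $\|\tilde{u}^{(n)}\|_{L_T^\infty H_x^s([0,N^\nu]\times\Tb^d)}\leq N^{-n\theta}$ with probability $\geq 1 - C_\theta e^{-N^\theta}$.

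The decisive case is $n = 1$. The Wick ordering excludes the exact resonances $\Omega = 0$; the time oscillatory integral contributes $\min(N^\nu, |\Omega|^{-1})$ on the nonresonant sector; and combining this with the amplitude factor $N^{-(p-1)\alpha}$ and lattice-point counting estimates for the paraboloid resonance $\Omega$ yields a bound of the form $N^{\nu - (p-1)(s - s_{pr})}\leq N^{-\theta}$. Summing in $n$ and unwinding the gauge transformation then gives (\ref{longtimecon}), with the exceptional set controlled by a union bound over $n$. The principal obstacle lies in making the $n=1$ random tensor estimate quantitative and sharp in $N$, especially when $s$ is close to $s_{pr}$: near-resonant configurations $|\Omega|\lesssim N^{-\nu}$ carry a time factor $N^{2\nu}$ that must be outweighed by the combined probabilistic gain and the amplitude. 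This requires the full strength of the random tensor estimates developed for Theorem \ref{main}, together with sharp number-theoretic counting for lattice points near the paraboloid, and a careful verification that subleading Wick-type resonances---absorbed within the expansion rather than by the gauge---do not produce secular growth over $[0,N^\nu]$.
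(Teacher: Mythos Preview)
Your broad strategy---gauge away the mass-type resonances, then exploit that the probabilistic gain $N^{-(p-1)(s-s_{pr})}$ per nonlinear iterate beats the time factor $N^\nu$---is correct and matches the paper's underlying mechanism. However, the paper's execution differs in one key simplification you miss, and your proposal contains a conceptual error that would derail the argument if taken literally.

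The paper does not work directly on $[0,N^\nu]$. Instead it rescales time, setting $t = N^{-\nu}T$, so that after the gauge transform and conjugation by the linear flow, the problem becomes a \emph{unit-time} fixed-point problem (equations (8.4)--(8.5)) with an extra factor $N^\nu$ in front of the nonlinearity. This puts the problem squarely in the framework of Section~5, and the entire analysis reduces to verifying a single algebraic inequality (displayed below (8.10)): the factor $N^\nu$, together with the mass coefficients $(m_{\mathrm{ho}})^{(p-q)/2}$, is absorbed by the $N^{-\alpha}$ decay of the Fourier coefficients, precisely because $\nu < (p-1)(\alpha - \alpha_0)$. With this rescaling, only the simplest part of the tensor machinery is needed---constant tensors $h^{(\Sc,0)}$, no blossoms, no trimming, no random averaging operators---so your claim that ``the full strength of the random tensor estimates'' is required overstates the difficulty. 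The paper explicitly says this case is ``significantly simpler.''

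Your statement that ``the Wick ordering excludes the exact resonances $\Omega = 0$'' is wrong, and the follow-up worry about ``near-resonant configurations $|\Omega| \lesssim N^{-\nu}$ carrying a time factor $N^{2\nu}$'' is a non-issue. First, the gauge (or simplicity in the sense of Definition~\ref{def:simple}) removes \emph{pairings} that are not over-paired; it does \emph{not} exclude $\Omega = 0$, which is the dominant set and is handled by lattice-point counting plus square-root cancellation exactly as in the heuristic (\ref{randsum})--(\ref{randsc}). Second, on $\Tb^d$ the quantity $\Omega$ is an \emph{integer}, so $|\Omega| \lesssim N^{-\nu}$ simply means $\Omega = 0$; there is no separate near-resonant layer and no $N^{2\nu}$ factor. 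Also note Remark~1.9: Wick ordering is not needed for Theorem~\ref{main2} at all, since the data is single-scale and the problematic high--low pairings of Theorem~\ref{main} do not arise. Finally, your proposed gauge $B(T) = c_0\sigma^r T$ is too crude; the paper uses $B(T) = \tfrac{p+1}{2}\int_0^T \fint |u_{\mathrm{ho}}|^{p-1}\,\mathrm{d}T'$, which is solution-dependent and is what actually renders the nonlinearity simple.
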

\begin{rem} When $s_{cr}>1$, i.e. (\ref{nls}) is $H^1$ supercritical in the usual (deterministic) sense, it is not a priorily known whether $u_{\mathrm{ho}}$ exists up to time $N^\nu$. Even in the $H^1$ subcritical case where $u_{\mathrm{ho}}$ exists for all time, Theorem \ref{main2} demonstrates the highly nontrivial fact that there is no significant energy shift (such as a cascade) in $u_{\mathrm{ho}}$, (almost) until this very long time $T=N^{(p-1)(s-s_{pr})}$.

In comparison, for deterministic data, say when $g_k(\omega)$ in (\ref{data1}) are replaced by $1$, one can describe the asymptotic behavior of $u_{\mathrm{ho}}$ only up to time\footnote{In fact there is energy shift in $u_{\mathrm{ho}}$ at time $N^{(p-1)(s-s_{cr})}$, dictated by the \emph{continuous resonance} (CR) equation; see \cite{BGHS0,FGH}. Note that, the approximation leading to the CR equation does not work in our case as randomization destroys the differentiability in rescaled Fourier space; in fact the solution $u_{\mathrm{ho}}$ in Theorem \ref{main2} has no energy shift at this time. It is currently unknown whether the solution $u_{\mathrm{ho}}$ in Theorem \ref{main2}, or the corresponding ensemble average, satisfies some effective equation at time $N^{(p-1)(s-s_{pr})}$.} $O(N^{(p-1)(s-s_{cr})})$ provided $s>s_{cr}$; see \cite{BGHS0,FGH} for the $p=3$ case. Therefore, the randomization (\ref{data1}) effectively \emph{extends the time of perturbative regime} for the given homogeneous initial data. In essence, this is the same as Theorem \ref{main}, where we keep the time of perturbative regime constant (namely $1$), and randomization allows us to increase the size of initial data at a given frequency (equivalent to reducing regularity).
\end{rem}
\begin{rem} Since we are on the square torus $\Tb^d$, the behavior of $u_{\mathrm{ho}}$ at long time is dominated by exact resonances. If $\Tb^d$ is replaced by a \emph{generic irrational} torus, then we may expect Theorem \ref{main2} to hold on even longer time intervals, conjecturally up to $N^{2(p-1)(s-s_{pr})}$, at least for some range of $s$. This, after rescaling, would correspond to justifying the wave kinetic equation up to the \emph{kinetic timescale} in the context of weak turbulence, which is still open at this point (despite the recent success in \cite{DH} of the first author with Z. Hani; see also \cite{CG}).
\end{rem}
\begin{rem} Unlike Theorem \ref{main}, in Theorem \ref{main2} we do not need the Wick ordering (\ref{wickpoly}). Indeed, the worst contributions in the context of Theorem \ref{main} (as well as the non-existence result of \cite{GO}), which can only be rescued by Wick ordering, are the high-low interactions where the high frequencies form a pairing and produce the mass term; for random homogeneous data there is no distinction between high and low frequencies, so such terms will not be a concern.
\end{rem}
 \subsection{The probabilistic scaling}\label{heuristic} The probabilistic criticality threshold $s_{pr}$ plays a key role in the local theory of (\ref{nls}) with random initial data. In this section we recall a heuristic justification of this fact, which originally appeared in \cite{DNY}.
 
Start with (\ref{nls}) on $\Rb\times\Tb^d$, for simplicity we will replace $W^p(u)$ by the artificial nonlinearity $\Nc_{\mathrm{np}}$ without Wick ordering, defined by\begin{equation}\label{nopairintro}(\Nc_{\mathrm{np}}(u))_k:=\sum_{k_1-\cdots +k_p=k}u_{k_1}\overline{u_{k_2}}\cdots u_{k_p},\end{equation} assuming there is no pairing (i.e. $k_j\not\in\{ k_{j'},k\}$ for any odd $j$ and even $j'$). Let the initial data $u(0)\in H^s$, to prove local well-posedness in $H^s$, one would like to control
 \begin{equation}\label{2iteration}u^{(1)}(t):=\int_0^te^{i(t-t')\Delta}\Nc_{\mathrm{np}}(e^{it'\Delta}u(0))\,\mathrm{d}t',\end{equation} namely the first nonlinear iteration, in $H^s$, at time $|t|\sim 1$. In the deterministic setting, if
 \begin{equation}\label{deterdata}u(0)=N^{-\alpha}\sum_{|k|\sim N}e^{ik\cdot x},\qquad \alpha=s+\frac{d}{2},
 \end{equation} then $\|u(0)\|_{H^s}\sim 1$. By (\ref{2iteration}) we can calculate the Fourier coefficients $u_k^{(1)}(t)$ of $u^{(1)}(t)$, where
 \begin{equation}\label{detersum}u_k^{(1)}(t)\sim N^{-p\alpha}\sum_{\substack{k_j\in\Zb^d,|k_j|\sim N\\k_1-\cdots+k_p=k}}\frac{1}{\langle\Omega\rangle},\quad \Omega:=|k|^2-|k_1|^2+\cdots-|k_p|^2
 \end{equation} for $|k|\sim N$ and $|t|\sim 1$. We may restrict to (say) $\Omega=0$ in (\ref{detersum}), and a dimension counting argument shows that the inner sum has size $N^{pd-d-2}$, hence 
 \begin{equation}\label{detersc}\|u^{(1)}(t)\|_{H^s}\sim N^{(p-1)(\frac{d}{2}-s)-2};\qquad \|u^{(1)}(t)\|_{H^s}\lesssim 1 \Leftrightarrow s\geq \frac{d}{2}-\frac{2}{p-1}:=s_{cr}.
 \end{equation} Indeed in the deterministic setting, (\ref{nls}) is locally well-posed for $s>s_{cr}$, and ill-posed for $s<s_{cr}$.
 
 Now, switching to the random setting, where instead of (\ref{deterdata}) we have
  \begin{equation}\label{randata}u(0)=N^{-\alpha}\sum_{|k|\sim N}g_k(\omega)e^{ik\cdot x},
   \end{equation} where $g_k(\omega)$ are i.i.d. centered normalized Gaussians, and instead of (\ref{detersum}) we have
  \begin{equation}\label{randsum}u_k^{(1)}(t)\sim N^{-p\alpha}\sum_{\substack{k_j\in\Zb^d,|k_j|\sim N\\k_1-\cdots+k_p=k}}\frac{1}{\langle\Omega\rangle}g_{k_1}\overline{g_{k_2}}\cdots g_{k_p}.
 \end{equation} Again we may restrict to $\Omega=0$; due to the square root cancellation in (\ref{randsum}), now with high probability the inner sum only has size $N^{(pd-d-2)/2}$, hence instead of (\ref{detersc}) we have
 \begin{equation}\label{randsc}\|u^{(1)}(t)\|_{H^s}\sim N^{-(p-1)s-1};\qquad \|u^{(1)}(t)\|_{H^s}\lesssim1\Leftrightarrow s\geq -\frac{1}{p-1}:=s_{pr}.
 \end{equation} This justifies the role of $s_{pr}$ in Theorem \ref{main}, and explains why almost-sure local well-posedness is plausible, in the \emph{probabilistically subcritical} regime $s>s_{pr}$.
 \begin{rem} Theorem \ref{main} establishes almost-sure local well-posedness when $s>s_{pr}$. In the probabilistically supercritical regime $s<s_{pr}$, we believe (\ref{nls}) is almost surely ill-posed, in the sense that almost surely, the approximations $u_N$ defined in (\ref{nlstrunc}) \emph{do not converge} in $C_t^0H_x^{s-}[-\tau,\tau]$ for \emph{any} $\tau>0$. This seems out of reach with the current methods, but some weaker results (for example failure of convergence\footnote{In comparison, when $s>s_{pr}$, the proof of Theorem \ref{main} easily implies the convergence of $u_N$ (including the $u_\lambda$ in Remark \ref{strongrem}) in $C_t^\iota H_x^{s-}$ for some $\iota>0$ ultimately determined by $(d,p,s)$.} in $C_t^\iota H_x^{s-}$ for any $\iota>0$) might be possible.
 \end{rem}
\begin{rem}\label{remscale} The notion of \emph{parabolic scaling} (also known as the \emph{super-renormalizability} scaling in quantum field theory, see \cite{CMW}) is central in the works on local theory of singular parabolic SPDEs, see for example \cite{Hairer,GIP}. For the analog of (\ref{nls}), namely 
 \begin{equation}\label{parab}(\partial_t-\Delta)u=\widetilde{W}^p(u)+\zeta,\qquad \zeta=\textrm{spacetime white (or colored) noise}
 \end{equation} (where $\widetilde{W}^p$ is some renormalization beyond Wick ordering, see (\ref{parab1}) below), this scaling has critical exponent $s_{pa}:=-2/(p-1)$. Note that $s_{pa}$ is strictly lower than $s_{pr}$, which reflects a fundamental difference between Schr\"{o}dinger and heat equations. See Section \ref{comparison} for more detailed discussions.
 \end{rem}
 \subsection{Propagating randomness: Earlier works}\label{propagate} The heuristics of Section \ref{heuristic} rely on the assumption that the Fourier coefficients of $u(0)$ are \emph{independent}; this is no longer satisfied by $u(t)$, as soon as $t>0$. Therefore, the key to the proof of Theorem \ref{main} is to propagate the randomness of initial data, for the anticipated amount of time, in such a way that the square cancellation in Section \ref{heuristic} remains valid.
 
 The idea of propagating randomness, interpreted in one way or another, has been central in all previous works concerning local well-posedness in the random setting. In this section we briefly review the existing approaches, especially those developed in the past few years.
\subsubsection{The method of Bourgain and Da Prato-Debussche}\label{BDDintro} The important early results in this direction are proved by Bourgain \cite{Bourgain} (for random initial data) and later by Da Prato-Debussche \cite{DaDe} (for additive noise). The idea is to propagate the random initial data (or the noise term) linearly, which preserves all the independence properties, and treat the nonlinearity as a perturbation.

For example, the equation studied in \cite{Bourgain} is (\ref{nls}) with $(d,p)=(2,3)$ and Gibbs measure initial data (i.e. $\alpha=1$ in (\ref{data0})), which is barely supercritical in the deterministic sense and subcritical in the probabilistic sense. In \cite{Bourgain} Bourgain constructed the solution as $u=u_{\mathrm{lin}}+w$, where $u_{\mathrm{lin}}=e^{it\Delta}u(0)$ is the linear evolution which enjoys the same randomness properties as the initial data $u(0)=f(\omega)$, and the remainder $w$ has improved regularity, say $C_t^0H_x^\sigma$ with $\sigma>0$, thus becoming subcritical in the deterministic sense. Then the classical fixed point analysis together with large deviation estimates apply to control the hybrid nonlinearity---of the difference equation that $w$ satisfies---containing interactions of $u_{\mathrm{lin}}$ with $w$. The situation in \cite{DaDe} is similar, except that Schr\"{o}dinger is replaced by Navier-Stokes, and $u_{\mathrm{lin}}$ is replaced by the linear evolution of the noise term. 

Until recent years, the methods of Bourgain and of Da Prato-Debussche has been the dominant strategy in the study of local well-posedness theory for random PDEs. The weakness of this approach is that the improved regularity of the nonlinear contribution $w$ may not be enough for the deterministic theory to be applicable, especially when one gets close to probabilistic criticality. One may try to enhance this by moving to higher-order variants and bringing in self-interactions of $u_{\mathrm{lin}}$, but in many situations there is an upper bound for all the regularity improvements obtained in this way, which may still fall short of the deterministic threshold. For an example of this see \cite{BOP}.
\subsubsection{The theory of regularity structures}\label{regstr} The theory of regularity structures was developed by Hairer \cite{Hairer, Hairer2} in the context of singular parabolic SPDEs, to provide a natural and mathematically rigorous notion of solutions to such equations and prove their local well-posedness. The theory is based on the \emph{local-in-space} properties of solutions at fine scales, hence it is well adapted to parabolic equations. It builds a general theory of distributions by means of an abstract generalization of local Taylor expansions of problem-dependent profiles (e.g. the spacetime white or colored noise, and self-interactions and Duhamel iterations thereof), in order to make sense of the equation---in particular the products of rough distributions emerging from the singular parabolic SPDE.
Furthermore, the solutions obtained can be approximated locally to arbitrarily high degree by linear combinations of this fixed family of problem-dependent profiles. These expansions in the context of singular parabolic SPDEs should be compared and contrasted to the ones that we obtain in Theorem \ref{main} (more precisely (\ref{expansionofu})).
 
 The theory of regularity structures establishes local well-posedness results in the sense of convergence of canonical smooth approximations. When taking limits of such approximations, a suitable renormalization in the form of divergent counterterms is usually needed. Sometimes (for example in the dynamical $\Phi_2^4$ model, see Remark \ref{needwick}) this is just the Wick ordering, but for more sophisticated equations further renormalizations become necessary. A nice feature of the regularity structures theory is that these renormalization constants can always be calculated using the profiles defined for the specific equation.

We illustrate this renormalization process following \cite{Hairer, Hairer3, Hairer4} where Hairer studies the dynamical $\Phi^4_3$ model. Here the canonical smooth approximations $u_\varepsilon$ satisfy the renormalized equations with the spacetime white noise $\zeta$ replaced by its regularization $\zeta_\varepsilon$, namely
\begin{equation}\label{parab1}
(\partial_t-\Delta)u_\varepsilon =3(C_1-3C_2) u_\varepsilon -u_\varepsilon^3 + \zeta_\varepsilon, 
\end{equation}
where $C_1\sim \varepsilon^{-1}$ corresponds to Wick ordering, and $C_2\sim\log \varepsilon$ is an additional renormalization constant. The problem-dependent profiles are in the following space 
\begin{equation}
T=\langle\bullet, \<3_black>, \<2_black>, \<2K*3_black>, \<1_black>, \<1K*3_black>, \<2K*2_black>, x_i \<2_black>, \mathbf{1}, \<K*3_black>, \<1K*2_black>, \cdots\rangle,
\end{equation}
with symbols ordered in increasing order of regularity and $\langle\, \cdot \, \rangle$ used to denote the linear span. The symbol $\bullet$ represents the noise, $x_i$ represent the coordinates of $x$, and $\<1_black> = (\partial_t-\Delta)^{-1} \bullet$, \, $\<3_black>= (\<1_black>)^3$, etc. The renormalization constants are then calculated from these profiles, such as $C_1= \mathbb{E}(\<1_black>)^2\sim \varepsilon^{-1}$, $C_2=\mathbb{E}(\<K*2_black>\cdot\<2_black>)\sim\log \varepsilon$, and convergence of $u_\varepsilon$ as $\varepsilon\to 0$ is proved for initial data in $C^\alpha$, $-\frac{2}{3}<\alpha<-\frac{1}{2}$.

Recently in a series of papers \cite{BCCH}\cite{BHZ}\cite{CMW}, the authors proved local well-posedness for the $\Phi^4_{4-\delta}$ model with initial data in $C^\alpha$, $\alpha>-1+\frac{\delta}{2}$, which covers as $\delta\to 0^+$ the whole subcritical regime in the sense of the parabolic scaling $s>s_{pa}$ (see Remark \ref{remscale}). 
\subsubsection{The para-controlled calculus and the renormalization group technique}\label{paracon} The theory of para-controlled calculus put forward by Gubinelli, Imkeller and Perkowski \cite{GIP} (see also Catellier and Chouk \cite{CCh}) and the work of  Kupiainen \cite{Kupia} based on renormalization group techniques provide alternative approaches to proving local well-posedness for singular parabolic SPDEs. The theory of para-controlled calculus, based on paradifferential calculus, leverages the key observation that the lack of regularity for $w$ in the approachs of Bourgain and of Da Prato-Debussche, as described at the end of Section \ref{BDDintro}, is only due to the high-low interactions where the high frequencies come from $u_{\mathrm{lin}}$ (or self-interactions and Duhamel evolutions thereof) and the low frequencies come from $w$. In this theory, such high-low interaction terms $X$ are \emph{para-controlled} by the high-frequency inputs (for example $u_{\mathrm{lin}}$), in the sense that\footnote{Here $\pi_>$ is the standard Bony paraproduct.} $X=\pi_>(u_{\mathrm{lin}},Y)+Z$ with $Z$ being smoother than $X$. Such terms, despite having insufficient regularity, are shown to enjoy similar randomness structures as $u_{\mathrm{lin}}$ itself, which allows for a fixed point argument, where $X$ is constructed in some low regularity space, and the remainder $Z$ is constructed in a higher regularity space. We refer the reader to \cite{CCh,GP, GP2, GP4, MouWe3, BaBe, BaBe2,GKO,Br} and references therein for nice expositions of these ideas and some other recent developments as well as a higher order variant of this method.
\subsection{Random averaging operators}\label{rao0} In view of these breakthrough works described in Sections \ref{regstr} and \ref{paracon} that deal with \emph{parabolic} equations, it would be natural to think that something similar can be done in the context of \emph{dispersive} equations. However, there are fundamental differences between dispersive (say Schr\"{o}dinger) and heat equations, preventing these methods from being applicable (for more comparisons see Section \ref{comparison}):

(a) The heat equation is compatible with local-in-space analysis, as is clear from the maximum principle or the off-diagonal exponential decay for the heat kernel. The Schr\"{o}dinger equation does not have these properties, which makes the application of the theory of regularity structures impossible, as the latter is based on pointwise Taylor expansions in physical space.

(b) Likewise, the heat equation is compatible with $C^\alpha$ (H\"{o}lder) spaces; in fact both the regularity structures theory and the para-controlled calculus rely heavily on such norms. On the other hand the Schr\"{o}dinger flow is unbounded on $C^\alpha$, and is bounded only on $H^s$ type spaces, which require a lot more derivatives to reach the same scaling as $C^\alpha$.

(c) The heat equation gains two spatial derivatives in terms of the fundamental solution $(\partial_t-\Delta)^{-1}$ (wave gains one), while the Schr\"{o}dinger equation gains none. The smoothing is seen only in terms of twisted temporal regularity by using $X^{s,b}$ type norms, which is clearly not compatible with either the regularity structures theory or the para-controlled calculus.

In our previous work \cite{DNY}, which studies (\ref{nls}) with $d=2$, arbitrary $p\geq 5$ and Gibbs measure initial data ($\alpha=1$), we introduced the method of \emph{random averaging operators}. The idea is to take the high-low interaction $X$ described in Section \ref{paracon}, but instead of putting it in a low regularity space (as is done in the para-controlled calculus), we write it as an \emph{operator} applied to the high frequency linear evolution $u_\mathrm{lin}$:
\begin{equation}\label{rao}X=\sum_N\sum_{L\ll N}\Pc_{NL}(\Delta_Nu_\mathrm{lin}),\end{equation} where $N$ and $L$ are the frequencies of the high and low inputs. The operator $\Pc_{NL}$, whose coefficients are independent with the modes of $\Delta_Nu_\mathrm{lin}$, contains all the randomness information of the low frequency components of $u$, which is carried by two operator norm estimates
\begin{equation}\label{rao1}\|\Pc_{NL}\|_{\mathrm{OP}}\leq L^{-\delta_0},\quad \|\Pc_{NL}\|_{\mathrm{HS}}\leq N^{\frac{1}{2}+\delta_1}L^{-\frac{1}{2}},\end{equation} where $\delta_1\ll\delta_0\ll 1$. Here $\Pc_{NL}$ is viewed as a linear operator between two Hilbert spaces that can be $L^2$ or $X^{s,b}$ depending on the context, $\|\cdot\|_{\mathrm{OP}}$ and $\|\cdot\|_{\mathrm{HS}}$ are respectively operator and Hilbert-Schmidt norms.

The method of random averaging operators, compared to the regularity structures theory and the para-controlled calculus, has three advantages relative to the three difficulties listed above, which makes it suitable for the study of Schr\"{o}dinger equations:

(a) The operator $\Pc_{NL}$ is a global-in-space object (in fact it is defined on the Fourier side), which is consistent with the non-local setting of Schr\"{o}dinger equations;

(b) The role of $C^\alpha$ norms is replaced by (essentially) the $L^2\to L^2$ operator norms, which is compatible with the well-established $L^2$ theory;

(c) The analysis for $\Pc_{NL}$ is performed in the category of $X^{s,b}$ spaces, which allows one to exploit the smoothing of the Schr\"{o}dinger fundamental solution.

By applying this method, we have been able to propagate the randomness of $\Pc_{NL}$ in terms of the above operator norm bounds, as well as control the remainder in a deterministically subcritical space, leading to the full resolution of the Gibbs measure problem in 2D. See \cite{DNY}.
 \subsection{Random tensors}\label{randomtensors} The core of this work is a broad extension of the method of random averaging operators, which we call the theory of \emph{random tensors}. A detailed introduction to this theory will be given in Section \ref{intro2}, here we will restrict our attention to only the motivation.
 
  Start by considering the random averaging operators; roughly speaking, the frequency $N$ piece of the high-low interaction $X$ in (\ref{rao}) is given by $\Delta_NX=\Pc_N(\Delta_Nu_\mathrm{lin})$ where $\Pc_N=\sum_L\Pc_{NL}$. In Fourier space it can be written as
 \[(\Delta_NX)_k(t)=e^{-i|k|^2t}\sum_{\langle k'\rangle\sim N}h_{kk'}(t)\frac{g_{k'}(\omega)}{\langle k'\rangle},\] where subscripts denote Fourier coefficients, and $h_{kk'}(t)$ is essentially the kernel of the operator $\Pc_N$. For fixed $t$ this is a random matrix, or $(1,1)$ tensor, that depends on the low frequency components of the solution.
 
 Now, to prove Theorem \ref{main} we will need higher order expansions. This naturally leads to the multilinear expressions (here we denote $(u^+,u^-)=(u,\overline{u})$ as usual)
 \begin{equation}\label{tensorintro0}\Psi_k=\sum_{k_1,\cdots,k_r}h_{kk_1\cdots k_r}\prod_{j=1}^r\langle k_j\rangle^{-\alpha}g_{k_j}^\pm(\omega),\end{equation} as well as the associated random $(r,1)$ tensors $h=h_{kk_1\cdots k_r}$, which depend on the low frequency components of the solution. For simplicity, in (\ref{tensorintro0}) we have omitted the dependence on $t$. We always assume there is no \emph{pairing}, i.e. $k_{j'}\neq k_j$ if the corresponding $\pm$ signs are the opposite.
 
Notice that, the product of $\Psi$ with another expression
 \[\Psi_{k'}'=\sum_{k_1',\cdots,k_s'}h_{k'k_1'\cdots k_s'}'\prod_{j=1}^s\langle k_j'\rangle^{-\alpha}g_{k_j'}^\pm(\omega)\] can be written as a linear combination of similar multilinear expressions, depending on the possible set of pairings between $\{k_1,\cdots,k_r\}$ and $\{k_1',\cdots,k_s'\}$. For example, if  \[\begin{aligned}\Psi_k&=\sum_{a,b,c,d}h_{kabcd}\cdot 
 \langle a\rangle^{-\alpha}g_a\cdot \langle b\rangle^{-\alpha}\overline{g_b}\cdot\langle c\rangle^{-\alpha}g_c\cdot\langle d\rangle^{-\alpha}\overline{g_d},\\
 \Psi_{k'}'&=\sum_{u,v,w}h_{k'uvw}'\cdot\langle  u\rangle^{-\alpha}\overline{g_u}\cdot\langle v\rangle^{-\alpha}g_v\cdot\langle w\rangle^{-\alpha}g_w,
 \end{aligned}\]
 and in the summation we assume $a=u$ and $b=w$, then we have one representative component of $\Psi\cdot\Psi'$ being
 \begin{equation}\label{examplemerge0}\begin{aligned}\Phi_m&=\sum_{c,d,v}H_{mcdv}\cdot \langle c\rangle^{-\alpha}g_c\cdot\langle d\rangle^{-\alpha}\overline{g_d}\cdot\langle v\rangle^{-\alpha}g_v,\\
 H_{mcdv}&\sim\sum_{k+k'=m}\sum_{a,b}\langle a\rangle^{-\alpha}\langle b\rangle^{-\alpha}h_{kabcd}\cdot\langle a\rangle^{-\alpha}\langle b\rangle^{-\alpha}h_{k'avb}',
 \end{aligned}\end{equation} where we have replaced $|g_a|^2$ and $|g_b|^2$ by $1$. The process of going from $(h,h')$ to $H$ as above will be called \emph{merging}, which gives the main algebraic structure of the tensors studied in this work.
 
 For purposes related to independence of Fourier coefficients (which will be explained in Section \ref{introtrim}), we also need another process called \emph{trimming}, which means contracting against free Gaussians, namely going from $h=h_{kk_1\cdots k_r}$ to
 \[h_{kk_1\cdots k_s}'=\sum_{k_{s+1},\cdots k_r}h_{kk_1\cdots k_r}\prod_{j=s+1}^r\langle k_j\rangle^{-\alpha}g_{k_j}^{\pm},\] where $1\leq s\leq r$. Note that $\Psi_k$ defined by (\ref{tensorintro0}) is formally invariant under trimming.
 
 Now, as in \cite{DNY}, the central objects in our work will be the tensors $h$ (instead of the multilinear expressions $\Psi$), as well as suitable $L^2\to L^2$ operator norms for these tensors. The theory of random tensors then provides a natural framework for studying such tensors, in particular proving estimates for such norms that are consistent with merging and trimming. In Section \ref{intro2} below we provide a more detailed discussion of this theory, and application to the proof of Theorem \ref{main}.
 \section{Overview of the theory}\label{intro2} This section contains an overview of the theory of random tensors in the context of the proof of Theorem \ref{main}. We start with the definition and norms in Section \ref{tensorintro}, then develop the algebraic structure and main tools in Section \ref{tmintro}. In Section \ref{simmodel} we introduce a simplified model, which is analyzed in Section \ref{simansatz} using tools from our theory. In Section \ref{generalintro} we explain the changes needed when moving to full generality, and finally in Section \ref{plan} we list an outline for the rest of the paper.
 \subsection{Tensors and tensor norms}\label{tensorintro} As discussed in Section \ref{randomtensors}, the central objects in this work are tensors and their $L^2\to L^2$ operator norms. We therefore start with the following definition.
\begin{df}\label{tensornorms} Let $A$ be a finite index set, we will denote $k_A=(k_j:j\in A)$. A \emph{tensor} $h=h_{k_A}$ is a function $(\Zb^d)^A\to \Cb$, with $k_A$ being the input variables. The \emph{support} of $h$ is the set of $k_A$ such that $h_{k_A}\neq 0$. These tensors are usually random (i.e. depend on $\omega$ which belongs to the ambient probability space $\Theta$, though we may omit this dependence), hence the name \emph{random tensors}.

A \emph{partition} of $A$ is a pair of sets $(B,C)$ such that $B\cup C=A$ and $B\cap C=\varnothing$. For such $(B,C)$ define the norm $\|\cdot\|_{k_B\to k_C}$ by
 \[\|h\|_{k_B\to k_C}^2=\sup\bigg\{\sum_{k_C}\bigg|\sum_{k_B}h_{k_A}\cdot z_{k_B}\bigg|^2:\sum_{k_B}|z_{k_B}|^2=1\bigg\}.\]By duality we have that 
 \begin{equation}\label{duality}\|h\|_{k_B\to k_C}=\sup\bigg\{\bigg|\sum_{k_B,k_C}h_{k_A}\cdot z_{k_B}\cdot y_{k_C}\bigg|:\sum_{k_B}|z_{k_B}|^2=\sum_{k_C}|y_{k_C}|^2=1\bigg\},
 \end{equation} hence $\|h\|_{k_B\to k_C}=\|h\|_{k_C\to k_B}=\|\overline{h}\|_{k_B\to k_C}$. If $B=\varnothing$ or $C=\varnothing$ we get the norm $\|\cdot\|_{k_A}$ defined by
 \[\|h\|_{k_A}^2=\sum_{k_A}|h_{k_A}|^2.\] Note that trivially $\|h\|_{k_B\to k_C}\leq \|h\|_{k_A}$.
 
 Finally, we define a \emph{subpartition} to be a pair $(B,C)$ such that $B\cup C\subset A$ and $B\cap C=\varnothing$. In such case let $E=A\backslash (B\cup C)$, then $(B,C)$ is a partition of $A\backslash E$ so we can define
 \begin{equation}\label{tensornorms:sup}
  \|h\|_{k_B\to k_C}=\sup_{k_E}\|h_{(k_E,\cdot)}\|_{k_B\to k_C}.
   \end{equation}
 \end{df}
 \begin{rem} In the main proof the tensors may depend on other parameters, such as $t$ or $(k_F,\lambda_F)$, where $\lambda_F=(\lambda_j:j\in F)$, for some set $F$; in such cases we will write respectively $h_{k_A}=h_{k_A}(t)$ or $h_{k_A}=h_{k_A}(k_F,\lambda_F)$. Moreover, the norm (\ref{tensornorms:sup}) is designed only to treat some degenerate cases, so it will not appear in the simplified model of this section.
 \end{rem}
 \subsection{Tensor algebra and basic tools}\label{tmintro} In this section we develop the algebra of random tensors given by merging and trimming as described in Section \ref{randomtensors}, and some important estimates which are the basic tools of our theory. The precise versions will be given in Sections \ref{defplant} and \ref{multi} below. First we record the definition of \emph{pairing}.
 \begin{df}\label{def:pairing}Define $u^\zeta$ for $\zeta\in\{\pm\}$ by $(u^+,u^-)=(u,\overline{u})$ (in doing algebra we may view such $\zeta$ as $\pm 1$). Given $k_i,k_j\in\Zb^d$ with associated signs $\zeta_i,\zeta_j\in\{\pm\}$, we say $(k_i,k_j)$ is a \emph{pairing} if $\zeta_i+\zeta_j=0$ and $k_i=k_j$. We say it is \emph{over-paired} (or an \emph{over-pairing}) if $k_i=k_j=k_l$ for some $l\not\in\{i,j\}$.
 \end{df}
 \subsubsection{Semi-products and merging}\label{semipro} As described in Section \ref{randomtensors}, our theory will focus on the  analysis of the tensors $h_{kk_A}$ and associated multilinear expressions\footnote{In practice we will use $\eta_k=|g_k|^{-1}g_k$, which is uniformly distributed on the unit circle, instead of $g_k$.}
 \begin{equation}\label{expressionintro}\Psi_k:=\sum_{k_A}h_{kk_A}\prod_{j\in A}\langle k_j\rangle^{-\alpha}g_{k_j}^{\pm},\end{equation} where $k_A=(k_j:j\in A)$ as in Definition \ref{tensornorms} and $k_{j'}\neq k_j$ when the signs are the opposite, as well as the merging and trimming operations loosely described in that section.
 
 In fact the merging operation can be viewed as a special case of taking what we may call \emph{semi-products} for two or more tensors, which means taking tensor products and contracting over the given set of repeated indices---note that, the repeated indices can be indices appearing in both tensors, or the result of specific pairings between the tensors. For example, if $h_{abcde}$ and $h_{uvawx}'$ are two tensors, then their semi-product, under the assumptions $b=u$ and $c=w$, will be\footnote{In practice we will also have a $\pm$ sign for each index of each tensor, for example a $+$ sign for the index $a$ in $h_{abcde}$ or a $-$ sign for the index $x$ in $h_{uvawx}'$. When precisely defining the merging operations, see Definition \ref{defmerge}, we will restrict to the cases where for each repeated or paired index, its signs in the two tensors that it appears are the opposite (for example if the sign of $b$ in $h_{abcde}$ is $+$ then the sign of $u$ in $h_{uvawx}'$ must be $-$, assuming $b=u$). In this section (including the simple case Definition \ref{defmerge0}) we will ignore this issue for simplicity.}
 \[H_{devx}=\sum_{a,b,c}h_{abcde}h_{bvacx}'.\] In general, suppose $h_{k_A}$ are $h_{k_B}'$ are two tensors, and we have a particular set of repeated indices (coming from $A\cap B$ or pairings between $k_A$ and $k_B$). We may then assume these repeated indices belong to both $A$ and $B$, and define the corresponding semi-product as
 \[H_{k_{A\Delta B}}=\sum_{k_{A\cap B}}h_{k_A}h_{k_B}'.\] This can easily be generalized to semi-products of more than two tensors, for example the semi-product of the three tensors $h_{abcd}$, $h_{aefg}'$ and $h_{cuv}''$ under the assumptions $b=f$ and $g=v$ is
 \[H_{deu}=\sum_{a,b,c,g}h_{abcd}h_{aebg}'h_{cug}''.\] Note that for simplicity we are not considering over-pairings where an index is repeated three or more times, but such situations do appear in the actual definition of merging and need special treatment (though they are only associated with degenerate situations which are always much easier). See Definition \ref{defmerge} for details.
 
 Now, with the notion of semi-products, we can define (in this simple case without over-pairing) the merging of finitely many tensors $h^{(1)}, \cdots,h^{(r)}$ via a \emph{base tensor} $h$ as follows:
 \begin{df}[Merging: simple version]\label{defmerge0} Let $h^{(j)}=h_{k_jk_{A_j}}^{(j)}$ be tensors, where $1\leq j\leq r$, $A_j$ are pairwise disjoint, and let $h=h_{kk_1\cdots k_r}$ be the base tensor. Also fix a set of pairings among the sets $A_1,\cdots,A_r$ (which creates paired indices that will be viewed as repeated indices; as before, assume there is no pairing within each $A_j$ and no pairing involving more than two indices), then the merging of $h^{(1)},\cdots h^{(r)}$ via $h$, assuming the given choice of pairings, is defined to be the semi-product of $\widetilde{h}^{(1)},\cdots,\widetilde{h}^{(r)}$ and $h$, where (i) each $\widetilde{h}^{(j)}$ is $h^{(j)}$ multiplied by the product of $\langle k_\lf\rangle^{-\alpha}$ over all $\lf\in A_j$ that appear in some pairing, and (ii) in addition to the paired indices, each $k_j\,(1\leq j\leq r)$ is also viewed as a repeated index and is summed over, as it appears in both $h^{(j)}$ and $h$.
 \end{df} For example, if $h^{(1)}=h_{k_1abcd}^{(1)}$, $h^{(2)}=h_{k_2efg}^{(2)}$, $h^{(3)}=h_{k_3uvw}^{(3)}$ and $h=h_{kk_1k_2k_3}$, then the merging of $h^{(1)}$, $h^{(2)}$ and $h^{(3)}$ via $h$, under the assumptions $a=w$, $b=f$ and $g=v$, will be
\[H_{kcdeu}=\sum_{k_1,k_2,k_3,a,b,g}h_{kk_1k_2k_3}\cdot\langle a\rangle^{-\alpha}\langle b\rangle^{-\alpha}h_{k_1abcd}^{(1)}\cdot\langle b\rangle^{-\alpha}\langle g\rangle^{-\alpha}h_{k_2ebg}^{(2)}\cdot\langle g\rangle^{-\alpha}\langle a\rangle^{-\alpha}h_{k_3uga}^{(3)}.\] Similarly, the example (\ref{examplemerge0}) which gives a component of the product $\Psi\cdot\Psi'$, is the merging of $h_{kabcd}$ and $h_{k'uvw}'$ via $\widetilde{h}_{mkk'}=\mathbf{1}_{m=k+k'}$, under the assumption $a=u$ and $b=w$.

The general version of Definition \ref{defmerge0}, which includes the signs of indices, dependence on other parameters and additional structures, as well as over-pairings, will be given in Definition \ref{defmerge}.
\subsubsection{Key bilinear and multilinear bounds} Our first basic tool is the following lemma (together with the multilinear version thereof), where the $\|\cdot\|_{k_B\to k_C}$ norms for semi-products of tensors, as defined in Definition \ref{tensornorms}, are estimated by the $\|\cdot\|_{k_B\to k_C}$ norms of the individual tensors.
\begin{prop}[A bilinear estimate]\label{bilineartensor0} Let $h_{k_A}$ and $h_{k_B}'$ be two tensors, assume that all repeated indices are already in $A\cap B$. Then for any partition $(X,Y)$ of $A\Delta B$, the semi-product $H$ of $h$ and $h'$ satisfies that
\[\|H\|_{k_X\to k_Y}\leq \|h\|_{k_{(X\cup B)\cap A}\to k_{Y\cap A}}\cdot\|h'\|_{k_{X\cap B}\to k_{(Y\cup A)\cap B}}.\]
\end{prop}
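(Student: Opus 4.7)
The claim is a tensor-algebra generalization of operator-norm submultiplicativity $\|AB\|_{\mathrm{op}}\le\|A\|_{\mathrm{op}}\|B\|_{\mathrm{op}}$, and the plan is to realize the semi-product $H$ as a two-step composition of operators built from $h$ and $h'$, each tensored with an identity on the inert variables.

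The first step is bookkeeping. Since all repeated indices lie in $A\cap B$ and $(X,Y)$ partitions $A\Delta B$, decompose $X=X_A\sqcup X_B$ and $Y=Y_A\sqcup Y_B$ with $X_A,Y_A\subset A\setminus B$ and $X_B,Y_B\subset B\setminus A$. Then $A$ is the disjoint union of $X_A$, $Y_A$, and $A\cap B$, while $B$ is the disjoint union of $X_B$, $Y_B$, and $A\cap B$. In these coordinates the two norms on the right-hand side of the claim read precisely $\|h\|_{k_{X_A\cup(A\cap B)}\to k_{Y_A}}$ and $\|h'\|_{k_{X_B}\to k_{Y_B\cup(A\cap B)}}$, so that the bridge indices $k_{A\cap B}$ end up on the input side of $h$ and the output side of $h'$.

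Next I will view $h$ as the kernel of a bounded linear operator
\[S_h\colon\ell^2(k_{X_A})\otimes\ell^2(k_{A\cap B})\to\ell^2(k_{Y_A})\]
whose operator norm equals $\|h\|_{k_{X_A\cup(A\cap B)}\to k_{Y_A}}$ by Definition \ref{tensornorms}, and analogously $h'$ as the kernel of $S_{h'}\colon\ell^2(k_{X_B})\to\ell^2(k_{A\cap B})\otimes\ell^2(k_{Y_B})$ of norm $\|h'\|_{k_{X_B}\to k_{Y_B\cup(A\cap B)}}$. Given a test vector $z\in\ell^2(k_X)=\ell^2(k_{X_A})\otimes\ell^2(k_{X_B})$, set
\[\tilde z:=(I_{k_{X_A}}\otimes S_{h'})z\in\ell^2(k_{X_A})\otimes\ell^2(k_{A\cap B})\otimes\ell^2(k_{Y_B}),\qquad w:=(S_h\otimes I_{k_{Y_B}})\tilde z\in\ell^2(k_Y).\]
The standard identity $\|T\otimes I\|=\|T\|$ for Hilbert-space operators yields $\|w\|\le\|S_h\|\,\|S_{h'}\|\,\|z\|$. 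A direct unpacking of the two applications shows that $w_{k_Y}=\sum_{k_X}H_{k_{A\Delta B}}z_{k_X}$, so taking the supremum over $\|z\|=1$ gives the claim.

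The only delicate point is the bookkeeping in the first step: one must verify that the partition of $A\Delta B$ interacts with the bridge set $A\cap B$ in exactly the way that produces the asymmetric pair of norms appearing in the statement, with $k_{A\cap B}$ placed on the input side of $h$ and the output side of $h'$. The analytic content—submultiplicativity under composition together with $\|T\otimes I\|=\|T\|$—is completely routine on Hilbert spaces, so I expect no genuine analytic obstacle.
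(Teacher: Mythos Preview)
Your proof is correct and follows essentially the same approach as the paper's proof of Proposition~\ref{bilineartensor}: the paper writes out the chain of inequalities by hand (fix $v$, apply $h$ in $(x,z)\to y$; then fix $x$, apply $h'$ in $u\to(v,z)$), which is exactly your factorization $(S_h\otimes I_{k_{Y_B}})\circ(I_{k_{X_A}}\otimes S_{h'})$ together with $\|T\otimes I\|=\|T\|$ unwound into explicit sums. The index bookkeeping matches perfectly.
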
 For example, we have
\[\|H\|_{dv\to ex}\leq\|h\|_{abcd\to e}\cdot\|h'\|_{v\to xabc},\quad\mathrm{where}\quad H_{devx}=\sum_{a,b,c}h_{abcde}h_{bvacx}'.\] Note that in the setting $b=u$ and $c=w$ as in Section \ref{semipro}, we can identify $h_{bvacx}'$ with $h_{uvawx}'$ and the norm $\|\cdot\|_{v\to xabc}$ with $\|\cdot\|_{v\to xauw}$; the same comment applies to Lemma \ref{multibound0} below.

An equivalent form of Proposition \ref{bilineartensor0} will be stated and proved in Proposition \ref{bilineartensor}.
\begin{prop}[A multilinear estimate]\label{multibound0} Let $h_{k_{A_j}}^{(j)}\,(1\leq j\leq m)$ be tensors, assume all repeated indices coming from pairings between any $A_i$ and $A_j$ are already in $A_i\cap A_j$. Let $H=H_{A}$ be the semi-product of the $h^{(j)}$'s, where $A=A_1\Delta\cdots\Delta A_m$, then for any partition $(X,Y)$ of $A$ we have
\begin{equation}\label{multibound00}\|H\|_{k_X\to k_Y}\leq\prod_{j=1}^m\|h^{(j)}\|_{k_{(X\cap A_j)\cup B_j}\to k_{(Y\cap A_j)\cup C_j}},\end{equation} where
\[B_j=\bigcup_{\ell>j}(A_j\cap A_\ell),\quad C_j=\bigcup_{\ell <j}(A_j\cap A_\ell).\]
\end{prop}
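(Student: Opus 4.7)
My plan is to prove Proposition \ref{multibound0} by induction on $m$, using the bilinear estimate Proposition \ref{bilineartensor0} as the engine. The base case $m=1$ is trivial (a single factor), and the case $m=2$ is literally Proposition \ref{bilineartensor0} once one checks that $B_1=A_1\cap A_2=C_2$, $C_1=B_2=\varnothing$, and that
$(X\cap A_1)\cup B_1=(X\cup A_2)\cap A_1$ and $(Y\cap A_2)\cup C_2=(Y\cup A_1)\cap A_2$,
which are immediate. So I only need to carry out the inductive step.

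For $m\geq 3$, I form the auxiliary tensor $\widetilde{H}$ as the semi-product of $h^{(2)},\ldots,h^{(m)}$ under the given pairings among $A_2,\ldots,A_m$. Since the hypothesis rules out triple repetitions, the index set of $\widetilde{H}$ is exactly $\widetilde{A}=A_2\Delta\cdots\Delta A_m$. Then $H$ is the semi-product of $h^{(1)}$ and $\widetilde{H}$, where the contracted indices are $A_1\cap\widetilde{A}$. Again because there are no over-pairings, each index in $A_1\cap A_\ell$ (for $\ell\geq 2$) survives inside $\widetilde{A}$, giving $A_1\cap\widetilde{A}=\bigcup_{\ell>1}(A_1\cap A_\ell)=B_1$. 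Applying Proposition \ref{bilineartensor0} to this bilinear decomposition yields
\[
\|H\|_{k_X\to k_Y}\leq\|h^{(1)}\|_{k_{(X\cap A_1)\cup B_1}\to k_{Y\cap A_1}}\cdot \|\widetilde{H}\|_{k_{\widetilde{X}}\to k_{\widetilde{Y}}},
\]
where $\widetilde{X}:=X\cap\widetilde{A}$ and $\widetilde{Y}:=(Y\cap\widetilde{A})\cup B_1$; these form a partition of $\widetilde{A}$, and the first factor already matches the $j=1$ term of the target bound.

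Next I apply the inductive hypothesis to $\widetilde{H}$, whose tensors $h^{(2)},\ldots,h^{(m)}$ have intersection patterns $\widetilde{B}_j=\bigcup_{\ell>j}(A_j\cap A_\ell)=B_j$ and $\widetilde{C}_j=\bigcup_{2\leq\ell<j}(A_j\cap A_\ell)=C_j\setminus(A_1\cap A_j)$. The key bookkeeping is then to verify, for each $j\geq 2$, the two identities
\[
\widetilde{X}\cap A_j=X\cap A_j,\qquad (\widetilde{Y}\cap A_j)\cup\widetilde{C}_j=(Y\cap A_j)\cup C_j.
\]
The first holds because an index of $A_j$ that is paired with some $A_i$, $2\leq i\neq j$, lies outside $\widetilde{A}$ and also outside $A$, so it contributes neither to $X\cap A_j$ nor to $\widetilde{X}\cap A_j$. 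The second is where the extra $B_1$ in $\widetilde{Y}$ plays its role: the paired indices $A_1\cap A_j$ lie in $\widetilde{A}$ but not in $A$, so they enter $\widetilde{Y}$ exclusively through $B_1\cap A_j=A_1\cap A_j$, and combining with $\widetilde{C}_j$ restores the full $C_j$ on the output side. With these identities in hand, the inductive hypothesis produces exactly $\prod_{j=2}^m\|h^{(j)}\|_{k_{(X\cap A_j)\cup B_j}\to k_{(Y\cap A_j)\cup C_j}}$, closing the induction.

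The main obstacle is purely bookkeeping: tracking which contracted indices land on the input versus the output side of each factor, and checking that the no-over-pairing hypothesis is enough to ensure $A_1\cap\widetilde{A}=B_1$ and that the intersection patterns inside $\widetilde{A}$ remain consistent. In the genuine setting of Definition \ref{defmerge}, over-pairings and signs will reappear and need to be handled separately, but as the excerpt notes those are degenerate and strictly easier; the induction scheme above is the conceptual core.
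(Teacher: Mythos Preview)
Your proof is correct and follows essentially the same approach as the paper's proof of Proposition \ref{multibound}: induction on $m$, peeling off $h^{(1)}$ and applying the bilinear estimate to the pair $(h^{(1)},\widetilde{H})$ with contracted index set $A_1\cap\widetilde{A}=B_1$, then invoking the inductive hypothesis on $\widetilde{H}$ with the shifted partition $(\widetilde{X},\widetilde{Y})=(X\cap\widetilde{A},(Y\cap\widetilde{A})\cup B_1)$. Your bookkeeping identities $\widetilde{X}\cap A_j=X\cap A_j$ and $(\widetilde{Y}\cap A_j)\cup\widetilde{C}_j=(Y\cap A_j)\cup C_j$ are exactly the verifications $(X'\cap A_j)\cup B_j=X_j\cup B_j$ and $(Y'\cap A_j)\cup(C_j\setminus B_1)=Y_j\cup C_j$ that the paper records at the end of its proof.
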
 
For example, we have
\[\|H\|_{e\to ud}\leq\|h\|_{abc\to d}\|h'\|_{eg\to ab}\|h''\|_{cug},\quad\mathrm{where}\quad H_{deu}=\sum_{a,b,c,g}h_{abcd}h_{aebg}'h_{cug}''.\]

An equivalent form of Proposition \ref{multibound0} will be stated and proved in Proposition \ref{multibound}.
  \subsubsection{Trimming}\label{introtrim} In the course the main proof, when considering the tensors $h_{kk_A}$ and associated multilinear expressions $\Psi_k$ as in (\ref{expressionintro}), we will always assume that the tensor $h$ is \emph{independent} with the Gaussians $g_{k_j}$, in order for the large deviation estimates (such as Lemma \ref{largedev0}) to be applicable. In practice, this is guaranteed by requiring that $\langle k_j\rangle\geq R$ in the support of $h_{k_A}$ for some $R$, and that $h_{k_A}$ is a Borel function of $\{g_k:\langle k\rangle<R\}$. A problem then occurs, say when merging two tensors $h=h_{k_A}$ and $h'=h_{k_B}'$ with cutoffs $R_1$ and $R_2$ as above, because the merged tensor $H$ is a Borel function of $\{g_k:\langle k\rangle<\max(R_1,R_2)\}$ and may not be independent with $g_{k_j}$ for $j\in A\Delta B$. Because of this, we will introduce the operation of \emph{trimming} as follows:
  \begin{df}[Trimming: simple version]\label{deftrim0} Let $h=h_{kk_A}$ be a tensor, assume for each $j\in A$ there is a dyadic $N_j$ such that $h$ is supported in $N_j/2<\langle k_j\rangle\leq N_j$. Then, for any $R$, the trimming of $h$ at frequency $R$ is defined to be the contraction against free Gaussians, namely
  \[h_{kk_{A'}}'=\sum_{k_{A\backslash A'}}h_{kk_A}\prod_{j\in A\backslash A'}\langle k_j\rangle^{-\alpha}g_{k_j}^\pm,\] where $A'=\{j\in A:N_j\geq R\}$. Note that those $g_{k_j}$ where $j\in A'$ are independent with those $g_{k_j}$ where $j\in A\backslash A'$. In particular we recover the expression $\Psi_k$ in (\ref{expressionintro}) if $A'=\varnothing$.
  \end{df} For example, if $h=h_{kabcd}$, where $N_1/2<\langle a\rangle\leq N_1$ etc., and assume $N_1\leq N_3<R\leq N_2\leq N_4$, then the trimming of $h$ at frequency $R$ will be
  \[h_{kbd}'=\sum_{a,c}h_{kabcd}\cdot\langle a\rangle^{-\alpha}g_a^\pm \cdot\langle c\rangle^{-\alpha}g_c^\pm.\]
  
The general version of Definition \ref{deftrim0}, which includes the signs of indices, as well as dependence on other parameters and additional structures, will be given in Definition \ref{deftrim}.
\subsubsection{Method of descent} Our second basic tool is the following lemma, where the $\|\cdot\|_{k_B\to k_C}$ norms for the contraction of a tensor against independent free Gaussians are estimated by the $\|\cdot\|_{k_B\to k_C}$ norms of this tensor. This inequality has an elegant form, and we believe it is of independent interest in the study of random matrices.
\begin{prop}\label{gausscont0} Let $h_{k_A}$ be a tensor, $A'$ be a subset of $A$ such that $\{g_{k_j}:j\in A\backslash A'\}$ is independent with $h_{k_A}$. Let $h'=h_{k_{A'}}'$ be the contraction of $h$ against the free Gaussians $\{g_{k_j}^\pm:j\in A\backslash A'\}$, namely
\[h_{k_{A'}}'=\sum_{k_{A\backslash A'}}h_{k_A}\prod_{j\in A\backslash A'}g_{k_j}^\pm,\] then for any partition $(X',Y')$ of $A'$, with high probability we have\footnote{In practice this will have a small power loss $M^\theta$ where $\theta$ is an arbitrary small number and $M$ is the size of $k_A\in(\Zb^d)^A$; see Propositions \ref{gausscont}--\ref{gausscont2}.}
\[\|h'\|_{k_{X'}\to k_{Y'}}\lesssim\sup_{(X,Y)}\|h\|_{h_X\to h_Y},\]where $(X,Y)$ runs over all partitions of $A$ such that $X'\subset X$ and $Y'\subset Y$.
\end{prop}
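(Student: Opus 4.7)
The plan is to combine the moment method for Gaussian chaos with an $\varepsilon$-net argument. By the duality formula (\ref{duality}), the operator norm $\|h'\|_{k_{X'}\to k_{Y'}}$ equals the supremum of the scalar bilinear form
$$S(z,y) := \sum_{k_{A'}} h'_{k_{A'}} z_{k_{X'}} y_{k_{Y'}} = \sum_{k_A} h_{k_A}\, z_{k_{X'}} y_{k_{Y'}} \prod_{j\in A\setminus A'} g_{k_j}^{\pm}$$
over unit-normalized $z\in \ell^2(k_{X'})$ and $y\in \ell^2(k_{Y'})$. Passing from pointwise control of $|S(z,y)|$ to the supremum is standard via an $\varepsilon$-net on the finite-dimensional ambient balls (whose cardinality is polynomial in the ambient size $M$), provided we have good moment bounds on $S(z,y)$ at each fixed $(z,y)$.

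For fixed $(z,y)$, and conditional on $h$ (which is independent of the remaining $g_{k_j}^{\pm}$), the variable $S(z,y)$ is a Wiener polynomial chaos of degree $|A\setminus A'|$ in the free Gaussians. Gaussian hypercontractivity then yields, for every integer $p\geq 1$,
$$\Eb|S(z,y)|^{2p} \leq (Cp)^{p|A\setminus A'|}\bigl(\Eb|S(z,y)|^2\bigr)^p,$$
with $C$ depending only on $|A|$. Choosing $p\sim \log M$, applying Markov's inequality, and taking a union bound over the $\varepsilon$-net gives $\|h'\|_{k_{X'}\to k_{Y'}}\lesssim M^\theta \sup_{z,y}\bigl(\Eb|S(z,y)|^2\bigr)^{1/2}$ on an event of probability at least $1-C_\theta e^{-M^\theta}$.

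The key algebraic step is to bound the second moment $\Eb|S(z,y)|^2$ by $\sup_{(X,Y)}\|h\|_{k_X\to k_Y}^2\cdot \|z\|^2\|y\|^2$. Expanding $|S|^2$ and invoking Wick's theorem expresses this expectation as a finite sum over perfect matchings of the $2|A\setminus A'|$ Gaussian factors appearing in $S\cdot \overline{S}$. The ``diagonal'' matching pairs each $g_{k_j}^{\pm}$ in $S$ with its complex conjugate in $\overline{S}$ and contributes
$$\sum_{k_{A\setminus A'}} \left|\sum_{k_{A'}} h_{k_A} z_{k_{X'}} y_{k_{Y'}}\right|^2 \leq \|h\|_{k_{X'}\to k_{A\setminus X'}}^2\, \|z\|^2\|y\|^2$$
after Cauchy--Schwarz in $k_{Y'}$. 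This is precisely $\|h\|_{k_X\to k_Y}^2\|z\|^2\|y\|^2$ for the admissible partition $(X,Y) = (X',\,Y'\cup (A\setminus A'))$, hence bounded by the supremum.

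The remaining matchings involve ``self-contractions'' identifying two indices $k_i,k_j$ of opposite signs inside $S$ (and symmetrically inside $\overline{S}$). Each such pattern collapses part of $A\setminus A'$ onto a diagonal; the resulting expression is a semi-product of $h$ with itself and so, by the bilinear and multilinear bounds of Propositions \ref{bilineartensor0}--\ref{multibound0}, is controlled by $\|h\|_{k_X\to k_Y}^2$ for a suitable partition $(X,Y)$ extending $(X',Y')$, obtained by distributing the collapsed and still-free indices of $A\setminus A'$ between the input and output sides. The principal obstacle will be the bookkeeping for the Wick expansion: tracking signs and sign-compatibility of pairings, discarding matchings that contribute zero, and absorbing over-paired configurations (three or more coinciding $k_j$'s, which live on lower-dimensional subvarieties) via further applications of Cauchy--Schwarz into the same family of operator-norm bounds, so that every matching contribution is dominated by the single supremum on the right-hand side of the claimed inequality.
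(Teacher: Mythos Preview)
Your approach has a genuine gap at the $\varepsilon$-net step. The unit spheres of $\ell^2(k_{X'})$ and $\ell^2(k_{Y'})$ sit in spaces of dimension $\sim M^{d|X'|}$ (the number of lattice points), so an $\varepsilon$-net has cardinality $(C/\varepsilon)^{M^{d|X'|}}$, which is exponential in a power of $M$, not polynomial. To union-bound over such a net you would need $p\sim M^{d|X'|}$ in the hypercontractivity step, and the resulting factor $(Cp)^{|A\setminus A'|/2}$ is then a genuine power $M^{C}$, not $M^\theta$.

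That this is not a mere technicality can be seen already when $A=\{1,2\}$, $A'=\{1\}$, $(X',Y')=(\{1\},\varnothing)$, and $h_{k_1k_2}=\mathbf 1_{k_1=k_2}\mathbf 1_{\langle k_1\rangle\le M}$. For every fixed unit $z$ one has $\Eb|S(z)|^2=\sum_k|z_k|^2=1=\|h\|_{k_1\to k_2}^2$, yet $\|h'\|_{k_1}^2=\sum_k|g_k|^2\sim M^d$. So the pointwise second moment only sees $\|h\|_{k_1\to k_2}$, whereas the correct bound requires the Hilbert--Schmidt norm $\|h\|_{k_1k_2}=M^{d/2}$, i.e.\ the partition $(X,Y)=(\{1,2\},\varnothing)$. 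This is exactly why the supremum over \emph{all} extending partitions is essential and cannot be replaced by the single choice your second-moment computation produces.

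The paper avoids any net by working directly with high Schatten norms: one expands the kernel of $(HH^*)^m$ inductively, and at each multiplication by $H$ or $H^*$ a pairing set $\Qc\subset A$ appears, which by Proposition~\ref{bilineartensor0} introduces the factor $\|h\|_{bk_{A\setminus\Qc}\to ck_{\Qc}}$; this is where the full family of extending partitions enters. One is then left with $\|(HH^*)^m\|_{\mathrm{HS}}^2\ge\|H\|_{b\to c}^{4m}$, a single scalar Gaussian chaos controlled by Lemma~\ref{largedev0} after a union bound over only the $M^{O(1)}$ values of the output index pair. Taking the $4m$-th root with $m$ large absorbs the residual $\|h\|_{\mathrm{HS}}^{1/2m}$ into $M^\theta$.
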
 For example, under the independence assumption, with high probability we have
\[\|h'\|_{b\to d}\lesssim\max(\|h\|_{abc\to d},\|h\|_{ab\to cd},\|h\|_{bc\to ad},\|h\|_{a\to bcd}),\quad \mathrm{where}\quad h_{bd}'=\sum_{a,c}h_{abcd}g_a^\pm g_c^\pm.\]

A more precise version of Proposition \ref{gausscont0} will be stated and proved in Proposition \ref{gausscont}. A slightly different version due to technical reasons will be stated and proved in Proposition \ref{gausscont2}.
 \subsection{A simple model}\label{simmodel} We now turn to the proof of Theorem \ref{main}. In this section we introduce a much simplified model for (\ref{nls}) and (\ref{nlstrunc}) that still preserves the main difficulties.
 
 First, we will replace the nonlinearities by the $\Nc_{\mathrm{np}}$ defined in (\ref{nopairintro}). This $\Nc_{\mathrm{np}}$ is essentially the result of Wick ordering and a suitable gauge transform, but avoids the complications linked to over-pairings and deviation of mass around its expected value.
 
Second, we will remove the time variable. Indeed, if we believe our solution is close to a linear solution, and thus restrict to functions $u$ whose spacetime Fourier transform looks like $
\widehat{u_k}(\xi)\sim u_k\cdot\psi(\xi+|k|^2)$ with some Schwartz function $\psi$ and some function $u_k$ of $k$ only, then by a formal calculation using Duhamel's formula, this $u_k$ will satisfy a fixed-point equation that essentially looks like \begin{equation}\label{modelintro0}u_k=\frac{g_k}{\langle k\rangle^{\alpha}}-i\sum_{k_1-\cdots +k_p=k;\,\Omega=0}u_{k_1}\overline{u_{k_2}}\cdots u_{k_p},\end{equation} where $\Omega:=|k|^2-|k_1|^2+\cdots-|k_p|^2$, and in (\ref{modelintro0}) we also assume no-pairing as in (\ref{nopairintro}).

Third, consistent with the setting of Section \ref{tmintro}, in analyzing (\ref{modelintro0}) we will disregard any over-pairings, and assume, when merging tensors, that no index is repeated more than twice.
 \subsection{The core ansatz}\label{simansatz} We now start the analysis of (\ref{modelintro0}). For simplicity we denote the terms on the right hand side of (\ref{modelintro0}) by \begin{equation}\label{deffintro}\langle k\rangle^{-\alpha}g_k=:f_k,\qquad -i\sum_{k_1-\cdots +k_p=k;\,\Omega=0}u_{k_1}\overline{u_{k_2}}\cdots u_{k_p}=:\Mc_{\mathrm{np}}(u,\cdots,u)_k,\end{equation} where $\Mc_{\mathrm{np}}$ is an $\Rb$-multilinear operator of degree $p$, so that (\ref{modelintro0}) reads \begin{equation}\label{modelintro1}
u_k = f_k + \Mc_{\mathrm{np}}(u,\cdots, u)_k,
\end{equation} We also introduce the canonical truncations of (\ref{modelintro1}), namely \begin{equation}\label{modelintro2}
(u_N)_k = \Pi_N f_k + \Mc_{\mathrm{np}}(u_N,\cdots, u_N)_k,
\end{equation} and define $y_N$  by 
\begin{equation}
y_N = u_N-u_{N/2};\quad u_N=\sum_{N'\leq N}y_{N'}.
\end{equation}Note that we do not put $\Pi_N$ before the nonlinearity in (\ref{modelintro2}); however in this model we still assume $y_N$ and $u_N$ are supported in $\langle k\rangle\leq N$. Under these assumptions, $y_N$ satisfies the equation
\begin{equation}\label{modelintro3}(y_N)_k=\Delta_N f_k+\sum_{\max(N_1,\cdots,N_p)=N}\Mc_{\mathrm{np}}(y_{N_1},\cdots,y_{N_p})_k.
\end{equation}

The core ansatz for $y_N$ will be constructed by induction (assuming $u_{N'}$ and $y_{N'}$ are already defined for $N'<N$). Recall that in \cite{DNY} the analogue ansatz for $y_N$ contains three parts: $\Delta_N f$ which corresponds to the linear evolution, the terms corresponding to the random averaging operators $\Pc_{NL}$, and a remainder $z_N$ of higher regularity. We start by a description of this simple case in Section \ref{raointro}. In order to prove Theorem \ref{main} which covers the full subcritical regime $s>s_{pr}$, in Section \ref{sec2.4:randomtensors}, we will further unravel the propagation of randomness from the remainder and make higher order expansions using the random tensors introduced in Sections \ref{tensorintro}--\ref{tmintro}.
\subsubsection{Random averaging operators}\label{raointro}
As stated in Section \ref{rao0}, the random averaging operator $\Pc_{NL}$ describes the high-low interaction, which in this simple model is defined by 
\begin{equation}\label{PNL}
\Pc_{NL} (y) = \Mc_{\mathrm{np}}(y, u_L, \cdots, u_L),
\end{equation} where\footnote{In \cite{DNY} we used $L=N^{1-\delta}$; here we need a smaller value of $L$ which works better in the general setting.} $L= N^{\delta}$ for some small $\delta$. Note that when we discuss the ansatz for $y_N$, $u_L$ has already been defined by the induction hypothesis. The ansatz for $y_N$ then includes the following term:
\begin{equation}\label{simpleansatz1}
(1 + \Pc_{NL}+  \Pc^2_{NL} + \Pc^3_{NL}+\cdots)\,\Delta_N f = (1-\Pc_{NL})^{-1} \Delta_N f,
\end{equation}
where convergence is guaranteed by the operator bound (\ref{rao1}) for $\Pc_{NL}$.  In the case $p=3$ for example, we may represent  the terms in (\ref{simpleansatz1}) by means of the following \emph{iteration trees}
\begin{equation}\label{symbol1}
\begin{aligned}
\bullet: \Delta_N f,\quad \circ: u_L,\quad \scaleto{\<dcc_black>}{0.35cm}: \Mc_{\mathrm{np}}(\Delta_N f, u_L, u_L)=\Pc_{NL} (\Delta_N f),\\ 
\scaleto{\<(dcc)cc_black>}{0.6cm}: \Pc^2_{NL} (\Delta_N f),\quad
\scaleto{\<((dcc)cc)cc_black>}{0.8cm}: \Pc^3_{NL} (\Delta_N f), \quad\textrm{etc.}
\end{aligned}\end{equation}
For each term in (\ref{symbol1}) we can define the associated random $(1,1)$ tensor (or equivalently random matrix). For example the random $(1,1)$ tensor associated to $\bullet=\Delta_N f$ is just the identity matrix, while for the iteration tree $\<dcc_black>$, the associated random $(1,1)$ tensor $h^{\scaleto{\<dcc_black>}{7pt}}$ is such that
\begin{equation}\label{tensorterm1}
(\scaleto{\<dcc_black>}{0.35cm})_k = \sum_{k_1}h^{\scaleto{\<dcc_black>}{7pt}}_{kk_1} \cdot\Delta_N f_{k_1},
\end{equation} where $f_{k_1}$ is as in (\ref{deffintro}). By (\ref{deffintro}) and (\ref{PNL}), we have the formula (where $\Omega=|k|^2-|k_1|^2+|k_2|^2-|k_3|^2$)
\begin{equation}\label{tensorterm11}
h^{\scaleto{\<dcc_black>}{7pt}}_{kk_1} = 
-i \sum_{\substack{k_1-k_2 +k_3=k;\,\Omega=0\\\langle k_2\rangle, \langle k_3\rangle\leq L,\,\langle k\rangle \leq N}}\overline{(u_L)_{k_2}}\cdot (u_L)_{k_3}.
\end{equation}
Similarly we can define the random $(1,1)$ tensors associated to other iteration trees in (\ref{symbol1}) such as $\<(dcc)cc_black>$. Since $u_L$ (represented by $\circ$) has less importance in the estimates (in fact they will be trimmed out, see the arguments below), all these random $(1,1)$ tensors can be treated in a similar way in our proof. Hence for simplicity we will denote them by the single notation $h^{\scaleto{\<d_black>}{7pt}}$.
\subsubsection{Random tensors}\label{sec2.4:randomtensors} With the random $(1,1)$ tensors $h^{\scaleto{\<d_black>}{7pt}}$ defined as above, we proceed to construct the random $(r,1)$ tensors in the ansatz for $y_N$ by induction, with $h^{\scaleto{\<d_black>}{7pt}}$ being the base case. These tensors arise from high order iterations of the equation (\ref{modelintro3}). We start with a simple case, namely the random $(2,1)$ tensor terms in the ansatz for $y_N$, by using the following iteration trees (assume $p=3$):
\begin{equation}\label{symbol2}\begin{aligned}
\scaleto{\<ddc_black>}{0.35cm}: \Mc_{\mathrm{np}}(\Delta_{N_{\lf_1}}f, \Delta_{N_{\lf_2}}f, u_L), \quad \scaleto{\<(ddc)cc_black>}{0.6cm}: \Pc_{NL} ( \<ddc_black>), \quad \scaleto{\<((ddc)cc)cc_black>}{0.8cm}: \Pc^2_{NL} ( \<ddc_black>),\\
\scaleto{\<(dcc)dc_black>}{0.6cm}:  \Mc_{\mathrm{np}}(\Mc_{\mathrm{np}}(\Delta_{N_{\lf_1}}f, u_L, u_L), \Delta_{N_{\lf_2}}f, u_L),\quad\mathrm{etc.}
\end{aligned}
\end{equation}
where $(\lf_1, \lf_2)$ is not a pairing (i.e. $k_{\lf_1}\neq k_{\lf_2}$ in (\ref{tensorterm2}) below, if the corresponding signs are the opposite), $N_{\lf_1}, N_{\lf_2}>L$ and $\max (N_{\lf_1}, N_{\lf_2}) = N$. These terms are similar to the high-low interactions in (\ref{simpleansatz1}) and hence will also be added to the ansatz for $y_N$.
  We can define the  random $(2, 1)$ tensors associated to terms in (\ref{symbol2}), proceeding similarly as in (\ref{tensorterm1}) and (\ref{tensorterm11}). Once again all these random $(2, 1)$ tensors can be treated in a similar way in our proof, hence for simplicity we also denote them by the single notation $h^{\scaleto{\<dd_black>}{7pt}}$, such that the $k$-th mode of terms in (\ref{symbol2}) are given by
 \begin{equation}\label{tensorterm2} \sum_{N_{\lf_i}/2<\langle k_{\lf_i}\rangle\leq N_{\lf_i},\ i=1,2} h^{\scaleto{\<dd_black>}{7pt}}_{kk_{\lf_1}k_{\lf_2}} (\Delta_{N_{\lf_1}}f_{k_{\lf_1}})^{\pm} (\Delta_{N_{\lf_2}}f_{k_{\lf_2}})^{\pm}.
 \end{equation}
 
Synthesizing the structures of these random $(1,1)$ and $(2,1)$ tensors, we describe the associated $(r,1)$ tensors in general. To that effect we introduce the {\emph{skeleton tree} $\Lc$ containing all solid leaves\footnote{In addition, we remove edges connecting a node to its \emph{only} child; they correspond to the random averaging operator in (\ref{PNL}) and can be dealt with as in Section \ref{raointro}.} $\bullet$ in the iteration trees associated to the $(r,1)$ tensors. 
 For each leaf $\lf\in \Lc$ we also attach a frequency $N_\lf$ (such as $N_{\lf_1}$ and $N_{\lf_2}$ in (\ref{symbol2}) above) and a sign $\zeta_\lf\in\{\pm\}$ (for simplicity we will not explicitly write $\zeta_\lf$ below). Define also the frequency of the skeleton tree to be $N$, which always equals the maximum of $N_\lf$; in particular if $\lf$ is the only leaf then $N=N_\lf$. For example, the term in (\ref{symbol2}) that has the iteration tree $\<(dcc)dc_black>$ will correspond to the skeleton tree $\<dd_black>$, or $\Lc =\{\lf_1, \lf_2\}$, with two leaves, no pairing, and $\max (N_{\lf_1}, N_{\lf_2}) = N$. In such terms we always assume $N_\lf>L$ so that the tensor $h^\Lc$, which is a Borel function of $u_L$, is \emph{independent} with the Gaussians $\Delta_{N_\lf}f$.

Let us now show with an example how the inductive definition of the $(r,1)$ tensors associated to the ansatz for $y_N$ proceeds using high order iterations, first in the no-pairing case. For $p=3$, consider the high order iteration term such as
\begin{equation}\label{tensorterm3}
\begin{aligned}
\scaleto{\<(ddc)d(dcc)_black>}{0.6cm}&= \Mc_{\mathrm{np}}(\scaleto{\<ddc_black>}{0.35cm}, \bullet, \scaleto{\<dcc_black>}{0.35cm});\\
\left(\scaleto{\<(ddc)d(dcc)_black>}{0.6cm}\right)_{k}&=\sum_{N_{\lf_i}/2<\langle k_{\lf_i}\rangle\leq N_{\lf_i};\,1\leq i\leq 4} \left(\sum_{k_1,k_2,k_3} h_{kk_1k_2k_3} h^{\Lc_1}_{k_1k_{\lf_1}k_{\lf_2}} h^{\Lc_2}_{k_2k_{\lf_3}} h^{\Lc_3}_{k_3k_{\lf_4}}\right) \prod_{i=1}^4 (\Delta_{N_{\lf_i}}f_{k_{\lf_i}})^{\pm},
\end{aligned}
\end{equation}
where $\Lc_1 = \{\lf_1, \lf_2\}$ corresponds to the iteration tree $\<ddc_black>$ and has frequency $N_1$, $\Lc_2 = \{\lf_3\}$ corresponds to $\bullet$ and has frequency $N_2$, and $\Lc_3 = \{\lf_4\}$ corresponds to the iteration tree $\<dcc_black>$ and has frequency $N_3$. Also note that $N= \max (N_1, N_2, N_3)$ by (\ref{modelintro3}). By the definition of $\Mc_{\mathrm{np}}$ in (\ref{deffintro}), the tensor $h_{kk_1k_2k_3}$ in (\ref{tensorterm3}) is (with the no-pairing restrictions which we omit)
\begin{equation}\label{formhintro}
h_{kk_1k_2k_3} = \mathbf{1}_{k=k_1-k_2+k_3}\cdot \mathbf{1}_{|k|^2=|k_1|^2-|k_2|^2+|k_3|^2}.
\end{equation}
Consider the case  when there is no pairing among $\{\lf_1,\lf_2,\lf_3,\lf_4\}$ (note that $(\lf_1, \lf_2)$ is already not a pairing in (\ref{symbol2})). 
By (\ref{tensorterm3}) and Definition \ref{defmerge0}, the random tensor $h^\Lc$ associated to  the iteration tree $\<(ddc)d(dcc)_black>$ is the merging of $h^{\Lc_1}$, $h^{\Lc_2}$, $h^{\Lc_3}$ via $h$, assuming there is no pairing; it has skeleton tree $\<(dd)dd_black>\,$, or $\Lc = \{\lf_1,\lf_2,\lf_3,\lf_4\}$.

 It becomes unnecessarily complex to keep track of the iteration or skeleton trees such as $\<(dd)dd_black>\,$ and $\<(ddc)d(dcc)_black>$, as we iterate further and increase the depth. It turns out, see Section \ref{sec:2.4.3}, that the desired estimates for the random $(r,1)$ tensors depend \emph{only} on the set of solid leaves and their corresponding frequencies, not on the tree structure, except for some minor corrections. Therefore it will suffice to consider structures that we will refer to as {\emph{flattened trees}} below, provided we keep the necessary information of the trees in a \emph{memory set} $\Yc$, which will yield the minor corrections alluded above. The pair $(\Lc, \Yc)$, where $\Lc$ is viewed as a \emph{set}, then plays the role of the trees (such as $\<(dd)dd_black>$ and $\<(ddc)d(dcc)_black>$). The process of viewing $\Lc$ as a set---forgetting its tree structure---and finding the set $\Yc$ associated to the trees, is then called the \emph{flattening} of trees. More precisely, every time we merge the tensors, the flattening of trees proceeds by putting an element $\pf$ into $\Yc$ and set $N_\pf$ to be the \emph{second maximum} among  all frequencies of the trees of the merged tensors. 
For example, in the situation of (\ref{tensorterm3}), the skeleton tree $\Lc = \{\lf_1,\lf_2,\lf_3,\lf_4\}$, which corresponds to the iteration tree $\<(ddc)d(dcc)_black>$, comes from merging $h^{\Lc_1}$, $h^{\Lc_2}$ and $h^{\Lc_3}$, so we have $\pf_1\in \Yc$ and $N_{\pf_1}$ equals the second maximum  among  $\{N_1, N_2, N_3\}$. Furthermore $\Lc_1$, which corresponds to the iteration tree $\<ddc_black>$, is constructed by merging the tensors $h^{\bullet}$ (from $\Delta_{N_{\lf_1}} f$), $h^{\bullet}$ (from $\Delta_{N_{\lf_2}} f$) and\footnote{Here $h^{\circ}_{k}=(u_L)_k$ can be understood as a $(0,1)$ tensor which has no input variable.} $h^{\circ}$ (from $u_L$), hence we have one more element $\pf_2$ in $\Yc$, and $N_{\pf_2}$  equals the second maximum  among  $\{N_{\lf_1}, N_{\lf_2}\}$. Since $h^{\Lc_2}$ and $h^{\Lc_3}$ are $(1,1)$ tensors which are defined directly without merging, we obtain the memory set $\Yc$ associated to the skeleton tree $\<(dd)dd_black>\,$, namely $\{\pf_1, \pf_2\}$. With this flattening process, we can forget the tree structures $\<(dd)dd_black>\,$ and $\<(ddc)d(dcc)_black>$, and replace it by the flattened tree $\<dddd_black>$ together with $\Yc$, so we may also denote $h^{\Lc}=h^{(\scaleto{\<dddd_black>}{0.3cm},\,\Yc)}$, where $\Yc=\{\pf_1,\pf_2\}$. 

We now need to trim\footnote{This corresponds to removing $\circ$'s and low-frequency $\bullet$'s from the iteration trees, or removing low-frequency leaves from the skeleton and flattened trees. In the main proof, in addition to this trimming after merging, we also need to trim the tensors \emph{before} merging; see (\ref{newpsi}) and (\ref{newH}).} the tensor $h^{(\scaleto{\<dddd_black>}{0.3cm},\,\Yc)}$ as above at frequency $2L$, in order to maintain the property $N_\lf>L$ and the independence between the Gaussians $\Delta_{N_\lf} f$ and the tensor which contains the low frequency components. When $N_\lf >L$ for all $\lf \in \Lc$, no trimming is needed and we get the same random $(4,1)$ tensor $h^{(\scaleto{\<dddd_black>}{0.3cm},\,\Yc)}$, with the $k$-th mode of the corresponding term in the ansatz being \begin{equation}\label{tensorterm5}
\sum_{N_{\lf_i}/2<\langle k_{\lf_i}\rangle\leq N_{\lf_i};\,1\leq i\leq 4} h^{(\scaleto{\<dddd_black>}{0.3cm},\,\Yc)}_{kk_{\lf_1}k_{\lf_2}k_{\lf_3}k_{\lf_4}}\cdot \prod_{i=1}^4 (\Delta_{N_{\lf_i}}f_{k_{\lf_i}})^\pm.
\end{equation}
Instead, if $N_{\lf_4} \leq L$, then by Definition \ref{deftrim0} the trimmed tensor is a $(3,1)$ tensor:
\begin{equation}
h^{(\scaleto{\<ddd_black>}{0.3cm},\,\Yc)}_{kk_{\lf_1}k_{\lf_2}k_{\lf_3}} = \sum_{k_{\lf_4}} h^{(\scaleto{\<dddd_black>}{0.3cm},\,\Yc)}_{kk_{\lf_1}k_{\lf_2}k_{\lf_3}k_{\lf_4}}\cdot  (\Delta_{N_{\lf_4}}f_{k_{\lf_4}})^\pm
\end{equation} and the 
$k$-th mode of the corresponding term in the ansatz should be \begin{equation}\label{tensorterm4}
\sum_{N_{\lf_i}/2<\langle k_{\lf_i}\rangle\leq N_{\lf_i};\,1\leq i\leq 3} h^{(\scaleto{\<ddd_black>}{0.3cm},\,\Yc)}_{kk_{\lf_1}k_{\lf_2}k_{\lf_3}}\cdot \prod_{i=1}^3  (\Delta_{N_{\lf_i}}f_{k_{\lf_i}})^\pm
\end{equation} which in fact is the same as (\ref{tensorterm5}). This means that the trimming process only changes the point of view by which we regard the terms in the ansatz, but not the terms themselves.

Finally, we consider the case when pairings (see Definition \ref{def:pairing}) occur in the merging process. In the above example, if we have a pairing $(\lf_2,\lf_4)$ in the merging process (\ref{tensorterm3}), then instead of the flattened tree $\<dddd_black>$ we will have a flattened tree \emph{with pairing}, namely\[\scaleto{\<pdddd_black>}{0.6cm};\] for simplicity we also assume $N_{\lf_i}>L$ for $1\leq i\leq 4$, i.e. no trimming is needed. The set of paired leaves is denoted by $\Pc =\{\lf_2, \lf_4\}$, and the set of  unpaired leaves is denoted by $\Uc =\{\lf_1,\lf_3\}$. In this case we still merge $h^{\Lc_1}$, $h^{\Lc_2}$ and $h^{\Lc_3}$ via $h$ using Definition \ref{defmerge0} as above, but assume now $(\lf_2, \lf_4)$ is a pairing, i.e. restricting $k_{\lf_2}=k_{\lf_4}$ in the sum (\ref{tensorterm3}). The merged tensor, denoted by $h^{({\scaleto{\<pdddd_black>}{0.45cm}},\,\Yc)}$, is in fact a random $(2,1)$ tensor as only $k_\lf$ for \emph{unpaired} leaves $\lf\in\Uc$ are input variables. The $k$-th mode of the corresponding term in the ansatz is then  \begin{equation}
\sum_{N_{\lf_i}/2<|k_{\lf_i}|\leq N_{\lf_i},\,i\in\{1,3\}}h^{({\scaleto{\<pdddd_black>}{0.45cm}},\,\Yc)}_{kk_{\lf_1}k_{\lf_3}}\cdot  (\Delta_{N_{\lf_1}}f_{k_{\lf_1}})^\pm (\Delta_{N_{\lf_3}}f_{k_{\lf_3}})^\pm.
\end{equation}

In summary, in order to construct a random $(r,1)$ tensor, we start with a high order iteration which can be understood as the process of merging several lower order tensors as in (\ref{tensorterm3}), and then trim the merged tensor at a given frequency $2L$. This trimmed tensor is $(r,1)$ tensor we seek for the ansatz for $y_N$.
\subsubsection{The core ansatz}\label{sec:2.4.3} Given a large parameter $D$, based on the above random $(r,1)$ tensors, we construct the ansatz for $y_N$ as follows:    
\begin{equation}\label{sec2.4:ansatz}
(y_N)_k = \sum_{(\Lc,\Yc)} h^{(\Lc,\Yc)}_{k k_\Uc}\cdot \prod_{\lf\in \Uc} (\Delta_{N_{\lf}}f_{k_{\lf}})^\pm   \, \, + \, \,  (z_N)_k,
\end{equation}
where $z_N$ is a smooth remainder, and the sum is taken over all flattened trees $\Lc$ with frequency $N$ and cardinality $|\Lc|\leq D$, and all possibilities of $\Yc$ that arise from the above inductive process. In (\ref{sec2.4:ansatz}), $\Uc$ is the set of unpaired leaves in $\Lc$, and denote by $\Pc =\Lc\backslash \Uc$ the set of paired leaves in $\Lc$.

The main a priori estimates contain the bounds for various operator norms for the tensors $h^\Lc$ (as well as a high-regularity bound for the remainder $z_N$, which we omit). Here we look at a simplified example\footnote{See Proposition \ref{mainprop} for the full detailed version. In particular there are distinctions between $h^{(*,0)}$ and $h^{(*,1)}$ tensors, which we omit here.}: for any partition $(B, C)$ of $\Uc$, we would like to show
\begin{equation}\label{sec24:operatornorm}
\left\| h^{(\Lc,\Yc)}_{kk_\Uc}\right\|_{kk_B\to k_C} \leq \prod_{\lf\in \Uc}N_\lf^{\beta}\cdot \big(\max_{\lf\in C}N_\lf\big)^{-\beta}\cdot \prod_{\lf\in \Pc} N_\lf^{-\varepsilon_1} \prod_{\pf\in \Yc} N_\pf^{-\delta_1},
\end{equation}
where $\beta$ is a constant which is a little bit smaller than $\alpha$, and $\varepsilon_1\ll 1$ and $\delta_1$ is small compared to $\varepsilon_1$. In particular, the factor $\prod_{\pf\in \Yc} N_\pf^{-\delta_1}$ shows the decay we gain from the tree structures (e.g. $\<(dd)dd_black>\,$), hence we only need to keep the flattened trees (e.g. $\<dddd_black>$) and the memory set $\Yc$ abstracted from the full tree structures.

We will prove (\ref{sec24:operatornorm}) by induction. The key step here is to show that if (\ref{sec24:operatornorm}) is true for some tensors $h^{(\Lc_j,\Yc_j)}=h_{k_jk_{\Uc_j}}^{(\Lc_j,\Yc_j)}$ where $1\leq j\leq p$, then it also holds for the tensor $h^{(\Lc,\Yc)}=h_{kk_\Uc}^{(\Lc,\Yc)}$ which is obtained by merging and trimming those tensors as in Section \ref{sec2.4:randomtensors}. This argument, which is the center of the whole paper, contains \textbf{three main ingredients}:

(1) The \textbf{inequalities} associated with the algebra of tensors, namely Propositions \ref{multibound0} and \ref{gausscont0} (and their precise versions in Section \ref{prelimtensor}). Note that these are problem non-specific and are not limited to Schr\"{o}dinger equations.

(2) The operator norm bounds for the base tensor $h$ that appears in the merging process. The form of $h$ is similar to (\ref{formhintro}), and operator norm bounds for $h$ follow from various \textbf{counting estimates} and Schur's Lemma. This is proved in Proposition \ref{final}.

(3) A particular \textbf{selection algorithm}. This is crucial when we apply Proposition \ref{multibound0}, since even though $H$ on the left hand side of (\ref{multibound00}) does not depend on the \emph{order} of the tensors $h^{(j)}$, the right hand side does. Therefore we have to follow a particular algorithm in order to go from bounds of $h^{(\Lc_j,\Yc_j)}$ to bounds of $h^{(\Lc,\Yc)}$. This algorithm is described in the proof of Proposition \ref{algorithm1}.

Finally, in addition to the operator norms, we need to control one more \emph{localization} norm for the tensor $h^{(\Lc,\Yc)}$, which localizes it around the hyperplane $k= \sum_{\lf\in \Uc} \zeta_\lf k_\lf$, where $\zeta_\lf\in\{\pm\}$ is the sign of $\lf$. This norm is essentially a weighted $L^2$ or Hilbert-Schmidt norm, and the corresponding estimates roughly look like
\begin{equation}\bigg\|\bigg(1+\frac{1}{L^2}\big|k-\sum_{\lf\in\Uc}\zeta_\lf k_\lf\big|\bigg)^\kappa h_{kk_\Uc}^{(\Lc,\Yc)}\bigg\|_{kk_\Uc}
\leq\prod_{\lf\in \Uc} N_\lf^{\beta} \prod_{\lf\in\Pc}N_\lf^{-\varepsilon_1}\prod_{\pf\in\Yc}N_\pf^{-\delta_1},
\end{equation}
where $\beta$, $\varepsilon_1$ and $\delta_1$ are the same as in (\ref{sec24:operatornorm}) and $\kappa$ is a large enough constant. Such localizations can be understood as our tensor $h^\Lc$ being close to a \emph{multilinear Fourier multiplier}. See Figure \ref{hyperplane} for an illustration of the regions around which the $(1,1)$ and $(2,1)$ tensors are localized.
 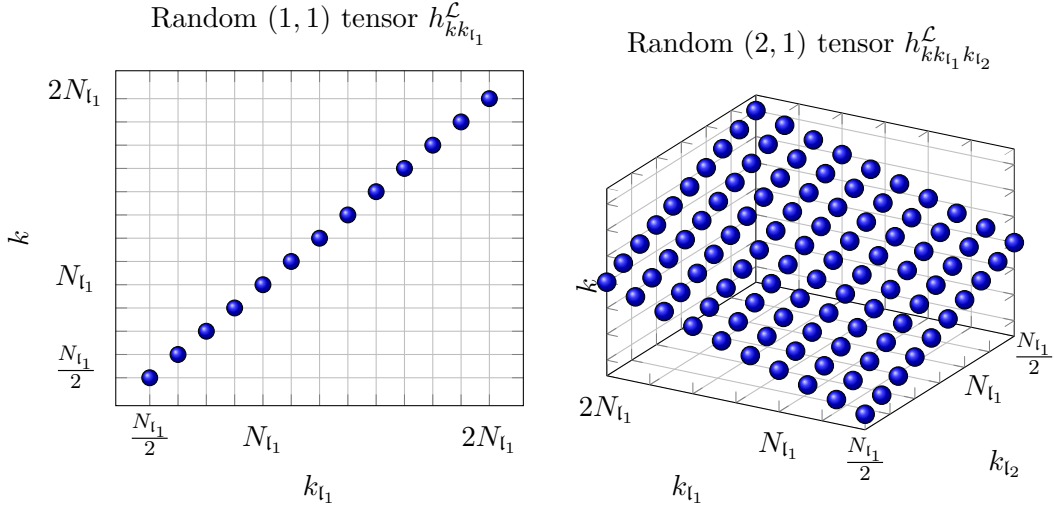
\begin{figure}
 \begin{minipage}[t]{0.45\textwidth}
\begin{tikzpicture}
\begin{axis}
[colormap={bw}{gray(0cm)=(0); gray(1cm)=(0)}, xlabel=$k_{\lf_1}$,ylabel=$k$,
title= {Random $(1,1)$ tensor $h^{\Lc}_{kk_{\lf_1}}$},
xtick={2,2.5,...,8},
xticklabels={$\frac{N_{\lf_1}}{2}$,,,,$N_{\lf_1}$,,,,,,,,$2N_{\lf_1}$},
xticklabel style={
anchor=base,
yshift=-\baselineskip
},
ytick={2,2.5,...,8},
yticklabels={$\frac{N_{\lf_1}}{2}$,,,,$N_{\lf_1}$,,,,,,,,$2N_{\lf_1}$},
yticklabel style={
anchor=base,
xshift=-\baselineskip
}, grid=both
]
\addplot+[scatter, only marks, mark=ball, mark size=3,
domain=2:8,samples=13]
{x};
\end{axis}
\end{tikzpicture}
\end{minipage}
\begin{minipage}[t]{0.45\textwidth}
\begin{tikzpicture}
	\begin{axis}[
	colormap={bw}{gray(0cm)=(0); gray(1cm)=(0)}, xlabel=$k_{\lf_1}$,ylabel=$k_{\lf_2}$,zlabel=$k$,
	title= {Random $(2,1)$ tensor $h^{\Lc}_{kk_{\lf_1}k_{\lf_2}}$},
xtick={2,3,...,8},
xticklabels={$\frac{N_{\lf_1}}{2}$,,$N_{\lf_1}$,,,,$2N_{\lf_1}$},
xticklabel style={
anchor=base,
yshift=-\baselineskip
},
ytick={2,3,...,8},
yticklabels={$\frac{N_{\lf_1}}{2}$,,$N_{\lf_1}$,,,,},
yticklabel style={
anchor=base,
yshift=-0.7\baselineskip,
xshift=0.5\baselineskip
},ztick={-8,-6,...,8},
zticklabels={,,,,,,,,,,},
zticklabel style={
anchor=base,
yshift=-0.7\baselineskip,
xshift=0.5\baselineskip
}
,grid= both,view={210}{30}]
	\addplot3+[mesh,scatter, only marks, mark=ball, mark size =3.5,samples=10,domain=2:8] 
		{x-y};
	\end{axis}
\end{tikzpicture}
\end{minipage}
\caption{Localization hyperplanes of random $(1,1)$ and $(2,1)$ tensors}\label{hyperplane}
\end{figure}

\subsection{The extended ansatz, and general case}\label{generalintro}
The ansatz (\ref{sec2.4:ansatz}) in Section \ref{simansatz} exhibits the main idea, however the full ansatz has extra layers of complexity. Some of them come from passing from the model (\ref{modelintro0}) to the full equation, such as the possibility of over-pairing (leading to the norms in (\ref{tensornorms:sup}) and the full Definition \ref{defmerge} of merging) and the role of time Fourier or modulation variables (leading to the spacetime norms defined in Section \ref{globalnorms} and allowing $\Omega\neq 0$ in (\ref{modelintro0})). The main one, however, arises already in the model (\ref{modelintro0}).

To demonstrate, suppose we plug the ansatz (\ref{sec2.4:ansatz}) into (\ref{modelintro3}). Consider the nonlinear term where one (or more) of the inputs is the remainder term $z_{N'}$ with $N'\leq N/2$, which is a part of $y_{N'}$ in (\ref{sec2.4:ansatz}). If $N^{1/2}\gg N'\gg N^\delta$, then this $N'$ is not high enough for the resulting term to have enough regularity to be put in the remainder $z_N$, and not low enough for the tensor arisen from the resulting term to be independent with the Gaussians.

To remedy this, we go back to the iteration trees in Section \ref{sec2.4:randomtensors} and introduce more random tensor terms by considering all possible configurations of iteration trees where we replace at least one $\circ$ (meaning $u_L$) by a diamond $\diamond$ (meaning $z_{N'}$ with $N'\leq N/2$). For example consider the term \begin{equation}\label{tensorterm6}
\scaleto{\<dsc_black>}{0.4cm}: \Mc_{\mathrm{np}}(\Delta_{N}f, z_{N'}, u_L),
\end{equation} where $N'\leq N/2$, whose $k$-th mode is given by \begin{equation}\label{tensortermV0}
\left(\scaleto{\<dsc_black>}{0.4cm}\right)_k
=\sum_{k_{\lf},k_{\ff}}  
h^{\scaleto{\<dsc_black>}{7.5pt}}_{kk_{\lf}}(k_{\ff})\cdot (\Delta_{N} f_{k_{\lf}})\cdot \overline{(z_{N'})_{k_{\ff}}},
\end{equation}
where
\begin{equation}\label{tensortermV}
h^{\scaleto{\<dsc_black>}{7.5pt}}_{kk_{\lf}}(k_{\ff}) = 
-i \sum_{\substack{k_{\lf}-k_{\ff} +k_3=k;\,\Omega=0\\\langle k_{\ff}\rangle\leq N', \langle k_3\rangle\leq L,\,\langle k\rangle \leq N}} (u_L)_{k_3}
\end{equation}
with $\Omega =|k|^2-|k_{\lf}|^2+ |k_{\ff}|^2-|k_3|^2$ and $u_{k_3}$ is the Fourier mode of $u_L$ in (\ref{tensorterm6}). The iteration term in (\ref{tensorterm6}) can be viewed, via (\ref{tensortermV0})--(\ref{tensortermV}), as a \emph{linear combination} of the tensor terms in Section \ref{sec2.4:randomtensors} with coefficients being the Fourier coefficients of $z_{N'}$, which are summable since the norm of $z_{N'}$ will be a large negative power of $N'$ (see part (4) of Proposition \ref{mainprop}), Moreover the tensors $h^{\scaleto{\<dsc_black>}{7.5pt}}_{kk_{\lf}}(k_{\ff})$ in (\ref{tensortermV}) do not involve $z_{N'}$ and thus retain independence.

Then we flatten these new iteration trees and repeat what we did in Sections \ref{sec2.4:randomtensors}--\ref{sec:2.4.3}, except that the pair $(\Lc,\Yc)$ alone is not sufficient to represent the new random tensor terms, and we need to introduce one more set $\Vc$ which contains all the $\diamond$'s in the new iteration trees. Hence in the full ansatz, the sum in (\ref{sec2.4:ansatz}) should be taken over all triples $(\Lc,\Vc,\Yc)$, which will be defined as \emph{plants}, see Definition \ref{defstr}.
\subsection{Outline of the paper}\label{plan} Sections \ref{prep}--\ref{prelimtensor} are mainly preparations, with definitions listed in Section \ref{prep} and lemmas proved in Section \ref{prelimtensor}. In Section \ref{ansatz} we introduce the main random tensor ansatz, thereby reducing Theorem \ref{main} to Proposition \ref{mainprop}, which is then proved in Sections \ref{mainproof}--\ref{mainproof1}. In Section \ref{proofmain} we finish the proof of Theorem \ref{main}, as well as the proof of Theorem \ref{main2}, which is a simplified version of the former. Finally in Section \ref{secfinrem} we make a few comments, including a comparison with parabolic equations. The structure of the proof is presented in Figure \ref{flowchart}.
    \begin{figure}
\centering
\begin{tikzpicture}
[
    node distance = 7mm and -3mm,
every node/.style = {draw=black, rectangle, fill=none, 
                     minimum width=2cm, minimum height=0.7cm,
                     align=center},
every path/.style = {draw, -latex}
                        ]

\node (411) {Prop \ref{bilineartensor}};
\node (412) [below = of 411]  {Prop \ref{multibound}};
\node (44) [left = 0.4cm of 412.west]{Lem \ref{largedev0}};

\node (45) [right=12mm of 411.east |- 411] {Lem \ref{cubic}--\ref{general2}};
\node (49)[below= of 45.south] {Prop \ref{final}};
\node (counting)[ rounded corners, fill=none, fit=(45) (49)] {};
\node (62)[below= of 49.south] {Prop \ref{algorithm1}--\ref{algorithm2}};

\node (414) [left = 4.4cm of 62.east |- 62] {Prop \ref{gausscont} (\ref{gausscont2})};
\node (randomtensor)[ rounded corners, fill=none, fit=(411) (414) (44)(412)] {};
\node (title1)[draw=white,fill=none, above= 0.01cm of randomtensor.north] {Random tensor theory};

\node (title2)[draw=white,fill=none, above= 0.01cm of counting.north] {Counting estimates};

\node (algorithm) [ rounded corners, right=12mm of 49.east |- 49] {Selection algorithm};

\node (61) [below = of 414.south] {Prop \ref{trimbd}};
\node (64) [below = of 62.south] {Prop \ref{overpair}--\ref{overpair3}};
\node (position) [draw=white,below = of 64.south]{};
\node (71) [left=0.2cm of 64.west |- position] {Prop \ref{linextra}--\ref{induct3}};
\node (51) [below = of 71.south]{Prop \ref{mainprop}};

\node (15) [below = 4.4cm of algorithm.south]{Thm \ref{main}};
\node (16) [below = 5.4cm of algorithm.south]{Thm \ref{main2}};
\node (mainthm) [ rounded corners, fit=(15)(16)]{};
\node (title3) [draw=white,fill=none, above= 0.01cm of mainthm.north] {Main theorems};

\draw (411)--(412);
\draw (412)--(414);
\draw (44)--(414);
\draw (414)--(61);
\draw (412)--(62);
\draw (45)--(49);
\draw (49)--(62);
\draw (algorithm)|-(62);
\draw (62)--(64);
\draw (61)--(71);
\draw (64)--(71);
\draw (71)--(51);
\draw (51)--(mainthm);

\end{tikzpicture}
\caption{Structure of the proof}\label{flowchart}
\end{figure}
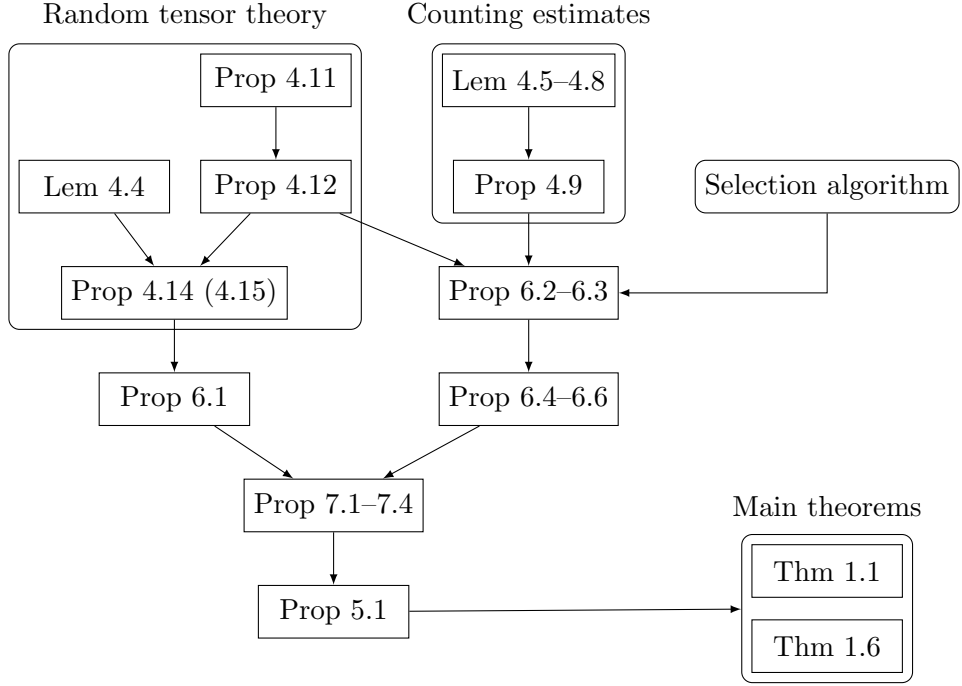 
 \section{Preliminaries I: Definitions}\label{prep} In this section we list the main definitions used in this work. In Section \ref{notations} we fix and collect the various notations and parameters, in Section \ref{defplant} we define the plant structure, associated tensors, and their operations. In Section \ref{globalnorms} we define the norms used in the main  proofs.
 \subsection{Choice of parameters and notations}\label{notations} We use $C$ to denote a generic large constant depending on $(d,p)$. Let $\alpha$ be fixed as in (\ref{choicealpha}). Define \begin{equation}\label{subcrit}\alpha_0:=\frac{d}{2}-\frac{1}{p-1};\quad\varepsilon:=(10^3dp)^{-1}(\alpha-\alpha_0)>0,\quad \beta:=\frac{\alpha+\alpha_0}{2},\quad \beta_1:=\frac{\beta+\alpha_0}{2}.\end{equation} We assume $0<\varepsilon\ll_C1$ and fix it throughout. We will use $C_\varepsilon$ to denote a generic large constant depending on $\varepsilon$; similarly $C_\delta$ etc. will depend on the small parameters $\delta$ etc. defined below. These constants, including $C$ and $(\theta,C_\theta)$ defined below, may be varying from line to line.
 
 Fix $(\delta,D,\kappa)$ such that\footnote{Roughly speaking $\delta=\varepsilon^{50}$, $D=\delta^{-50}$, $\kappa=D^{50}$ and $\theta\sim \kappa^{-50}$ should suffice.}
 \begin{equation}\label{defdelta} 0<\delta\ll_{C_\varepsilon}1;\quad D\gg_{C_\delta}1,\quad \kappa\gg_D1.
 \end{equation} Define also \begin{equation}\label{defb}D_1:=\delta^5D,\quad b:=\frac{1}{2}+8\kappa^{-1},\quad b^+:=\frac{1}{2}+16\kappa^{-1},\quad b_0=\frac{1}{2}+4\kappa^{-1}.\end{equation} Let $\theta$ denote a generic positive constant that is sufficiently small depending on $\kappa$, and (as above) $C_\theta$ a generic large constant depending on $\theta$. We also fix $\theta_0$ to be a specific positive constant that is sufficiently small depending on $\kappa$. Unless otherwise stated, the implicit constants in $\lesssim$ symbols will depend on $C_\theta$.  We fix $0<\tau\ll_{C_\theta}1$, and let $J=[-\tau,\tau]$. If an event on the ambient probability space $(\Theta,\Bc,\Pb)$ happens with probability $\geq 1-C_\theta e^{-A^\theta}$ for some quantity $A>0$, we say this event is $A$-certain. In the proof below many quantities will depend on $\omega\in\Theta$; we may include $\omega$ in the expressions for emphasis, or omit it for simplicity. We use $\mathbf{1}_E$ to denote the characteristic function of a statement $E$.
 
In the proof, the capital letters $N, M, L$ etc. will denote dyadic numbers $\geq 1$; when they (formally) take the value of $1/2$, we will understand that the corresponding quantities are $0$ (or the trivial case depending on the context). Define $N^{[\delta]}:=\max\{L:L<N^\delta\}$. The lower case letters $k,m$ etc. will denote integer vectors in $\Zb^d$ or Cartesian powers of $\Zb^d$. The letters $t,t'$ etc. will denote the time variable, and the letters $\lambda,\lambda',\lambda_j$ etc. will denote the Fourier dual of time (we call these \emph{modulation variables}). For $k\in\Zb^d$, let $\rho_k:=|g_k|$ and $\eta_k:=\rho_k^{-1}g_k$, which are independent random variables, such that each $\eta_k$ is uniformly distributed on the unit circle. For dyadic $N$, let $\Bc_N\subset\Bc$ be the $\sigma$-algebra generated by the random variables $\{\eta_k:\langle k\rangle\leq N\}$. The cardinality of a finite set $S$ is denoted by $|S|$ or $\#S$. Recall the notion of partition and subpartition in Definition \ref{tensornorms}, as well as the abbreviation $k_A=(k_j:j\in A)$; similarly we use $\lambda_A$ to denote $(\lambda_j:j\in A)$ and $\mathrm{d}\lambda_A$ to denote $\prod_{j\in A}\mathrm{d}\lambda_j$. Also recall the notion of $u^\zeta$, pairing and over-pairing in Definition \ref{def:pairing}.

We will use $u_k$ to denote Fourier coefficients of $u$, and the notation $\widehat{u}$ represents the time Fourier transform \emph{only} (maybe in multiple time dimensions, see (\ref{kerneltime})--(\ref{kernelfourier}) below). We will be loose about powers of $2\pi$, and may write formulas like 
\[\widehat{v}(\lambda)=\int_\Rb e^{-i\lambda t}v(t)\,\mathrm{d}t,\quad v(t)=\int_\Rb e^{it\lambda}\widehat{v}(\lambda)\,\mathrm{d}\lambda.\] If $\Ls$ is an $\Rb$-linear operator acting on spacetime functions, we can uniquely decompose it into the sum of a $\Cb$-linear operator, $\Ls^+$, and a $\Cb$-conjugate linear operator, $\Ls^-$. We will denote the kernel of $\Ls^\zeta$, where $\zeta\in\{\pm\}$, by $(\Ls^\zeta)_{kk'}(t,t')$, so that
\begin{equation}\label{kerneltime}(\Ls^\zeta w)_k(t)=\sum_{k'}\int\mathrm{d}t'\cdot (\Ls^\zeta)_{kk'}(t,t')w_{k'}^\zeta(t'),\end{equation} then on the time Fourier side we have 
\begin{equation}\label{kernelfourier}(\widehat{\Ls^\zeta w})_k(\lambda)=\sum_{k'}\int_\Rb (\widehat{\Ls^\zeta})_{kk'}(\lambda,-\zeta\lambda')(\widehat{w})_{k'}^\zeta(\lambda')\,\mathrm{d}\lambda'.\end{equation}Fix a smooth cutoff function $\chi(t)$ which equals 1 for $|t|\leq 1$ and equals 0 for $|t|\geq 2$. For $0<\tau\lesssim 1$ define $\chi_\tau(t):=\chi(\tau^{-1}t)$. Define the standard and truncated Duhamel operators
\begin{equation}\label{defi}\Ic v(t)=\int_0^tv(t')\,\mathrm{d}t',\quad \Ic_\chi v(t)=\chi(t)\int_0^t\chi(t')v(t')\,\mathrm{d}t'.
\end{equation} Note that these are not coming from the original Schr\"{o}dinger equation (\ref{nls}), but a variant of it after conjugating by the linear Schr\"{o}dinger flow (namely $v=e^{-it\Delta}u$). Finally we introduce the notion of \emph{simplicity} for real polynomials and $\Rb$-multilinear operators; in practice the Wick-ordered and suitably gauged power nonlinearities will be simple.
\begin{df}\label{def:simple} Consider a real polynomial (or $\Rb$-multilinear operator) $\Nc$ of degree $r$, given by
\begin{equation}\label{sec2.1:nonlinear}
\Nc(u)_k=\sum_{\zeta_1k_1+\cdots+\zeta_rk_r=k}c_{kk_1\cdots k_r}u_{k_1}^{\zeta_1}\cdots u_{k_r}^{\zeta_r}.
\end{equation}
We say it is \emph{simple} if the coefficients $c_{kk_1\cdots k_r}$ depend only on the \emph{set of pairings}\footnote{Here a pairing $(k,k_j)$ means $k=k_j$ and $\zeta_j=+$.} in $(k,k_1,\cdots,k_r)$, and $c_{kk_1\cdots k_r}=0$ unless each such pairing is over-paired. 
\end{df}
\subsection{Plants and plant tensors}\label{defplant} In this section we introduce the main structure---namely \emph{plants}---and the associated tensors, as well as two basic operations ($\mathtt{Trim}$ and $\mathtt{Merge}$) of these objects.
 \begin{df}[Plants]\label{defstr} A \emph{plant} $\Sc$ consists of the following objects:
 \begin{enumerate}
 \item Three disjoint finite sets $\Lc$ (called the \emph{tree}), $\Vc$ (called the \emph{blossom set}), and $\Yc$ (called the \emph{memory set}); elements of $\Lc$, $\Vc$ and $\Yc$ are called \emph{leaves}, \emph{blossoms} and \emph{pasts}, and are denoted by $\lf$, $\ff$ and $\pf$. An arbitrary element of $\Lc\cup\Vc\cup\Yc$ is denoted by $\nf$.
 \item A collection of pairwise disjoint $2$-element subsets of $\Lc$, which we refer to as \emph{pairings}; the set of paired leaves is denoted by $\Pc$, and the set of unpaired leaves is denoted by $\Uc:=\Lc\backslash\Pc$.
 \item A dyadic number $N=N(\Sc)$ (called the \emph{frequency} of $\Sc$), a \emph{sign} $\zeta_{\nf}\in\{\pm\}$ for each $\nf\in\Lc\cup\Vc$ (note that signs are not defined for pasts),  and a dyadic \emph{frequency} $N_\nf$ for each $\nf\in\Lc\cup\Vc\cup\Yc$. We require that $N_{\lf'}=N_{\lf}$ and $\zeta_{\lf'}=-\zeta_\lf$ for any pairing $(\lf,\lf')$ in $\Lc$; that $N_\nf\leq N$ for $\nf\in\Lc\cup\Yc$; and that $ N_\ff\leq N/2$ for $\ff\in\Vc$.
 \end{enumerate} 
 
We will denote a plant by $\Sc=(\Lc,\Vc,\Yc)$, and define $|\Sc|=|\Lc|+|\Vc|+|\Yc|$ to be the \emph{size} of the plant. Two plants will be identified if there is a bijection between them that preserves all these objects. We say a plant $\Sc$ is \emph{regular} if $N_\nf\geq N^\delta$ for any $\nf\in\Lc\cup\Vc\cup\Yc$, and \emph{plain} if $\Vc=\varnothing$ and $\sum_{\lf\in\Lc}\zeta_\lf=1$ (so in particular $|\Lc|$ is odd). We also define the \emph{mini plant} $\Sc_N^\zeta$, where $\zeta\in\{\pm\}$, to be the plant where $\Vc=\Yc=\varnothing$, $\Lc$ has only one element $\lf$ with sign $\zeta $ and frequency $N$, and $N(\Sc)=N$. This is regular, and is plain if $\zeta=+$. Finally we define the \emph{conjugate} of a plant $\Sc$ to be $\overline{\Sc}$, which is the same as $\Sc$ except that the signs of all elements in $\overline{\Sc}$ are the opposite to the signs in $\Sc$.
 \end{df} 
 \begin{rem}\label{notationrem} Note that the sets $\Uc,\,\Lc,\,\Vc$ etc. are associated with a plant $\Sc$. We will keep this correspondence throughout; for example whenever there is a plant $\Sc_j$ in some context, the set $\Uc_j$ will always be the one coming from $\Sc_j$. We may encounter sets $\Uc_j$ that do not come from any plant; in such case there will simply be \emph{no} plant called $\Sc_j$ that appears in the same context.
 \end{rem}
 \begin{df}[Plant tensors]\label{deftensor} Given a plant $\Sc=(\Lc,\Vc,\Yc)$, let $\Uc$ be as in Definition \ref{defstr}. We say a tensor\footnote{In this tensor $k_\Vc$ and $\lambda_\Vc$ appear as parameters. Note also that the definition does \emph{not} involve $\Pc$ or $\Yc$.} $h=h_{kk_\Uc}(k_\Vc,\lambda_\Vc)$ is \emph{an $\Sc$-tensor}, if $k$ and each $k_\nf\,(\nf\in\Uc\cup\Vc)$ are vectors in $\Zb^d$, and in the support of $h$ we have that 
 \begin{enumerate}
 \item $\langle k\rangle\leq N$, $\langle k_\ff\rangle\leq N_\ff$ and $|\lambda_\ff|\leq 2N^{\kappa^2}$ for each $\ff\in\Vc$, and $N_\lf/2<\langle k_\lf\rangle\leq N_\lf$ for each $\lf\in\Uc$;
 \item there is no pairing in $k_\Uc$, i.e. if $\lf,\lf'\in\Uc$ and $\zeta_{\lf'}=-\zeta_\lf$ then $k_{\lf'}\neq k_\lf$.
 \end{enumerate} Here $h$ may depend on other parameters like $t$, in which case we may write $h=h_{kk_\Uc}(t,k_\Vc,\lambda_\Vc)$.
  
 Suppose we have defined functions $f_{N'}=(f_{N'})_{k'}$ for any $N'$, and $z_{N'}=(z_{N'})_k(t)$ for any $N'<N$. Define $\Psi_k=\Psi_k[\Sc,h]$ by
 \begin{equation}\label{defpsi}\Psi_k=\sum_{k_\Uc,k_\Vc}\int\mathrm{d}\lambda_\Vc\cdot h_{kk_\Uc}(k_\Vc,\lambda_\Vc)\cdot\prod_{\lf\in\Uc}(f_{N_\lf})_{k_\lf}^{\zeta_\lf}\prod_{\ff\in\Vc}(\widehat{z_{N_\ff}})_{k_\ff}^{\zeta_\ff}(\lambda_\ff),
 \end{equation} which is an expression determined by the tensor $h$. Note also that an $\Sc$ tensor $h$ is also an $\overline{\Sc}$ tensor, and $\overline{\Psi_k[\Sc,h]}=\Psi_k[\overline{\Sc},\overline{h}]$.
 \end{df}
 \begin{df}[Trimming]\label{deftrim} Given a plant $\Sc=(\Lc,\Vc,\Yc)$ and $R\geq 1$, we can \emph{trim $\Sc$ at frequency $R$} to get $\Sc'=(\Lc',\Vc',\Yc')$, such that $\Lc'=\{\lf\in\Lc:N_\lf\geq R\}$ and $(\Vc',\Yc')$ are defined in the same way. The other objects (i.e. the frequency of $\Sc'$, the signs and frequencies of elements, and the pairings in $\Lc'$) are inherited from $\Sc$. Obviously, $\Sc'$ is regular if either $\Sc$ is regular or $R\geq N^\delta$.
 
 Now suppose we have defined functions $f_{N'}=(f_{N'})_{k'}$ for any $N'$, and $z_{N'}=(z_{N'})_k(t)$ for any $N'<R$. Then, given an $\Sc$-tensor $h=h_{kk_\Uc }(k_\Vc,\lambda_{\Vc})$, we can \emph{trim it at frequency $R$} to get an $\Sc'$-tensor $h'=(h')_{kk_{\Uc'}}(k_{\Vc'},\lambda_{\Vc'})$, which is defined by
 \begin{equation}\label{trim1}(h')_{kk_{\Uc'}}(k_{\Vc'},\lambda_{\Vc'})=\sum_{k_{\Uc\backslash\Uc'}}\sum_{k_{\Vc\backslash\Vc'}}\int\mathrm{d}\lambda_{\Vc\backslash\Vc'}\cdot h_{kk_\Uc}(k_\Vc,\lambda_{\Vc})\cdot\prod_{\lf\in\Uc\backslash\Uc'}(f_{N_\lf})_{k_\lf}^{\zeta_\lf}\prod_{\ff\in\Vc\backslash\Vc'}(\widehat{z_{N_\ff}})_{k_\ff}^{\zeta_\ff}(\lambda_\ff).
 \end{equation} We shall write the above definitions as $\Sc'=\mathtt{Trim}(\Sc,R)$ and $h'=\mathtt{Trim}(h,R)$. Note that the definition of $h'$ actually depends on the choices of $(f_{N'})$ and $(z_{N'})_{N'<R}$, but in practice these will be uniquely fixed whenever we apply $\mathtt{Trim}$ functions, so we will omit them from the list of parameters. If $N_\nf<R$ for all $\nf\in\Uc\cup\Vc$ then $\Uc'=\Vc'=\varnothing$, and $h'=(h')_k$ is just the $\Psi_k$ defined in (\ref{defpsi}).
 \end{df}
 \begin{df}[Merging]\label{defmerge} First, for any finite set $\Ac$ with a sign for each element, we will fix a maximal collection of pairwise disjoint two-element subsets of $\Ac$, such that each such subset contains two elements of opposite sign (i.e. pairings). Let $\Ps(\Ac)$ be this collection, and $\Qc(\Ac)$ be the union of the two-element subsets in $\Ps(\Ac)$.
 
 Now let $3\leq q\leq p$ be odd and $0\leq r\leq q$. Given dyadic numbers $N$ and $N_j$ and signs $\zeta_j\in\{\pm\}$ for $1\leq j\leq q$, so that $N_j\leq N$ for $1\leq j\leq r$ and $N_j\leq N/2$ for $r+1\leq j\leq q$, and $\sum_{j=1}^q\zeta_j=1$, denote the collection of these parameters by $\Bs$. Given pairwise disjoint plants $\Sc_j=(\Lc_j,\Vc_j,\Yc_j)$ with frequency $N_j$ for $1\leq j\leq r$, let $\Uc_j$ etc. be as in Definition \ref{defstr}. Let $\Lc:=\Lc_1\cup\cdots \cup\Lc_{r}$ and\footnote{If necessary we may replace the unions $\cup$ by the disjoint unions $\sqcup$ to avoid repetition of elements.} $\Vc:=\Vc_1\cup\cdots\cup\Vc_{r}\cup\{r+1,\cdots,q\}$, and for each $\nf\in\Lc_j\cup\Vc_j$ let the (new) sign of $\nf$ be $\zeta_\nf^*=\zeta_j\zeta_\nf$, where $\zeta_\nf$ is the sign of $\nf$ in $\Sc_j$. Let $\Os=\{\Ac_1,\cdots,\Ac_m\}$ be an arbitrary collection of disjoint subsets of $\Wc:=\Uc_1\cup\cdots\cup\Uc_{r}$, such that:
 \begin{enumerate}
 \item each $\Ac_i$ contains two elements of opposite $\zeta_\lf^*$ sign, but does not contain two elements of opposite $\zeta_\lf^*$ sign that belongs to \emph{the same} $\Uc_j$; \item the frequencies of $\lf\in\Ac_i$ are the same for each $1\leq i\leq m$.
 \end{enumerate} For each possible $\Os$, let 
 \begin{equation}\label{def3.6:Q}
 \Ps:=\Ps(\Ac_1)\cup\cdots\cup\Ps(\Ac_m)\quad \text{ and }\quad \Qc:=\Qc(\Ac_1)\cup\cdots\cup\Qc(\Ac_m)
  \end{equation}
  with $\Ps(\cdot)$ and $\Qc(\cdot)$ defined as above. We then \emph{merge $\Sc_j\,(1\leq j\leq r)$ via $\Bs$ and $\Os$}, to get a plant $\Sc=(\Lc,\Vc,\Yc)$ as follows. First $N(\Sc)=N$, $\Lc$ and $\Vc$ are as above, and the set of pairings in $\Lc$ is the union of the sets of pairings in each $\Lc_j$, together with $\Ps$ (the new pairings; in particular we have $\Uc=\Wc\backslash\Qc$). Second, the sign and frequency of $j$ are given by $\zeta_j$ and $N_j$ for $r+1\leq j\leq q$, and the sign and frequency of any $\nf\in\Lc_j\cup\Vc_j\,(1\leq j\leq r)$ is given by $\zeta_\nf^*$ and the frequency of $\nf$ in $\Sc_j$. Finally, $\Yc=\Yc_1\cup\cdots\cup\Yc_{r}\cup\{0\}$, with $N_0$ given by the \emph{second maximum} of all the $N_j\,(1\leq j\leq q)$; for any $\pf\in\Yc_j\,(1\leq j\leq r)$ the frequency of $\pf$ in $\Sc$ equals the frequency of $\pf$ in $\Sc_j$.
 
 Now suppose we have defined a tensor $h=h_{kk_1\cdots k_q}(\lambda_{r+1},\cdots,\lambda_{q})$, where $k,k_1,\cdots ,k_q$ are input variables and $\lambda_{r+1},\cdots,\lambda_{q}$ are parameters; assume in the support of $h$ that $\langle k\rangle\leq N$ and $\langle k_j\rangle\leq N_j$ for $1\leq j\leq q$, and that $|\lambda_j|\leq 2N^{\kappa^2}$ for $r+1\leq j\leq q$. Then, given $\Sc_j$-tensors $h^{(j)}=h_{k_jk_{\Uc_j}}^{(j)}(k_{\Vc_j},\lambda_{\Vc_j})$, where $1\leq j\leq r$, we shall \emph{merge these $h^{(j)}$ via $h$, $\Bs$ and $\Os$}, to form a new tensor $H=H_{kk_\Uc}(k_\Vc,\lambda_\Vc)$, namely
\begin{multline}\label{merge1}H_{kk_\Uc}(k_\Vc,\lambda_\Vc)=\prod_{\lf,\lf'}^{(1)}\mathbf{1}_{k_\lf=k_{\lf'}}\prod_{\lf,\lf'}^{(2)}\mathbf{1}_{k_\lf\neq k_{\lf'}}\cdot\sum_{(k_1,\cdots,k_r)}h_{kk_1\cdots k_q}(\lambda_{r+1},\cdots,\lambda_q)\\\times\sum_{k_{\Qc}}^{(3)}\prod_{\lf\in\Qc}\Delta_{N_\lf}\gamma_{k_\lf}\prod_{j=1}^{r}\big[h_{k_jk_{\Uc_j}}^{(j)}(k_{\Vc_j},\lambda_{\Vc_j})\big]^{\zeta_j}.
\end{multline}In the above expression, the product $\prod_{\lf,\lf'}^{(1)}$ is taken over all leaves $\lf,\lf'\in\Uc$ such that they belong to the same $\Ac_i$ (in particular $\zeta_{\lf'}^*=\zeta_{\lf}^*$), the product $\prod_{\lf,\lf'}^{(2)}$ is taken over all leaves $\lf,\lf'\in \Uc$ such that\footnote{Here we may also require $N_\lf=N_{\lf'}$; whether we do so will not affect the result of this product.} $\zeta_{\lf'}^*=-\zeta_{\lf}^*$ (so they do not belong to the same $\Ac_i$), and the summation $\sum_{k_{\Qc}}^{(3)}$ is taken over all possible $k_{\Qc}$ (with $\Qc$ defined above) such that for each $i$, all the $k_\lf$ for $\lf\in\Qc\cap\Ac_i$ are equal, and they equal $k_{\lf'}$ for $\lf'\in\Uc\cap\Ac_i$ (if such $\lf'$ exists). We can verify that $H$ is an $\Sc$-tensor.

We shall write the above definitions as \begin{equation}\label{merge2}\Sc=\mathtt{Merge}(\Sc_1,\cdots,\Sc_r,\Bs,\Os),\quad H=\mathtt{Merge}(h^{(1)},\cdots,h^{(r)},h,\Bs,\Os).\end{equation}
 \end{df}
 \begin{prop}\label{formulas} Assume we have fixed the choices of $f_{N'}$ and $z_{N'}$ as in Definitions \ref{deftensor} and \ref{deftrim}, and $h=h_{kk_1\cdots k_q}(\lambda_{r+1},\cdots,\lambda_{q})$, $\Bs$, $\Sc_j$ and $\Sc_j$-tensors $h^{(j)}$ for $1\leq j\leq r$ as in Definition \ref{defmerge}. In applying the $\mathtt{Merge}$ function below we will omit the parameters $h$ and $\Bs$. Then the following statements hold:
 
 (1) Recall the definition of $\Psi_k=\Psi_k[\cdot,\cdot]$ as in (\ref{defpsi}). Then for any $R$ we have $\Psi_k[\Sc_j,h^{(j)}]=\Psi_k[\mathtt{Trim}(\Sc_j,R),\mathtt{Trim}(h^{(j)},R)]$. Similarly, trimming at frequency $R_1$ and then $R_2$ is equivalent to trimming once at $\max(R_1,R_2)$.
 
 (2) Let $\Psi_{k_j}^{(j)}=\Psi_{k_j}[\Sc_j,h^{(j)}]$ be defined as in (\ref{defpsi}) from the $\Sc_j$-tensor $h^{(j)}$ for $1\leq j\leq r$. Then the quantity
 \begin{equation}\label{defphi}\Phi_k:=\sum_{(k_1,\cdots,k_q)}\int\mathrm{d}\lambda_{r+1}\cdots\mathrm{d}\lambda_q\cdot h_{kk_1\cdots k_q}(\lambda_{r+1},\cdots,\lambda_{q})\cdot\prod_{j=1}^r(\Psi_{k_j}^{(j)})^{\zeta_j}\prod_{j=r+1}^q(\widehat{z_{N_j}})_{k_j}^{\zeta_j}(\lambda_j)
 \end{equation} can be written as a linear combination\footnote{Here and below the phrase ``linear combination'' will refer to a linear combination with a fixed number of terms and fixed constant coefficients.} of $\Psi_k=\Psi_k[\Sc,H]$ (for different choices of $\Os$ as in Definition \ref{defmerge}), where
 \[\Sc=\mathtt{Merge}(\Sc_1,\cdots,\Sc_r,\Os),\quad H=\mathtt{Merge}(h^{(1)},\cdots,h^{(r)},\Os).\]
 
 (3) Let $\Sc_j'=(\Lc_j',\Vc_j',\Yc_j')=\mathtt{Trim}(\Sc_j,N^\delta)$ and $(h^{(j)})'=\mathtt{Trim}(h^{(j)},N^\delta)$ for $1\leq j\leq r$. For any $\Os$ as in Definition \ref{defmerge}, let $\Os'$ be the sub-collection of $\Os$ consisting of subsets that are contained in the union of $\Uc_j'$ for $1\leq j\leq r$. Let \begin{equation*}\begin{aligned}\Sc=(\Lc,\Vc,\Yc)&=\mathtt{Merge}(\Sc_1,\cdots,\Sc_r,\Os),&\Sc'&=(\Lc',\Vc',\Yc')=\mathtt{Trim}(\mathtt{Merge}(\Sc_1',\cdots,\Sc_r',\Os'),N^\delta),\\H&=\mathtt{Merge}(h^{(1)},\cdots,h^{(r)},\Os),&H'&= \mathtt{Trim}(\mathtt{Merge}((h')^{(1)},\cdots,(h')^{(r)},\Os'),N^\delta),\end{aligned}\end{equation*}then we have $\Sc'=\mathtt{Trim}(\Sc,N^\delta))$.
 Moreover, given any such $\Os'$, the tensor $H'$ can be written as a linear combination of tensors $\mathtt{Trim}(H,N^\delta)$ (for different choices of $\Os$ that are related to $\Os'$ as above).
 
 (4) Assume that some $N_j=N$, $\Sc_j$ is regular, and $N_{j'}\geq N^\delta$ for some $j'\neq j$. Then for the plant $\Sc=\mathtt{Trim}(\mathtt{Merge}(\mathtt{Trim}(\Sc_1,N^\delta),\cdots,\mathtt{Trim}(\Sc_r,N^\delta),\Os),N^\delta)$ where $\Os$ is as in Definition \ref{defmerge}, we have $|\Sc|>|\Sc_j|$.
 
 (5) Assume that each $\Sc_j$ is plain, and $r=q$, then the plant $\Sc=\mathtt{Merge}(\Sc_1,\cdots,\Sc_r,\Os)$, where $\Os$ is as in Definition \ref{defmerge}, is also plain.
 \end{prop}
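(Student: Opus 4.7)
The strategy is a direct unwinding of the definitions of $\mathtt{Trim}$ and $\mathtt{Merge}$, followed by combinatorial rearrangement of the resulting finite sums. Parts (1), (4), (5) are bookkeeping, whereas parts (2) and (3) carry the real algebraic content of the merging construction.

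For part (1), I would expand $\Psi_k[\mathtt{Trim}(\Sc_j,R),\mathtt{Trim}(h^{(j)},R)]$ using (\ref{defpsi}): its outer sum ranges over leaves and blossoms with $N_\nf\geq R$, while the definition of $\mathtt{Trim}(h^{(j)},R)$ already incorporates the sum over low-frequency indices against the appropriate $f_{N_\lf}$ and $\widehat{z_{N_\ff}}$ factors. Reindexing then recovers $\Psi_k[\Sc_j,h^{(j)}]$. The iteration identity for successive trimmings follows because the two contractions target exactly the same set of low-frequency indices as a single contraction at $\max(R_1,R_2)$. For part (5), plainness of each $\Sc_j$ forces $\Vc_j=\varnothing$, and $r=q$ means no fresh blossoms are introduced, so $\Vc=\varnothing$; the sign sum computes as $\sum_{\lf\in\Lc}\zeta_\lf^{*}=\sum_j\zeta_j\sum_{\lf\in\Lc_j}\zeta_\lf=\sum_j\zeta_j=1$, using $\sum_j\zeta_j=1$ from Definition \ref{defmerge}. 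For part (4), regularity of $\Sc_j$ yields $\mathtt{Trim}(\Sc_j,N^\delta)=\Sc_j$, so $\Sc_j$ embeds into the merged plant with all its frequencies $\geq N^\delta$ and survives the final trim; meanwhile merging introduces a fresh past $0\in\Yc$ with $N_0$ equal to the second maximum of $\{N_1,\dots,N_q\}$, and since $N_j=N$ with $N_{j'}\geq N^\delta$ for some $j'\neq j$ we have $N_0\geq N^\delta$, so this past also survives, yielding $|\Sc|\geq|\Sc_j|+1$.

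Part (2) is the core algebraic identity. I would substitute the definition of each $\Psi_{k_j}^{(j)}$ into (\ref{defphi}), collecting all factors into a single multilinear sum over $k_\Wc$ and $k_\Vc$. The $\Sc_j$-tensor property of $h^{(j)}$ already forbids pairings within each $\Uc_j$, but pairings between leaves in different $\Uc_j$'s are unconstrained. The plan is to insert the partition-of-unity identity
\begin{equation*}
1=\sum_{\Os}\prod_{(\lf,\lf')\in\mathrm{coinc}(\Os)}\mathbf{1}_{k_\lf=k_{\lf'}}\prod_{(\lf,\lf')\notin\mathrm{coinc}(\Os)}\mathbf{1}_{k_\lf\neq k_{\lf'}},
\end{equation*}
where $\Os$ ranges over all admissible coincidence collections as in Definition \ref{defmerge}. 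Each summand collapses $(f_{N_\lf})^{+}(f_{N_\lf})^{-}=|f_{N_\lf}|^2$ into a pairing factor $\Delta_{N_\lf}\gamma_{k_\lf}$ for every coinciding leaf, and keeps the remaining unpaired leaves as $\Uc=\Wc\setminus\Qc$ with the enforced no-pairing constraint, which is exactly the form of $\Psi_k[\Sc,H]$ with $H$ given by (\ref{merge1}).

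Part (3) would proceed by the same decomposition. On the plant side, one checks by inspection that $\Sc'$ and $\mathtt{Trim}(\Sc,N^\delta)$ have the same leaves $\{\lf\in\cup_j\Lc_j:N_\lf\geq N^\delta\}$, the same pairings (inherited from each $\Sc_j'$ together with $\Os'$), and that the fresh past $0$ has the same surviving criterion in both constructions. For the tensor identity, trimming each $h^{(j)}$ first replaces it by a contraction against Gaussians and $\widehat{z}$'s at low-frequency indices of $\Sc_j$, so the subsequent merge via $\Os'$ involves only high-frequency leaves; on the other side, merging via a general $\Os$ encodes low-frequency coincidences as $\gamma$-factors and the trim contracts the remaining low-frequency leaves against free Gaussians. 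Summing $\mathtt{Trim}(H,N^\delta)$ over all $\Os$ whose high-frequency restriction equals $\Os'$ is exactly the partition-of-unity decomposition above, and should reproduce $H'$ term by term. I expect the main obstacle to be the careful bookkeeping in part (2)—tracking the sign relabeling $\zeta_\nf^{*}=\zeta_j\zeta_\nf$, verifying that coincidence of $k_\lf$ forces coincidence of the dyadic frequency level (so condition (2) on $\Ac_i$ in Definition \ref{defmerge} is automatic), and correctly absorbing triple-or-higher coincidences into the over-pairing structure built into $\Ps(\Ac)$ and $\Qc(\Ac)$. Once this identity is pinned down, parts (1), (3), (4), (5) follow essentially by inspection.
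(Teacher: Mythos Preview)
Your approach matches the paper's, and parts (1), (4), (5) are handled correctly. However, there is a subtle gap in your treatment of part (2). The partition-of-unity you propose decomposes $\Phi_k$ according to the \emph{exact} coincidence pattern of the $k_\lf$'s---call these conditions $(\Os,2)$: the $k_\lf$ coincide within each $\Ac_i$, are distinct across different $\Ac_i$'s, and admit no further pairings. But the formula (\ref{merge1}) defining $H$, and hence $\Psi_k[\Sc,H]$, enforces only the weaker conditions $(\Os,1)$: coincidence within each $\Ac_i$ (via $\prod^{(1)}$ and $\sum^{(3)}$) and no pairing among the remaining unpaired leaves of opposite sign (via $\prod^{(2)}$). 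It does \emph{not} forbid the common value of one $\Ac_i$ from coinciding with that of another $\Ac_{i'}$, nor with some unpaired $k_\lf$ of the same sign. So your partition-of-unity term for a given $\Os$ is a strict sub-sum of $\Psi_k[\Sc,H]$, and the identification ``each summand \dots\ is exactly the form of $\Psi_k[\Sc,H]$'' fails as stated.

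The paper closes this gap by an inclusion-exclusion: first write $\sum_{k_\Wc}=\sum_\Os\sum_{(\Os,2)}$ (which is your partition of unity), then observe that $\sum_{(\Os,1)}-\sum_{(\Os,2)}$ is a linear combination of $\sum_{(\Ws,2)}$ for collections $\Ws$ with strictly more coincidences than $\Os$, and invert this triangular system by induction on the number of pairings. This is a routine M\"obius-type argument once noticed, but it is genuinely needed to land on expressions of the form $\Psi_k[\Sc,H]$. The same issue recurs in part (3) when summing over all $\Os$ extending a fixed $\Os'$: there too the linear combination comes from inclusion-exclusion among low-frequency coincidence patterns, not from a direct partition.
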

 \begin{proof} First, (1) is obvious once we expand the $\Psi_k$ expressions using (\ref{defpsi}) and (\ref{trim1}); also (5) directly follows from definition, noticing also that\[\sum_{j=1}^q\sum_{\lf\in\Lc_j}\zeta_\lf^*=\sum_{j=1}^q\sum_{\lf\in\Lc_j}\zeta_j\zeta_\lf=\sum_{j=1}^q\zeta_j=1.\] Next, (4) is true because if $N_j=N$ and $\Sc_j$ is regular, then $\mathtt{Trim}(\Sc_j,N^\delta)=\Sc_j$, so by definition, if $\Sc=(\Lc,\Vc,\Yc)$ we have $\Lc\supset\Lc_j$ etc.; moreover as the second maximum of $N_1,\cdots,N_q$ is $\geq N^\delta$, by definition $\Yc$ will have at least one more element than $\Yc_j$, so $|\Sc|\geq |\Sc_j|+1$.
 
 Next consider (2). Recall $\Wc=\Uc_1\cup\cdots\cup\Uc_r$, using (\ref{defpsi}) and (\ref{defphi}), we can expand the expression $\Phi_k$ as a sum over the variables $k_{\Wc}$ and $(k_1,\cdots,k_q)$ and $k_{\Vc_j}$ for $1\leq j\leq r$, and an integration over the variables $(\lambda_{r+1},\cdots,\lambda_q)$ and $\lambda_{\Vc_j}$ for $1\leq j\leq r$, of the quantity
 \begin{equation}\label{quantity}h_{k_1\cdots k_q}(\lambda_{r+1},\cdots,\lambda_q)\cdot\prod_{j=1}^r \big[h_{k_jk_{\Uc_j}}^{(j)}(k_{\Vc_j},\lambda_{\Vc_j})\big]^{\zeta_j}\cdot\prod_{j=1}^r\prod_{\lf\in\Uc_j}(f_{N_\lf})_{k_\lf}^{\zeta_\lf^*}\cdot \prod_{j=1}^r\prod_{\ff\in\Vc_j}(\widehat{z_{N_\ff}})_{k_\ff}^{\zeta_\ff^*}(\lambda_\ff)\prod_{j=r+1}^q(\widehat{z_{N_j}})_{k_j}^{\zeta_j}(\lambda_j),
 \end{equation} where in the summation we do not impose any pairing or no-pairing condition (with respect to the $\zeta_\lf^*$ signs, same below) for the variables $k_{\Wc}$ (of course if there is a pairing within $k_{\Uc_j}$ then $h^{(j)}=0$ so the quantity (\ref{quantity}) is zero).
 
 On the other hand, using (\ref{defpsi}) and (\ref{merge1}), and noticing that $(f_N)_k\overline{(f_N)_k}=\Delta_N(\gamma_k)^2$, we can write $\Psi_k[\Sc,H]$, corresponding to a certain choice of $\Os=\{\Ac_1,\cdots,\Ac_m\}$ as in Definition \ref{defmerge}, as a sum and integration of the same quantity (\ref{quantity}) over the same set of variables as in $\Phi_k$, but with a set of additional pairing and no-pairing conditions on $k_\Wc$. Precisely, the extra conditions are (i) the $k_\lf$ are the same for $\lf$ in each $ \Ac_i$; (ii) there is no pairing in $k_{\Wc\backslash\Qc}$ where $\Qc$ is defined in Definition \ref{defmerge}. We denote this set of extra conditions by $(\Os,1)$. With these observations, it suffices to show that the sum $\sum_{k_{\Wc}}$ can be written as a linear combination of sums $\sum_{(\Os,1)}$ for different $\Os$. Now, by identifying the exact set of pairings among $k_{\Wc}$, we can write $\sum_{k_{\Wc}}$ as a linear combination of sums $\sum_{(\Os,2)}$ for different $\Os$'s , where $(\Os,2)$ represents a different set of extra conditions, namely (i) the $k_\lf$ are the same for $\lf$ in each $ \Ac_i$; (ii) the $k_\lf$ for different $\Ac_i$ are different, and are different from any $k_\lf$ for $\lf$ not in any $\Ac_i$; (iii) there is no pairing among the $k_\lf$ where $\lf$ is not in any $\Ac_i$. Note that we may assume these $\Os$'s are as in Definition \ref{defmerge}, i.e. each $\Ac_i$ contains two elements of opposite sign but does not contain two elements of opposite sign that belong to the same $\Uc_j$, and $N_\lf$ for $\lf$ in each $\Ac_i$ are all the same, since otherwise the summand (\ref{quantity}) would be zero by Definition \ref{deftensor}.
 
 Clearly the condition $(\Os,2)$ is stronger than $(\Os,1)$, and the difference $\sum_{(\Os,1)}-\sum_{(\Os,2)}$ can be written as a linear combination of sums $\sum_{(\Ws,2)}$ for different $\Ws$'s, where $\Ws$ has the same form as $\Os$, such that the condition $(\Ws,2)$ gives \emph{strictly} more pairings than $(\Os,2)$. Thus, we can inductively write $\sum_{k_{\Wc}}$ as a  linear combination of sums $\sum_{(\Os,1)}$ for different $\Os$. In this way we have written $\Phi_k$ as a linear combination of $\Psi_k=\Psi_k[\Sc,H]$ for different choices of $\Os$, which proves (2).
 
 Finally look at (3). As $\Os'$ consists of those subsets in $\Os$ that are contained in the union of $\Uc_j'$ (note that any $\Ac_i$ is either contained in the union of $\Uc_j'$, or contained in the union of $\Uc_j\backslash\Uc_j'$), we know that $\Ps'$ (defined from $\Os'$ as in Definition \ref{defmerge}) also consists of those subsets in $\Ps$ that are contained in the union of $\Uc_j'$. Then the equality $\Sc'=\mathtt{Trim}(\Sc,N^\delta)$ follows from Definitions \ref{deftrim} and \ref{defmerge}, and straightforward verification. Note that if $N_j\geq N^\delta$ for $r+1\leq j\leq s$, and $N_j<N^\delta $ for $s+1\leq j\leq q$ (which we may assume), then $\{r+1,\cdots,s\}\subset\Vc'$ and $\{s+1,\cdots,q\}\subset\Vc\backslash\Vc'$.
 
Now look at the tensor $H'$. Let $\Rc$ be the union of all the $\Uc_j\backslash\Uc_j'$ and $\Zc$ be the union of all the $\Vc_j\backslash\Vc_j'$, and let $\Qc'$ be defined as in Definition \ref{defmerge}, which occurs in the process of merging $\Sc_1',\cdots, \Sc_r'$ via $\Os'$, then using (\ref{trim1}) and (\ref{merge1}), we can expand $(H')_{kk_{\Uc'}}(k_{\Vc'},\lambda_{\Vc'})$ as a sum over the variables $(k_{s+1},\cdots,k_q)$ and $k_{\Qc'}$ and $k_\Rc$ and $k_{\Zc}$, and integration over the variables $(\lambda_{s+1},\cdots,\lambda_q)$ and $\lambda_{\Zc}$, of the quantity
 \begin{multline}\label{quantity2}
 \prod_{\lf,\lf'}^{(1)}\mathbf{1}_{k_\lf=k_{\lf'}}\prod_{\lf,\lf'}^{(2)}\mathbf{1}_{k_\lf\neq k_{\lf'}}\cdot \sum_{(k_1,\cdots,k_r)}h_{kk_1\cdots k_q}(\lambda_{r+1},\cdots,\lambda_q)\\\times\sum_{k_{\Qc'}}^{(3)}\prod_{\lf\in\Qc'}\Delta_{N_\lf}\gamma_{k_\lf}\prod_{j=1}^{r}\big[h_{k_jk_{\Uc_j}}^{(j)}(k_{\Vc_j},\lambda_{\Vc_j})\big]^{\zeta_j}\prod_{\lf\in\Rc}(f_{N_\lf})_{k_{\lf}}^{\zeta_\lf^*}\prod_{\ff\in\Zc}(\widehat{z_{N_\ff}})_{k_\ff}^{\zeta_\ff^*}(\lambda_\ff)\prod_{j=s+1}^q(\widehat{z_{N_j}})_{k_j}^{\zeta_j}(\lambda_j),
 \end{multline} where the sums and products $\prod_{\lf,\lf'}^{(1)}$, $\prod_{\lf,\lf'}^{(2)}$ and $\sum_{k_{\Qc'}}^{(3)}$ are defined as in Definition \ref{defmerge} in the process of merging $\Sc_1',\cdots, \Sc_r'$ via $\Os'$, and in this summation we do not impose any pairing or no-pairing condition for the variables $k_{\Rc}$. The signs $\zeta_{\mathfrak{n}}^*$ are also defined as in Definition \ref{defmerge}.
 
 On the other hand, if $\Os=\Os'\cup\Ws$, then using (\ref{trim1}) and (\ref{merge1}) again, we can expand the tensor $(\mathtt{Trim}(H,N^\delta))_{kk_{\Uc'}}(k_{\Vc'},\lambda_{\Vc'})$ as a sum and integration of the same quantity (\ref{quantity2}) over the same set of variables as in $(H')_{kk_{\Uc'}}(k_{\Vc'},\lambda_{\Vc'})$, but with the $k_\Rc$ variables satisfying a set of additional pairing and no-pairing conditions, given exactly by $(\Ws,1)$. Therefore, the same arguments as in the proof of part (2) above also imply that $H'$ can be written as a linear combination of $\mathtt{Trim}(H,N^\delta)$ for different choices of $\Os$. This completes the proof.
 \end{proof}
 \subsection{Working norms}\label{globalnorms} Based on the tensor norms of Definition \ref{tensornorms}, we can define the norms involving the modulation variables $\lambda,\lambda'$, etc., as well as some other parameters; these will be the norms used in the main proof.
 
Suppose $b_1,b_2\in[0,1]$, $h=h_{k_A}(t)$ depends on $t$, and let $\widehat{h}$ be the Fourier transform of $h$ in $t$. Let $(B,C)$ be a subpartition of $A$, we define
\begin{equation}\label{newnorm0}\|h\|_{X^{b_1}[k_B\to k_C]}^2=\int_\Rb\langle\lambda\rangle^{2b_1}\|\widehat{h}_{k_A}(\lambda)\|_{k_B\to k_C}^2\,\mathrm{d}\lambda.
\end{equation} If $h=h_{k_A}(k_F,\lambda_F)$ depends on some parameters $(k_F,\lambda_F)$, we define
 \begin{equation}\label{newnorm}\|h\|_{X_F^{-b_2}[k_B\to k_C]}^2=\sum_{k_F}\int\mathrm{d}\lambda_F\cdot\prod_{j\in F}\langle\lambda_j\rangle^{-2b_2}\|h_{k_A}(k_F,\lambda_F)\|_{k_B\to k_C}^2.
 \end{equation} If $h=h_{k_A}(t,k_F,\lambda_F)$ depends on both $t$ and $(k_F,\lambda_F)$, we define
  \begin{equation}\label{newnorm2}\|h\|_{X_F^{b_1,-b_2}[k_B\to k_C]}^2=\int_\Rb\langle\lambda\rangle^{2b_1}\sum_{k_F}\int\mathrm{d}\lambda_F\cdot\prod_{j\in F}\langle\lambda_j\rangle^{-2b_2}\|\widehat{h}_{k_A}(\lambda,k_F,\lambda_F)\|_{k_B\to k_C}^2\,\mathrm{d}\lambda,
 \end{equation} where $\widehat{h}$ is the Fourier transform of $h$ in $t$ only; note that
 \begin{multline}\label{relanorm}\|h\|_{X_F^{b_1,-b_2}[k_B\to k_C]}^2=\int_\Rb\langle \lambda\rangle^{2b_1}\|\widehat{h}_{k_A}(\lambda,\cdot,\cdot)\|_{X_F^{-b_2}[k_B\to k_C]}^2\,\mathrm{d}\lambda\\=\sum_{k_F}\int\mathrm{d}\lambda_F\cdot\prod_{j\in F}\langle\lambda_j\rangle^{-2b_2}\|h_{k_A}(\cdot,k_F,\lambda_F)\|_{X^{b_1}[k_B\to k_C]}^2.
 \end{multline}
 
Now, given a tensor $h=h_{k_A}(t,t')$, a subpartition $(B,C)$, and $b_1,b_2\in[0,1]$, we can similarly define
  \begin{equation}\label{newnorm3}\|h\|_{X^{b_1,-b_2}[k_B\to k_C]}^2=\int_{\Rb^2}\langle\lambda\rangle^{2b_1}\langle\lambda'\rangle^{-2b_2}\|\widehat{h}_{k_A}(\lambda,\lambda')\|_{k_B\to k_C}^2\,\mathrm{d}\lambda\mathrm{d}\lambda',
 \end{equation} where $\widehat{h}$ is the Fourier transform of $h$ in $(t,t')$. Finally, if $s_1\in\Rb$, the $X^{s_1,b_1}$ norm for a function $f=f_k(t)$ is defined by
 \begin{equation}\label{xbnorm}\|f\|_{X^{s_1,b_1}}^2=\int_\Rb\langle\lambda\rangle^{2b_1}\|\langle k\rangle^{s_1}\widehat{f}_k(\lambda)\|_{\ell_k^2}^2\,\mathrm{d}\lambda.
 \end{equation} When $s_1=0$ we simplify write $X^{b_1}$. 
\section{Preliminaries II: Estimates}\label{prelimtensor} In this section we collect the important core estimates. Sections \ref{section4.1}--\ref{section4.2} contain the basic linear and large deviation estimates. Section \ref{counting} contains counting estimates ultimately leading to Proposition \ref{final}, and Section \ref{multi} contains the main tensor norm estimates, Propositions \ref{bilineartensor}--\ref{multibound} and \ref{gausscont}--\ref{gausscont2}.
\subsection{Linear estimates}\label{section4.1} We record two estimates for Duhamel and time localization operators (recall from (\ref{defi}) the definition of $\Ic_\chi$), and another weighted estimate. For the proofs see \cite{DNY0,DNY}.
\begin{lem}\label{duhamelform0} We have the formula
\begin{equation}\label{duhamelform}\widehat{\Ic_\chi v}(\lambda)=\int_{\Rb}\Ic(\lambda,\lambda')\widehat{v}(\lambda')\,\mathrm{d}\lambda',
\end{equation} where the kernel $\Ic$ satisfies that
\begin{equation}\label{duhamleker}|\Ic|+|\partial_{\lambda,\lambda'}\Ic|\lesssim\bigg(\frac{1}{\langle \lambda\rangle^3}+\frac{1}{\langle\lambda-\lambda'\rangle^3}\bigg)\frac{1}{\langle\lambda'\rangle}\lesssim\frac{1}{\langle\lambda\rangle\langle\lambda-\lambda'\rangle}.
\end{equation}
\end{lem}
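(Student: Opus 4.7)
The plan is to compute the kernel $\Ic$ explicitly and control it via iterated integration by parts. Substituting the Fourier inversion $v(t')=\int_{\Rb}e^{it'\lambda'}\widehat{v}(\lambda')\,\mathrm{d}\lambda'$ into the definition of $\Ic_\chi v$ and using Fubini, one identifies
\[
\Ic(\lambda,\lambda')=\int_{\Rb}e^{-it\lambda}g_{\lambda'}(t)\,\mathrm{d}t,\qquad g_{\lambda'}(t):=\chi(t)F_{\lambda'}(t),\qquad F_{\lambda'}(t):=\int_0^t\chi(t')e^{it'\lambda'}\,\mathrm{d}t'.
\]
Since $g_{\lambda'}$ is smooth and compactly supported in $t$, the map $\lambda\mapsto\Ic(\lambda,\lambda')$ is Schwartz, and the game is to track quantitatively how this Schwartz decay interacts with $\lambda'$.

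The main step is to integrate by parts in $t'$ inside $F_{\lambda'}$. Using $\chi(0)=1$ and $\chi^{(k)}(0)=0$ for $k\geq 1$, one obtains the finite expansion
\[
F_{\lambda'}(t)=-\frac{1}{i\lambda'}+\sum_{k=0}^{N-1}\frac{(-1)^{k}\chi^{(k)}(t)\,e^{it\lambda'}}{(i\lambda')^{k+1}}+\frac{(-1)^{N}F_{\lambda',N}(t)}{(i\lambda')^{N}},\qquad F_{\lambda',N}(t):=\int_0^t\chi^{(N)}(t')e^{it'\lambda'}\,\mathrm{d}t'.
\]
Multiplying by $\chi(t)$ and taking the Fourier transform in $t$ yields
\[
\Ic(\lambda,\lambda')=-\frac{\widehat{\chi}(\lambda)}{i\lambda'}+\sum_{k=0}^{N-1}\frac{(-1)^{k}\widehat{\chi\,\chi^{(k)}}(\lambda-\lambda')}{(i\lambda')^{k+1}}+\frac{(-1)^{N}\widehat{\chi F_{\lambda',N}}(\lambda)}{(i\lambda')^{N}}.
\]
Because $\widehat{\chi}$ and each $\widehat{\chi\chi^{(k)}}$ are Schwartz, the first two groups of terms obey bounds of the form $|\lambda'|^{-1}\langle\lambda\rangle^{-M}$ or $|\lambda'|^{-1}\langle\lambda-\lambda'\rangle^{-M}$ for any $M$, which is stronger than needed. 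For the residual term, the trivial bounds $|F_{\lambda',N}|\leq\|\chi^{(N)}\|_{L^1}\lesssim 1$ and the compact support of $\chi$ give $|\widehat{\chi F_{\lambda',N}}(\lambda)|\lesssim 1$. Combined with the triangle inequality $\max(\langle\lambda\rangle,\langle\lambda-\lambda'\rangle)\gtrsim\langle\lambda'\rangle$, the resulting $|\lambda'|^{-N}$ factor can be rewritten, for $N$ sufficiently large (say $N=4$), as $\lesssim \langle\lambda'\rangle^{-1}(\langle\lambda\rangle^{-3}+\langle\lambda-\lambda'\rangle^{-3})$. The regime $|\lambda'|\lesssim 1$ is treated separately by simply integrating by parts in $t$ a few times on $g_{\lambda'}$, using the trivial pointwise bound $|F_{\lambda'}|\lesssim 1$.

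The bounds on $\partial_{\lambda,\lambda'}\Ic$ follow the same scheme: differentiation brings down polynomial factors in $t$ or $t'$ which remain bounded on $\operatorname{supp}\chi$, so the same IBP expansion applies with $\chi$ (resp.\ $\chi'$) replaced by $t\chi$ or $t'\chi$. Finally, the second inequality $\bigl(\langle\lambda\rangle^{-3}+\langle\lambda-\lambda'\rangle^{-3}\bigr)\langle\lambda'\rangle^{-1}\lesssim\langle\lambda\rangle^{-1}\langle\lambda-\lambda'\rangle^{-1}$ is purely elementary: using $\langle\lambda-\lambda'\rangle\lesssim\langle\lambda\rangle\langle\lambda'\rangle$ to absorb the extra powers.

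The only real obstacle is keeping track of which factor carries the $\lambda$-decay versus the $(\lambda-\lambda')$-decay: the iterated IBP must be arranged so that ``oscillating'' pieces of $F_{\lambda'}$ get isolated into terms proportional to $e^{it\lambda'}\chi^{(k)}(t)$ (producing $\widehat{\cdot}(\lambda-\lambda')$) while the purely non-oscillatory piece $-1/(i\lambda')$ is paired with $\chi(t)$ (producing $\widehat{\cdot}(\lambda)$). The residual then has no structure in $\lambda$, but this is harmless because we have burned enough IBPs in $t'$ to trade any missing $\lambda$-decay against $|\lambda'|^{-N}$ via the triangle inequality.
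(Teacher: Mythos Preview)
Your proposal is correct. The paper does not actually prove this lemma in-text; it simply cites \cite{DNY0}, Lemma~3.1, and remarks that the derivative bound follows by the same argument. Your integration-by-parts expansion of $F_{\lambda'}$ is precisely the standard computation one expects that citation to contain, so there is no substantive difference in approach---you have just supplied the details the paper outsources.
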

\begin{proof} See \cite{DNY0}, Lemma 3.1 whence by a similar proof, one can see that (\ref{duhamleker}) also holds for $|\partial_{\lambda,\lambda'}\Ic|$.
\end{proof}

\begin{lem}\label{localization} Let $v=v(t)$ be a function on $\mathbb{R}$ valued in some Banach function space. For $b_1\in[0,1]$ define the $Y^{b_1}$ norm by
\begin{equation}\|v\|_{Y^{b_1}}^2=\int_{\Rb}\langle \lambda\rangle^{2b_1}\|\widehat{v}(\lambda)\|^2\,\mathrm{d}\lambda,
\end{equation}where $\widehat{v}$ is the (vector-valued) Fourier transform of $u$. For $\tau\lesssim 1$ let $\chi_\tau(t)=\chi(\tau^{-1}t)$ be as in Section \ref{notations}, then for any $0<b_1\leq b_2<1/2$ and for any $v$, or for any $1/2<b_1\leq b_2<1$ and for any $v$ satisfying\footnote{In practice, the factor $\chi_\tau$ will always come with a $v$ which has the form $\Ic_\chi(\cdots)$, so we always have $v(0)=0$.} $v(0)=0$, we have
\begin{equation}\label{localize}\|\chi_\tau\cdot v\|_{Y^{b_1}}\lesssim \tau^{b_2-b_1}\|v\|_{Y^{b_2}}.
\end{equation}
\end{lem}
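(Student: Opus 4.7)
The plan is to pass to the temporal Fourier side and reduce to a Schur-type convolution bound. Setting $f(\lambda):=\|\widehat{v}(\lambda)\|$, the triangle inequality for Bochner integrals gives
\[
\|\widehat{\chi_\tau v}(\lambda)\| = \Big\| \int_\Rb \widehat{\chi_\tau}(\lambda-\mu)\widehat{v}(\mu)\,\mathrm{d}\mu\Big\| \leq \int_\Rb |\widehat{\chi_\tau}(\lambda-\mu)|\, f(\mu)\,\mathrm{d}\mu,
\]
so it suffices to prove the scalar estimate $\big\||\widehat{\chi_\tau}|\ast f\big\|_{L^2(\langle\lambda\rangle^{2b_1}\mathrm{d}\lambda)}\lesssim \tau^{b_2-b_1}\|f\|_{L^2(\langle\mu\rangle^{2b_2}\mathrm{d}\mu)}$. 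Note that $\widehat{\chi_\tau}(\xi)=\tau\widehat{\chi}(\tau\xi)$ is Schwartz, has $L^1$ norm $O(1)$, and decays as $(1+\tau|\xi|)^{-M}$ for any $M$; in particular it is essentially concentrated at scale $|\xi|\lesssim\tau^{-1}$.

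\textbf{Regime $0<b_1\leq b_2<1/2$.} Rewrite the target as $L^2$-boundedness of the integral operator with kernel $K(\lambda,\mu)=\langle\lambda\rangle^{b_1}\langle\mu\rangle^{-b_2}|\widehat{\chi_\tau}(\lambda-\mu)|$. By Schur's lemma it suffices to show
\[
\sup_\lambda \int_\Rb K(\lambda,\mu)\,\mathrm{d}\mu +\sup_\mu\int_\Rb K(\lambda,\mu)\,\mathrm{d}\lambda \,\lesssim\, \tau^{b_2-b_1}.
\]
I would split each integral at the diagonal band $|\lambda-\mu|\lesssim\tau^{-1}$ against its complement. On the band, $\langle\lambda\rangle$ and $\langle\mu\rangle$ agree up to an additive $\tau^{-1}$, and $\int_{\langle\mu\rangle\lesssim\max(\langle\lambda\rangle,\tau^{-1})}\langle\mu\rangle^{-2b_2}\,\mathrm{d}\mu$ is finite of size $\sim\max(\langle\lambda\rangle,\tau^{-1})^{1-2b_2}$ exactly because $b_2<1/2$; combined with $|\widehat{\chi_\tau}|\sim\tau$ in the band and $\langle\lambda\rangle^{b_1}\lesssim\max(|\lambda|,\tau^{-1})^{b_1}$, this produces $\tau^{b_2-b_1}$. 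Off the band, the Schwartz decay of $\widehat{\chi_\tau}$ beats any polynomial weight and the contribution is strictly smaller.

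\textbf{Regime $1/2<b_1\leq b_2<1$ with $v(0)=0$.} The same Schur test now fails, since $\int\langle\mu\rangle^{-2b_2}\,\mathrm{d}\mu$ diverges; equivalently $\|\chi_\tau\|_{Y^{b_j}}\sim\tau^{1/2-b_j}$ is a \emph{negative} power of $\tau$ when $b_j>1/2$, so even a single nonzero value $v(0)$ would destroy any gain. The hypothesis $v(0)=0$ is precisely what removes this obstruction. I would reduce to Regime 1 by writing $v=\Ic w$ with $w=v'$ and $b_j-1\in(-1/2,0)$, using that $v(0)=0$ gives $\|v\|_{Y^{b_j}}\sim\|w\|_{Y^{b_j-1}}$. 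The estimate then follows by dualizing Regime 1 (via $(Y^b)^*=Y^{-b}$ and self-adjointness of multiplication by $\chi_\tau$) applied to the exponent pair $1-b_2\leq 1-b_1<1/2$. A more hands-on alternative is to differentiate: $\partial_t(\chi_\tau v)=\chi_\tau w+\chi_\tau' v$; the first term is controlled by Regime 1 at the shifted exponents $b_1-1\leq b_2-1$, and the second by a direct computation using the Sobolev embedding bound $|v(t)|\lesssim\|v\|_{Y^{b_2}}|t|^{b_2-1/2}$ (which requires $v(0)=0$) together with $\chi_\tau'(t)=\tau^{-1}\chi'(\tau^{-1}t)$.

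\textbf{Main obstacle.} Regime 1 is essentially routine Schur analysis once one isolates the diagonal contribution. The real work is Regime 2: the divergence of the naive estimate shows that the $v(0)=0$ hypothesis is structurally necessary, and one must package it as a regularity improvement on the antiderivative to reach a range of exponents where Regime 1 (or its dual) applies. The endpoints $b=1/2$ are excluded on both sides for exactly this reason---they are the familiar endpoint failure of fractional Sobolev multiplication by a cutoff.
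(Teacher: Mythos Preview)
The paper simply cites \cite{DNY}, Proposition~2.7, so there is no in-paper proof to compare against. Your overall strategy---reduce to a scalar convolution bound, treat the two exponent regimes separately, and in Regime~2 exploit $v(0)=0$ through differentiation or duality---is standard and sound. But your Regime~1 argument has a genuine gap.

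You claim that Schur's lemma applies with both row and column integrals of $K(\lambda,\mu)=\langle\lambda\rangle^{b_1}\langle\mu\rangle^{-b_2}|\widehat{\chi_\tau}(\lambda-\mu)|$ bounded by $\tau^{b_2-b_1}$. This is false for the column integral: take $\mu=0$, then
\[
\int K(\lambda,0)\,\mathrm{d}\lambda=\int\langle\lambda\rangle^{b_1}|\widehat{\chi_\tau}(\lambda)|\,\mathrm{d}\lambda\sim\tau\int_{|\lambda|\lesssim\tau^{-1}}\langle\lambda\rangle^{b_1}\,\mathrm{d}\lambda\sim\tau^{-b_1},
\]
which for any $b_2>0$ is strictly worse than $\tau^{b_2-b_1}$. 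The Schur test therefore only gives operator norm $\lesssim(\tau^{b_2-b_1}\cdot\tau^{-b_1})^{1/2}=\tau^{(b_2-2b_1)/2}$, which misses the target unless $b_2=0$.

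The integral $\int_{\langle\mu\rangle\lesssim\tau^{-1}}\langle\mu\rangle^{-2b_2}\,\mathrm{d}\mu$ you invoke (note the exponent $-2b_2$, not $-b_2$) does not belong in a Schur computation; it is the right object for a \emph{Cauchy--Schwarz} estimate on the low-frequency piece $\langle\lambda\rangle,\langle\mu\rangle\lesssim\tau^{-1}$. Indeed, there one bounds
\[
\Big|\int|\widehat{\chi_\tau}(\lambda-\mu)|f(\mu)\,\mathrm{d}\mu\Big|\le\|\langle\mu\rangle^{b_2}f\|_{L^2}\Big(\int|\widehat{\chi_\tau}(\lambda-\mu)|^2\langle\mu\rangle^{-2b_2}\,\mathrm{d}\mu\Big)^{1/2}\lesssim\tau^{1/2+b_2}\|\langle\mu\rangle^{b_2}f\|_{L^2},
\]
and then integrating $\langle\lambda\rangle^{2b_1}$ over $\langle\lambda\rangle\lesssim\tau^{-1}$ gives the correct $\tau^{2(b_2-b_1)}$. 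On the high-frequency piece $\langle\lambda\rangle\sim\langle\mu\rangle\gtrsim\tau^{-1}$ one instead uses Young's inequality $L^1\ast L^2$ together with $\langle\lambda\rangle^{b_1-b_2}\lesssim\tau^{b_2-b_1}$. So the correct argument is a hybrid (Cauchy--Schwarz at low frequency, Young at high frequency), or equivalently interpolation between $\|\chi_\tau v\|_{Y^{b_2}}\lesssim\|v\|_{Y^{b_2}}$ and $\|\chi_\tau v\|_{Y^0}\lesssim\tau^{b_2}\|v\|_{Y^{b_2}}$ (the latter via H\"older and Sobolev embedding $Y^{b_2}\hookrightarrow L^{2/(1-2b_2)}$). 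Your ingredients are right but the packaging as a pure Schur test is wrong.

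For Regime~2 your differentiation approach does work, but the sketch for the $\chi_\tau' v$ term needs one more ingredient: the naive $L^2$ bound gives $\|\chi_\tau' v\|_{L^2}\lesssim\tau^{b_2-1}\|v\|_{Y^{b_2}}$, which is too large. You must instead use the dual Sobolev embedding $L^{q'}\hookrightarrow Y^{b_1-1}$ with $1/q'=3/2-b_1$, which combined with the pointwise bound $|v(t)|\lesssim|t|^{b_2-1/2}\|v\|_{Y^{b_2}}$ gives $\|\chi_\tau' v\|_{Y^{b_1-1}}\lesssim\|\chi_\tau' v\|_{L^{q'}}\lesssim\tau^{b_2-b_1}\|v\|_{Y^{b_2}}$.
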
\begin{proof} See \cite{DNY}, Proposition 2.7 (which proves the scalar case, but the proof directly carries over to vector valued cases).
\end{proof}
\begin{lem}\label{weightedbd} Fix $\kappa_1>0$. Let $\Ms=\Ms_{kk'}(\lambda,\lambda')$ be the kernel of an operator $\Ms$, namely
\[(\Ms w)_k(\lambda)=\sum_{k'}\int_{\Rb}\Ms_{kk'}(\lambda,\lambda')w_{k'}(\lambda')\,\mathrm{d}\lambda',\] and assume that $\Ms$ is supported in $|k-k'|\leq R$ for some dyadic $R$. Then uniformly in any $R$ and any $k^0\in\Zb^d$, we have
\begin{equation}\|(1+R^{-1}|k-k^0|)^{\kappa_1}(\Ms w)_k(\lambda)\|_{\ell_k^2L_\lambda^2}\lesssim\|\Ms\|_{\ell_{k'}^2L_{\lambda'}^2\to \ell_k^2L_\lambda^2}\cdot \|(1+R^{-1}|k'-k^0|)^{\kappa_1} w_{k'}(\lambda')\|_{\ell_{k'}^2L_{\lambda'}^2}.
\end{equation}
\end{lem}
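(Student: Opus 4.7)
Write $\phi(k) := (1+R^{-1}|k-k^0|)^{\kappa_1}$ and let $M_\phi$ denote multiplication by $\phi$ on $\ell^2_k L^2_\lambda$. Setting $\tilde w := M_\phi w$, the identity $\phi(k)(\Ms w)_k(\lambda) = (M_\phi \Ms M_\phi^{-1}\tilde w)_k(\lambda)$ reduces the claim to the operator bound $\|M_\phi \Ms M_\phi^{-1}\|_{\mathrm{op}} \lesssim \|\Ms\|_{\mathrm{op}}$ uniformly in $R$ and $k^0$. The only structural input I will use is that the support condition $|k-k'|\le R$ makes the two weights comparable on the support of the kernel: the triangle inequality gives
\[ 1+R^{-1}|k-k^0| \le 1+R^{-1}|k'-k^0|+R^{-1}|k-k'| \le 2\bigl(1+R^{-1}|k'-k^0|\bigr),\]
so $\phi(k) \le 2^{\kappa_1} \phi(k')$ whenever $\Ms_{kk'}(\lambda,\lambda') \neq 0$, and symmetrically.

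To upgrade this pointwise comparability to an operator bound I will decompose via the orthogonal projections $P_l$ onto the annulus $A_l := \{k : l \le R^{-1}|k-k^0| < l+1\}$ for $l \in \Zb_{\ge 0}$. Two observations are crucial. First, on the range of $P_l$ the multiplier $\phi$ is pointwise comparable to the constant $(1+l)^{\kappa_1}$, so $P_l M_\phi$ has norm at most $2^{\kappa_1}(1+l)^{\kappa_1}$ and $P_{l'} M_\phi^{-1}$ has norm at most $(1+l')^{-\kappa_1}$. Second, the support condition on $\Ms$ enforces the finite-band property $P_l \Ms P_{l'} = 0$ whenever $|l-l'|\ge 2$, since otherwise $|k-k^0| \ge lR$ and $|k'-k^0| < (l'+1)R$ would force $|k-k'| > R$. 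Since $M_\phi$ commutes with each $P_l$, decomposing $u = \sum_{l'} P_{l'} u$ gives
\[ P_l\, M_\phi \Ms M_\phi^{-1} u \;=\; M_\phi \sum_{l':\,|l-l'|\le 1} P_l\, \Ms\, P_{l'}\, M_\phi^{-1} u.\]

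Bounding each term using the norms of $P_l M_\phi$ and $P_{l'} M_\phi^{-1}$ together with the fact that their product $(1+l)^{\kappa_1}(1+l')^{-\kappa_1}$ is $O_{\kappa_1}(1)$ when $|l-l'|\le 1$ yields $\|P_l M_\phi \Ms M_\phi^{-1} u\| \lesssim \|\Ms\|_{\mathrm{op}} \sum_{|l-l'|\le 1} \|P_{l'} u\|$. Squaring, invoking Cauchy--Schwarz on the three-term sum, and then using the Pythagorean identity $\|v\|^2 = \sum_l \|P_l v\|^2$ together with the bounded overlap of the bands $|l-l'|\le 1$ gives
\[\|M_\phi \Ms M_\phi^{-1} u\|^2 = \sum_l \|P_l M_\phi \Ms M_\phi^{-1} u\|^2 \lesssim \|\Ms\|_{\mathrm{op}}^2 \sum_{l'} \|P_{l'} u\|^2 = \|\Ms\|_{\mathrm{op}}^2 \|u\|^2,\]
which is the required bound. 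There is no deep difficulty; the only subtle point, and the reason a naive pointwise kernel estimate is inadequate, is that the comparability $\phi(k) \le 2^{\kappa_1}\phi(k')$ on the support of $\Ms$ does not by itself translate into an operator inequality. The annular decomposition, combined with the finite-band structure inherited from the kernel support, is exactly what effects the upgrade. All implicit constants depend only on $\kappa_1$, which gives the desired uniformity in $R$ and $k^0$.
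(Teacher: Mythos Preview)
Your proof is correct. The paper itself does not give an argument here but simply cites \cite{DNY}, Proposition 2.5, so there is nothing to compare against directly; your annular decomposition with the finite-band property $P_l\Ms P_{l'}=0$ for $|l-l'|\ge 2$ is a clean and standard way to pass from the pointwise weight comparability on the kernel support to the operator bound, and all constants are indeed uniform in $R$ and $k^0$.
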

\begin{proof} See \cite{DNY}, Proposition 2.5.
\end{proof}
 \subsection{Large deviation inequalities}\label{section4.2} We state a large deviation estimate that works for uniform distributions on the unit circle, see \cite{DNY}.
 \begin{lem}\label{largedev0} Let $E\subset \Zb^d$ be a finite set, $a=a_{k_1\cdots k_r}(\omega)$ be a random tensor such that the collection $\{a_{k_1\cdots k_r}\}$ is independent with the collection $\{\eta_k(\omega):k\in E\}$. Let $\zeta_j\in\{\pm\}$ and assume that in the support of $a_{k_1\cdots k_r}$ there is no pairing in $(k_1,\cdots,k_r)$ associated with the signs $\zeta_j$. Let the random variable
 \begin{equation}\label{largedev1}X(\omega):=\sum_{k_1,\cdots,k_r}a_{k_1\cdots k_r}(\omega)\prod_{j=1}^r\eta_j(\omega)^{\zeta_j},
 \end{equation} then for any $A>0$, we have $A$-certainly that
 \begin{equation}\label{largedev2}|X(\omega)|^2\leq A^\theta\cdot\sum_{k_1,\cdots,k_r}|a_{k_1\cdots k_r}(\omega)|^2.
 \end{equation}
 \end{lem}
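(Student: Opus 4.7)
The plan is to combine a conditioning argument with hypercontractivity for Steinhaus random variables (i.e. variables uniformly distributed on the unit circle) and Markov's inequality, mirroring the standard treatment of polynomial chaos.

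First, I would condition on a sub-$\sigma$-algebra $\mathcal{G}\subset\Bc$ with respect to which the tensor $a=a_{k_1\cdots k_r}$ is measurable, so that $\{\eta_k:k\in E\}$ is independent of $\mathcal{G}$ (such a $\mathcal{G}$ exists by the independence hypothesis). Under this conditioning, $X$ becomes a multilinear polynomial of degree $r$ in the independent Steinhaus variables $\eta_{k_j}^{\zeta_j}$ with deterministic coefficients $a_{k_1\cdots k_r}$. The no-pairing assumption guarantees that for every tuple $(k_1,\ldots,k_r)$ in the support of $a$ the monomial $\prod_j\eta_{k_j}^{\zeta_j}$ is a nontrivial character of a finite-dimensional torus: no $\eta_k$ appears against its own conjugate. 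Because $\Eb[\eta_k^n]=\mathbf{1}_{n=0}$, distinct tuples give rise to orthogonal monomials in $L^2$, whence
\[
\Eb\big[|X|^2\,\big|\,\mathcal{G}\big]=\sum_{k_1,\ldots,k_r}|a_{k_1\cdots k_r}|^2.
\]

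Next I would invoke hypercontractivity for Steinhaus chaos: for any integer $q\geq 2$ there is a constant $C$ (independent of $q$ and of the coefficients) such that
\[
\Eb\big[|X|^{2q}\,\big|\,\mathcal{G}\big]\leq (Cq)^{rq}\big(\Eb[|X|^2\mid\mathcal{G}]\big)^q.
\]
This is classical and can be established either by a Borell-type induction, reducing to the one-dimensional case via conditional expectations and Jensen's inequality, or by direct combinatorial expansion of $|X|^{2q}$ and counting the contributing pairings of the indices. The no-pairing support condition on $a$ keeps the diagonal contributions under control, so the bound is sharp up to the $(Cq)^{rq}$ factor.

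With these two inputs, Markov's inequality yields, for any $\Lambda>0$,
\[
\Pb\Big(|X|^2>\Lambda\cdot\textstyle\sum_{k_1,\ldots,k_r}|a_{k_1\cdots k_r}|^2\,\Big|\,\mathcal{G}\Big)\leq (Cq)^{rq}\Lambda^{-q}.
\]
Optimizing by choosing $q$ of order $\Lambda^{1/r}/C$ produces a bound of Chernoff type $\exp(-c\Lambda^{1/r})$; setting $\Lambda=A^\theta$ and shrinking $\theta$ to absorb the factor $1/r$ and the constants yields the required $A$-certain estimate with probability at least $1-C_\theta\exp(-A^\theta)$. Since the conditional bound is uniform in $\mathcal{G}$, integrating over $\mathcal{G}$ preserves it. The only genuinely delicate point is the hypercontractive estimate itself and the accurate tracking of the $r$-dependence of the constants; fortunately $r$ is bounded in our applications (it will be at most a fixed power of the plant size $D$), so it can be absorbed into the free parameter $\theta$ without loss.
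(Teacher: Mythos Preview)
Your proposal is correct and follows the standard hypercontractivity route for Steinhaus chaos; the paper itself does not give a self-contained proof but simply cites \cite{DNY}, Lemma~4.1, where essentially the same argument is carried out. One small inaccuracy worth noting: distinct tuples need not give orthogonal monomials (for instance $(k_1,k_2)$ and $(k_2,k_1)$ with $\zeta_1=\zeta_2=+$ yield the same character), so the identity $\Eb[|X|^2\mid\mathcal G]=\sum|a|^2$ should really be the inequality $\Eb[|X|^2\mid\mathcal G]\leq r!\sum|a|^2$, obtained by bounding the fiber size of the map from tuples to characters. This is harmless since $r$ is bounded in all applications and the extra constant is absorbed into $\theta$.
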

 \begin{proof} This is a special case (i.e. no pairing) of \cite{DNY}, Lemma 4.1.
 \end{proof}
 \subsection{Lattice point counting bounds}\label{counting} Here we state and prove the various counting bounds that eventually lead to Proposition \ref{final}.
 \begin{lem}\label{cubic} Consider the set
 \begin{multline}S^{(3)}=\big\{(k_1,k_2,k_3)\in(\Zb^d)^3:\zeta_1k_1+\zeta_2k_2+\zeta_3k_3=m,\,\,\zeta_1|k_1|^2+\zeta_2|k_2|^2+\zeta_3|k_3|^2=\Gamma,
 \\|k_1-m_1|\leq M_1,\,\,|k_2-m_2|\leq M_2,\,\,\text{\rm{and there is no pairing in $(k_1,k_2,k_3)$}}\big\},
 \end{multline} where $\zeta_j\in\{\pm\}$, $(m,\Gamma)\in\Zb^d\times\Zb$, $m_j\in\Zb^d$ and $M_j$ dyadic are given. Then we have, uniformly in all parameters, that
 \begin{equation}\label{cubiccounting}\#S^{(3)}\lesssim (M_1M_2)^{d-1+\theta}.
 \end{equation}
 \end{lem}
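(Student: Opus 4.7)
The plan is to eliminate $k_3$ via the linear constraint, reducing the count to pairs $(k_1,k_2)\in\Zb^{2d}$ lying in the product of $M_1$- and $M_2$-boxes and satisfying a single quadratic equation $Q(k_1,k_2)=c$. A direct expansion shows that $Q$ falls into one of two forms depending on whether $\zeta_1+\zeta_2+\zeta_3 = \pm 1$ (two signs agree, one opposes) or $\pm 3$ (all signs equal).

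In the two-versus-one case, for instance signs $(+,+,-)$ (the other sign patterns reducing to this after a linear change of variables), one computes $(k_1-m)\cdot(k_2-m)=c_*$. The no-pairing hypothesis precisely excludes the degenerate solutions $k_1=m$ and $k_2=m$, so writing $a=k_1-m$ and $b=k_2-m$ both vectors are nonzero. For each fixed nonzero $a$ in the $M_1$-box, the number of $b$ in the $M_2$-box lying on the affine hyperplane $\{a\cdot b=c_*\}$ is at most $CM_2^{d-1}\gcd(a)/|a|+O(1)$, by the standard observation that the covolume of $a^\perp \cap \Zb^d$ inside $a^\perp$ equals $|a|/\gcd(a)$. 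Summing the main term, after decomposing $a=ga'$ with $\gcd(a')=1$ and invoking the shell estimate $\sum_{|a'|\leq M,\,\gcd(a')=1}|a'|^{-1}\lesssim M^{d-1}$ valid for $d\geq 2$, gives $\lesssim (M_1M_2)^{d-1+\theta}$. The $O(1)$ remainder, which naively sums to $M_1^d$, is controlled by running the symmetric argument with the roles of $a$ and $b$ swapped, producing $\min(M_1,M_2)^d\leq(M_1M_2)^{d-1}$ when $d\geq 2$; the case $d=1$ is closed by the divisor bound $\tau(|c_*|)\lesssim (M_1M_2)^\theta$, noting $(M_1M_2)^{d-1+\theta}=(M_1M_2)^\theta$ there.

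In the all-signs-equal case, $Q$ is positive definite and diagonalizes as $2|x|^2+(3/2)|y|^2=c'$ with $x=k_1+(k_2-m)/2$ and $y=k_2-m/3$. For each $y$ in its (shifted) $M_2$-box, $x$ is simultaneously constrained to a sphere of radius $R(y)=\sqrt{(c'-(3/2)|y|^2)/2}$ and to the shifted $M_1$-box, giving the per-$y$ bound $\min\!\big(R(y)^{d-2+\theta},M_1^{d-1}\big)$ from the classical lattice-points-on-a-sphere estimate $r_d(R^2)\lesssim R^{d-2+\theta}$ combined with the $(d-1)$-volume of the sphere–box intersection. The $k_1\leftrightarrow k_2$ symmetry of $Q$ allows the roles of $M_1$ and $M_2$ to be swapped. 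The principal obstacle here is that large shifts $m_1,m_2,m$ can force the sphere far from the origin relative to the $M_1$-box; this is resolved by a dyadic decomposition in $|y|$ which confines admissible $y$ to a thin annulus of radial width $\lesssim M_1^2/\max(R,M_1)$, so that after summation one obtains $\min(M_1,M_2)^{2(d-1)+\theta}\leq (M_1M_2)^{d-1+\theta}$, which is the desired bound.
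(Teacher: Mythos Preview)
Your route—eliminate $k_3$ and count $(k_1,k_2)$ on a quadric in $\Zb^{2d}$—is genuinely different from the paper's. The paper works coordinate by coordinate: it first settles $d=1$ by reducing to a divisor problem in $\Zb$ (mixed signs) or $\Zb[e^{2\pi i/3}]$ (all signs equal), then for $d\geq2$ fixes coordinates $2,\dots,d$ of $(k_1,k_2)$ freely at cost $(M_1M_2)^{d-1}$ and applies the $d=1$ bound to the first coordinate, with a short case split when the surviving one-dimensional triple happens to have a pairing. No higher-dimensional hyperplane or sphere geometry enters.

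Your mixed-sign case is essentially right, with two loose ends. For $d\geq3$ the error in the hyperplane-in-box count is $O(M_2^{d-2})$, not $O(1)$; after symmetrizing this still closes since $M_1^dM_2^{d-2}\leq(M_1M_2)^{d-1}$ when $M_1\leq M_2$. More importantly, the shell estimate $\sum_{|a'|\leq M}|a'|^{-1}\lesssim M^{d-1}$ is stated for an origin-centered box, but $a=k_1-m$ lies in a box centered at the unconstrained point $m_1-m$. The needed bound $\sum_{a\in B,\,a\neq0}\gcd(a)/|a|\lesssim M_1^{d-1+\theta}$ for an arbitrary box $B$ of side $M_1$ does hold, but it is not a consequence of the origin-centered statement and needs its own argument.

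The all-signs-equal case has a genuine gap. Your claimed intermediate bound $\min(M_1,M_2)^{2(d-1)+\theta}$ is false: take $M_1=1$, so $k_1$ is a single point; the remaining equation places $k_2$ on a sphere intersected with the $M_2$-box, and this can carry far more than $O(1)$ lattice points (for instance $\sim r_d(n)$ when the sphere sits inside the box). The annulus heuristic is also off: a direct computation gives $|x_0(y)|^2-R(y)^2=|y-y_0|^2+C_0$ with $y_0=-(m_1-m/3)/2$, so the admissible set is an annulus in $|y-y_0|$ of width $\sim M_1\cdot(R+|x_0|)/|y-y_0|$, not $M_1^2/\max(R,M_1)$ in $|y|$; and none of $R$, $|x_0|$, $|y-y_0|$ is controlled by $M_1,M_2$, since $m,m_1,m_2$ are free. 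The summation does not close as written. The paper's coordinate reduction handles this sign pattern with no sphere analysis at all, via factoring in $\Zb[e^{2\pi i/3}]$ at the one-dimensional step.
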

 \begin{proof} We may assume $M_1\leq M_2$. If $d=1$, by simple algebra, we can reduce to a divisor counting problem in $\Zb[e^{2\pi i/3}]$ (if $\zeta_1=\zeta_2=\zeta_3$) or $\Zb$ (otherwise). Since each $k_j$ belongs to an interval of length $O(M_2)$, the estimate (\ref{cubiccounting}) follows from Lemma 3.4 of \cite{DNY0}.
 
Consider $d\geq 2$. Without loss of generality, we may assume (due to no-pairing) that either \emph{the first coordinates} of $(k_1,k_2,k_3)$ do not contain a pairing, or the $j$-th coordinates of $(k_1,k_2,k_3)$ contain a pairing for each $j$, and this pairing is \emph{not} from $(k_2,k_3)$ for $j=1$. In the former case the $j$-th coordinates of $(k_1,k_2,k_3)$ have at most $M_1M_2$ choices for each $2\leq j\leq d$, and then at most $(M_1M_2)^\theta$ choices for $j=1$ thanks to the $d=1$ case. In the latter case the $j$-th coordinates of $(k_1,k_2,k_3)$ have at most $M_2$ choices for $2\leq j\leq d$, and at most $M_1$ choices for $j=1$. In either case we get
\[\#S^{(3)}\lesssim\max((M_1M_2)^{d-1}(M_1M_2)^\theta,M_2^{d-1}M_1)\lesssim (M_1M_2)^{d-1+\theta}.\qedhere\]
 \end{proof}
 \begin{lem}\label{basic} Recall that $p\geq 3$ is odd. For $1\leq p_1\leq p$, consider a partition of a set $A\subset\{1,\cdots,p\}$, $|A|=p_1$, into pairwise disjoint nonempty subsets $B_1,\cdots,B_t$, say $B_u=\{i_u(1),\cdots,i_u(b_u)\}\,(1\leq u\leq t)$. Given $m_j\in\Zb^d$, $M_j$ dyadic, $\zeta_j\in\{\pm\}\,(j\in A)$ and $\Gamma\in\Zb$, consider the set $S$ consisting of vectors $k_A$ (where each $k_j\in\Zb^d$) that satisfy
 \begin{equation}\label{basicset}\sum_{j\in A}\zeta_j|k_j|^2=\Gamma;\quad \bigg|\sum_{z=1}^y\zeta_{i_u(z)}k_{i_u(z)}-m_{i_u(y)}\bigg|\leq M_{i_u(y)},\,\forall 1\leq u\leq t,1\leq y\leq b_u=|B_u|.
 \end{equation}
Assume $M_{i_{u}(b_u)}=1$ for $1\leq u\leq t$, and that there is no pairing in $k_A$. Then we have, uniformly in all parameters, that
 \begin{equation}\label{basiccounting1}\#S\lesssim \prod_{j\in A}(M_j)^{2\alpha_0+\theta},
 \end{equation} where $\alpha_0$ is as in (\ref{subcrit}).
 \end{lem}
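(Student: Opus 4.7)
The plan is to prove the bound by induction on $p_1$, using the cubic counting estimate of Lemma \ref{cubic} to peel off variables from the blocks. For the base cases $p_1\le 3$, the result follows essentially directly: when $p_1=1$ the pinning $M_{i_1(1)}=1$ gives $\#S=O(1)$; when $p_1=2$, either both indices are pinned singletons or they form a single block in which one is pinned, and in the latter subcase the linear constraint $\zeta_{i_1(1)}k_{i_1(1)}+\zeta_{i_1(2)}k_{i_1(2)}=m+O(1)$ combined with the sum-of-squares and no-pairing condition restricts the free variable to a codimension-$1$ lattice subset, giving $\lesssim M^{d-1+\theta}$ via the divisor-type bound (Lemma 3.4 of \cite{DNY0}); and when $p_1=3$ with the three indices in a single block of size $3$, the pinning $M_{i_1(3)}=1$ converts the partial-sum constraints into the single linear constraint $\zeta_1 k_1+\zeta_2 k_2+\zeta_3 k_3=m+O(1)$, to which Lemma \ref{cubic} applies directly to give $(M_{i_1(1)}M_{i_1(2)})^{d-1+\theta}$. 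In each base case, the inequality $d-1\le 2\alpha_0=d-2/(p-1)$ (valid for all $p\ge 3$) gives the target $\prod_j M_j^{2\alpha_0+\theta}$.

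For the inductive step $p_1\ge 4$ (which occurs only for $p\ge 5$), I discard the trivial all-pinned case and select a block $B_{u^*}$ containing one of the largest scales $M_{j_1}$ among $A$. If $b_{u^*}\ge 3$, I choose $j_2\in B_{u^*}$ with the next-largest scale in $B_{u^*}$ and set $j_3=i_{u^*}(b_{u^*})$ so that $M_{j_3}=1$. Freezing the intermediate partial sums in $B_{u^*}$ at positions not in $\{j_1,j_2,j_3\}$ (each such freezing costs $M_{j_0}^d$ for the corresponding intermediate index $j_0$), the triple $(k_{j_1},k_{j_2},k_{j_3})$ satisfies a three-variable linear constraint of the form $\zeta_{j_1}k_{j_1}+\zeta_{j_2}k_{j_2}+\zeta_{j_3}k_{j_3}=m^*+O(1)$, and Lemma \ref{cubic} together with the induced sum-of-squares equation $\zeta_{j_1}|k_{j_1}|^2+\zeta_{j_2}|k_{j_2}|^2+\zeta_{j_3}|k_{j_3}|^2=\Gamma-\sum_{j\notin\{j_1,j_2,j_3\}}\zeta_j|k_j|^2$ gives $\lesssim (M_{j_1}M_{j_2})^{d-1+\theta}$ triples. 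The induction hypothesis on $A\setminus\{j_1,j_2,j_3\}$ (whose block structure inherits the $M_{\text{last}}=1$ property once the intermediate partial sum is fixed, replacing the original $M_{j_0}$ by $1$ in the reduced problem) contributes $\prod_{j\in A\setminus\{j_1,j_2,j_3\}}M_j^{2\alpha_0+\theta}$. Multiplying, summing over the $\lesssim M_{j_0}^d$ intermediate values, and using the identities $2\alpha_0-(d-1)=1-2/(p-1)$ and $d-2\alpha_0=2/(p-1)$, one reduces the target inequality to the elementary fact $(M_{j_1}M_{j_2})^{1-2/(p-1)}\ge M_{j_0}^{2/(p-1)}$, which holds since $M_{j_1},M_{j_2}\ge M_{j_0}$ by the choice of $j_1,j_2$ together with $2(1-2/(p-1))\ge 2/(p-1)$ for $p\ge 4$. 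In the remaining case $b_{u^*}=2$, one peels off the block $B_{u^*}$ entirely (a two-variable reduction using the sphere/hyperplane analysis from the $p_1=2$ base case).

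The main obstacle is the careful bookkeeping needed to preserve the pinning invariant $M_{\text{last}}=1$ in every block of the reduced problem, and to ensure that the extra $M_{j_0}^d$ factor from summing over intermediate partial-sum values is exactly absorbed by the $(M_{j_1}M_{j_2})^{2\alpha_0-(d-1)}$ gain provided by Lemma \ref{cubic}. The arithmetic is tight and forces selecting $j_1,j_2$ as the two largest scales overall (not merely the largest in some arbitrary block). A further subtlety is the configuration in which the two largest scales lie in different blocks each of size $\le 2$; this is handled by iteratively applying the two-variable reduction, each step consuming one block containing one of the current largest scales, and closes on the same arithmetic as above.
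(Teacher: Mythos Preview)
The inductive step in the case $b_{u^*}\ge 3$ has a genuine gap. You assert that after freezing the partial sums $w_y$ at all positions with $i(y)\notin\{j_1,j_2,j_3\}$, the triple $(k_{j_1},k_{j_2},k_{j_3})$ obeys a nontrivial linear relation $\zeta_{j_1}k_{j_1}+\zeta_{j_2}k_{j_2}+\zeta_{j_3}k_{j_3}=m^*+O(1)$ together with the sum-of-squares, so that Lemma~\ref{cubic} applies. This is false when $j_1,j_2$ (chosen by scale) are not the last two positions of $B_{u^*}$. Writing $j_1=i(a)$, $j_2=i(c)$, $j_3=i(b)$ with $a<c<b$, the frozen partial sums already pin $k_{j_3}$ to $O(1)$ and leave $k_{j_1},k_{j_2}$ \emph{independently} free in their respective balls; the telescoping identity shows that the claimed linear relation collapses to the triviality ``$k_{j_3}$ is fixed''. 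The only remaining constraint is the sum of squares, and after substituting $k_{i(a+1)},k_{i(c+1)}$ (each a fixed affine function of $k_{j_1}$ resp.\ $k_{j_2}$) it reads
\[
(\zeta_{j_1}+\zeta_{i(a+1)})\,|k_{j_1}|^2+(\zeta_{j_2}+\zeta_{i(c+1)})\,|k_{j_2}|^2+(\text{linear})=\text{const}.
\]
When e.g.\ $\zeta_{j_1}=-\zeta_{i(a+1)}$ this degenerates in $k_{j_1}$ and yields at best $M_{j_1}^{d-1}M_{j_2}^{d}$, strictly weaker than $(M_{j_1}M_{j_2})^{d-1+\theta}$. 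The $b_{u^*}=2$ ``peel off a block'' step has the complementary defect: once you spend the single sum-of-squares relation on that block, no quadratic relation survives for the remaining $p_1-2$ variables, so you cannot invoke the induction hypothesis on them (and the iterated two-variable reduction does not close for $p_1$ close to $p$).

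The paper sidesteps both issues by always removing the \emph{last three} positions $n=i(b)$, $n'=i(b-1)$, $n''=i(b-2)$ of some block; this is the choice for which the linear and quadratic constraints on the removed triple are genuinely of the form required by Lemma~\ref{cubic}. Since $M_{n'},M_{n''}$ need not be large, a single estimate is not enough; the paper records two bounds---one from Lemma~\ref{cubic} on $(n,n',n'')$ with the rest counted trivially, and one from a trivial $(M_{n'}M_{n''})^d$ count on $(n,n')$ together with the induction hypothesis \emph{at level $p-2$} (exponent $d-2/(p-3)$) on the remaining variables---and \emph{interpolates}. It is exactly this interpolation, combined with inducting on $p$ (so that $\alpha_0$ changes) rather than on $p_1$ with $\alpha_0$ fixed, that makes the exponents match no matter where the large scales sit in the block.
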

 \begin{proof} We may assume $p_1=p$, since if $p_1<p$ we can add some elements to the sets $B_u$ and reduce to the $p_1=p$ case. We will prove (\ref{basiccounting1}) by induction. Suppose (\ref{basiccounting1}) is true for $p-2$, we will prove it for $p$. For simplicity we define
 \[w_{i_u(y)}:=\sum_{z=1}^y\zeta_{i_u(z)}k_{i_u(z)},\quad 1\leq u\leq t,\,1\leq y\leq b_u.\]
 
 Since $p$ is odd, at least one of $|B_u|$ (say $|B_1|$) must be odd. If $|B_1|=1$ then the value of $k_{i_1(1)}$ is fixed, so we only need to count the vector $k_{A\backslash B_1}$. If $|B_2|=1$ also then we may reduce to counting $k_{A\backslash (B_1\cup B_2)}$ and apply the induction hypothesis; otherwise $|B_2|\geq 2$ and we may add an element to $B_2$ at no cost and reduce to the case $|B_2|\geq 3$. Therefore in any case we may assume some $|B_j|\geq 3$, say $|B_1|=b_1\geq 3$.
 
Let $(i_1(b_1),i_1(b_1-1),i_1(b_1-2))=(n,n',n'')$. Since by (\ref{basicset}), each $w_{i_u(y)}$ belongs to a ball of radius $M_{i_u(y)}$, and $k_{i_u(y)}=\pm w_{i_u(y)}\pm w_{i_u(y-1)}$ for some choices of $\pm$ (same below), the number of choices for the vector $k_{B_1\backslash\{n,n',n''\}}$ is at most\[M_{i_1(1)}^d\cdots M_{i_1(b_1-3)}^d=(M_{n'}M_{n''})^{-d}\prod_{j\in B_1}M_j^d,\] noticing that $M_n=1$. Similarly for $2\leq u\leq t$, the number of choices for the vector $k_{B_u}$ is at most $\prod_{j\in B_u}M_j^d$. Once $k_{A\backslash\{n,n',n''\}}$ is fixed, $k_{n''}=\pm w_{n''}\pm w_{i_1(b_1-3)}$ belongs to a ball of radius $M_{n''}$, and $k_n=\pm w_n\pm w_{n'}$ belongs to a ball of radius $M_{n'}$. Then the number of choices for $(k_n,k_{n'},k_{n''})$ can be bounded by $(M_{n'}M_{n''})^{d-1+\theta}$ by Lemma \ref{cubic}, thus
\begin{equation}\label{interpolate1}\#S\lesssim (M_{n'}M_{n''})^{d-1+\theta}\prod_{j\in A\backslash\{n,n',n''\}}M_j^d.\end{equation} Note that this also settles the base case $p=3$.

On the other hand, for $p\geq 5$, since $M_n=1$, by (\ref{basicset}) we know that $k_n=\pm w_{n}\pm w_{n'}$ belongs to a ball of radius $M_{n'}$, and when $k_n$ is fixed, $k_{n'}=\pm w_n\pm k_n\pm w_{n''}$ belongs to a ball of radius $M_{n''}$. Thus the number of choices for $(k_n,k_{n'})$ is at most $(M_{n'}M_{n''})^d$. When $(k_n,k_{n'})$ is fixed, we only need to count the vectors $k_{A\backslash\{n,n'\}}$. Now $w_{i_1(y)}$ belongs to a ball of radius $M_{i_1(y)}$ if $y\leq b_1-3$, and to a ball of radius $1$ if $y=b_1-2$, so by the induction hypothesis we conclude that
\begin{equation}\label{interpolate2}\#S\lesssim (M_{n'}M_{n''})^{d}\prod_{j\in A\backslash\{n,n',n''\}}M_j^{d-\frac{2}{p-3}+\theta}.\end{equation} Interpolating (\ref{interpolate1}) and (\ref{interpolate2}) we get
\[\#S\lesssim\prod_{j\in A\backslash\{n\}}M_j^{d-\frac{2}{p-1}+\theta},\] which is just (\ref{basiccounting1}).
 \end{proof}
 \begin{lem}\label{general1}For $1\leq p_1\leq p$, consider a partition of a set $A\subset\{1,\cdots,p\}$, $|A|=p_1$, into pairwise disjoint nonempty subsets $A_1,\cdots,A_s$ and $B_1,\cdots,B_t$, say $A_v=\{\ell_v(1),\cdots,\ell_v(a_v)\}\,(1\leq v\leq s)$ and $B_u=\{i_u(1),\cdots,i_u(b_u)\}\,(1\leq u\leq t)$. Given $m_j\in\Zb^d$, $M_j$ dyadic, $\zeta_j\in\{\pm\}\,(j\in A)$ and $\Gamma,\Gamma_v\in\Zb\,(1\leq v\leq s)$, consider the set $S$ consisting of vectors $k_A$ (where each $k_j\in\Zb^d$) that satisfy
 \begin{equation}\label{generalset1}\sum_{j\in A}\zeta_j|k_j|^2=\Gamma,\quad \bigg|\sum_{z=1}^y\zeta_{i_u(z)}k_{i_u(z)}-m_{i_u(y)}\bigg|\leq M_{i_u(y)},\,\forall 1\leq u\leq t,1\leq y\leq b_u=|B_u|,
 \end{equation}
  \begin{equation}\label{generalset2}\sum_{j\in A_v}\zeta_j|k_j|^2=\Gamma_v,\quad\bigg|\sum_{z=1}^y\zeta_{\ell_v(z)}k_{\ell_v(z)}-m_{\ell_v(y)}\bigg|\leq M_{\ell_u(y)},\,\forall 1\leq v\leq s,1\leq y\leq a_v=|A_v|.
 \end{equation}
Assume that $M_{\ell_v(a_v)}=1$ and $|A_v|\leq p-2$ for each $1\leq v\leq s$, that $M_{i_u(b_u)}=1$ for each $1\leq u\leq t$, and that there is no pairing in $k_A$. Then we have, uniformly in all parameters, that
 \begin{equation}\label{generalcounting1}\#S\lesssim\prod_{j\in A}(M_j)^{2\alpha_0+\theta}\prod_{v=1}^s(\min_{1\leq y<a_v}M_{\ell_v(y)})^{2\alpha_0-d}.
 \end{equation}
\end{lem}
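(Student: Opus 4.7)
\emph{Strategy.} The plan is to exploit the observation that the local quadratic constraints $\sum_{j \in A_v}\zeta_j |k_j|^2 = \Gamma_v$ together with the global one $\sum_{j \in A}\zeta_j|k_j|^2 = \Gamma$ \emph{decouple}: subtracting, they yield $s+1$ disjoint quadratic constraints, namely one on each $A_v$ and one on $B := \bigcup_u B_u$ given by $\sum_{j \in B}\zeta_j|k_j|^2 = \Gamma - \sum_v \Gamma_v$. Since the partial-sum constraints already only couple variables within the same $B_u$ or $A_v$, the counting problem splits as
\[
\#S \;\le\; \#\{k_B\} \cdot \prod_{v=1}^s \#\{k_{A_v}\},
\]
reducing the task to $s+1$ independent sub-counting problems, each of the form treated by Lemma~\ref{basic}.

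\emph{Bounding the sub-counts.} For the $B$-factor I would apply Lemma~\ref{basic} directly, obtaining $\#\{k_B\} \lesssim \prod_{j \in B} M_j^{2\alpha_0 + \theta}$. For each $A_v$-factor, since $a_v = |A_v| \le p - 2$, I would invoke a refinement of Lemma~\ref{basic} obtained by rerunning its inductive proof with the size of the counting set in place of the global $p$: this yields the sharper exponent
\[
\#\{k_{A_v}\} \;\lesssim\; \prod_{j \in A_v} M_j^{d - 2/\widetilde q_v + \theta},
\]
where $\widetilde q_v := a_v - 1$ if $a_v$ is odd and $\ge 3$, $\widetilde q_v := a_v$ if $a_v$ is even (reduced to odd size by padding $A_v$ with one dummy variable of mass $1$), while the case $a_v = 1$ is trivial because $k_{\ell_v(1)}$ is determined by the single partial-sum constraint $M_{\ell_v(1)} = 1$, with the convention that the empty-min factor $m_v^{2\alpha_0 - d}$ equals $1$.

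\emph{Closing the inequality.} Writing $m_v := \min_{1 \le y < a_v} M_{\ell_v(y)}$, the product of these bounds matches the claimed estimate once I verify
\[
\prod_{j \in A_v} M_j^{d - 2/\widetilde q_v} \;\le\; \prod_{j \in A_v} M_j^{2\alpha_0} \cdot m_v^{2\alpha_0 - d},
\]
i.e.\ $\prod_j M_j^{2/\widetilde q_v - 2/(p-1)} \ge m_v^{2/(p-1)}$. Since $\widetilde q_v \le a_v \le p - 2 < p - 1$, the exponent on the left is positive, and bounding each $M_j \ge m_v$ reduces the estimate to $a_v(2/\widetilde q_v - 2/(p-1)) \ge 2/(p-1)$, equivalently $a_v(p - 1) \ge \widetilde q_v(a_v + 1)$, which holds since $\widetilde q_v \le a_v$ and $a_v + 1 \le p - 1$.

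\emph{Main obstacle.} The chief technical hurdle is establishing the refined form of Lemma~\ref{basic} with the sharper exponent $d - 2/\widetilde q_v$ tied to the local counting-set size rather than the global $d - 2/(p-1)$. The proof of Lemma~\ref{basic} is essentially agnostic to this: the base case Lemma~\ref{cubic} gives the local exponent $d-1$ for three variables, and the inductive interpolation between estimates (\ref{interpolate1}) and (\ref{interpolate2}) naturally carries the current size of the counting set rather than any global parameter. Nevertheless, confirming this transparently—and handling the even-$a_v$ padding together with the edge cases ($a_v = 1$, $B = \varnothing$)—requires careful bookkeeping that I would work out in detail but not belabor here.
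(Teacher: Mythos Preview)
Your approach is essentially identical to the paper's: decouple the counting into $k_B$ and each $k_{A_v}$ via the separate quadratic constraints, apply Lemma~\ref{basic} to each piece (with the sharper local exponent $d-2/n'$ for the $A_v$ blocks, where your $\widetilde q_v$ is exactly the paper's $n'$), and then close with an elementary inequality.

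There is one small slip in your closing arithmetic. You write ``bounding each $M_j \ge m_v$'' over $j \in A_v$, but by hypothesis $M_{\ell_v(a_v)} = 1$, which is \emph{not} $\ge m_v$ in general; only the $a_v - 1$ factors with $y < a_v$ satisfy $M_{\ell_v(y)} \ge m_v$. The correct reduction is therefore
\[
(a_v - 1)\Big(\tfrac{2}{\widetilde q_v} - \tfrac{2}{p-1}\Big) \;\ge\; \tfrac{2}{p-1}, \qquad\text{equivalently}\qquad (a_v-1)(p-1) \;\ge\; a_v\,\widetilde q_v,
\]
which is precisely what the paper checks. It still holds: for $a_v$ odd one has $\widetilde q_v = a_v - 1$ and the inequality reads $p-1 \ge a_v$, immediate from $a_v \le p-2$; for $a_v$ even one has $\widetilde q_v = a_v$, and since $p$ is odd, $a_v \le p-2$ forces $a_v \le p-3$, whence $(a_v-1)(p-1) \ge (a_v-1)(a_v+2) = a_v^2 + a_v - 2 \ge a_v^2$.
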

\begin{proof} This essentially follows from Lemma \ref{basic}. Let $B=B_1\cup\cdots\cup B_t$, by (\ref{generalset1}) and (\ref{generalset2}) we may count $k_B$ and each $k_{A_v}\,(1\leq v\leq s)$ separately. By Lemma \ref{basic}, the number of choices for $k_B$ is at most
\[\prod_{j\in B}(M_j)^{2\alpha_0+\theta};\] thus it suffices to prove that the number of choices for $k_{A_v}$ is at most
\[\prod_{j=1}^{a_v-1}(M_{\ell_v(y)})^{2\alpha_0+\theta}\cdot (\min_{1\leq y<a_v}M_{\ell_v(y)})^{2\alpha_0-d}.\] Let $|A_v|=a_v=n$, clearly we may assume $n\geq 2$. Lemma \ref{basic} then implies that the number of choices for $k_{A_v}$ is at most
\[\prod_{y=1}^{n-1}(M_{\ell_v(y)})^{d-\frac{2}{n'}+\theta},\] where $n'=n$ if $n$ is even, and $n'=n-1$ if $n$ is odd. Now the desired estimate follows, since
\[\prod_{y=1}^{n-1}(M_{\ell_v(y)})^{d-\frac{2}{n'}}\leq \prod_{y=1}^{n-1}(M_{\ell_v(y)})^{2\alpha_0}\cdot (\min_{1\leq y<a_v}M_{\ell_v(y)})^{2\alpha_0-d},\] due to the elementary inequality
\[(n-1)\big(2\alpha_0-d+\frac{2}{n'}\big)\geq d-2\alpha_0\] which can be verified for $2\leq n\leq p-2$.
\end{proof}
\begin{lem}\label{general2} Consider the same setting and set $S$ as in Lemma \ref{general1}, but instead of no pairing in $k_A$ we assume that (1) any pairing in $k_A$ must be over-paired, and (2) $d(p-1)\geq 8$. Then the bound (\ref{generalcounting1}) remains true.
\end{lem}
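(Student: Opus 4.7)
The plan is to reduce Lemma \ref{general2} to Lemma \ref{general1} by exploiting the over-pairing structure to ``collapse'' equivalence classes of equal $k$-values. For a given $k_A\in S$, consider the equivalence relation $j\sim j'$ iff $k_j=k_{j'}$, and let $\Pi=\{\pi_1,\dots,\pi_q\}$ denote the resulting partition of $A$. Up to a factor depending only on $p$ (which can be absorbed into the $(\cdot)^\theta$), we enumerate over all such partitions $\Pi$, so it suffices to bound the count $\#S|_\Pi$ for a fixed $\Pi$. The over-pairing hypothesis translates into: whenever a class $\pi_u$ contains both a $+$ and a $-$ sign, necessarily $|\pi_u|\geq 3$; thus every class $\pi_u$ of size $\geq 2$ is either a same-sign class of size $2$, or a class of size $\geq 3$.

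For each class $\pi_u$ introduce the common value $k_u^*:=k_j\ (j\in\pi_u)$; it lies in the intersection of the balls of radius $M_j$, $j\in\pi_u$, hence ranges over at most $(\min_{j\in\pi_u}M_j)^d$ values. Substituting $k_j=k_u^*$ into (\ref{generalset1})--(\ref{generalset2}) produces a reduced system on the $q$ variables $k_1^*,\dots,k_q^*$ with updated parameters $m_{\cdot},\Gamma,\Gamma_v$ (pairs of opposite-sign duplicates simply cancel from each sum). By construction the reduced system has \emph{no} pairings, so Lemma \ref{general1} applies to it, giving
\[
\#S|_\Pi\ \lesssim\ \prod_{u:\,|\pi_u|\geq 2}(\min_{j\in\pi_u}M_j)^d\ \cdot\ \prod_{u:\,|\pi_u|=1}M_{j(u)}^{2\alpha_0+\theta}\ \cdot\ \prod_{v=1}^{s}\bigl(\min_{y}M_{\ell_v(y)}\bigr)^{2\alpha_0-d},
\]
where the minima on the right are now taken over the reduced $A_v'$. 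The required bound (\ref{generalcounting1}) then follows from the elementary key inequality
\[
(\min_{j\in\pi}M_j)^d\ \leq\ \prod_{j\in\pi}M_j^{2\alpha_0+\theta}
\]
for each class $\pi$ with $|\pi|\geq 2$, which reduces to $|\pi|\cdot 2\alpha_0\geq d$; for $|\pi|=2$ this is precisely $d(p-1)\geq 4$, and it is easier for $|\pi|\geq 3$. One also checks that the $A_v$-minimum factors only improve under the reduction, since removing indices from $A_v$ can only raise the minimum $M$ and the exponent $2\alpha_0-d$ is negative.

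The main obstacle is bookkeeping the hypotheses of Lemma \ref{general1} for the reduced system: the structural conditions $M_{\ell_v(a_v)}=1$, $M_{i_u(b_u)}=1$, and $|A_v|\leq p-2$ must survive the collapse. Two degenerate situations require care: (i) the last element $\ell_v(a_v)$ (or $i_u(b_u)$) lies in a collapsed class $\pi_u$, so the normalization $M=1$ is lost after removing the duplicates; and (ii) a class $\pi_u$ straddles several $B_u$'s or $A_v$'s, so the partial-sum constraints on the reduced variables are no longer of the required telescoping form. In case (i) we promote an adjacent element of $A_v$ to the ``last position'' by absorbing a free shift into the modified $m$-parameter, paying a one-time factor of at most $M_{\ell_v(a_v-1)}^d$; in case (ii) we reorder $B_u,A_v$ and, if needed, split one partial-sum constraint into two, merging the resulting $\Gamma_v$'s. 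Both operations cost at most a factor $M^d$ for a single index $M\leq N^{C}$. The stronger hypothesis $d(p-1)\geq 8$ enters here: it ensures $4\alpha_0-d\geq d$, giving enough slack in the key inequality above for classes of size $\geq 2$ to absorb this extra $M^d$ loss without violating the target exponent $2\alpha_0+\theta$, even in the worst configuration where the collapsed class and the disturbed $A_v$ overlap. Combined with the (already noted) monotonicity of the $A_v$-minimum factors, this completes the reduction to Lemma \ref{general1} and yields (\ref{generalcounting1}).
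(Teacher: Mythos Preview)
Your reduction has a genuine gap. The claim that $k_u^*$ ``lies in the intersection of the balls of radius $M_j$, $j\in\pi_u$'' misreads the hypotheses: the $M_j$ in Lemma~\ref{general1} are radii for the \emph{partial sums} $w_{i_u(y)}=\sum_{z\leq y}\zeta_{i_u(z)}k_{i_u(z)}$, not for the individual $k_j$. What is true is that $k_j=\pm(w_j-w_{j-1})$ lies in a ball of radius $\max(M_j,M_{j-1})$ centered at a point depending on $m_j,m_{j-1}$; this is weaker than what you use, and the centers for different $j\in\pi_u$ are unrelated, so there is no ``intersection'' to exploit.

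More seriously, the displayed bound $\#S|_\Pi\lesssim\prod_{|\pi_u|\geq 2}(\min M_j)^d\cdot\prod_{|\pi_u|=1}M_j^{2\alpha_0+\theta}\cdot(\cdots)$ does not follow from Lemma~\ref{general1}. After fixing the collapsed values $k_u^*$, the residual constraints on the singleton variables do not form a system of the telescoping type (\ref{generalset1})--(\ref{generalset2}): a collapsed position in the middle of a chain splits the chain, shifts the centers $m_{\cdot}$, and inserts redundant constraints with mismatched radii. Your cases (i)--(ii) acknowledge part of this, but the fix you propose (absorbing a loss $M^d$ using $4\alpha_0-d\geq d$) is arithmetically wrong --- indeed $4\alpha_0-d=d-4/(p-1)<d$ always --- and a concrete example (say a single $B_u$ of length $3$ with the middle variable collapsed) already produces a deficit $\max(M_1,M_2)^{d-2\alpha_0}$ that your scheme cannot close.

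The paper proceeds differently. It fixes the common value $k$ of one over-paired class, applies the \emph{induction hypothesis} (not Lemma~\ref{general1} directly) to count the remaining variables for each fixed $k$, and then sums in $k$. The point is that each position $j$ with $k_j=k$ forces $|k-m^*|\lesssim\max(M_j,M_{j-1})$ for some fixed $m^*$, and removing that position from the chain improves the induction bound by roughly $\max(M_j,M_{j-1})^{-2\alpha_0}$; combining several such positions gives $\mathfrak N(k)/\mathfrak N\lesssim|k-m^*|^{d-4\alpha_0}|k-m^{**}|^{d-4\alpha_0}$, and $d(p-1)\geq 8$ is exactly the condition $2(4\alpha_0-d)>d$ that makes this summable. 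This sum-over-$k$ step is where the hypothesis $d(p-1)\geq 8$ actually enters, and it is not captured by your static collapse-and-apply argument.
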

\begin{proof} As in the proof of Lemma \ref{basic}, we define
\[
\begin{aligned}w_{i_u(y)}&=\sum_{z=1}^y\zeta_{i_u(z)}k_{i_u(z)},&&1\leq u\leq t,\,1\leq y\leq b_u;\\
w_{\ell_v(y)}&=\sum_{z=1}^y\zeta_{\ell_v(z)}k_{\ell_v(z)},&&1\leq v\leq s,\,1\leq y\leq a_v.
\end{aligned}\] We also understand $M_{i_u(y)}=M_{\ell_v(y)}=1$ when $y=0$. It suffices to bound $\#S$ by 
\begin{equation}\Nf=\prod_{v=1}^s\Nf_v\prod_{u=1}^t\Nf_u^*,
\end{equation} where for $1\leq v\leq s$ and $1\leq u\leq t$,
\begin{equation}\label{factorseach}\Nf_v=\prod_{1\leq y<a_v}(M_{\ell_v(y)})^{2\alpha_0+\theta}\cdot(\min_{1\leq y<a_v}M_{\ell_v(y)})^{2\alpha_0-d},\quad\textrm{and}\quad \Nf_u^*=\prod_{1\leq y<b_u}(M_{i_u(y)})^{2\alpha_0+\theta}.
\end{equation}We proceed by induction. The no-pairing case is already known by Lemma \ref{general1}. Now suppose there is some over-pairing in $k_A$; we list all the different $j$'s in $A$ (there are at least three of them) such that $k_j$ are the same, and let this common value be $k$. We will fix $k$, count the remaining variables, and then sum in $k$. With $k$ fixed, by using induction hypothesis, the number of choices for the remaining variables will be bounded by some product
\[\Nf(k)=\prod_{v=1}^s\Nf_v(k)\prod_{u=1}^t\Nf_u^*(k),\] and we need to bound the quotients $\Nf_v(k)/\Nf_v$ and $\Nf_u^*(k)/\Nf_u^*$. We only need to consider those $v$ and $u$ such that at least one $k_j\,(j\in A_v\textrm{ or }B_u)$ equals $k$ (otherwise the quotient is 1). There will be several cases depending on how many $k_j$ equal $k$, and their positions. First, if $|A_v|=1$ (or $|B_u|=1$), then the value of $k$ will be uniquely determined, so there is no summation in $k$, and the result follows from the induction hypothesis. From now on we will assume $|A_v|\geq 2$ and $|B_u|\geq 2$. Similarly, if $|A_v|=3$ or $|B_u|=3$ then the elements in $\{k_j:j\in A_v\}$ (or $\{k_j:j\in B_u\}$) cannot all equal $k$, since otherwise $k$ would also be uniquely determined.

(1) If all (or all but one) elements in $\{k_j:j\in A_v\}$ (or in $\{k_j:j\in B_u\}$) equal $k$, then $\Nf_v(k)$ (or $\Nf_u^*(k)$) will be equal to 1. Considering $B_u$, since $k_{i_u(y)}=\pm w_{i_u(y)}\pm w_{i_u(y-1)}$, by (\ref{generalset1}) we see that if $k_{i_u(y)}=k$, then there exists a vector $m^*$ not depending on $k$, such that $\max(M_{i_u(y)},M_{i_u(y-1)})\gtrsim|k-m^*|$. This implies that
\begin{equation}\label{quotientest1}\frac{\Nf_u^*(k)}{\Nf_u^*}\lesssim\left\{
\begin{aligned}&|k-m^{*}|^{-2\alpha_0-\theta},&&\textrm{if $|B_u|\leq 3$};\\
&|k-m^*|^{-2\alpha_0-\theta}|k-m^{**}|^{-2\alpha_0-\theta},&&\textrm{if $|B_u|\geq 4$},
\end{aligned}
\right.
\end{equation} where $m^{**}$ is another vector not depending on $k$.

Similarly considering $A_v$, in view of the extra factor in (\ref{factorseach}), the estimates will be
\begin{equation}\label{quotientest2}\frac{\Nf_v(k)}{\Nf_v}\lesssim\left\{
\begin{aligned}&|k-m^*|^{d-4\alpha_0-\theta},&&\textrm{if $|A_v|=2$};\\
&|k-m^*|^{-2\alpha_0-\theta},&&\textrm{if $|A_v|\geq 3$};\\
&|k-m^*|^{-2\alpha_0-\theta}|k-m^{**}|^{d-4\alpha_0-\theta},&&\textrm{if $|A_v|\geq 4$}.
\end{aligned}
\right.
\end{equation}

(2) If at least two elements in $\{k_j:j\in A_v\}$ (or $\{k_j:j\in B_u\}$) do not equal $k$, then in particular $|A_v|\geq 3$ (or $|B_u|\geq 3$). In this case we only need to consider $B_u$, and the bounds for $A_v$ will be the same (if not better) due to the negative power of $\min_y M_{\ell_v(y)}$ in (\ref{factorseach}). If we fix $k_{i_u(y)}=k$, then $w_{i_u(y-1)}$ belongs to a ball of radius $\min(M_{i_u(y)},M_{i_u(y-1)})$, so we get
\begin{equation}\label{quotientest3}\frac{\Nf_u^*(k)}{\Nf_u^*}\lesssim\max(M_{i_u(y)},M_{i_u(y-1)})^{-2\alpha_0-\theta},\end{equation} which is bounded by $|k-m^*|^{-2\alpha_0-\theta}$, in the same way as in (1).

(3) By similar arguments as in (2), we know that if two non-adjacent elements in $B_u$ equal $k$ (similarly for $A_v$), say $k_{i_u(y)}=k_{i_u(y')}=k$, then $w_{i_u(y-1)}$ belongs to a ball of radius $\min(M_{i_u(y)},M_{i_u(y-1)})$ and $w_{i_u(y'-1)}$ belongs to a ball of radius $\min(M_{i_u(y')},M_{i_u(y'-1)})$, so we have
\[\frac{\Nf_u^*(k)}{\Nf_u^*}\lesssim \max(M_{i_u(y)},M_{i_u(y-1)})^{-2\alpha_0-\theta}\max(M_{i_u(y')},M_{i_u(y'-1)})^{-2\alpha_0-\theta}\lesssim |k-m^*|^{-2\alpha_0-\theta}|k-m^{**}|^{-2\alpha_0-\theta}.\]

In summary, since at least three elements in all the $A_v$'s and $B_u$'s equal $k$, we conclude that
\[\frac{\Nf(k)}{\Nf}=\prod_{v=1}^s\frac{\Nf_v(k)}{\Nf_v}\prod_{u=1}^t\frac{\Nf_u^*(k)}{\Nf_u^*}\lesssim |k-m^*|^{d-4\alpha_0-\theta}|k-m^{**}|^{d-4\alpha_0-\theta}.\] Since $2(4\alpha_0-d+\theta)>d$ because $d(p-1)\geq 8$, we conclude that
\[\sum_{k\in\Zb^d}|k-m^*|^{d-4\alpha_0-\theta}|k-m^{**}|^{d-4\alpha_0-\theta}\leq O(1)\Rightarrow\sum_{k\in\Zb^d}\Nf(k)\lesssim\Nf,\] which completes the proof.
\end{proof}
\begin{prop}\label{final} Partition $\{1,\cdots,p\}$ into disjoint nonempty subsets $A_1,\cdots,A_s$, $B_1,\cdots,B_t$ and $C$. Assume $A_v=\{\ell_v(1),\cdots ,\ell_v(a_v)\}$, $B_u=\{i_u(1),\cdots ,i_u(b_u)\}$, and $C=\{n_1,\cdots, n_c=1\}$. Consider a tensor $h=h_{kk_1\cdots k_p}$, where each $k,k_j\in\Zb^d$, which satisfies $|h_{kk_1\cdots k_p}|\lesssim 1$, and in the support of $h$ we have (with $m_j\in\Zb^d$ fixed)
  \begin{equation}\label{finalset1}\sum_{j\in A_v}\zeta_j|k_j|^2=\Gamma_v,\quad\bigg|\sum_{z=1}^y\zeta_{\ell_v(z)}k_{\ell_v(z)}-m_{\ell_v(y)}\bigg|\leq M_{\ell_u(y)},\,\forall 1\leq v\leq s,1\leq y\leq a_v=|A_v|.
 \end{equation}
   \begin{equation}\label{finalset2}\sum_{j=1}^p\zeta_j|k_j|^2-|k|^2=\Gamma,\quad \bigg|\sum_{z=1}^y\zeta_{i_u(z)}k_{i_u(z)}-m_{i_u(y)}\bigg|\leq M_{i_u(y)},\,\forall 1\leq u\leq t,1\leq y\leq b_u=|B_u|,
 \end{equation}
    \begin{equation}\label{finalset3}\bigg|\sum_{z=1}^y\zeta_{n_z}k_{n_z}-m_{n_y}\bigg|\leq M_{n_y},\,\forall 1\leq y\leq c-1=|C|-1.
 \end{equation} We assume that $M_{\ell_v(a_v)}=M_{i_u(b_u)}=1$ (denote also $M_{n_c}=1$), and that
 \begin{equation}\label{finalset4}\sum_{j\in A_v}\zeta_j=0\,(\forall 1\leq v\leq s),\,\,\sum_{j=1}^p\zeta_j=1;\quad\sum_{j=1}^p\zeta_jk_j-k=\sum_{j\in A_v}\zeta_jk_j=0.\end{equation} We also assume that any pairing in $(k,k_1,\cdots,k_p)$ is over-paired. Then, for any subset $P_0$ satisfying $P_0\subset C\backslash\{1\}$, let $\{1,\cdots,p\}\backslash P_0=Q_0$, then we have
 \begin{equation}\label{finalbound}\|h\|_{kk_{P_0}\to k_{Q_0}}\lesssim \prod_{j=2}^{p}(M_j)^{\alpha_0+\theta}\prod_{v=1}^s(\min_{1\leq y<a_v}M_{\ell_v(y)})^{\alpha_0-\frac{d}{2}},
 \end{equation} \emph{unless} $(d,p)=(1,7)$, and (up to permutation) that $|A_1|=|A_2|=2$, $k_{\ell_1(1)}=k_{\ell_2(1)}$.
 
Furthermore, if we do not assume that $h$ is supported in the set $\sum_{j=1}^p\zeta_j|k_j|^2-|k|^2=\Gamma$ as in (\ref{finalset2}), but instead assume 
\begin{equation}\label{extrabdh}|h_{k_1\cdots k_p}|\lesssim\frac{1}{\langle \Omega+\Gamma\rangle},\quad \Omega:=|k|^2-\sum_{j=1}^p\zeta_j|k_j|^2,\end{equation}then the same result holds. Finally, all the above results remain true if we replace $p$ by any odd $3\leq q\leq p$ (without changing the value of $\alpha_0$).
\end{prop}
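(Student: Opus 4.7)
My plan is to apply Schur's test to reduce the operator-norm bound (\ref{finalbound}) to two lattice-counting problems, each of which is then handled by Lemma \ref{general1} (when no pairings are present) or Lemma \ref{general2} (when over-pairings occur).

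Since $|h|\lesssim 1$ and $h$ is supported on the set carved out by (\ref{finalset1})--(\ref{finalset4}), Schur's test gives
\begin{equation*}
\|h\|_{kk_{P_0}\to k_{Q_0}}^2 \lesssim \mathfrak{S}_1\cdot\mathfrak{S}_2,
\end{equation*}
where $\mathfrak{S}_1 := \sup_{k,k_{P_0}}\#\{k_{Q_0}:(k,k_1,\ldots,k_p)\in\mathrm{supp}(h)\}$ and $\mathfrak{S}_2 := \sup_{k_{Q_0}}\#\{(k,k_{P_0}):(k,k_1,\ldots,k_p)\in\mathrm{supp}(h)\}$.

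To bound $\mathfrak{S}_1$, freeze $k$ and $k_{P_0}$. Since $P_0\subset C\setminus\{1\}$ is disjoint from every $A_v$ and $B_u$, the block constraints (\ref{finalset1}) and the $B_u$ partial-sum constraints in (\ref{finalset2}) are untouched. The $C$-block telescoping constraints (\ref{finalset3}) reduce to telescoping constraints on the subsequence $Q_0\cap C$ after absorbing the fixed entries into the shifts $m_j$. Since $n_c=1\in Q_0$ with $M_{n_c}=1$, this chain terminates at $M=1$; concatenated with the global linear constraint (\ref{finalset4}) (which, after freezing $k$, becomes a final partial-sum equation on $Q_0\cap C$), it fits the template of a single $B$-block in Lemma \ref{general2}. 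The global quadratic in (\ref{finalset2}) becomes a constraint of the same form as in (\ref{generalset1}) with a shifted value of $\Gamma$. Invoking Lemma \ref{general2} (or \ref{general1}) yields
\begin{equation*}
\mathfrak{S}_1 \lesssim \prod_{j\in Q_0\setminus\{1\}}M_j^{2\alpha_0+\theta}\prod_{v=1}^s(\min_{1\leq y<a_v}M_{\ell_v(y)})^{2\alpha_0-d},
\end{equation*}
the absence of $M_1^{2\alpha_0}$ being correct because $M_1=M_{n_c}=1$. To bound $\mathfrak{S}_2$, freeze $k_{Q_0}$; introducing $k$ as an auxiliary variable (with sign $-1$) and appending it to the end of the $C$-chain turns the global linear into a terminal partial-sum constraint with bound $M=1$, so that the global quadratic plus the restricted $C$-telescoping form a clean $B$-block setup on $P_0\cup\{k\}$ as in Lemma \ref{general2}. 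A second application gives $\mathfrak{S}_2 \lesssim \prod_{j\in P_0}M_j^{2\alpha_0+\theta}$. Since $(Q_0\setminus\{1\})\sqcup P_0=\{2,\ldots,p\}$, taking the square root of $\mathfrak{S}_1\mathfrak{S}_2$ recovers exactly (\ref{finalbound}).

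The weighted extension $|h|\lesssim\langle\Omega+\Gamma\rangle^{-1}$ follows from a dyadic decomposition $|\Omega+\Gamma|\sim L$ for $L\geq 1$: for each $L$ the argument above applies with $\Omega+\Gamma\sim L$ replacing $\Omega+\Gamma=0$ (the counting bounds of Lemma \ref{general2} are uniform in the value of $\Gamma$), and the $L^{-1}$ weight renders the sum over dyadic $L\lesssim N^{O(1)}$ harmless up to an $N^\theta$ loss. The extension to any odd $3\leq q\leq p$ is immediate, since neither the Schur reduction nor the counting lemmas depend on the exact number of variables beyond the fixed value of $\alpha_0$.

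The principal technical obstacle is verifying that the over-pairing step fits into Lemma \ref{general2}; the exceptional configuration $(d,p)=(1,7)$ with $|A_1|=|A_2|=2$ and the pairing $k_{\ell_1(1)}=k_{\ell_2(1)}$ is traced directly to the borderline hypothesis $d(p-1)\geq 8$ required there. In that configuration each size-$2$ $A$-block contributes the negative-power correction $(\min M)^{2\alpha_0-d}=(\min M)^{-1/3}$, and after fixing the common paired value these corrections are insufficient to absorb its one-dimensional summation, mirroring precisely the final summation step of the proof of Lemma \ref{general2} when $d(p-1)=6<8$. In every other configuration the decay produced by the over-pairing accounting of that lemma is enough, and (\ref{finalbound}) follows.
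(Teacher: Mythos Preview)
Your overall strategy---Schur's test plus the counting Lemmas \ref{general1}--\ref{general2}---is exactly the paper's. However, there is a genuine gap in your direct invocation of Lemma \ref{general2} on the restricted counts $\mathfrak{S}_1,\mathfrak{S}_2$. That lemma requires any pairing \emph{among the free variables} to be over-paired \emph{within that same set}, and this need not hold after you freeze one side: if $k_j,k_l\in Q_0$ form a pairing whose over-pairing partner $k_i$ lies in $(k,k_{P_0})$, then within $k_{Q_0}$ you are left with a bare pairing, and neither Lemma \ref{general1} (no pairing) nor Lemma \ref{general2} (over-paired) applies. The paper handles this \emph{before} Schur: whenever a pairing straddles $(k,k_{P_0})$ and $k_{Q_0}$, one uses the elementary operator-norm bound
\[
\|h_{k_1k_Ak_2k_B}\mathbf{1}_{k_1=k_2}\|_{k_1k_A\to k_2k_B}\leq\sup_{k}\|h_{kk_Akk_B}\|_{k_A\to k_B}
\]
to fix the paired value at no cost and reduce to a strictly smaller problem. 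After iterating this reduction, every remaining pairing lies entirely on one side and is over-paired there (if the third partner were on the other side it would itself be a cross-side pairing, already removed), so Lemma \ref{general2} now applies legitimately.

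Two smaller omissions: you do not verify the hypothesis $|A_v|\leq p-2$ of Lemmas \ref{general1}--\ref{general2}; the paper observes that $|A_v|\geq p-1$ together with (\ref{finalset4}) forces $(k,k_1)$ to be a pairing, necessarily over-paired, and the cross-side reduction above disposes of it. And your treatment of the regime $d(p-1)<8$ is incomplete: you correctly trace the one genuinely exceptional configuration to the borderline of Lemma \ref{general2}, but you do not explain why the remaining small cases are fine. The paper dispatches $p=3$ by noting that an over-pairing already pins every variable, and $(d,p)=(1,5)$ by observing that an over-pairing leaves at most three free variables, which Lemma \ref{cubic} handles directly; for $(d,p)=(1,7)$ outside the exceptional configuration one checks that the summation inequality at the end of the proof of Lemma \ref{general2} still closes. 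Your dyadic argument for the weighted case (\ref{extrabdh}) is a valid alternative to the paper's H\"older step.
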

\begin{proof} First assume $d(p-1)\geq 8$. If there is some pairing between $(k,k_{P_0})$ and $k_{Q_0}$, then using the simple fact that
\[\|h_{k_1k_Ak_2k_B}\cdot\mathbf{1}_{k_1=k_2}\|_{k_1k_A\to k_2k_B}\leq\sup_k\|h_{kk_Akk_B}\|_{k_A\to k_B}\] (this is proved in the same way as Lemma \ref{adjustment} below), we may fix the value $k$ of these paired variables with no summation (hence no cost of powers) and reduce to a problem involving a smaller set\footnote{Strictly speaking this reduction may not preserve (\ref{finalset4}), but (\ref{finalset4}) is only used to guarantee $|A_v|\leq p-2$ in order to apply Lemma \ref{general2}; after this (\ref{finalset4}) can be replaced by the more general versions where the linear combinations of $k_j$ and $k$ are fixed $\Zb^d$ vectors instead of $0$, which is preserved under the reduction.}. Therefore we may assume there is no pairing between $(k,k_{P_0})$ and $k_{Q_0}$. We may also assume $|A_v|\leq p-2$ for each $v$, since otherwise $(k,k_1)$ will be a pairing due to (\ref{finalset4}), which has to be over-paired with an element in $A_v$, and after removing these over-paired variables\footnote{These over-paired variables include a pairing between $(k,k_{P_0})$ and $k_{Q_0}$ as $1\in Q_0$, and thus can be treated using the argument in the beginning of the proof.}, the remaining set will satisfy $|A_v|\leq p-2$.

At this point we are ready to apply Lemma \ref{general2}. By Schur's Lemma, we have
\[\|h\|_{kk_{P_0}\to k_{Q_0}}\lesssim\bigg(\sup_{k_{Q_0}}\sum_{k,k_{P_0}}1\bigg)^{1/2}\bigg(\sup_{k,k_{P_0}}\sum_{k_{Q_0}}1\bigg)^{1/2},\] so we just need to count $(k,k_{P_0})$ with $k_{Q_0}$ given, and also count $k_{Q_0}$ with $(k,k_{P_0})$ given. Now Lemma \ref{general2} implies that
\begin{align}\label{schur1}\sup_{k_{Q_0}}\sum_{k,k_{P_0}}1&\lesssim\prod_{j\in P_0}(M_j)^{2\alpha_0+\theta},\\\label{schur2}\sup_{k,k_{P_0}}\sum_{k_{Q_0}}1&\lesssim\prod_{j\in Q_0}(M_j)^{2\alpha_0+\theta}\cdot\prod_{v=1}^s(\min_{1\leq y<a_v}M_{\ell_v(y)})^{2\alpha_0-d}.\end{align} This is because, for example, once $(k,k_{P_0})$ is fixed, for any $n_y\in C\cap Q_0$ we have
\[\sum_{1\leq z\leq y;\,n_z\in Q_0}\zeta_{n_z}k_{n_z}=\sum_{z=1}^y\zeta_{n_z}k_{n_z}+(\textrm{constant vector}),\] so the left hand side sum belongs to a ball of radius $M_{n_y}$. Using also (\ref{finalset1}) and (\ref{finalset2}), and noticing that any pairing in $k_{Q_0}$ must be over-paired, we can deduce (\ref{schur2}) from Lemma \ref{general2}, and similarly (\ref{schur1}). Combining (\ref{schur1}) and (\ref{schur2}) then gives (\ref{finalbound}).

Now consider the exceptional cases $d(p-1)\leq 6$. If $p=3$, then either there is no pairing at all and (\ref{finalbound}) follows from (\ref{schur1}) and (\ref{schur2}), which in turns follow from Lemma \ref{general1}, or there is an over-pairing and each $k_j\,(1\leq j\leq 3)$ is uniquely fixed, in which case (\ref{finalbound}) is immediate.

In the remaining cases we must have $d=1$, and $p\in\{5,7\}$. Again we may assume there is an over-pairing (otherwise (\ref{finalbound}) follows from Lemma \ref{general1}, which does require any condition on $(d,p)$); if $p=5$, then an over-pairing takes at least 3 variables while there are 6 in total ($k$ and each $k_j$), so there are only 3 variables remaining. By using Schur's Lemma and the one-dimensional version of Lemma \ref{cubic}, one can show that in such cases we always have $\|h\|_{kk_{Q_0}\to k_{P_0}}\lesssim (M_2\cdots M_5)^\theta$, which also implies (\ref{finalbound}). Finally if $p=7$, then we have
\[2(2\alpha_0+\theta)>(2\alpha_0+\theta)+(4\alpha_0-d+\theta)>d,\] so we can apply the same arguments in the proof of Lemma \ref{general2} and get the same result, unless we are in the worst case, namely the first line of (\ref{quotientest2}), which implies that (up to permutation) $|A_1|=|A_2|=2$, and $k_{\ell_1(1)}=k_{\ell_2(1)}$.

Finally, we look at the case where (\ref{extrabdh}) is assumed instead of the support condition (the result for $3\leq q\leq p$ follows from the same arguments, which we will not repeat). Here again we can reduce to the case where Lemma \ref{general2} is applicable, and by Schur's Lemma and (\ref{extrabdh}) we have
\[\|h\|_{kk_{P_0}\to k_{Q_0}}\lesssim\bigg(\sup_{k_{Q_0}}\sum_{k,k_{P_0}}\frac{1}{\langle \Omega+\Gamma\rangle}\bigg)^{1/2}\bigg(\sup_{k,k_{P_0}}\sum_{k_{Q_0}}\frac{1}{\langle \Omega+\Gamma\rangle}\bigg)^{1/2},\] so we just need to bound the sums on the right hand side. The idea is that, when $k_{Q_0}$ (or $(k,k_{P_0})$) is fixed, the number of choices for $(k,k_{P_0})$ (or $k_{Q_0}$) is at most $(M_2\cdots M_{p})^d$ due to (\ref{finalset1})--(\ref{finalset4}) (without using the equality for $\Omega$), so by H\"{o}lder
\[\sup_{k_{Q_0}}\sum_{k,k_{P_0}}\frac{1}{\langle \Omega+\Gamma\rangle}\lesssim (M_2\cdots M_{p})^{\theta}\bigg(\sup_{k_{Q_0}}\sum_{k,k_{P_0}}\frac{1}{\langle \Omega+\Gamma\rangle^{1+\theta}}\bigg)^{1/(1+\theta)}.\] Upon fixing the value of $\Omega+\Gamma$, the latter sum can be bounded by (\ref{schur1}); similarly the sum in $k_{Q_0}$ can be bounded by (\ref{schur2}) with a loss of $(M_2\cdots M_{p})^{\theta}$, which can always be incorporated into (\ref{finalbound}). This completes the proof.
\end{proof}
 \subsection{Tensor norm estimates}\label{multi} Here we prove the main estimates for tensor norms. Start with the following simple lemma.
 \begin{lem}\label{adjustment} Let $(B,C)$ be a subpartition of $A$, and let $E=A\backslash (B\cup C)$. Then the norm $\|h\|_{k_B\to k_C}$ increases by at most a constant multiple, under multiplication by:
 \begin{enumerate}[(1)]
 \item Any function $f(k_B,k_E)$, or any function $g(k_C,k_E)$, that is bounded;
 \item Any function of form $\varphi(L^{-1}[f(k_B,k_E)-g(k_C,k_E)])$, where $L>0$ is a real number, $\varphi$ is defined on some $\Rb^m$ such that $\widehat{\varphi}\in L^1$, and $f,g$ are arbitrary $\Rb^m$-valued functions;
 \item Any function of form $\mathbf{1}_{k_i=k_j}$ or $\mathbf{1}_{k_i\neq k_j}$, regardless whether $i$ or $j$ belong to $B$, $C$ or $E$.
 \end{enumerate}
 \end{lem}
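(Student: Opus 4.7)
The plan is to handle the three parts in order, reducing (2) to (1) by Fourier inversion and reducing (3) to (1) by a slicing argument.

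For (1), I would work with the duality formulation (\ref{duality}) of the norm. If we multiply $h$ by a bounded function $f(k_B,k_E)$, then in the bilinear form
\[\sum_{k_B,k_C,k_E} f(k_B,k_E)\,h_{k_A}\,z_{k_B}y_{k_C},\]
we can absorb $f$ into $z$ by setting $\tilde z_{k_B} = f(k_B,k_E)z_{k_B}$ for the fixed value of $k_E$ given in (\ref{tensornorms:sup}); this new vector still satisfies $\|\tilde z\|_{\ell^2_{k_B}} \leq \|f\|_\infty\|z\|_{\ell^2_{k_B}}$, which gives the bound with constant $\|f\|_\infty$. Multiplication by $g(k_C,k_E)$ is handled symmetrically by absorbing into $y_{k_C}$.

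For (2), I would invoke Fourier inversion: since $\widehat\varphi \in L^1(\Rb^m)$,
\[\varphi(L^{-1}[f(k_B,k_E)-g(k_C,k_E)]) = c_m\int_{\Rb^m} \widehat\varphi(\xi)\,e^{iL^{-1}\xi\cdot f(k_B,k_E)}\,e^{-iL^{-1}\xi\cdot g(k_C,k_E)}\,\mathrm{d}\xi.\]
For each fixed $\xi$, the two exponential factors are bounded functions of $(k_B,k_E)$ and $(k_C,k_E)$ respectively, so by part (1) the inner integrand contributes $\|h\|_{k_B\to k_C}$ times $|\widehat\varphi(\xi)|$; integrating in $\xi$ yields a bound of order $\|\widehat\varphi\|_{L^1}\|h\|_{k_B\to k_C}$.

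For (3), the cases where $i,j$ lie in the same one of $B$, $C$, $E$, or are split between $\{B,E\}$ or $\{C,E\}$, reduce immediately to (1) since $\mathbf{1}_{k_i=k_j}$ is then a bounded function of the appropriate variables. The remaining case is $i\in B$, $j\in C$ (up to swap). Here I would argue by slicing: for each value $k^\star\in\Zb^d$, the slice $h|_{k_i=k_j=k^\star}$ is realized from $h$ by composing with the embedding $w\mapsto w\otimes\delta_{k_i=k^\star}$ into $\ell^2_{k_B}$ and the projection onto the $k_j=k^\star$ coordinate of the output, so $\|h|_{k_i=k_j=k^\star}\|_{k_{B\setminus\{i\}}\to k_{C\setminus\{j\}}}\leq \|h\|_{k_B\to k_C}$. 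Summing the squared images over $k^\star$ (which plays the role of the joint $k_i=k_j$) and bounding by the $\sup$ in $k^\star$ yields $\|h\,\mathbf{1}_{k_i=k_j}\|_{k_B\to k_C}\leq \|h\|_{k_B\to k_C}$. The case $\mathbf{1}_{k_i\neq k_j}=1-\mathbf{1}_{k_i=k_j}$ then follows from the triangle inequality. The only potential subtlety is keeping track of the $\sup_{k_E}$ in (\ref{tensornorms:sup}) when $E\neq\varnothing$, but since each argument above is uniform in $k_E$, this is handled without difficulty.
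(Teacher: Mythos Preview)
Your proposal is correct and follows essentially the same route as the paper: part (1) is immediate from the definition/duality, part (2) reduces to (1) via Fourier inversion (the paper writes the same integral representation), and for part (3) the paper also reduces to the case $i\in B$, $j\in C$, $E=\varnothing$, then fixes the common value $k=k_i=k_j$ and uses exactly your slicing inequality $\sup_k\|h_{kmkn}\|_{m\to n}\leq\|h\|_{k_im\to k_jn}$ before summing the squared output over $k$ and handling $\mathbf{1}_{k_i\neq k_j}=1-\mathbf{1}_{k_i=k_j}$.
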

 \begin{proof} (1) is obvious by definition, and (2) follows from (1) by writing
 \[\varphi(L^{-1}[f(k_B,k_E)-g(k_C,k_E)])=L^m\int_{\Rb^m}\widehat{\varphi}(L\xi)e^{i\xi\cdot f(k_B,k_E)}e^{-i\xi\cdot f(k_C,k_E)}\,\mathrm{d}\xi.\]
 To prove (3), we may assume $i\in B$ and $j\in C$ (otherwise $i,j\in B$ or $i,j\in C$ or one of them belongs to $E$, and the proof will be easier), and also assume $E=\varnothing$. Let $k_{B\backslash\{i\}}=m$ and $k_{C\backslash\{j\}}=n$, and let $k_i=k_j=k$ after multiplying by $\mathbf{1}_{k_i=k_j}$, then it suffices to prove that
 \[\sum_{k,n}\bigg|\sum_{m}h_{kmkn}\cdot z_{km}\bigg|^2\leq\|h_{k_imk_jn}\|_{k_im\to k_jn}^2\cdot\sum_{k,m}|z_{km}|^2.\] Clearly we may fix $k$, and consider the tensor $h_{kmkn}$, so the desired bound follows from the inequality
 \[\sup_k\|h_{kmkn}\|_{m\to n}\leq \|h_{k_imk_jn}\|_{k_im\to k_jn},\] which is obvious by definition. The result for $\mathbf{1}_{k_i\neq k_j}$ then follows, since $\mathbf{1}_{k_i\neq k_j}=1-\mathbf{1}_{k_i= k_j}$.
 \end{proof} Next we state and prove the bilinear semi-product estimate, equivalent to Proposition \ref{bilineartensor0}.
 \begin{prop}[Restatement of Proposition \ref{bilineartensor0}]\label{bilineartensor} Consider two tensors $h_{k_{A_1}}^{(1)}$ and $h_{k_{A_2}}^{(2)}$, where $A_1\cap A_2=C$. Let $A_1\Delta A_2=A$, define the semi-product
 \begin{equation}\label{combination}H_{k_A}=\sum_{k_C}h_{k_{A_1}}^{(1)}h_{k_{A_2}}^{(2)}.
 \end{equation} Then, for any partition $(X,Y)$ of $A$, let $X\cap A_1=X_1$, $Y\cap A_1 =Y_1$ etc., we have
 \begin{equation}\label{combinationbd}\|H\|_{k_X\to k_Y}\leq \|h^{(1)}\|_{k_{X_1\cup C}\to k_{Y_1}}\cdot\|h^{(2)}\|_{k_{X_2}\to k_{C\cup Y_2}}.
 \end{equation}
 \end{prop}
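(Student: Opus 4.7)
The strategy is to view the semi-product $H$ as a composition of the two operators naturally associated to $h^{(1)}$ and $h^{(2)}$, and then invoke sub-multiplicativity of operator norms. Since $C = A_1 \cap A_2$ is disjoint from $A = A_1 \Delta A_2$, the partition $(X,Y)$ of $A$ splits as $X = X_1 \sqcup X_2$ and $Y = Y_1 \sqcup Y_2$ with $X_j = X\cap A_j$, $Y_j = Y\cap A_j$. Let $T_1$ be the operator from $\ell^2_{k_{X_1\cup C}}$ to $\ell^2_{k_{Y_1}}$ with kernel $h^{(1)}$, of norm $\|h^{(1)}\|_{k_{X_1\cup C}\to k_{Y_1}}$, and let $T_2$ be the operator from $\ell^2_{k_{X_2}}$ to $\ell^2_{k_{C\cup Y_2}}$ with kernel $h^{(2)}$, of norm $\|h^{(2)}\|_{k_{X_2}\to k_{C\cup Y_2}}$.

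Given $z = z_{k_X}$ with $\|z\|_{\ell^2_{k_X}} = 1$, I would define the intermediate object
\[
\tilde z_{k_{X_1},k_C,k_{Y_2}} := \sum_{k_{X_2}} h^{(2)}_{k_{X_2},k_{Y_2},k_C}\, z_{k_{X_1},k_{X_2}},
\]
which for each fixed $k_{X_1}$ is exactly $T_2$ applied to the slice $z(k_{X_1},\cdot)$. Thus
\[
\|\tilde z(k_{X_1},\cdot,\cdot)\|_{\ell^2_{k_C,k_{Y_2}}}^2 \leq \|T_2\|^2\,\|z(k_{X_1},\cdot)\|_{\ell^2_{k_{X_2}}}^2,
\]
and summing over $k_{X_1}$ yields $\|\tilde z\|_{\ell^2}^2 \leq \|T_2\|^2$. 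Next, unpacking the definition of $H$ gives
\[
(Hz)_{k_Y} = \sum_{k_X, k_C} h^{(1)}_{k_{X_1},k_{Y_1},k_C}\, h^{(2)}_{k_{X_2},k_{Y_2},k_C}\, z_{k_X} = \sum_{k_{X_1},k_C} h^{(1)}_{k_{X_1},k_{Y_1},k_C}\,\tilde z_{k_{X_1},k_C,k_{Y_2}},
\]
so for each fixed $k_{Y_2}$ this is $T_1$ applied to $\tilde z(\cdot,\cdot,k_{Y_2})$. Applying the $T_1$ bound in the $(k_{X_1},k_C)$ variables and summing in $k_{Y_2}$ gives $\|Hz\|_{\ell^2_{k_Y}}^2 \leq \|T_1\|^2\|\tilde z\|_{\ell^2}^2 \leq \|T_1\|^2\|T_2\|^2$, which is exactly \eqref{combinationbd} after taking supremum over $z$.

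There is no serious obstacle here: the result is essentially the sub-multiplicativity $\|T_1 T_2\| \leq \|T_1\|\|T_2\|$, with the subtlety that the "passive" indices $k_{X_1}$ and $k_{Y_2}$ play the role of parameters in the two respective applications. The only thing requiring care is the bookkeeping to ensure that each of $k_{X_1},k_{X_2},k_{Y_1},k_{Y_2},k_C$ ends up on the correct side of the appropriate operator, which is dictated by the splitting $X_1\cup C$ vs.\ $Y_1$ for $h^{(1)}$ and $X_2$ vs.\ $C\cup Y_2$ for $h^{(2)}$; this is precisely the placement of $C$ on the right side of the arrow in the $h^{(1)}$ norm and on the left in the $h^{(2)}$ norm.
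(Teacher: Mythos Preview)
Your proof is correct and is essentially the same as the paper's: both apply $h^{(2)}$ first (with $k_{X_1}$ as a passive parameter) to produce an intermediate object in $\ell^2_{k_{X_1},k_C,k_{Y_2}}$, then apply $h^{(1)}$ (with $k_{Y_2}$ as a passive parameter). The paper writes this as a direct chain of inequalities in shorthand variables $x=k_{X_1},\,y=k_{Y_1},\,z=k_C,\,u=k_{X_2},\,v=k_{Y_2}$, while you phrase it in operator language, but the computation is line-for-line identical.
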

 \begin{proof} Note that $X_1$, $X_2$, $Y_1$, $Y_2$ and $C$ are five pairwise disjoint sets; let the vectors $x:=k_{X_1}$, $y:=k_{Y_1}$, $z=k_C$, $u:=k_{X_2}$ and $v:=k_{Y_2}$, then we can write
 \[h^{(1)}=h_{xyz}^{(1)},\quad h^{(2)}=h_{uvz}^{(2)};\quad H=H_{xyuv}=\sum_z h_{xyz}^{(1)}h_{uvz}^{(2)}.\] Now for any $\alpha=\alpha_{xu}$, we can bound
 \[
 \begin{aligned}\sum_{y,v}\bigg|\sum_{x,u}H_{xyuv}\alpha_{xu}\bigg|^2&=\sum_{y,v}\bigg|\sum_{x,u,z}h_{xyz}^{(1)}h_{uvz}^{(2)}\alpha_{xu}\bigg|^2=\sum_{v}\sum_y\bigg|\sum_{x,z}h_{xyz}^{(1)}\bigg(\sum_u h_{uvz}^{(2)}\alpha_{xu}\bigg)\bigg|^2\\
 &\leq \|h^{(1)}\|_{xz\to y}^2\cdot\sum_{x,z,v}\bigg|\sum_u h_{uvz}^{(2)}\alpha_{xu}\bigg|^2= \|h^{(1)}\|_{xz\to y}^2\cdot\sum_{x}\sum_{v,z}\bigg|\sum_u h_{uvz}^{(2)}\alpha_{xu}\bigg|^2\\
 &\leq   \|h^{(1)}\|_{xz\to y}^2\cdot \|h^{(2)}\|_{u\to vz}^2\cdot\sum_{x,u}|\alpha_{xu}|^2,
 \end{aligned}
 \] so by definition, $\|H\|_{xu\to yv}\leq \|h^{(1)}\|_{xz\to y}\cdot \|h^{(2)}\|_{u\to vz}$, as desired.
 \end{proof} A corollary is the following multilinear semi-product estimate, equivalent to Proposition \ref{multibound0}.
 \begin{prop}[Restatement of Proposition \ref{multibound0}]\label{multibound} Let $A_j\,(1\leq j\leq m)$ be index sets, such that any index appears in at most two $A_j$'s, and let $h^{(j)}=h_{k_{A_j}}^{(j)}$ be tensors. Let $A=A_1\Delta\cdots\Delta A_m$ be the set of indices that belong to only one $A_j$, and $C=(A_1\cup\cdots \cup A_m)\backslash A$ be the set of indices that belong to two different $A_j$'s. Define the semi-product
 \begin{equation}\label{combination2}H_{k_A}=\sum_{k_C}\prod_{j=1}^mh_{k_{A_j}}^{(j)}.
 \end{equation} Let $(X,Y)$ be a partition of $A$. For $1\leq j\leq m$ let $X_j=X\cap A_j$ and $Y_j=Y\cap A_j$, and define
 \begin{equation}\label{intermedsets}B_j:=\bigcup_{\ell>j}(A_j\cap A_\ell),\quad C_j=\bigcup_{\ell<j}(A_j\cap A_\ell),
 \end{equation} then we have
 \begin{equation}\label{combinationbd2}\|H\|_{k_X\to k_Y}\leq\prod_{j=1}^m\|h^{(j)}\|_{k_{X_j\cup B_j}\to k_{Y_j\cup C_j}}.
 \end{equation} 
 \end{prop}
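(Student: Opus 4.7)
The plan is to prove the statement by induction on $m$, with the base case $m=2$ being Proposition~\ref{bilineartensor}. The inductive step amounts to grouping the first $m-1$ tensors into one ``intermediate'' tensor and then applying the bilinear estimate to that tensor together with $h^{(m)}$, with the analytic content entirely carried by Proposition~\ref{bilineartensor}; what remains is a careful bookkeeping of the index sets.

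More concretely, assume the bound holds for any collection of $m-1$ tensors. Let $\widetilde{C}=\bigcup_{1\leq i<j\leq m-1}(A_i\cap A_j)$ denote the indices appearing in two of the first $m-1$ sets, let $\widetilde{A}=A_1\cup\cdots\cup A_{m-1}$, and set $\widetilde{A}_{\mathrm{out}}=\widetilde{A}\setminus\widetilde{C}$. Define the partial semi-product
\[\widetilde{h}_{k_{\widetilde{A}_{\mathrm{out}}}}=\sum_{k_{\widetilde{C}}}\prod_{j=1}^{m-1}h^{(j)}_{k_{A_j}}.\]
Since any index appears in at most two of the $A_j$'s, the set $C''=\widetilde{A}_{\mathrm{out}}\cap A_m=\bigcup_{j<m}(A_j\cap A_m)$ is disjoint from $\widetilde{C}$, and $C=\widetilde{C}\sqcup C''$. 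Moreover $A=\widetilde{A}_{\mathrm{out}}\,\Delta\,A_m$, and one verifies directly that
\[H_{k_A}=\sum_{k_{C''}}\widetilde{h}_{k_{\widetilde{A}_{\mathrm{out}}}}\cdot h^{(m)}_{k_{A_m}}.\]
Thus $H$ is the \emph{bilinear} semi-product of $\widetilde{h}$ and $h^{(m)}$ along $C''$, and Proposition~\ref{bilineartensor} (applied to the partition $(X,Y)$ of $A$) yields
\[\|H\|_{k_X\to k_Y}\leq\|\widetilde{h}\|_{k_{X_{\tau}\cup C''}\to k_{Y_{\tau}}}\cdot\|h^{(m)}\|_{k_{X_m}\to k_{Y_m\cup C''}},\]
where $X_\tau=X\cap\widetilde{A}_{\mathrm{out}}$ and $Y_\tau=Y\cap\widetilde{A}_{\mathrm{out}}$.

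It remains to apply the inductive hypothesis to $\widetilde{h}$ with the partition $(X_\tau\cup C'',Y_\tau)$ of $\widetilde{A}_{\mathrm{out}}$, and to reconcile the resulting index sets with those in the statement. For $j<m$, since $X_j=X\cap A_j\subset A$ contains only indices that appear in a unique $A_\ell$, we have $X_j\cap A_m=\varnothing$ and therefore $X_\tau\cap A_j=X_j$, while $C''\cap A_j=A_j\cap A_m$. Writing $B_j^{(m-1)}=\bigcup_{j<\ell\leq m-1}(A_j\cap A_\ell)$ and $C_j^{(m-1)}=\bigcup_{\ell<j}(A_j\cap A_\ell)$ for the auxiliary sets in the $(m-1)$-tensor version of the statement, the inductive input side for the $j$-th tensor becomes
\[(X_\tau\cup C'')\cap A_j\,\cup\,B_j^{(m-1)}=X_j\cup (A_j\cap A_m)\cup B_j^{(m-1)}=X_j\cup B_j,\]
and similarly the output side is $Y_j\cup C_j$, matching the definitions in \eqref{intermedsets}. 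For $j=m$ one has $C_m=C''$ and $B_m=\varnothing$, so the $h^{(m)}$ factor already has the correct form. Combining these identifications gives \eqref{combinationbd2}.

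The main (and only) obstacle is the set-theoretic bookkeeping above, in particular the disjointness $\widetilde{C}\cap C''=\varnothing$ and the identifications $X_\tau\cap A_j=X_j$, $C''\cap A_j=A_j\cap A_m$ for $j<m$, all of which rely crucially on the hypothesis that each index belongs to at most two of the $A_j$'s. Once these are in place, the analytic estimates are reduced to a single application of the bilinear Proposition~\ref{bilineartensor} followed by the inductive hypothesis, with no additional work.
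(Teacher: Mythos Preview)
Your proof is correct and follows essentially the same approach as the paper: induction on $m$, with the base case $m=2$ being Proposition~\ref{bilineartensor}, and the inductive step peeling off one tensor via the bilinear estimate. The only difference is cosmetic: the paper peels off $h^{(1)}$ and groups $h^{(2)},\dots,h^{(m)}$ into an intermediate tensor, whereas you peel off $h^{(m)}$ and group $h^{(1)},\dots,h^{(m-1)}$; the set-theoretic bookkeeping is the mirror image and equally valid.
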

 \begin{proof} We induct in $m$. When $m=2$, (\ref{combinationbd2}) is just (\ref{combinationbd}); suppose (\ref{combinationbd2}) holds for $m-1$. Then, define $F=A_2\Delta\cdots\Delta A_m$, $E=(A_2\cup\cdots \cup A_m)\backslash F$ and
 \[Y_{k_F}:=\sum_{k_E}\prod_{j=2}^mh_{k_{A_j}}^{(j)},\] then we have $A =A_1\Delta F$, and
 \[H_{k_A}=\sum_{k_G}h_{k_{A_1}}^{(1)}Y_{k_F},\quad G:=A_1\cap F=\bigcup_{\ell>1}(A_1\cap A_\ell)=B_1.\] Applying (\ref{combinationbd}) we get
 \[\|H\|_{k_X\to k_Y}\leq\|h^{(1)}\|_{k_{X_1\cup B_1}\to k_{Y_1}}\cdot\|Y\|_{k_{X\cap F}\to k_{(Y\cap F)\cup B_1}}.\] Note that $X':=X\cap F$ and $Y':=(Y\cap F)\cup B_1$ form a partition of $F$, by induction hypothesis we have
 \[\|Y\|_{k_{X'}\to k_{Y'}}\leq \prod_{j=2}^m\|h^{(j)}\|_{k_{(X'\cap A_j)\cup B_j}\to k_{(Y'\cap A_j)\cup (C_j\backslash B_1)}}.\] Finally, note that $(X'\cap A_j)\cup B_j= X_j\cup B_j$ and $(Y'\cap A_j)\cup (C_j\backslash B_1)=Y_j\cup C_j$, this completes the inductive proof.
 \end{proof}
 \begin{rem} Note that, if we fix $(X,Y)$ and rearrange the tensors $h^{(j)}$, then the expression (\ref{combination2}) will not change, but the norms appearing on the right hand side of (\ref{combinationbd2}) will. We may take advantage of this and arrange these tensors in some order using a particular algorithm so that (\ref{combinationbd2}) gives the desired bound. This will be the key to the proof of Proposition \ref{algorithm1} below.
 \end{rem}
 Finally we state and prove the precise form of Proposition \ref{gausscont0}, and a similar variant. The proofs rely on higher order versions of Bourgain's $\mathscr{G}\mathscr{G}^*$ argument in \cite{Bourgain}.
  \begin{prop}[Precise form of Proposition \ref{gausscont0}]\label{gausscont} Let $A$ be a finite set and $h_{bck_A}=h_{bck_A}(\omega)$ be a tensor, where each $k_j\in \Zb^d$ and $(b,c)\in (\Zb^d)^q$ for some integer $q\geq 2$. Given signs $\zeta_j\in\{\pm\}$, we also assume that $\langle b\rangle,\langle c\rangle\lesssim M$ and $\langle k_j\rangle\lesssim M$ for all $j\in A$, where $M$ is a dyadic number, and that in the support of $h_{bck_A}$ there is no pairing in $k_A$. Define the tensor
  \begin{equation}\label{contract}
  H_{bc}=\sum_{k_{A}}h_{bck_A}\prod_{j\in A}\eta_{k_j}^{\zeta_j},
  \end{equation}where we restrict $k_j\in E$ in (\ref{contract}), $E$ being a finite set such that $\{h_{bck_A}\}$ is independent with $\{\eta_k:k\in E\}$. Then $\tau^{-1}M$-certainly, we have
  \begin{equation}\label{contbd}\|H_{bc}\|_{b\to c}\lesssim\tau^{-\theta} M^\theta\cdot\max_{(B,C)}\|h\|_{bk_B\to ck_C},
  \end{equation} where $(B,C)$ runs over all partitions of $A$.
 \end{prop}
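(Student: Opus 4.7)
The plan is to adopt the moment method, following the $\mathscr{G}\mathscr{G}^*$ template used by Bourgain \cite{Bourgain}. For a large integer $p$ (to be chosen as a small positive power of $\tau^{-1}M$), we use the standard inequality
\[\|H\|_{b\to c}^{2p}\leq \mathrm{Tr}\big((HH^*)^p\big),\]
so that it suffices to bound $\Eb\,\mathrm{Tr}((HH^*)^p)$ and then apply Markov's inequality to obtain (\ref{contbd}).

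Expanding the trace gives a sum over $b_1,\ldots,b_p$ and $c_1,\ldots,c_p$ of a cyclic product alternating $p$ copies of $H$ and $p$ copies of $\overline{H}$. Substituting (\ref{contract}) and introducing independent dummy variables $k_A^{(\alpha)}$ for the $2p$ copies, the resulting factor of $\eta$'s is
\[\prod_{\alpha=1}^{2p}\prod_{j\in A}\eta_{k_j^{(\alpha)}}^{\sigma_\alpha\zeta_j},\]
with $\sigma_\alpha=+$ on $H$-copies and $\sigma_\alpha=-$ on $\overline{H}$-copies. Since each $\eta_k$ is uniform on the unit circle and is independent of $h$, its expectation vanishes unless at every distinct $k$-value appearing among the $2p|A|$ slots, the total $+$ sign count equals the total $-$ sign count. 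The no-pairing hypothesis on $k_A$ in the support of $h$ forbids any such identification within a single copy $\alpha$, so every identification is \emph{across} distinct copies. The expectation thus decomposes as a sum, over admissible equivalence relations $\Pi$ on $\{1,\ldots,2p\}\times A$, of terms indexed by $\Pi$.

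For a fixed $\Pi$, the corresponding contribution is a tensor contraction of the $2p$ copies of $h$ (or $\overline{h}$), where each block of $\Pi$ collapses several $k$-slots into a single summation variable. The central estimate is that each such contribution is bounded by $\big(\max_{(B,C)}\|h\|_{bk_B\to ck_C}\big)^{2p}$ up to a factor $M^{C\theta}$. This is carried out by peeling off one copy of $h$ at a time and applying Proposition \ref{bilineartensor} (together with Lemma \ref{adjustment} to absorb the coincidence and inequality constraints on indices), where at each step the partition $(B,C)$ of $A$ is dictated by $\Pi$: indices $j\in A$ whose slot in the current copy is identified with a slot already processed go into $B$ (the ``input side''), while indices whose slot is identified with one still to come go into $C$ (the ``output side''). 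The $b,c$ variables of the current copy are likewise routed through the input/output positions inherited from the trace structure.

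Finally, the number of admissible $\Pi$'s is at most $(Cp|A|)^{Cp|A|}$, yielding
\[\Eb\,\mathrm{Tr}\big((HH^*)^p\big)\leq \big(CpM^{C\theta}\big)^{Cp}\big(\max_{(B,C)}\|h\|_{bk_B\to ck_C}\big)^{2p}.\]
Choosing $p$ to be a small power of $\tau^{-1}M$ and applying Markov's inequality at the threshold $K=\tau^{-\theta}M^\theta\cdot\max_{(B,C)}\|h\|_{bk_B\to ck_C}$ produces (\ref{contbd}) with probability at least $1-C_\theta e^{-(\tau^{-1}M)^{\theta}}$. The principal obstacle is the peeling step above: one must verify that a \emph{consistent} global choice of the order in which the $2p$ copies are processed, depending on $\Pi$, exists so that every application of Proposition \ref{bilineartensor} genuinely returns a factor of the form $\|h\|_{bk_B\to ck_C}$ for some partition $(B,C)$ of $A$, rather than a hybrid norm not controlled by the hypothesis. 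This amounts to a selection algorithm in the same spirit as, but simpler than, the one employed later in Propositions \ref{algorithm1}--\ref{algorithm2}.
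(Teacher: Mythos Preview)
Your approach via the full moment expansion of $\mathrm{Tr}((HH^*)^p)$ is a valid alternative, but it differs from the paper's in a way worth noting. The paper does \emph{not} take the expectation of the trace. Instead it builds the kernel of $R_n=(HH^*)^{\lfloor n/2\rfloor}H^{n\bmod 2}$ inductively as a \emph{random} tensor: at each step one multiplies by a single copy of $H$ or $\overline H$, decomposes according to which indices of the new copy coincide with indices already present (using $|\eta_k|^2\equiv 1$ to collapse pairs), and applies Proposition~\ref{bilineartensor} once. This yields a recursion $\|y^{(n)}\|_{\mathrm{HS}}\lesssim\|y^{(n-1)}\|_{\mathrm{HS}}\cdot\max_{(B,C)}\|h\|_{bk_B\to ck_C}$ for the Hilbert--Schmidt norm of the coefficient tensor. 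Only at the very end is Lemma~\ref{largedev0} invoked to convert the HS bound into an almost-sure bound on $|(\Rc_{2m})_{bb'}|$, and the sub-exponential tail comes from that lemma rather than from Markov with growing $p$. Consequently $m$ can be taken \emph{fixed} (large but independent of $M$), and the one stray factor $\|h\|_{bck_A}\lesssim M^{(q+|A|)/2}\max\|h\|$ from the base of the induction is absorbed by the exponent $1/(2m)$.

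Two remarks on your route. First, the ``principal obstacle'' you flag---finding a consistent peeling order---is exactly what the paper's induction resolves automatically: processing copies one at a time in their natural order, the partition $(B,C)$ at each step is simply (indices paired into the accumulated tensor, indices still free), with the $b,c$ slots routed by the chain structure of $R_{n-1}\cdot H$. No separate selection is required here. Second, in your full-expansion scheme the blocks of $\Pi$ need not be pairs (since $\Eb\,\eta_k^{n_+}\overline{\eta_k}^{\,n_-}=\mathbf{1}_{n_+=n_-}$), and the boundary copies in the cyclic product contribute norms with $b,c$ on the same side, i.e.\ $\|h\|_{bck_B\to k_C}$ rather than $\|h\|_{bk_B\to ck_C}$; these are controlled only by $M^{O(1)}\max\|h\|$ via the support condition, not by $M^{O(\theta)}$. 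So your final inequality should read $(Cp|A|)^{Cp|A|}M^{O(1)}(\max\|h\|)^{2p}$, and one must then choose $p$ carefully against $\theta$ to close. This works, but the paper's fixed-$m$ route sidesteps both issues.
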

 \begin{proof} By conditioning on $\{h_{bck_A}(\omega)\}$, we may assume $h_{bck_A}$ are constant tensors. View $H$ as a linear operator that maps functions of $c$ to functions of $b$, and consider the kernel of $(HH^*)^m$ for a large positive integer $m$. 
 
Define $R_n=(HH^*)^m$ if $n=2m$, and $R_n=(HH^*)^mH$ if $n=2m+1$. By induction in $n$, we will prove that the kernel of $R_n$ can be written as a linear combination of terms $\Rc_n$ which has the form
 \begin{equation}\label{kernelp}
 \left\{
 \begin{aligned}
 (\Rc_n)_{bb'}&=\sum_{k_Z}y_{bb'k_Z}\prod_{j\in Z}\eta_{k_j}^{\zeta_j},&& n\textrm{ even};\\
  (\Rc_n)_{bc}&=\sum_{k_Z}y_{bck_Z}\prod_{j\in Z}\eta_{k_j}^{\zeta_j},&& n\textrm{ odd},\\
 \end{aligned}
 \right.
 \end{equation} where $Z$ is a finite set, $\zeta_j\in\{\pm\}$, $y_{bb'k_Z}$ (or $y_{bck_Z}$) is a tensor such that in its support, there is no pairing in $k_Z$, and satisfies the bound
 \begin{equation}\label{inductbdy}\|y\|_{bb'k_Z}\,\textrm{(or }\|y\|_{bck_Z})\lesssim \big(\sup_{(B,C)}\|h\|_{bk_B\to ck_C}\big)^{n-1}\|h\|_{bck_A}.
 \end{equation}In fact, when $n=1$ this is obvious (with $Z=A$). Suppose (\ref{kernelp}) and (\ref{inductbdy}) are true for $n-1$, where $n$ is odd, then since $R_n=R_{n-1}H$ it suffices to consider the kernel (note that by relabeling we may assume $Z\cap A=\varnothing$)
  \begin{equation}\label{kernelr}(\Rc_n)_{bc}=\sum_{b'}(\Rc_{n-1})_{bb'}H_{b'c}=\sum_{b'}\sum_{k_Z,k_A}y_{bb'k_Z}h_{b'ck_A}\prod_{j\in Z}\eta_{k_j}^{\zeta_j}\prod_{j\in A}\eta_{k_j}^{\zeta_j}.\end{equation}
 
 Now, by repeating the arguments in the proof of Proposition \ref{formulas}, we can write (\ref{kernelr}) as a linear combination of sums (for different choices of $\Os$), which have the same summand as (\ref{kernelr}) and are taken over the same set of variables ($b'$ and $k_{Z\cup A}$), but with a set of additional pairing and no-pairing conditions for the variables $k_{Z\cup A}$ given by $(\Os,1)$. More precisely, here $\Os=\{\Ac_1,\cdots,\Ac_m\}$ where $\Ac_i$ are pairwise disjoint subsets of $Z\cup A$ such that each subset contains two elements of $Z\cup A$ with opposite $\zeta_j$ sign, but does not contain two elements of $Z$ or two elements of $A$ with opposite $\zeta_j$ sign, and the set of conditions $(\Os,1)$ is defined by (i) the $k_j$ are the same for $j$ in each $\Ac_i$, and (ii) there is no pairing in $k_{(Z\cup A)\backslash\Qc}$ where $\Qc=\Qc(\Ac_1)\cup\cdots\Qc(\Ac_m)$ (see (\ref{def3.6:Q})), as in the proof of Proposition \ref{formulas}.
 
 Since $|\eta_j|^2\equiv 1$, we may recast the sum corresponding to $\Ps$ defined above as  \begin{equation}\label{descent1}\Rc_{bc}=\sum_{k_Y}w_{bck_Y}\prod_{j\in Y}\eta_{k_j}^{\zeta_j},\quad w_{bck_Y}=\prod_{(j,j')}^{(1)}\mathbf{1}_{k_j\neq k_{j'}}\cdot\sum_{b'}\sum_{k_{\Qc}}^{(2)}\widetilde{y}_{bb'k_Z}\widetilde{h}_{b'ck_A}.\end{equation} Here $Y=(Z\cup A)\backslash \Qc$, the product $\prod_{(j,j')}^{(1)}$ is taken over all $j,j'\in Y$ such that $\zeta_{j'}=-\zeta_{j}$, the sum $\sum_{k_{\Qc}}^{(2)}$ is taken over the variables $k_{\Qc}$ such that $k_j=k_{j'}$ whenever $\{j,j'\}$ is one of the opposite-sign 2-element subsets (pairings) selected when obtaining $\Qc$ as in Definition \ref{defmerge}, and
 \begin{equation}\label{descent2}\widetilde{y}_{bb'k_Z}:=y_{bb'k_Z}\cdot\prod_{(j,j')}^{(3)}\mathbf{1}_{k_j=k_{j'}},\quad \widetilde{h}_{b'ck_A}:=h_{b'ck_A}\cdot\prod_{(j,j')}^{(3)}\mathbf{1}_{k_j=k_{j'}},\end{equation} where the products $\prod_{(j,j')}^{(3)}$ are taken over all $j,j'\in Z$ (for $y$, or $j,j'\in A$ for $h$) such that they belong to the same $\Ac_i$. We shall apply Proposition \ref{bilineartensor} to estimate $\|w_{bck_Y}\|_{bck_Y}$; in order to do so we need to make an adjustment in notations. Namely, for any pairing $\{j,j'\}$, as we always require $k_j=k_{j'}$ in the sum $\sum_{k_{\Qc}}^{(2)}$, we may combine them into a single element and include this element in both $Z$ and $A$. In this way we are changing pairings between $Z$ and $A$ to intersections of $Z$ and $A$, which is the setting of Proposition \ref{bilineartensor}.

With (\ref{descent1}), (\ref{descent2}) and these adjustments, by Lemma \ref{adjustment} and Proposition \ref{bilineartensor}, we conclude that
 \[\|w_{bck_Y}\|_{bck_Y}\lesssim\|y_{bb'k_Z}\|_{bb'k_Z}\cdot\|h_{b'ck_A}\|_{ck_C\to b'k_B},\] where $B=\Qc\cap A$, and $C=A\backslash \Qc$. This completes the inductive proof of (\ref{kernelp}) and (\ref{inductbdy}) when $n$ is odd. When $n$ is even noticing that since $R_n=R_{n-1}H^*$, we have
   \begin{equation}\label{kernelr2}(\Rc_n)_{bb'}=\sum_{c}(\Rc_{n-1})_{bc}\overline{H_{b'c}}=\sum_{c}\sum_{k_Z,k_A}y_{bck_Z}\overline{h_{b'ck_A}}\prod_{j\in Z}\eta_{k_j}^{\zeta_j}\prod_{j\in A}\eta_{k_j}^{-\zeta_j}\end{equation} instead of (\ref{kernelr}), and the rest of proof goes analogously.
 
Now consider the product $(HH^*)^m$ with $n=2m$. Using (\ref{inductbdy}), Lemma \ref{largedev0} and noticing that the number of choices for $(b,b')$ is at most $M^{O(1)}$, we conclude that $\tau^{-1}M$-certainly, we have
  \begin{multline*}\|H_{bc}\|_{b\to c}^{4m}=\|(HH^*)^m\|_{\mathrm{OP}}^2\lesssim\sum_{b,b'}|(\Rc_n)_{bb'}|^2\lesssim(\tau^{-1}M)^\theta\|y\|_{bb'k_Z}^2\\\lesssim(\tau^{-1}M)^\theta \big(\sup_{(B,C)}\|h\|_{bk_B\to ck_C}\big)^{4m-2}\|h\|_{bck_A}^2,\end{multline*} and hence
  \[\|H_{bc}\|_{b\to c}\lesssim (\tau^{-1}M)^\theta\big(\sup_{(B,C)}\|h\|_{bk_B\to ck_C}\big)^{1-\frac{1}{2m}}\|h\|_{bck_A}^{\frac{1}{2m}}.\] Fixing $m$ large enough, and noticing that by the support condition, 
  \[\|h\|_{bck_A}\lesssim M^{\frac{q+|A|}{2}}\sup_{b,c,k_A}|h_{bck_A}|\lesssim M^{\frac{q+|A|}{2}}\sup_{(B,C)}\|h\|_{bk_B\to ck_C},\]we deduce (\ref{contbd}).
  \end{proof}
\begin{prop}[A variant of Proposition \ref{gausscont}]\label{gausscont2} Consider the same setting as in Proposition \ref{gausscont}, with the following differences: (1) we only restrict $\langle k_j\rangle\lesssim M$ but do not impose any condition on $\langle b\rangle$ or $\langle c\rangle$; (2) we assume $b,c\in\Zb^d$ also, and assume that in the support of $h_{bck_A}$ we have $|b-\zeta c|\lesssim M$ where $\zeta\in\{\pm\}$; (3) the tensor $h_{bck_A}$ only depends on $b-\zeta c$, $|b|^2-\zeta |c|^2$ and $k_A$, and is supported in the set where $||b|^2-\zeta|c|^2|\leq M^{\kappa^3}$. The other assumptions are the same. Then $\tau^{-1}M$-certainly we have
  \begin{equation*}\|H_{bc}\|_{b\to c}\lesssim \tau^{-\theta}M^\theta\cdot\sup_{(B,C)}\|h\|_{bk_B\to ck_C}.
  \end{equation*}
\end{prop}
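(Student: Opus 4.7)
The plan is to reduce to Proposition \ref{gausscont} by exploiting the structural dependence of $h_{bck_A}$ only on $(b-\zeta c,\,|b|^2-\zeta|c|^2,\,k_A)$. Combined with $|b-\zeta c|\lesssim M$, the support bound $||b|^2-\zeta|c|^2|\leq M^{\kappa^3}$ effectively confines $(b,c)$ to a region of size at most $M^{\kappa^3}$, except on a ``diagonal'' piece in the case $\zeta=+$ which must be isolated and treated separately by a scalar large-deviation estimate.

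The case $\zeta=-$ is immediate: the support bound becomes $|b|^2+|c|^2\leq M^{\kappa^3}$, forcing $\langle b\rangle,\langle c\rangle\lesssim M^{\kappa^3/2}$. Setting $\widetilde{M}:=M^{\kappa^3}$, Proposition \ref{gausscont} applies directly with $M$ replaced by $\widetilde{M}$, yielding the desired bound $\tau^{-1}\widetilde{M}$-certainly (hence $\tau^{-1}M$-certainly). The resulting factor $\widetilde{M}^\theta=M^{\kappa^3\theta}$ is absorbed into $M^\theta$ once the generic constant $\theta$ (allowed to depend on $\kappa$) is taken small enough.

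For $\zeta=+$, I will decompose $H_{bc}=H^{(1)}_{bc}+H^{(2)}_{bc}$ with $H^{(1)}:=H\cdot\mathbf{1}_{b=c}$ and $H^{(2)}:=H\cdot\mathbf{1}_{b\neq c}$; this splitting is admissible by Lemma \ref{adjustment}(3). On the support of $H^{(2)}$, the identity $|b|^2-|c|^2=(b-c)\cdot(b+c)$ together with $|b-c|\geq 1$ and $||b|^2-|c|^2|\leq M^{\kappa^3}$ forces $|b+c|\leq M^{\kappa^3}$, hence $\langle b\rangle,\langle c\rangle\lesssim M^{\kappa^3}$, and Proposition \ref{gausscont} with $M\to M^{\kappa^3}$ bounds $H^{(2)}$ exactly as in the $\zeta=-$ case. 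On the diagonal $b=c$, the structural hypothesis collapses $h_{bbk_A}$ to $\widetilde{h}(0,0,k_A)$, depending only on $k_A$; therefore $H^{(1)}$ equals the random scalar $\alpha:=\sum_{k_A}\widetilde{h}(0,0,k_A)\prod_{j\in A}\eta_{k_j}^{\zeta_j}$ times the identity, and $\|H^{(1)}\|_{b\to c}=|\alpha|$. Since $\widetilde{h}(0,0,\cdot)$ is independent of $\{\eta_k:k\in E\}$ and there is no pairing in $k_A$, Lemma \ref{largedev0} applied with parameter $\tau^{-1}M$ gives $|\alpha|\lesssim (\tau^{-1}M)^\theta\|\widetilde{h}(0,0,\cdot)\|_{\ell^2_{k_A}}$. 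Testing the definition of $\|h\|_{bk_A\to c}$ with $z_{bk_A}=\mathbf{1}_{b=0}\,\overline{\widetilde{h}(0,0,k_A)}/\|\widetilde{h}(0,0,\cdot)\|_{\ell^2}$ and evaluating the resulting sum at $c=0$ yields $\|\widetilde{h}(0,0,\cdot)\|_{\ell^2_{k_A}}\leq\|h\|_{bk_A\to c}\leq\sup_{(B,C)}\|h\|_{bk_B\to ck_C}$, closing the estimate for $H^{(1)}$.

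The only delicate step is the isolation of the diagonal $b=c$ in the $\zeta=+$ case, where the support bound alone does not constrain $|b|,|c|$; once this diagonal is extracted and handled via the direct scalar large-deviation estimate above, the rest is a routine transfer from Proposition \ref{gausscont} after inflating the effective frequency $M\to M^{\kappa^3}$.
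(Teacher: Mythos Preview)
Your treatment of the case $\zeta=-$ and of the diagonal piece $b=c$ when $\zeta=+$ is correct. However, the off-diagonal case $\zeta=+$, $b\neq c$ contains a genuine gap: the implication ``$|b-c|\geq 1$ and $|(b-c)\cdot(b+c)|\leq M^{\kappa^3}$ forces $|b+c|\leq M^{\kappa^3}$'' is false in dimension $d\geq 2$. The dot product $(b-c)\cdot(b+c)$ only constrains the component of $b+c$ in the direction of $b-c$; the orthogonal component is completely free. For instance, with $d=2$, $b=(1,T)$, $c=(0,T)$ one has $|b|^2-|c|^2=1$ while $|b|,|c|\sim T$ can be arbitrarily large. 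Thus you cannot reduce to Proposition~\ref{gausscont} by inflating $M$, and your argument breaks down precisely here.

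The paper handles $\zeta=+$ by a different mechanism that exploits the structural hypothesis more carefully. After an orthogonality localization to $|b-f|,|c-f|\lesssim M$ and the substitution $x=b-f$, $y=c-f$, one has $|b|^2-|c|^2=2f\cdot(x-y)+|x|^2-|y|^2$, so the dependence on the unbounded parameter $f$ enters only through the linear functional $z\mapsto f\cdot z$ restricted to $\{z:\langle z\rangle\lesssim M,\ |f\cdot z|\leq M^{2\kappa^3}\}$. The crucial observation (Claim~\ref{uniformclaim}, a geometry-of-numbers fact) is that as $f$ ranges over all of $\Zb^d$, these restricted functionals take at most $M^{\kappa^4}$ distinct values. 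One then applies Proposition~\ref{gausscont} for each fixed $f$ and takes a union bound over this polynomial-in-$M$ family to obtain uniformity. This step is essential and is what your decomposition misses.
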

\begin{proof} We may assume $\zeta=+$, since the other case is much easier. Since $h_{bck_A}$ is supported in the set where $|b-c|\lesssim M$, by an orthogonality argument we may modify $h$ by restricting it to the set where $|b-f|\lesssim M$ and $|c-f|\lesssim M$, and to bound $\|H_{bc}\|_{b\to c}$ it suffices to bound these restricted operators \emph{uniformly} in $f\in\Zb^d$.

For any $f$, let $x=b-f$ and $y=c-f$, then $x$ and $y$ are both assumed to have size $\lesssim M$, and it suffices to estimate the norm
\[\|\widetilde{H}_{f;xy}\|_{x\to y},\quad\mathrm{where}\quad \widetilde{H}_{f;xy}=\sum_{k_A}\widetilde{h}_{f;xyk_A}\prod_{j\in A}\eta_{k_j}^{\zeta_j},\quad\mathrm{and}\quad\widetilde{h}_{f;xyk_A}:=h_{x+f,y+f,k_A}\cdot\mathbf{1}_{|x|,|y|\lesssim M}.\] For any \emph{fixed} value of $f$, we may apply Proposition \ref{gausscont} to conclude that $\tau^{-1}M$-certainly we have
\begin{equation}\label{thisbound}\|\widetilde{H}_{f;xy}\|_{x\to y}\lesssim \tau^{-\theta}M^\theta \cdot \sup_{(B,C)}\|\widetilde{h}_{f;xyk_A}\|_{xk_B\to yk_C}\leq \tau^{-\theta}M^\theta\cdot \sup_{(B,C)}\|h\|_{bk_B\to ck_C},\end{equation} so it suffices to establish (\ref{thisbound}) \emph{uniformly} in $f$. Note that by assumption, $\widetilde{h}_{f;xyk_A}$ is in fact a function of $(x,y,k_A)$ and $|x+f|^2-|y+f|^2=2f\cdot(x-y)+(|x|^2-|y|^2)$. The desired uniform bound (\ref{thisbound}), and hence the proof of Proposition \ref{gausscont2}, will follow from the following statement:
\begin{claim}\label{uniformclaim} There exist finitely many integer-valued functions $g_j(z)$ (defined on a subset $E_j\subset\{z:\langle z\rangle\lesssim M\}$), where $1\leq j\leq K\leq M^{\kappa^4}$, such that for \emph{any} integer vector $f\in \Zb^d$, there exists $1\leq j\leq A$, such that for any $z$ satisfying $\langle z\rangle\lesssim M$, we have $|f\cdot z|\leq M^{2\kappa^3}$ if and only if $z\in E_j$, and in such case we have $f\cdot z=g_j(z)$.
\end{claim}
\begin{proof}[Proof of claim \ref{uniformclaim}] The proof is a slight modification of (a special case of) the proof of \cite{DH}, Claim 3.7, so we refer the reader to that paper.
\end{proof}\phantom\qedhere
\end{proof}
 \section{The random tensor ansatz}\label{ansatz} In this section we begin the main proof. We make several reductions to the equation (\ref{nlstrunc}) in Section \ref{reduct}, write down the central random tensor ansatz in Section \ref{constructansatz}, and state the key a priori estimates, Proposition \ref{mainprop}, in Section \ref{section5.3}.
 \subsection{First reductions}\label{reduct} We start by analyzing (\ref{nlstrunc}). The first step is to reduce it to a more suitable form. This is done by using a gauge transform, conditioning on the norms of Fourier modes of (\ref{data0}), and conjugating by the linear Schr\"{o}dinger flow.
\subsubsection{The gauge transform}\label{gauging} Define the \emph{gauge transform}
\begin{equation}\label{gauge}\widetilde{u_N}(t)=u_N(t)\cdot\exp\bigg(\frac{(p+1)i}{2}\int_0^t\fint_{\Tb^d}W_N^{p-1}(u_N)\,\mathrm{d}t'\bigg),
\end{equation} which has inverse
\begin{equation}\label{gaugeinv}u_N(t)=\widetilde{u_N}(t)\cdot\exp\bigg(\frac{-(p+1)i}{2}\int_0^t\fint_{\Tb^d}W_N^{p-1}(\widetilde{u_N})\,\mathrm{d}t'\bigg),
\end{equation} then $u_N$ satisfies (\ref{nlstrunc}) if and only if $\widetilde{u_N}$ satisfies the gauged equation
\begin{equation}\label{nlstruncgauged}
\left\{
\begin{aligned}(i\partial_t+\Delta)\widetilde{u_N}&=\Pi_N\bigg(W_N^{p}(\widetilde{u_N})-\frac{p+1}{2}\fint_{\Tb^d}W_N^{p-1}(\widetilde{u_N})\cdot \widetilde{u_N}\bigg),\\
\widetilde{u_N}(0)&=\Pi_Nf(\omega).
\end{aligned}
\right.
\end{equation} The nonlinearity in the big parenthesis of (\ref{nlstruncgauged}) can be recast in the following form:
\begin{equation}\label{simpleterm}W_N^{p}(\widetilde{u_N})-\frac{p+1}{2}\fint_{\Tb^d}W_N^{p-1}(\widetilde{u_N})\cdot \widetilde{u_N}=\sum_{3\leq q\leq p}a_{pq}(m_N-\sigma_N)^{(p-q)/2}\Nc_q(\widetilde{u_N}),\end{equation} where $q$ runs over odd integers, $a_{pq}$ are constants, $m_N$ denotes the conserved mass of $\widetilde{u_N}$ (and $u_N$),
\[m_N=\fint_{\Tb^d}|\widetilde{u_N}|^2=\sum_{\langle k\rangle\leq N}\frac{|g_k|^2}{\langle k\rangle^{2\alpha}},\] $\sigma_N$ is as in (\ref{truncmass}), and $\Nc_q$ is a degree $q$ real polynomial (regarded also as a $\Rb$-multilinear expression of degree $q$) that is simple in the sense of Definition \ref{def:simple}. For the derivation of (\ref{simpleterm}), see \cite{DNY}, Proposition 2.2.
\subsubsection{Conditioning and conjugating}\label{condition1}
Note that each $m_N$ is a function of the norms $\rho_k=|g_k|$, moreover let $m_N^*:=m_N-\sigma_N$, then by standard large deviation estimates,
\begin{equation}\label{devmass}\big|m_N^*-m_{\frac{N}{2}}^*\big|^2=\bigg|\sum_{N/2<\langle k\rangle\leq N}\frac{\rho_k^2-1}{\langle k\rangle^{2\alpha}}\bigg|^2\leq\tau^{-\theta}N^\theta\sum_{N/2<\langle k\rangle\leq N}\frac{1}{\langle k\rangle^{4\alpha}}\leq \tau^{-\theta} N^{d-4\alpha+\theta}\leq \tau^{-\theta}N^{-40\varepsilon}\end{equation} holds $\tau^{-1}N$-certainly, as $4\alpha-d>80\varepsilon$ by our assumptions.

Now, by excluding a set of probability $\leq C_\theta e^{-\tau^{-\theta}}$ and conditioning on $\{\rho_k\}$, we may fix the values of $\rho_k$ and hence $m_N^*$, so that $\widetilde{u_N}$ solves the equation
\begin{equation}\label{nlstruncgauged2}
\left\{
\begin{aligned}(i\partial_t+\Delta)\widetilde{u_N}&=\sum_{3\leq q\leq p}a_{pq}(m_N^*)^{(p-q)/2}\cdot\Pi_N\Nc_q(\widetilde{u_N}),\\
\widetilde{u_N}(0)&=\sum_{k\in\Zb^d}\Pi_N\gamma_k\cdot\eta_k(\omega)e^{ik\cdot x},
\end{aligned}
\right.
\end{equation} where $\gamma_k=\langle k\rangle^{-\alpha}\rho_k$, and they and the $m_N^*$ are constants that satisfy
\begin{equation}\label{constants}|\gamma_k|\leq \tau^{-\theta}\langle k\rangle^{-\alpha+\theta},\quad |m_N^*|\leq \tau^{-\theta},\quad \big|m_N^*-m_{\frac{N}{2}}^*\big|\leq \tau^{-\theta}N^{-40\varepsilon}.
\end{equation} We may also assume, due to (\ref{devmass}), that
\begin{equation}\label{constants2}\sum_{N/2<\langle k\rangle\leq N}\gamma_k^2\leq\tau^{-\theta}N^{d-2\alpha}.
\end{equation} Finally, define $v_N$ by $(v_N)_k(t):=e^{it|k|^2}(\widetilde{u_N})_k(t)$, then $v_N$ satisfies the following equation
\begin{equation}\label{reducedeqn}(v_N)_k(t)=(F_N)_k-i\sum_{3\leq q\leq p}a_{pq}(m_N^*)^{(p-q)/2}\int_0^t\Pi_N\Mc_q(v_N,\cdots,v_N)_k(t')\,\mathrm{d}t',
\end{equation} where the initial data $F_N$ is defined by 
\begin{equation}\label{deffnfn}(F_N)_k:=\Pi_N\gamma_k\cdot\eta_k(\omega)=\sum_{N'\leq N}(f_{N'})_k,\quad \mathrm{where}\quad(f_{N})_k:=\Delta_N\gamma_k\cdot\eta_k(\omega)
\end{equation} (this $f_N$ will be the one appearing in Definitions \ref{deftensor} and \ref{deftrim} and in particular the $\mathtt{Trim}$ functions), $\Mc_q$ is a conjugated version of $\Nc_q$, and is given by
\begin{equation}\label{multilin}\Mc_q(v^{(1)},\cdots, v^{(q)})_k(t')=\sum_{\zeta_1k_1+\cdots +\zeta_qk_q=k}c_{kk_1\cdots k_q}\cdot e^{it'\Omega}\prod_{j=1}^q(v^{(j)})_{k_j}^{\zeta_j}(t').
\end{equation} In (\ref{multilin}), the signs $(\zeta_1,\cdots,\zeta_q)=(+,-,\cdots,+)$, and the coefficients $c_{kk_1\cdots k_q}$ satisfy the simplicity condition in the sense of Definition \ref{def:simple}. Finally $\Omega$ is defined by
\begin{equation}\label{defomega}\Omega=|k|^2-|k_1|^2+\cdots -|k_q|^2=|k|^2-\sum_{j=1}^q\zeta_j|k_j|^2.
\end{equation} Below we will focus on the system (\ref{reducedeqn})--(\ref{multilin}) on $J=[-\tau,\tau]$, with the parameters satisfying (\ref{constants})--(\ref{constants2}). Using (\ref{defi}), (\ref{reducedeqn}) can be rewritten as
 \begin{equation}\label{reducedeqn2}
 (v_N)_k(t)=(F_N)_k(t)-i\sum_{3\leq q\leq p}a_{pq}(m_N^*)^{(p-q)/2}\cdot\Ic\Pi_N\Mc_q(v_N,\cdots,v_N)_k(t),
 \end{equation} where $\Ic$ is as in (\ref{defi}). In order to use the global-in-time norms defined in Section \ref{globalnorms}, we need to construct functions $v_N^\dagger$ that are well-controlled for all time, and agree with $v_N$ on $J$. The strategy is to construct $v_N^\dagger$ by the time truncated system
  \begin{equation}\label{reducedeqn3}
 (v_N^\dagger)_k(t)=\chi(t)\cdot(F_N)_k-i\sum_{3\leq q\leq p}a_{pq}(m_N^*)^{(p-q)/2}\chi_\tau(t)\cdot\Ic_\chi\Pi_N\Mc_q(v_N,\cdots,v_N)_k(t),
 \end{equation} where $\Ic_\chi$ is as in (\ref{defi}). Clearly if $v_N^\dagger$ solves (\ref{reducedeqn3}) then they must agree with the solution $v_N$ to (\ref{reducedeqn2}) on $J$. Unlike $v_N$, which always solve (\ref{reducedeqn2}), the $v_N^\dagger$ we construct are solutions to (\ref{reducedeqn3}) only $\tau^{-1}$-certainly, i.e. apart from a set of probability $\leq C_\theta e^{-\tau^{-\theta}}$.
 \subsection{Construction of tensors}\label{constructansatz} In this section we present the random tensor ansatz. The reader may recall that the core idea of this ansatz was presented for a simpler model in Section \ref{simansatz}. Suppose $v_N^\dagger$ solves (\ref{reducedeqn3}), and let $y_N$ be defined by
  \begin{equation}\label{defyndagger}y_N=v_N^\dagger-v_{\frac{N}{2}}^\dagger;\quad v_N^\dagger=\sum_{N'\leq N}y_{N'},\end{equation} then $y_N$ solves the system
   \begin{equation}\label{eqnyn}
 \begin{split}(y_N)_k(t)&=\chi(t)\cdot (f_N)_k-i\sum_{3\leq q\leq p}a_{pq}(m_N^*)^{(p-q)/2}\chi_\tau(t)\cdot\Ic_\chi\Delta_N\Mc_q(v_{\frac{N}{2}}^\dagger,\cdots v_{\frac{N}{2}}^\dagger)_k(t)\\
 &-i\sum_{3\leq q\leq p}a_{pq}(m_N^*)^{(p-q)/2}\chi_\tau(t)\cdot\Ic_\chi\Pi_N\big[\Mc_q(v_N^\dagger,\cdots,v_N^\dagger)-\Mc_q(v_{\frac{N}{2}}^\dagger,\cdots v_{\frac{N}{2}}^\dagger)\big]_k(t)\\
 &-i\sum_{3\leq q\leq p}a_{pq}\big[(m_N^*)^{(p-q)/2}-(m_{\frac{N}{2}}^*)^{(p-q)/2}\big]\chi_\tau(t)\cdot\Ic_\chi\Pi_{\frac{N}{2}}\Mc_q(v_{\frac{N}{2}^\dagger},\cdots v_{\frac{N}{2}}^\dagger)_k(t),
 \end{split}
 \end{equation} where $\Mc_q$ is as in (\ref{multilin}). Conversely if $y_N$ solves (\ref{eqnyn}), then $v_N^\dagger$ solves (\ref{reducedeqn3}) where we understand that $v_{1/2}^\dagger=0$. So it suffices to construct $y_N$.
 
 We shall construct $y_N$ by an ansatz which involves $\Sc$-tensors $h^{(\Sc,n)}=h_{kk_\Uc}^{(\Sc,n)}(t,k_\Vc,\lambda_\Vc)$  for $n\in\{0,1\}$ and \emph{regular} plants $\Sc$ having frequency $N(\Sc)=N$ and size $|\Sc|\leq D$, as well as a remainder term $z_N$. Here $D$ is as in (\ref{defdelta}). This construction will be inductive, first in $N$ and then in $|\Sc|$ with fixed $N$. As the base case we understand that all these quantities are $0$ when (formally) $N$ is $1/2$.
 
\emph{Step 1: the induction hypothesis.} Now, given dyadic $M\geq 1$, assume that we have already defined the $\Sc$-tensors $h^{(\Sc,n)}$ for all $n\in\{0,1\}$, all regular plants $\Sc$ with $N(\Sc)<M$ and $|\Sc|\leq D$, as well as $z_{N'}=(z_{N'})_k(t)$ for $N'<M$. For $N<M$, define
\begin{equation}\label{ansatzsum}
\begin{aligned}(y_N)_k(t)&=\sum_{n\in\{0,1\}}\sum_{\substack{\Sc:N(\Sc)=N\\|\Sc|\leq D}}\sum_{k_\Uc,k_\Vc}\int\mathrm{d}\lambda_\Vc\cdot h_{kk_\Uc}^{(\Sc,n)}(t,k_\Vc,\lambda_\Vc)\cdot\prod_{\lf\in\Uc}(f_{N_\lf})_{k_\lf}^{\zeta_\lf}\prod_{\ff\in\Vc}(\widehat{z_{N_\ff}})_{k_\ff}^{\zeta_\ff}(\lambda_\ff)+(z_N)_k(t),\\
(v_N^\dagger)_k(t)&=\sum_{N'\leq N}(y_{N'})_k(t);
\end{aligned}
\end{equation} note that the first equation in (\ref{ansatzsum}) can also be written as
\begin{equation}\label{ansatzalt}(y_N)_k(t)=\sum_{n\in\{0,1\}}\sum_{\substack{\Sc:N(\Sc)=N\\|\Sc|\leq D}}\Psi_k[\Sc,h^{(\Sc,n)}(t)]+(z_N)_k(t)
\end{equation} in view of (\ref{defpsi}). Here and throughout the proof, the $f_{N'}$ in (\ref{defpsi}) will be fixed as in (\ref{deffnfn}), and $(z_{N'})_{N'<M}$ will be fixed as above. Moreover, define the $\Rb$-linear operator (which plays the role of $\Pc_{NL}$ in Section \ref{raointro})
\begin{equation}\label{deflin}
(\Ls^{M}w)_k(t)=-i\sum_{3\leq q\leq p}a_{pq}(m_M^*)^{(p-q)/2} \chi_\tau(t)\cdot\Ic_\chi\Pi_M\sum_{\mathrm{sym}}\Mc_q(w,v_{M^{[\delta]}}^\dagger,\cdots,v_{M^{[\delta]}}^\dagger)_k(t),
\end{equation} where $\sum_{\mathrm{sym}}$ represents the sum over all possible permutations, for example 
\[\sum_{\mathrm{sym}}\Mc(w,v,v):=\Mc(w,v,v)+\Mc(v,w,v)+\Mc(v,v,w).\] Let the components $\Ls^{M,\zeta}$, as well as the kernels $(\Ls^{M,\zeta})_{kk'}(t,t')$, be defined as in Section \ref{notations}. Let the $\Rb$-linear operator $\Rs^{M}=(1-\Ls^{M})^{-1}$, which is bounded from $X^{b_0}$ to itself, if $\|\Ls^{M}\|_{X^{b_0}\to X^{b_0}}< 1/2$ (with $b_0$ in (\ref{defb})); otherwise let $\Rs^{M}=\mathrm{Id}$. Define also $\Vs^M=\Rs^M-1$. The goal is to define the $\Sc$-tensors $h^{(\Sc,n)}$ for $n\in\{0,1\}$ and regular plants $\Sc$ having $N(\Sc)=M$ and $|\Sc|\leq D$, and the remainder term $z_M$, such that $y_M$ defined by (\ref{ansatzsum}) with $N$ replaced by $M$ solves (\ref{eqnyn}) with high probability.

\emph{Step 2: paralinearization.} If we assume $\mathscr{R}^M=(1-\mathscr{L}^M)^{-1}$, then using the operator $\Ls^M$, we can paralinearize (\ref{eqnyn}) and rewrite it as
\begin{equation}\label{workingeqn}(y_M)_k(t)=\chi(t)\cdot (f_M)_k+(\Ls^My_M)_k(t)+\sum_{3\leq q \leq p}\sum_{N_1,\cdots,N_q\leq M}\Upsilon\cdot\chi_\tau(t)\big[\Ic_\chi\Pi\Mc_q(y_{N_1},\cdots y_{N_q})\big]_k(t).
\end{equation} In the above summation $q$ is odd, $\Pi$ is one of the projections $\Pi_M$, $\Delta_M$ or $\Pi_{\frac{M}{2}}$, and we require that if $N_j=M$ for some $j$, then there must be another $j'\neq j$ such that $N_{j'}\geq M^\delta$ (otherwise the term is contained in the second term $(\Ls^My_M)_k(t)$). The coefficient $\Upsilon$ depends only on $q$ and the $N_j$'s, and satisfies $|\Upsilon|\leq \tau^{-\theta}$; moreover if $N_j\leq (50dp)^{-1}M$ for each $1\leq j\leq q$, we have the stronger bound $|\Upsilon|\leq\tau^{-\theta}M^{-40dp\varepsilon}$.

Using the operators $\Rs^M$, $\Vs^M$ and their kernels defined by (\ref{kerneltime})--(\ref{kernelfourier}), we can rewrite (\ref{workingeqn}) as
\begin{equation}\label{workingeqn2}
\begin{aligned}(y_M)_k(t)&=\chi(t)\cdot (f_M)_k+\sum_{\zeta\in\{\pm\}}\sum_{k'}\int\mathrm{d}t'\cdot(\Vs^{M,\zeta})_{kk'}(t,t')\chi(t')\cdot (f_M)_{k'}^\zeta\\
&+\sum_{\zeta\in\{\pm\}}\sum_{k'}\int\mathrm{d}t'\cdot(\Rs^{M,\zeta})_{kk'}(t,t')\sum_{3\leq q \leq p}\sum_{N_1,\cdots,N_q\leq M}\Upsilon\cdot\chi_\tau(t')\big[\Ic_\chi\Pi\Mc_q(y_{N_1},\cdots y_{N_q})\big]_{k'}^{\zeta}(t').
\end{aligned}
\end{equation} The strategy is to construct the tensors $h^{(\Sc,n)}$ with $N(\Sc)=M$ inductively in $|\Sc|$, such that when we plug (\ref{ansatzsum}) into (\ref{workingeqn2}) allowing $N=M$, the terms on the left and right sides cancel to sufficiently high order so that the remainders can be put in $z_M$.

\emph{Step 3: definition of $h^{(\Sc,n)}$.} Expanding the right hand side of (\ref{workingeqn2}) using (\ref{ansatzsum}) and allowing $N=M$, we obtain a sum of terms of the form (omitting $\Rs^{M,\zeta}$ and other factors for the moment)
\begin{equation}\label{sums1}\sum_{\mathrm{sym}}\Mc_q(\Psi_{k_1}^{(\Sc_1,n_1)},\cdots,\Psi_{k_r}^{(\Sc_r,n_r)},z_{N_{r+1}},\cdots,z_{N_q})_{k'}(t'),
\end{equation} where $\Psi_{k_j}^{(\Sc_j,n_j)}=\Psi_{k_j}^{(\Sc_j,n_j)}(t')=\Psi_{k_j}[\Sc_j,h^{(\Sc_j,n_j)}(t')]$.

Let $\Bs=(M,q,r,\zeta_1,\cdots,\zeta_q,N_1,\cdots,N_q)$, note that $\sum_{j=1}^q\zeta_j=1$. By Proposition \ref{formulas} (1) and (2), if $N_j\leq M/2$ for each $r+1\leq j\leq q$, and each $z_{N_j}$ in (\ref{sums1}) is replaced by its low-modulation cutoff\footnote{This matches Definition \ref{deftensor}.} $z_{N_j}^{\mathrm{lo}}$ defined by \[(\widehat{z_{N_j}^{\mathrm{lo}}})_{k_j}(\lambda_j)=(\widehat{z_{N_j}})_{k_j}(\lambda_j)\cdot\chi(M^{-\kappa^2}\lambda_j)\] (see Section \ref{notations} for the definition of $\chi$), then (\ref{sums1}) can be recast as a linear combination of $\widetilde{\Psi}_k^{(\Sc)}=\widetilde{\Psi}_k^{(\Sc)}(t')=\Psi_k[\Sc,H]$ (for different choices of $\Os$ as in Definition \ref{defmerge}), where
\begin{equation}\label{newpsi}\Sc=\mathtt{Trim}(\mathtt{Merge}(\mathtt{Trim}(\Sc_1,M^\delta),\cdots,\mathtt{Trim}(\Sc_r,M^\delta),\Bs,\Os),M^\delta)
\end{equation}
\begin{equation}\label{newH}H=\mathtt{Trim}(\mathtt{Merge}(\mathtt{Trim}(h^{(\Sc_1,n_1)},M^\delta),\cdots,\mathtt{Trim}(h^{(\Sc_r,n_r)},M^\delta),h,\Bs,\Os),M^\delta).
\end{equation} In (\ref{newH}) the tensor $h=h_{kk_1\cdots k_q}(t',\lambda_{r+1},\cdots,\lambda_q)$ is given by
\begin{multline}\label{basetensor}h_{kk_1\cdots k_q}(t',\lambda_{r+1},\cdots,\lambda_q)=\mathbf{1}_{k=\zeta_1k_1+\cdots +\zeta_qk_q}\cdot\mathbf{1}_{\langle k\rangle\leq M}\prod_{j=1}^q\mathbf{1}_{\langle k_j\rangle\leq  N_j}\\\times\prod _{j=r+1}^q\chi(M^{-\kappa^2}\lambda_j)c_{kk_1\cdots k_q}e^{it'(\Omega+\zeta_{r+1}\lambda_{r+1}+\cdots +\zeta_q\lambda_q)}
\end{multline}with $c_{kk_1\cdots k_q}$ as in (\ref{multilin}) and $\Omega$ as in (\ref{defomega}). Here and throughout the proof, when applying $\mathtt{Trim}$ functions, we always fix $f_{N'}$ as in (\ref{deffnfn}), and $(z_{N'})_{N'<M}$ as in the beginning of \emph{Step 1}.

We define the $h^{(\Sc,n)}$ tensors inductively in $|\Sc|$, by the equations
\begin{equation}\label{deftensor1}h_{kk_\Uc}^{(\Sc,0)}(t,k_\Vc,\lambda_\Vc)=\chi(t)\mathbf{1}_{\Sc=\Sc_M^+}\cdot\mathbf{1}_{k=k_\lf}\mathbf{1}_{M/2<\langle k\rangle\leq M}+\sum_{\mathrm{sym}}\sum_{(a)}\Upsilon\cdot\chi_\tau(t)\big[\Ic_\chi\Pi H_{kk_\Uc}\big](t,k_\Vc,\lambda_\Vc),
\begin{aligned}
\end{aligned}
\end{equation}
\begin{equation}\label{deftensor2}\begin{aligned}h_{kk_\Uc}^{(\Sc,1)}(t,k_\Vc,\lambda_\Vc)&=\sum_{\zeta\in\{\pm\}}\mathbf{1}_{\Sc=\Sc_M^\zeta}\int\mathrm{d}t'\cdot\mathbf{1}_{M/2<\langle k_\lf\rangle\leq M}\cdot\Vs_{kk_\lf}^{M,\zeta}(t,t')\chi(t')\\
&+\sum_{\mathrm{sym}}\sum_{(b)}\Upsilon\cdot\chi_\tau(t)\big[\Ic_\chi\Pi H_{kk_\Uc}\big](t,k_\Vc,\lambda_\Vc)\\
&+\sum_{\zeta\in\{\pm\}}\sum_{k'}\int\mathrm{d}t'\cdot (\Vs^{M,\zeta})_{kk'}(t,t')\sum_{\mathrm{sym}}\sum_{(c[\zeta])}\Upsilon\cdot\chi_\tau(t')\big[\Ic_\chi\Pi H_{k'k_\Uc}\big](t',k_\Vc,\lambda_\Vc)^\zeta.
\end{aligned}
\end{equation} 
Here $\Sc_M^\zeta$ are the mini plants defined in Definition \ref{defstr}. The summation $\sum_{(c[\zeta])}$ is taken over $\Bs$, $n_j\in\{0,1\}$ and regular plants $\Sc_j$ with frequency $N_j\leq M$ and size $|\Sc_j|\leq D$ for $1\leq j\leq r$, and\footnote{Strictly speaking the sum over $\Os$ should carry the coefficients in the linear combination of $\widetilde{\Psi}_k^{(\Sc)}$ that gives (\ref{sums1}) as above; these are constants, and for simplicity we will treat them as $1$.} $\Os$, such that
\begin{enumerate}[(i)]
\item if $N_j=M$ for some $1\leq j\leq q$ then there is $q\geq j'\neq j$ with $N_{j'}\geq M^\delta$;
\item $N_j\leq M/2$ for $r+1\leq j\leq q$;
\item if $\zeta=+$ then (\ref{newpsi}) is true with the given $\Sc$, and if $\zeta=-$ then (\ref{newpsi}) is true with the left hand side replaced by $\overline{\Sc}$.
\end{enumerate} The term $H_{k'k_\Uc}(t',k_\Vc,\lambda_\Vc)$ that appears in the summand is defined in (\ref{newH}) with $h^{(\Sc_j,n_j)}$ given by the induction hypothesis. The summation $\sum_{(a)}$ is taken over the same set of variables as $\sum_{(c[+])}$ but with the restrictions (in addition to those in $\sum_{(c[\zeta])}$) that $q=r$, $n_j=0$ and $N_{\lf}\geq M^\delta$ for each $j$ and each $\lf\in\Lc_j$ (where $\Lc_j$ is the set of leaves of $\Sc_j$, see Remark \ref{notationrem}), and $\sum_{(b)}=\sum_{(c[+])}-\sum_{(a)}$.

The above is a valid inductive definition, i.e. the tensors $h^{(\Sc_j,n_j)}$ in (\ref{newH}) are already defined when we use them to define $h^{(\Sc,n)}$ via (\ref{deftensor1})--(\ref{deftensor2}), thanks to Proposition \ref{formulas} (4). Note that the first term on the right hand side of (\ref{deftensor1}) and the first line in (\ref{deftensor2}) are precisely the random $(1,1)$ tensors described in Section \ref{raointro}; the rest come from higher order iterations in Section \ref{sec2.4:randomtensors}.

\emph{Step 4: definition of $z_M$.} Now we have finished the inductive definition of $\Sc$-tensors $h^{(\Sc,n)}$ for $n\in\{0,1\}$ and regular plants $\Sc$ of frequency $N(\Sc)\leq M$ and $|\Sc|\leq D$. Using (\ref{defpsi}) this gives definition to terms $\Psi_k^{(\Sc,n)}=\Psi_k^{(\Sc,n)}(t')=\Psi_k[\Sc,h^{(\Sc,n)}]$ for such $n$ and $\Sc$. Finally we shall construct $z_M$ to complete the inductive definition; this is simply defined to be the solution to the equation 
\begin{multline}\label{eqnzm}(z_M)_k(t)=\sum_{\zeta\in\{\pm\}}\sum_{k'}\int\mathrm{d}t'\cdot(\Rs^{M,\zeta})_{kk'}(t,t')\sum_{\mathrm{sym}}\sum_{(d)}\Upsilon\cdot\chi_\tau(t')\\\times\big[\Ic_\chi\Pi\Mc_q(\Psi_{k_1}^{(\Sc_1,n_1)},\cdots,\Psi_{k_r}^{(\Sc_r,n_r)},z_{N_{r+1}}^*,\cdots,z_{N_q}^*)\big]_{k'}^\zeta(t'),
\end{multline} where $z_{N_j}^*\,(r+1\leq j\leq q)$ is either $z_{N_j}$ or $z_{N_j}^{\mathrm{lo}}$ or the high-modulation cutoff $z_{N_j}^{\mathrm{hi}}:=z_{N_j}-z_{N_j}^{\mathrm{lo}}$. Here in (\ref{eqnzm}), the sum $\sum_{(d)}$ is taken over $\Bs$, $n_j\in\{0,1\}$, regular plants $\Sc_j$ with frequency $N_j$ and size $|\Sc_j|\leq D$ for $1\leq j\leq r$, and choices of $z_{N_j}^*$, under the restrictions that (i) if $N_j=M$ for some $1\leq j\leq q$ then there is $q\geq j'\neq j$ with $N_{j'}\geq M^\delta$, (ii) either $N_j=M$ for at least one $r+1\leq j\leq q$ and $z_{N_j}^*=z_{N_j}$ for all $r+1\leq j\leq q$, or $N_j\leq M/2$ for all $r+1\leq j\leq q$ and $z_{N_j}^*=z_{N_j}^{\mathrm{hi}}$ for at least one $r+1\leq j\leq q$, or $(N_j\leq M/2)\wedge(z_{N_j}^*=z_{N_j}^{\mathrm{lo}})$ for all $r+1\leq j\leq q$ and the plant
\begin{equation}\label{defnewS}\Sc=\mathtt{Trim}(\mathtt{Merge}(\mathtt{Trim}(\Sc_1,M^\delta),\cdots,\mathtt{Trim}(\Sc_r,M^\delta),\Bs,\Os),M^\delta)\end{equation} has size\footnote{Note that $\Os$ actually does not appear in the summation (\ref{eqnzm}). But this is fine, since it can easily be checked that the \emph{size} of $\Sc$ defined by (\ref{defnewS}) does not depend on $\Os$.} $|\Sc|>D$.

Note that in (\ref{eqnzm}), all terms $\Psi_{k_j}^{(\Sc_j,n_j)}=\Psi_{k_j}[\Sc_j,h^{(\Sc_j,n_j)}]$ are already defined for $n_j\in\{0,1\}$ and regular plants $\Sc_j$ with $N(\Sc_j)\leq M$ and $|\Sc_j|\leq D$, so (\ref{eqnzm}) can be viewed as an equation for the function $z_M=(z_M)_k(t)$. If the mapping defined by the right hand side of (\ref{eqnzm}) is a contraction mapping from the set $\{z_M:\|z_M\|_{X^{b_0}}\leq M^{-D_1}\}$ to itself, we define $z_M$ to be the unique fixed point of this mapping; otherwise define $z_M=0$.

This finishes the inductive definition of the tensors $h^{(\Sc,n)}$ and the remainder $z_N$, which give the definition of $y_N$ by the ansatz (\ref{ansatzsum}).
 \subsection{The a priori estimates}\label{section5.3} With the complete definition of $h^{(\Sc,n)}$ tensors and $z_N$, we can now state the main a priori estimates for these terms. First notice that they satisfy the following simple properties, which are easily verified (using Definitions \ref{deftrim} and \ref{defmerge}, and Proposition \ref{formulas}) during the construction process:
 \begin{itemize}
 \item The tensors $h^{(\Sc,0)}$ are constant (i.e. do not depend on $\omega$) and are nonzero only when $\Sc$ is plain, the tensors $h^{(\Sc,1)}$ are $\Bc_{N^{[\delta]}}$ measurable (recall Section \ref{notations} for definition) for any $\Sc$ with $N(\Sc)=N$ and $|\Sc|\leq D$, and the remainder $z_N$ is $\Bc_N$ measurable;
 \item All these terms are supported in  $|t|\leq 1$, and $(z_N)_k(t)$ is supported in $\langle k\rangle\leq N$. In the support of $h_{kk_\Uc}^{(\Sc,n)}(t,k_{\Vc},\lambda_\Vc)$ we have $\langle k\rangle\leq N$, that $N_\lf/2<\langle k_\lf\rangle\leq N_\lf$ for each $\lf\in\Uc$, $\langle k_\ff\rangle\leq N_\ff$ and $|\lambda_\ff|\leq 2N^{\kappa^2} $ for each $\ff\in\Vc$, and that there is no pairing in $k_{\Uc}$.
 \end{itemize} The main a priori estimates are listed in the following proposition.
 \begin{prop}\label{mainprop} Given a dyadic $M$, consider the following set of statements (viewed as an event for $\omega$), which we shall refer to as $\mathtt{Local}(M)$ below: 

(1) For any plain regular plant $\Sc=(\Lc,\varnothing,\Yc)$ with $N(\Sc)=N< M$ and $|\Sc|\leq D$, we have that
\begin{equation}\label{support01} h^{(\Sc,0)}=h_{kk_\Uc}^{(\Sc,0)}(t)\mathrm{\ is\ supported\ in\ the\ set\ where\ }k=\sum_{\lf\in\Uc}\zeta_\lf k_\lf.
\end{equation}For any $\Gamma\in\Zb$, let $h^{(\Sc,0,\Gamma)}$ be the restriction of $h^{(\Sc,0)}$ to the set where
 \begin{equation}\label{support02} |k|^2-\sum_{\lf\in\Uc}\zeta_\lf|k_\lf|^2=\Gamma,
 \end{equation} obtained by multiplying by the indicator function of this set. Let $(B,C)$ be a subpartition of $\Uc$ and let $E=\Uc\backslash(B\cup C)$. Then we have
 \begin{equation}\label{estimate01}\int_\Rb\langle \lambda\rangle^{2b}\bigg(\sum_{\Gamma\in\Zb}\|\widehat{h_{kk_\Uc}^{(\Sc,0,\Gamma)}}(\lambda)\|_{kk_B\to k_C}\bigg)^2\,\mathrm{d}\lambda\leq\bigg(\prod_{\lf\in B\cup C}N_\lf^{\beta_1}\prod_{\lf\in\Pc}N_\lf^{-8\varepsilon}\prod_{\lf\in E}N_\lf^{4\varepsilon}\prod_{\pf\in\Yc}N_\pf^{-\delta^3}\cdot\Xc_0\Xc_1\bigg)^2,
 \end{equation} with the quantities
 \begin{equation}\label{estimate02}\Xc_0=\left\{
  \begin{aligned}&\big(\max_{\lf\in C}N_\lf\big)^{-\beta_1},&&\mathrm{if\ }C\neq\varnothing;\\
    & \big(\min_{\lf\in\Lc}N_\lf\big)^{\frac{d}{2}-\beta_1},&&\mathrm{if\ }C=E=\varnothing;\\
   & N^{-\varepsilon\delta},&&\mathrm{if\ }B=C=\varnothing;\\
    &1,&&\mathrm{otherwise},
 \end{aligned}
 \right.\qquad\Xc_1=\left\{
 \begin{aligned}&1,&{\rm{if }}\max_{\lf\in \Lc}N_\lf&\geq (10^3dp)^{-|\Lc|}N;\\
&N^{-4\varepsilon},&{\rm{if }}\max_{\lf\in \Lc}N_\lf&< (10^3dp)^{-|\Lc|}N.
 \end{aligned}
 \right.
 \end{equation}
 
 (2) For any regular plant $\Sc=(\Lc,\Vc,\Yc)$ with $N(\Sc)=N< M$ and $|\Sc|\leq D$, consider the tensor $h^{(\Sc,1)}=h_{kk_\Uc}^{(\Sc,1)}(t,k_\Vc,\lambda_\Vc)$. Let $(B,C)$ be a subpartition of $\Uc$ and let $E=\Uc\backslash(B\cup C)$. Then, recall the norms defined in (\ref{newnorm})--(\ref{newnorm2}), for $C\neq\varnothing$ we have \begin{equation}\label{estimate11}\|h_{kk_\Uc}^{(\Sc,1)}(t,k_\Vc,\lambda_\Vc)\|_{X_\Vc^{1-b,-b_0}[kk_B\to k_C]}\leq\prod_{\lf\in B\cup C}N_\lf^\beta\prod_{\lf\in\Pc}N_\lf^{-4\varepsilon}\prod_{\lf\in E}N_\lf^{8\varepsilon}\prod_{\pf\in\Yc}N_\pf^{-\delta^3}\prod_{\ff \in \Vc} N_\ff^d
 \cdot 
\big(\max_{\lf\in C}N_\lf\big)^{-\beta},
 \end{equation} while for $C=\varnothing$ we have
 \begin{equation}
 \label{estimate12}\|h_{kk_\Uc}^{(\Sc,1)}(t,k_\Vc,\lambda_\Vc)\|_{X_\Vc^{\widetilde{b},-b_0}[kk_B]}\leq\prod_{\lf\in B}N_\lf^\beta\prod_{\lf\in\Pc}N_\lf^{-4\varepsilon}\prod_{\lf\in E}N_\lf^{8\varepsilon}\prod_{\pf\in\Yc}N_\pf^{-\delta^3}\prod_{\ff \in \Vc} N_\ff^d\cdot N^{-\varepsilon},
 \end{equation} where $\widetilde{b}$ equals $1-b$ if $E\neq \varnothing$ and equals $b$ if $E=\varnothing$. We also have a localization bound (for $C=\varnothing$)
 \begin{equation} \label{estimate13}\bigg\|\bigg(1+\frac{1}{N^{2\delta}}\big|k-\sum_{\lf\in\Uc}\zeta_\lf k_\lf-\ell\big|\bigg)^\kappa h_{kk_\Uc}^{(\Sc,1)}(t,k_\Vc,\lambda_\Vc)\bigg\|_{X_\Vc^{\widetilde{b},-b_0}[kk_B]}\leq\prod_{\lf\in B}N_\lf^\beta\prod_{\lf\in\Pc}N_\lf^{-4\varepsilon}\prod_{\lf\in E}N_\lf^{8\varepsilon}\prod_{\pf\in\Yc}N_\pf^{-\delta^3}\prod_{\ff \in \Vc} N_\ff^d,
 \end{equation} where $\ell=\sum_{\ff\in\Vc}\zeta_\ff k_\ff$, and $\widetilde{b}$ is the one in (\ref{estimate12}).
 
 Finally, we have an auxiliary bound for the $\lambda_\Vc$-derivative\footnote{Note that, once we have (\ref{estimate14}), we automatically have the same bound for the norm with the weight in (\ref{estimate13}), with the right hand side multiplied by $N^\kappa$, which is negligible as the right hand side is super-polynomial.} of $h_{kk_\Uc}^{(\Sc,1)}(t,k_\Vc,\lambda_\Vc)$ (for $C=E=\varnothing$),
  \begin{equation} \label{estimate14}\|\partial_{\lambda_\Vc}h_{kk_\Uc}^{(\Sc,1)}(t,k_\Vc,\lambda_\Vc)\|_{X_\Vc^{b,-b_0}[kk_\Uc]}\leq \exp[(\log N)^5+|\Sc|(\log N)^3].
 \end{equation}
 
 (3) For $n\in\{0,1\}$ and regular plant $\Sc$ with $N(\Sc)=N<M$ and $|\Sc|\leq D$, let the expression $\Psi_{k}^{(\Sc,n)}=\Psi_k[\Sc,h^{(\Sc,n)}]$ be defined as in (\ref{defpsi}), then we have
 \begin{equation}\label{psibd}\|\Psi^{(\Sc,n)}\|_{X^{s',b_0}}\leq\tau^{-\theta_0}N^{s'-s}\prod_{\nf\in\Lc\cup\Vc\cup\Yc}N_\nf^{-\delta^3}
 \end{equation} for any $s-\delta^2<s'<s$, where $(s,b_0,\theta_0)$ are defined in (\ref{choicealpha}) and Section \ref{notations}.
 
  (4) For all $N<M$, the mapping that defines $z_N$ (namely the right hand side of (\ref{eqnzm}) but with $M$ replaced by $N$ and then $z_N$ replaced by an independent variable $z$) is a contraction mapping from $\{z:\|z\|_{X^{b_0}}\leq N^{-D_1}\}$ to itself with $D_1$ as in (\ref{defb}). In particular, we have $\|z_N\|_{X^{b_0}}\leq N^{-D_1}$ for all $N<M$.
 
 (5) Let $y_{N}$ and $v_{N}^\dagger$ be defined as in (\ref{ansatzsum}) for $N<M$, then they solve the system (\ref{eqnyn}) for $N<M$. Moreover, for any $N_2,\cdots,N_{q}< M$ and any $N$ (which may be $\geq M$), consider the operator $\Ls^\zeta$ (which is complex linear if $\zeta=+$, and conjugate complex linear if $\zeta=-$) defined by
 \begin{equation}\label{deflinoper}(\Ls^\zeta w)_k(t)=\chi_\tau(t)\cdot\Ic_\chi\Pi_N\Mc_q(y_{N_2}^*,\cdots,w,\cdots, y_{N_{q}}^*)_k(t),
 \end{equation} and the corresponding kernel $(\Ls^\zeta)_{kk'}(t,t')$, where each $y_{N_j}^*$ is either $y_{N_j}$ or one of its components, namely $z_{N_j}$ (possibly with Fourier truncations similar to the ones in $z_N^{\mathrm{hi}}$ and $z_N^{\mathrm{lo}}$ defined in Section \ref{constructansatz}) or $\Psi[\Sc_j,h^{(\Sc_j,n_j)}]$ for some $n_j\in\{0,1\}$ and regular plant $\Sc_j$ with $N(\Sc_j)=N_j$ and $|\Sc_j|\leq D$ (see (\ref{defpsi}) and (\ref{ansatzsum})), then they satisfy
 \begin{equation}\label{linbd}\|\Ls^\zeta\|_{X^{1-b,-b}[k\to k']}\leq \tau^{(5\kappa)^{-1}}\big(\max_{2\leq j\leq q}N_j\big)^{-4\varepsilon\delta}.
 \end{equation}
 
Now, with the above definition of $\mathtt{Local}(M)$, we have that \emph{the probability that $\mathtt{Local}(M)$ holds but $\mathtt{Local}(2M)$ does not hold} is $\leq C_\theta e^{-(\tau^{-1}M)^\theta}$. In particular, $\tau^{-1}$-certainly, $\mathtt{Local}(M)$ holds for all $M$.
  \end{prop}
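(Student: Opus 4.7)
My plan is to induct on $M$: assume $\mathtt{Local}(M)$ holds and show that $\mathtt{Local}(2M)$ fails with probability at most $C_\theta e^{-(\tau^{-1}M)^\theta}$. Since the new information in $\mathtt{Local}(2M)$ concerns objects at frequency $N=M$, namely the tensors $h^{(\Sc,0)},h^{(\Sc,1)}$ for $N(\Sc)=M$, the remainder $z_M$, and the operators $\Ls^\zeta$, I only need to control these new objects. This will be done by a secondary induction on the size $|\Sc|\leq D$, following exactly the constructive definitions in (\ref{deftensor1}), (\ref{deftensor2}) and (\ref{eqnzm}); Proposition \ref{formulas}(4) guarantees that when we define $h^{(\Sc,n)}$ with $N(\Sc)=M$, the inputs $h^{(\Sc_j,n_j)}$ appearing via $\mathtt{Merge}$ and $\mathtt{Trim}$ either have $N(\Sc_j)<M$ (handled by $\mathtt{Local}(M)$) or have $N(\Sc_j)=M$ with strictly smaller size (handled by the secondary induction).

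The heart of the proof is the passage from bounds on $h^{(\Sc_j,n_j)}$ to bounds on the trimmed merged tensor $H$ defined by (\ref{newH}). First, the base tensor $h=h_{kk_1\cdots k_q}$ in (\ref{basetensor}) is controlled in all partition norms $\|\cdot\|_{kk_{P_0}\to k_{Q_0}}$ through Proposition \ref{final}, which packages the counting Lemmas \ref{cubic}--\ref{general2}. Second, the $\mathtt{Merge}$ step is estimated by the multilinear tensor inequality Proposition \ref{multibound}. Since the right-hand side of (\ref{multibound00}) depends on the ordering of the factors, one must run the selection algorithm referenced in Proposition \ref{algorithm1}--\ref{algorithm2}: I order the $\Sc_j$ so that the large-input factor falls on the ``correct'' side of the partition, and in the presence of pairings $\Pc$ the gains $N_\lf^{-\varepsilon_1}$ absorb the cost; the memory gain $\prod_{\pf\in\Yc}N_\pf^{-\delta^3}$ is generated each time we merge (since a new $\pf$ is added with $N_\pf$ equal to the second-maximum of the input frequencies). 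Third, the post-merge $\mathtt{Trim}$ at frequency $M^\delta$ is the contraction of the merged tensor against independent Gaussians, and its $\|\cdot\|_{k_B\to k_C}$ norms are estimated via Proposition \ref{gausscont} (or its variant Proposition \ref{gausscont2} for the localized tensor bound (\ref{estimate13}) using Lemma \ref{weightedbd}). The $X^{1-b,-b_0}$ spacetime structure is propagated by Lemma \ref{duhamelform0} for $\Ic_\chi$, Lemma \ref{localization} for the time cutoff $\chi_\tau$, and the fact that the base tensor (\ref{basetensor}) enjoys the modulation bound $|h|\lesssim\langle \Omega+\sum\zeta_j\lambda_j\rangle^{-1}$ through the Duhamel kernel, which is exactly the setting of the $1/\langle\Omega+\Gamma\rangle$ bound in (\ref{extrabdh}).

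Once (1)--(2) for $N=M$ are established, the bound (\ref{psibd}) for $\Psi^{(\Sc,n)}$ in (3) follows by a single further application of Proposition \ref{gausscont} (viewing (\ref{defpsi}) as a complete contraction of $h^{(\Sc,n)}$ against the independent Gaussians $f_{N_\lf}$), combined with the losses and gains already recorded in (\ref{estimate01}), (\ref{estimate11})--(\ref{estimate13}). For (4), I verify the contraction property for $z_M$ by plugging (\ref{ansatzsum}) into (\ref{eqnzm}) and distinguishing cases: if some $N_j=M$ with $z_{N_j}$ then the input is assumed small by the induction hypothesis; if a high-modulation piece $z_{N_j}^{\mathrm{hi}}$ appears, standard $X^{s,b}$ high-modulation estimates produce a gain $M^{-\theta\kappa^2}$; otherwise $|\Sc|>D$ forces a factor $M^{-\delta^3 D}\ll M^{-D_1}$. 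Finally (5), the linear operator bound (\ref{linbd}), is established by the same $\mathtt{Merge}$ and $\mathtt{Trim}$ machinery applied to the single-input operator (\ref{deflinoper}), noting that the $\tau^{(5\kappa)^{-1}}$ factor arises from Lemma \ref{localization} applied to $\chi_\tau$.

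The main obstacle will be executing the selection algorithm and verifying that the exponents close: the bounds (\ref{estimate01}) and (\ref{estimate11}) are delicate in the near-critical regime $s\approx s_{pr}$, where the $\beta$, $\varepsilon$, $\delta$ hierarchy in (\ref{subcrit})--(\ref{defdelta}) must be respected at every node. A secondary subtlety is the accounting of over-pairings, which force us to use the sup-norm (\ref{tensornorms:sup}) and bring in the extra clauses in Proposition \ref{final}; these will be dealt with by Propositions \ref{overpair}--\ref{overpair3}. The probabilistic losses from each application of Proposition \ref{gausscont} are $(\tau^{-1}M)^\theta$ on an event of probability at least $1-C_\theta e^{-(\tau^{-1}M)^\theta}$, and since the total number of plants with $|\Sc|\leq D$ and $N(\Sc)=M$ is polynomially bounded in $M$, a union bound yields the claimed overall exceptional probability.
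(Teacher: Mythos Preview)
Your proposal is correct and follows essentially the same architecture as the paper's proof: the outer induction on $M$, the inner induction on $|\Sc|$, and the three-ingredient scheme (counting via Proposition~\ref{final}, multilinear tensor bounds via Proposition~\ref{multibound} with the selection algorithm of Propositions~\ref{algorithm1}--\ref{algorithm2}, and Gaussian contraction via Proposition~\ref{gausscont}) are exactly what the paper does in Sections~\ref{mainproof}--\ref{mainproof1}, packaged through the merging Propositions~\ref{overpair}--\ref{overpair3} and the trimming Proposition~\ref{trimbd}.

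One nontrivial step you have not addressed: the definition (\ref{deftensor2}) of $h^{(\Sc,1)}$ contains, in its first and third lines, the operator $\Vs^{M,\zeta}=(1-\Ls^M)^{-1}-1$, and you need operator bounds for $\Vs^{M,\zeta}$ (on $X^{1-b}\to X^b$, as a Hilbert--Schmidt kernel, and with the localization weight) before the merge/trim machinery can be applied to those terms. In the paper this is Proposition~\ref{linextra}, which is derived from the $\Ls^\zeta$ bound (\ref{linbd}) in part~(5) of $\mathtt{Local}(M)$ together with a Neumann series and interpolation argument; your plan for part~(5) does supply the needed input, but the $\Vs^M$ step is a separate subsection of work you should flag. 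Two smaller corrections: Proposition~\ref{gausscont2} is used for the \emph{operator} bound in part~(5) (via Proposition~\ref{overpair3}), not for the localization bound (\ref{estimate13}), which instead goes through Lemma~\ref{weightedbd} and the support property $|k-\zeta k'|\lesssim M^\delta$ of $\Ls^\zeta$; and for part~(3) the paper uses Lemma~\ref{largedev0} (a scalar large-deviation bound) rather than Proposition~\ref{gausscont}, though in the fully-contracted case these amount to the same thing. Finally, you will need the \emph{meshing argument} (approximating in the continuous $\lambda_\Vc$ and $\lambda_j$ variables by piecewise constants, using the derivative bound (\ref{estimate14})) to turn the fixed-parameter probabilistic estimates of Proposition~\ref{gausscont} into uniform ones; this is the role of (\ref{estimate14}) that your proposal does not make explicit.
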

   \section{Trimming and merging estimates}\label{mainproof} In this and in the next section we prove Proposition \ref{mainprop}. This section is devoted to the proof of some important estimates on trimming and merging of tensors, which will be crucial to the proof we will give in Section \ref{mainproof1}. These are: trimming bounds in Section \ref{6.1}, no-over-pairing merging bounds in Section \ref{mainproof2} and general merging bounds in Section \ref{6.3}. Throughout this and the next section we will fix a dyadic $M$ (we may always assume $M\gg_{C_\theta}1$, since otherwise the relevant bounds become trivial as $\tau\ll_{C_\theta}1$), and assume that the statement $\mathtt{Local}(M)$, defined in Proposition \ref{mainprop}, is already true. If results in this and the next section rely on more assumptions (such as parts of $\mathtt{Local}(2M)$ that have been established in preceding proofs), we will explicitly point this out.
  
All the quantities (functions, tensors, etc.) in this and the next section that depend on $t$ (or $t'$ etc.) will be supported in $|t|\leq1$ (or $|t'|\leq 1$ etc.). This implies that their derivatives in the time-Fourier variables ($\lambda$, $\lambda_j$ etc.) automatically satisfy the same bounds as they do; these derivative bounds will be useful in application of \emph{meshing arguments} (see the proof of Proposition \ref{trimbd} for details) below. Moreover due to $\mathtt{Local}(M)$ part (4), the functions $z_{N'}=(z_{N'})_k(t)$ for $N'<M$ are already defined and satisfy that $\|z_{N'}\|_{X^{b_0}}\leq (N')^{-D_1}$. When applying $\mathtt{Trim}$ functions below we always fix these $z_{N'}$, and the $f_{N'}$ defined in (\ref{deffnfn}).
  \subsection{Trimming estimates}\label{6.1} We first prove the trimming estimates.
  \begin{prop}[Trimmed tensor bounds]\label{trimbd} Let $\Sc$ be a regular plant, $N(\Sc)=N\leq M$ and $|\Sc|\leq D$. Let $h=h_{kk_\Uc}(k_\Vc,\lambda_\Vc)$ be an $\Sc$-tensor which is $\Bc_{N^{[\delta]}}$ measurable. For $N^\delta\leq R\leq M^\delta$, consider the trimmed plant $\Sc'=(\Lc',\Vc',\Yc')=\mathrm{Trim}(\Sc,R)$ and the trimmed tensor $h'=(h')_{kk_{\Uc'}}(k_{\Vc'},\lambda_{\Vc'})=\mathtt{Trim}(h,R)$. Let $(B,C)$ be a subpartition of $\Uc$ with $E=\Uc\backslash(B\cup C)$ and $(B',C')$ be a subpartition of $\Uc'$ with $E'=\Uc'\backslash(B'\cup C')$.
  
  (1) Assume $h$ satisfies that, for any $(B,C)$ with $C\neq\varnothing$,
  \begin{equation}\label{trim-bd1}\|h_{kk_\Uc}(k_\Vc,\lambda_\Vc)\|_{X_\Vc^{-b_0}[kk_B\to k_C]}\lesssim\Xf\cdot\prod_{\lf\in B\cup C}N_\lf^\beta\prod_{\lf\in\Pc}N_\lf^{-4\varepsilon}\prod_{\lf\in E}N_\lf^{8\varepsilon}\prod_{\pf\in\Yc}N_\pf^{-\delta^3}\prod_{\ff \in \Vc} N_\ff^d\cdot \big(\max_{\lf\in C}N_\lf\big)^{-\beta},
  \end{equation} and for $C=\varnothing$,
  \begin{equation}
 \label{trim-bd2}\|h_{kk_\Uc}(k_\Vc,\lambda_\Vc)\|_{X_\Vc^{-b_0}[kk_B]}\lesssim\Xf\cdot\prod_{\lf\in B}N_\lf^\beta\prod_{\lf\in\Pc}N_\lf^{-4\varepsilon}\prod_{\lf\in E}N_\lf^{8\varepsilon}\prod_{\pf\in\Yc}N_\pf^{-\delta^3}\prod_{\ff \in \Vc} N_\ff^d\cdot N^{-\varepsilon},
 \end{equation}
 \begin{equation} \label{trim-bd3}\bigg\|\bigg(1+\frac{1}{N^{2\delta}}\big|k-\sum_{\lf\in\Uc}\zeta_\lf k_\lf-\ell\big|\bigg)^\kappa h_{kk_\Uc}(k_\Vc,\lambda_\Vc)\bigg\|_{X_\Vc^{-b_0}[kk_B]}\lesssim\Xf\cdot\prod_{\lf\in B}N_\lf^\beta\prod_{\lf\in\Pc}N_\lf^{-4\varepsilon}\prod_{\lf\in E}N_\lf^{8\varepsilon}\prod_{\pf\in\Yc}N_\pf^{-\delta^3}\prod_{\ff \in \Vc} N_\ff^d,
 \end{equation} where $\ell=\sum_{\ff\in\Vc}\zeta_\ff k_\ff$, and also the auxiliary bound for $C=E=\varnothing$,
 \begin{equation}\label{trim-bd4}\|\partial_{\lambda_\Vc}h_{kk_\Uc}(k_\Vc,\lambda_\Vc)\|_{X_\Vc^{-b_0}[kk_\Uc]}\leq\Xf\cdot\exp[(\log N)^5+|\Sc|(\log N)^3].
 \end{equation} Then, $\tau^{-1}M$-certainly, the estimates (\ref{trim-bd1})--(\ref{trim-bd3}) hold for $h=h_{kk_\Uc}(k_\Vc,\lambda_\Vc)$ replaced by $h'=h_{kk_{\Uc'}}'(k_{\Vc'},\lambda_{\Vc'})$, the sets $B,C,E,\Pc,\Vc$ etc. replaced by $B',C',E',\Pc',\Vc'$ etc., the fraction $1/N^{2\delta}$ in (\ref{trim-bd3}) replaced by $1/\max(N^{2\delta},R)$, and the factor $\Xf$ replaced by $\Xf\cdot\tau^{-\theta}M^\theta$.
 
 (2) Assume $\Vc=\varnothing$, $h$ is supported in the set $k=\sum_{\lf\in\Uc}\zeta_\lf k_\lf$ and satisfies that, for any $(B,C)$,
 \begin{equation}
 \label{trim-bd5}\|h_{kk_\Uc}\|_{kk_B\to k_C}\leq\Xf\cdot \prod_{\lf\in B\cup C}N_\lf^{\beta_1}\prod_{\lf\in\Pc}N_\lf^{-8\varepsilon}\prod_{\lf\in E}N_\lf^{4\varepsilon}\prod_{\pf\in\Yc}N_\pf^{-\delta^3}\cdot\Xc_0\Xc_1,
 \end{equation} where $\Xc_0$ and $\Xc_1$ are defined as in (\ref{estimate02}). Assume also that either $C'\cup E'\neq\varnothing$, or $\Lc\neq\Lc'$ (i.e. $N_\lf<R$ for at least one $\lf\in\Lc$), then $\tau^{-1}M$-certainly, the estimates (\ref{trim-bd1})--(\ref{trim-bd3}) hold for $h'=h_{kk_{\Uc'}}'$, with the sets $B,C,E,\Pc$ etc. replaced by $B',C',E',\Pc'$ etc., the fraction $1/N^{2\delta}$ in (\ref{trim-bd3}) replaced by $1/R$, and the factor $\Xf$ replaced by $\Xf\cdot \tau^{-\theta}M^\theta $ for (\ref{trim-bd1}), and by $\Xf\cdot \tau^{-\theta}M^\theta (1+N^{-3\varepsilon}R^{d/2-\beta_1})$ for (\ref{trim-bd2})--(\ref{trim-bd3}).
  \end{prop}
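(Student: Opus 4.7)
The plan is to view the trimming operation, as defined in Definition \ref{deftrim}, as the composition of two kinds of contractions: contractions against the deterministic remainder functions $\widehat{z_{N_\ff}}$ over the low-frequency blossoms $\ff \in \Vc \backslash \Vc'$, and contractions against the independent Gaussians $(f_{N_\lf})_{k_\lf}^{\zeta_\lf}$ over the low-frequency leaves $\lf \in \Uc \backslash \Uc'$. We carry them out sequentially, performing the blossom contractions (deterministic) first and the leaf contractions (random) last, so that the independence hypothesis of Proposition \ref{gausscont} is preserved at the final step. Note that since $\Sc$ is regular we have $N_\lf \geq N^\delta > N^{[\delta]}$ for every $\lf \in \Lc$, so the $\Bc_{N^{[\delta]}}$-measurability of $h$ ensures independence from the Gaussians we contract against.

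For the blossom contractions, the bound $\|z_{N_\ff}\|_{X^{b_0}} \leq N_\ff^{-D_1}$ coming from $\mathtt{Local}(M)$ part (4), together with Cauchy--Schwarz in $(k_\ff, \lambda_\ff)$—exploiting that the weight $\langle \lambda_\ff \rangle^{-2b_0}$ built into the $X^{-b_0}_\Vc$ norm is dual to $\langle \lambda_\ff\rangle^{2b_0}$ in $X^{b_0}$—yields, for each $\ff \in \Vc\backslash\Vc'$,
\[
\|\mathtt{Trim}(h,N_\ff+)\|_{X^{-b_0}_{\Vc\backslash\{\ff\}}[\cdots]} \;\lesssim\; N_\ff^{-D_1}\cdot \|h\|_{X^{-b_0}_\Vc[\cdots]}.
\]
Iterating over $\ff$, the accumulated factor $\prod_{\ff \in \Vc\backslash\Vc'} N_\ff^{-D_1}$ comfortably absorbs the leftover $\prod_{\ff \in \Vc\backslash\Vc'} N_\ff^d$ from the RHS of (\ref{trim-bd1})--(\ref{trim-bd3}) (since $D_1\gg d$), so the deterministic contractions introduce no essential loss.

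For the leaf contractions, we first apply a meshing argument to the continuous variables $\lambda$ and $\lambda_\Vc$: choose a grid of spacing $\sim \tau$ covering the effective range $|\lambda_j|\lesssim M^{\kappa^2}$ (hence $\lesssim (\tau^{-1}M)^{O(\kappa^2)}$ points), and use the auxiliary Lipschitz bound (\ref{trim-bd4}) on $\partial_{\lambda_\Vc} h$—together with the support $|t|\leq 1$, which gives the $\lambda$ Lipschitz bound for free—to interpolate between grid points. At each grid point we apply Proposition \ref{gausscont} to the Gaussian contraction over $\lf \in \Uc\backslash\Uc'$: $\tau^{-1}M$-certainly, this contraction gives
\[
\|h'\|_{kk_B\to k_C} \;\lesssim\; \tau^{-\theta}M^\theta \cdot \sup_{(B_1,C_1)}\|h\|_{kk_{B \cup B_1}\to k_{C\cup C_1}},
\]
where $(B_1,C_1)$ ranges over partitions of $\Uc\backslash\Uc'$. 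A union bound over grid points preserves the $\tau^{-1}M$-certainty. Using the hypothesis (\ref{trim-bd1})--(\ref{trim-bd2}) on the right-hand side, and noting that the low-frequency factors $\prod_{\lf\in \Uc\backslash\Uc'} N_\lf^{O(1)}$ with $N_\lf<R\leq M^\delta$ are power-absorbed into $\tau^{-\theta}M^\theta$, we recover the trimmed estimates in part~(1).

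The main technical obstacle is the localization bound (\ref{trim-bd3}). Upon trimming, the hyperplane around which $h$ is concentrated, namely $k = \sum_{\lf \in \Uc}\zeta_\lf k_\lf + \sum_{\ff \in \Vc}\zeta_\ff k_\ff$, shifts to $k = \sum_{\lf \in \Uc'}\zeta_\lf k_\lf + \sum_{\ff \in \Vc'}\zeta_\ff k_\ff$ by a sum of low-frequency vectors of total size $\lesssim R$. Lemma \ref{weightedbd} lets us propagate the weight $(1 + N^{-2\delta}|k - \cdots|)^\kappa$ through both kinds of contractions, but only up to the new scale $R$: the weight becomes ineffective on differences of size $\ll R$, which is precisely why the fraction $1/N^{2\delta}$ in (\ref{trim-bd3}) must be replaced by $1/\max(N^{2\delta},R)$. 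For part (2), which starts from the weaker operator-norm hypothesis (\ref{trim-bd5}) without modulation variables, we invoke the variant Proposition \ref{gausscont2} (which dispenses with the external size restriction on the output variable $k$) in place of Proposition \ref{gausscont}; the extra factor $1 + N^{-3\varepsilon}R^{d/2-\beta_1}$ appears in the near-degenerate case where $\Uc'$ is small and $C' = E' = \varnothing$, forcing a Hilbert--Schmidt-type bound on $h'$ in the $k$-variable. In that situation, Lemma \ref{largedev0} applied to the Gaussian sum bounds $\|h'\|_k$ by (a power of $\tau^{-1}M$ times) the $\ell^2$ norm of $h$ in $(k,k_{\Uc\backslash\Uc'})$, and passing from the operator-norm hypothesis (scaling like $R^{\beta_1}$) to this $\ell^2$-norm (scaling like $R^{d/2}$) produces exactly the ratio $R^{d/2 - \beta_1}$; the extra factor $N^{-3\varepsilon}$ encodes the standard $\Xc_1$ gain from (\ref{estimate02}) when the minimum leaf frequency is at scale $R$.
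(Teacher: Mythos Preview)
Your overall strategy---Cauchy--Schwarz for the blossom contractions and Proposition~\ref{gausscont} for the leaf contractions, with a meshing argument in $\lambda_\Vc$ to obtain uniformity---matches the paper's proof. However, several concrete steps are incorrect as written.

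First and most seriously, the mesh spacing $\sim\tau$ is far too coarse. The auxiliary bound (\ref{trim-bd4}) only controls $\partial_{\lambda_\Vc}h$ by $\Xf\cdot\exp[(\log N)^5+|\Sc|(\log N)^3]$, which is super-polynomial in $N$; since $\tau$ is fixed while $N$ can be arbitrarily large, the interpolation error $\tau\cdot\|\partial_{\lambda_\Vc}h\|$ is not small. The paper takes the mesh spacing to be $\nu=\exp(-(\log N)^6)$, so that $\nu\cdot\exp[(\log N)^5]$ is negligible, while the number of grid points $\sim\exp(\kappa(\log N)^6)$ is still sub-exponential and survives the union bound. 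Also, there is no time-Fourier variable $\lambda$ in this proposition (the tensor $h$ has no $t$-dependence), so your remark about meshing in $\lambda$ is out of place.

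Second, for the localization bound (\ref{trim-bd3}) you invoke Lemma~\ref{weightedbd}, but that lemma is about operators with kernel supported in $|k-k'|\leq R$ and does not apply to the trimming contractions. The paper instead uses the elementary pointwise inequality
\[
1+\frac{1}{\max(N^{2\delta},R)}\Big|k-\sum_{\lf\in\Uc'}\zeta_\lf k_\lf-\ell'\Big|\;\lesssim\;1+\frac{1}{N^{2\delta}}\Big|k-\sum_{\lf\in\Uc}\zeta_\lf k_\lf-\ell\Big|,
\]
which holds because the removed variables $k_\lf$ $(\lf\in\Uc\backslash\Uc')$ and $k_\ff$ $(\ff\in\Vc\backslash\Vc')$ all have size $\lesssim R$.

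Third, your treatment of part (2) is off in two places. You do not need Proposition~\ref{gausscont2}: the support condition $\langle k\rangle\leq N\leq M$ is already built into the $\Sc$-tensor definition, so the size hypothesis of Proposition~\ref{gausscont} is satisfied and that proposition suffices. More importantly, the factor $R^{d/2-\beta_1}$ does \emph{not} arise from passing from an operator norm to a Hilbert--Schmidt norm. It comes directly from the hypothesis (\ref{trim-bd5}): when $C'=E'=\varnothing$ one is forced into the branch $C=E=\varnothing$ of (\ref{estimate02}), where $\Xc_0=(\min_{\lf\in\Lc}N_\lf)^{d/2-\beta_1}$, and the assumption $\Lc\neq\Lc'$ guarantees $\min_{\lf\in\Lc}N_\lf<R$. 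The companion factor $N^{-3\varepsilon}$ is the residual gain from the discrepancy between the exponents $\beta_1$ in (\ref{trim-bd5}) and $\beta$ in (\ref{trim-bd1})--(\ref{trim-bd3}) (or equivalently from $\Xc_1$), after setting aside $N^{-\varepsilon}$ for (\ref{trim-bd2}).

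Finally, a minor point: doing the blossom contraction first and then applying Proposition~\ref{gausscont} to the resulting tensor would destroy the required independence, since the absorbed $z_{N_\ff}$ with $N_\ff<R$ may depend on the Gaussians $\eta_{k_\lf}$ with $N_\lf<R$. The paper avoids this by applying Proposition~\ref{gausscont} directly to $h$ (for frozen $(k_\Vc,\lambda_\Vc)$), which is $\Bc_{N^{[\delta]}}$-measurable, and only afterward performing the blossom Cauchy--Schwarz; your displayed inequality in fact already has $h$ on the right, so you may simply have described the order incorrectly.
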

  \begin{proof} (1) By definition we have
\begin{equation}\label{trimexpand}
\begin{split} h'=h_{kk_{\Uc'}}'(k_{\Vc'},\lambda_{\Vc'})&=\sum_{k_{\Vc\backslash\Vc'}}\int\mathrm{d}\lambda_{\Vc\backslash\Vc'}\cdot \widetilde{h}_{kk_{\Uc'}}(k_{\Vc},\lambda_{\Vc})\prod_{\ff\in\Vc\backslash\Vc'}(\widehat{z_{N_\ff}})_{k_\ff}^{\zeta_\ff}(\lambda_\ff),\\
\widetilde{h}_{kk_{\Uc'}}(k_{\Vc},\lambda_{\Vc})&:=\sum_{k_{\Uc\backslash\Uc'}}h_{kk_\Uc}(k_\Vc,\lambda_\Vc)\prod_{\lf\in\Uc\backslash\Uc'}(f_{N_\lf})_{k_\lf}^{\zeta_\lf}.
\end{split}\end{equation} By Cauchy-Schwartz we have 
\begin{equation}\label{blostep0}\|h'\|_{X_{\Vc'}^{-b_0}[\cdots]}\lesssim\|\widetilde{h}\|_{X_\Vc^{-b_0}[\cdots]}\cdot\prod_{\ff\in\Vc\backslash\Vc'}\|z_{N_\ff}\|_{X^{b_0}}\lesssim \|\widetilde{h}\|_{X_\Vc^{-b_0}[\cdots]}\cdot\prod_{\ff\in\Vc\backslash\Vc'} N_\ff^{-D_1},\end{equation} where $[\cdots]$ represents any $kk_{B'}\to k_{C'}$ or $kk_{B'}$ or weighted norm as in (\ref{trim-bd1})--(\ref{trim-bd3}).

It thus suffices to bound the corresponding norms for $\widetilde{h}$. Note that if we \emph{fix the values of} $k_\Vc$ and $\lambda_\Vc$, the tensor $\widetilde{h}_{kk_{\Uc'}}(k_\Vc,\lambda_\Vc)$ can be estimated $\tau^{-1}M$-certainly, using Proposition \ref{gausscont}; in order to transform this to a bound for the $X_\Vc^{-b_0}[\cdots]$ norms, we need to make this bound \emph{uniform in all choices of $k_\Vc$ and $\lambda_\Vc$}. There is no problem in doing so for $k_\Vc$ since\footnote{The same applies to the $k_E$ variables when measuring $kk_B\to k_C$ norms, where $E=\Uc\backslash(B\cup C)$.} the number of choices for $k_\Vc$ is at most $M^{\kappa}$. To deal with $\lambda_\Vc$, we will employ the following argument, which will be referred to as the \emph{meshing argument} (see the proof of Lemma 4.2 in \cite{DNY}), and will be used frequently in the proofs below. First note that $|\lambda_\ff|\lesssim N^{\kappa^2}$ for each $\ff\in\Vc$, then we divide the big box $\{\lambda_\Vc:|\lambda_\ff|\lesssim N^{\kappa^2},\forall\,\ff\in\Vc\}$ into small boxes $\Bf$ of size $\nu:=\exp(-(\log N)^6)$. Now by taking averages on these small boxes and using Poincar\'{e} inequality, there exists a tensor $h_{\mathrm{avg}}=(h_{\mathrm{avg}})_{kk_\Uc}(k_\Vc,\lambda_\Vc)$ such that $h_{\mathrm{avg}}$ satisfies the same measurability condition as $h$, that $h_{\mathrm{avg}}$ is supported in the big box and constant when $\lambda_\Vc$ moves within each small box (and other parameters are fixed), and that
\begin{equation}\label{approx}\|h-h_{\mathrm{avg}}\|_{X_\Vc^{-b_0}[kk_\Uc]}\lesssim\nu\cdot(\|h\|_{X_\Vc^{-b_0}[kk_\Uc]}+\|\partial_{\lambda_\Vc}h\|_{X_\Vc^{-b_0}[kk_\Uc]}).\end{equation} Let $\widetilde{h}_{\mathrm{avg}}$ be defined from $h_{\mathrm{avg}}$, in the same way as $\widetilde{h}$ is defined from $h$, then for \emph{fixed values of} $\lambda_\Vc$, the tensor $(\widetilde{h}_{\mathrm{avg}})_{kk_\Uc'}(k_\Vc,\lambda_\Vc)$ can be estimated $\tau^{-1}M$-certainly, using Proposition \ref{gausscont}, in the same way that $\widetilde{h}$ is bounded in terms of $h$. Since $\lambda_{\Vc}$ has at most $\exp(\kappa(\log N)^6)$ different choices in studying $h_{\mathrm{avg}}$, and $N\leq M$, we know that the estimate for $\widetilde{h}_{\mathrm{avg}}$ is \emph{uniform in all choices of} $\lambda_\Vc$, after removing an exceptional set whose probability is still $\leq C_\theta e^{-(\tau^{-1}M)^\theta}$. This gives the $X_\Vc^{-b_0}[\cdots]$ norm bounds for $\widetilde{h}_{\mathrm{avg}}$; but by (\ref{trim-bd4}), (\ref{approx}) and our choice for $\nu$, the $X_\Vc^{-b_0}[\cdots]$ norm of the difference $\widetilde{h}-\widetilde{h}_{\mathrm{avg}}$ is negligible, so we get the desired $X_\Vc^{-b_0}[\cdots]$ norm bounds for $\widetilde{h}$.

Armed with the meshing argument, we can now apply Proposition \ref{gausscont} to control the $X_\Vc^{-b_0}[\cdots]$ norms of $\widetilde{h}$. Given any subpartition $(B',C')$ of $\Uc'$, $C'\neq\varnothing$, let $E=E'=\Uc'\backslash(B'\cup C')$; since $h$ is $\Bc_{N^{[\delta]}}$ measurable and $N_\lf>N^{[\delta]}$ for all $\lf\in\Uc$, we can apply Proposition \ref{gausscont} (note that $(f_{N_\lf})_{k_\lf}=\Delta_{N_\lf}\gamma_{k_\lf}\cdot\eta_{k_\lf}(\omega)$, where $\Delta_{N_\lf}\gamma_{k_\lf}$ can be replaced by $\tau^{-\theta}N_\lf^{-\alpha+\theta}$ due to (\ref{constants}) and Lemma \ref{adjustment}) to get
\[\|\widetilde{h}_{kk_{\Uc'}}(k_\Vc,\lambda_\Vc)\|_{X_\Vc^{-b_0}[kk_{B'}\to k_{C'}]}\lesssim\tau^{-\theta}M^\theta\prod_{\lf\in\Uc\backslash\Uc'}N_\lf^{-\alpha+\theta}\cdot\sup_{(F,G)}\|h_{kk_{\Uc}}(k_\Vc,\lambda_\Vc)\|_{X_\Vc^{-b_0}[kk_{B}\to k_{C}]},\] where $(F,G)$ is any partition of $\Uc\backslash\Uc'$, and $B=B'\cup F$, $C=C'\cup G$. The conclusion about the $X_{\Vc'}^{-b_0}[kk_{B'}\to k_{C'}]$ norms then follows by combining (\ref{trim-bd1}) and (\ref{blostep0}), and noticing that $\Pc'\subset\Pc$, $\Yc'\subset\Yc$, and $\max_{\lf\in C'}N_\lf=\max_{\lf\in C}N_\lf$. When $C'=\varnothing$, the proofs for the $X_{\Vc'}^b[kk_{B'}]$ norms are completely analogous (simply choose $G=\varnothing$ so $C=\varnothing$), and so is the weighted norm bound, where for the latter we notice that
\[1+\frac{1}{\max(N^{2\delta},R)}\bigg|k-\sum_{\lf\in\Uc'}\zeta_\lf k_\lf-\ell'\bigg|\lesssim 1+\frac{1}{N^{2\delta}}\bigg|k-\sum_{\lf\in\Uc}\zeta_\lf k_\lf-\ell\bigg|\] with $\ell'=\sum_{\ff\in\Vc'}\zeta_\ff k_\ff$ and $\ell=\sum_{\ff\in\Vc}\zeta_\ff k_\ff$, since $\langle k_\ff\rangle\leq R$ for any $\ff\in\Vc\backslash\Vc'$ (hence $|\ell-\ell'|\lesssim R$) and $\langle k_\lf\rangle\leq R$ for any $\lf\in\Uc\backslash\Uc'$.

(2) The proof is similar to (1) but much easier since there is no blossom $\ff\in\Vc$ (hence no meshing argument) involved. Since $|k-\sum_{\lf\in\Uc'}\zeta_\lf k_\lf|\lesssim R$ because $k-\sum_{\lf\in\Uc}\zeta_\lf k_\lf=0$ and $\langle k_\lf\rangle\leq R$ for $\lf\in\Uc\backslash\Uc'$, we know that the weighted norm bound (\ref{trim-bd3}) follows from the unweighted norm bound (\ref{trim-bd2}). For the $kk_{B'}\to k_{C'}$ norms in (\ref{trim-bd1})--(\ref{trim-bd2}), we simply apply Proposition \ref{gausscont}; when $C'\neq\varnothing$ we readily get (\ref{trim-bd1}) with the indicated changes. When $C'=\varnothing$ (and $C=\varnothing$) we make two observations. First, when $E=E'=\varnothing$ there is an extra factor $\Xc_0$ in (\ref{trim-bd5}), but since $\min_{\lf\in\Lc}N_\lf\leq R$ due to the assumption $\Lc\neq\Lc'$, we have $\Xc_0\lesssim R^{d/2-\beta_1}$ by definition (\ref{estimate02}), which gives rise to the factor $R^{d/2-\beta_1}$ in the desired estimate. Second, thanks to the different powers between (\ref{trim-bd1})--(\ref{trim-bd3}) and (\ref{trim-bd5}), in the process of using (\ref{trim-bd5}) for $h$ to deduce (\ref{trim-bd1})--(\ref{trim-bd2}) for $h'$, we will be gaining a factor
\[\prod_{\lf\in \Uc\backslash E'}N_\lf^{\beta_1-\beta}\prod_{\lf\in E'\cup\Pc}N_\lf^{-4\varepsilon}\leq\big(\max_{\lf\in\Lc}N_\lf\big)^{-4\varepsilon},\] which is at most $N^{-4\varepsilon}$ if $\max_{\lf\in\Lc}N_\lf\geq (10^3dp)^{-|\Lc|}N$. If $\max_{\lf\in\Lc}N_\lf< (10^3dp)^{-|\Lc|}N$ we gain exactly the same $N^{-4\varepsilon}$ from the factor $\Xc_1$ in (\ref{estimate02}). In any case, this gain will contribute the $N^{-3\varepsilon}$ in the desired estimate, after providing the $N^{-\varepsilon}$ factor for (\ref{trim-bd2}).
  \end{proof}
  \subsection{No-over-pairing merging estimates}\label{mainproof2} Next we will prove two merging estimates in the no-over-pairing case by introducing a \emph{selection algorithm}. Note that in Propositions \ref{algorithm1} and \ref{algorithm2} below, the sets $\Uc_j$ are just sets by themselves and are \emph{not} coming from any plant; nevertheless in applications, they do occur as suitable \emph{subsets} of the sets coming from some plants, see the proof of Propositions \ref{overpair} and \ref{overpair2}.
  \begin{prop}[Selection algorithm: Case I]\label{algorithm1} Let $\Uc_2,\cdots,\Uc_{p}$ be pairwise disjoint finite index sets (could be empty), $|\Uc_j|\leq D$. Given $\zeta_j\in\{\pm\}$ for $1\leq j\leq p$ and $\zeta_\lf\in\{\pm\}$ for any $\lf\in\Uc_j$, and $N_j$ for $2\leq j\leq p$ and $N_\lf$ for any $\lf\in\Uc_j$, let $N_*=\max(N_2,\cdots,N_{p})$, and define $\zeta_\lf^*=\zeta_\lf\zeta_j$ for $\lf\in\Uc_j$. Assume that \[\sum_{j=1}^p\zeta_j=1,\quad N_j^\delta\leq N_\lf\leq N_j\,(\forall \lf\in \Uc_j,\,2\leq j\leq p).\]  Assume there are some pairings in $\Wc:=\Uc_2\cup\cdots\cup\Uc_{p}$ (i.e. a collection of pairwise disjoint two-element subsets of $\Wc$, each containing two elements from two different $\Uc_j$), such that for any pair $(\lf,\lf')$ we have $\zeta_{\lf'}^*=-\zeta_\lf^*$ and $N_{\lf'}=N_\lf$. Let $h^{(j)}=h_{k_jk_{\Uc_j}}^{(j)}$, where $2\leq j\leq p$, be tensors, and $h=h_{kk_1\cdots k_p}$ be a tensor supported in $\langle k_j\rangle\leq N_j$ for $2\leq j\leq p$. Let the set of paired elements in $\Wc$ be $\Qc$ and the set of unpaired elements be $\Uc$, define the semi-product
\begin{equation}\label{defH}H=H_{kk_1k_\Uc}=\sum_{k_2,\cdots,k_{p}}\sum_{k_\Qc}h_{kk_1\cdots k_p}\prod_{j=2}^{p}\big(h_{k_jk_{\Uc_j}}^{(j)}\big)^{\zeta_j},\end{equation} where the sum is taken over all $(k_2,\cdots,k_{p})$ and $k_\Qc$ that satisfy $k_{\lf'}=k_\lf$ for any pairing $(\lf,\lf')$.

For each $2\leq j\leq p$, in the support of $h^{(j)}$ we assume that $k_\lf\in\Zb^d$, $\langle k_j\rangle\leq N_j$ and $N_\lf/2<\langle k_\lf\rangle\leq N_\lf$ for each $\lf\in\Uc_j$. Moreover, $h^{(j)}$ has one of the following three types:
\begin{enumerate}
\item Type $R0$: where we assume, in the support of $h^{(j)}$, that
\begin{equation}\label{type1}\sum_{\lf\in \Uc_j}\zeta_\lf=1,\quad k_j=\sum_{\lf\in\Uc_j}\zeta_\lf k_\lf,\quad |k_j|^2-\sum_{\lf\in\Uc_j}\zeta_\lf |k_\lf|^2=\Gamma_j.
\end{equation} Moreover, for any partition $(P_j,Q_j)$ of $\Uc_j$, we assume
\begin{equation}\label{type1bd1}\|h^{(j)}\|_{k_jk_{P_j}\to k_{Q_j}}\leq \Xf_j\cdot \prod_{\lf\in \Uc_j}N_\lf^{\beta_1}\cdot\Zc_{0,j}\Zc_{1,j},
\end{equation}where $\Zc_{0,j}$ and $\Zc_{1,j}$ are defined similarly as in (\ref{estimate02}) but with the following modifications,
 \begin{equation}\label{type1bd2}\Zc_{0,j}=\left\{
  \begin{aligned}&\big(\max_{\lf\in Q_j}N_\lf\big)^{-\beta_1},&&\mathrm{if\ }Q_j\neq\varnothing;\\
    & \big(\min_{\lf\in\Uc_j}N_\lf\big)^{\frac{d}{2}-\beta_1},&&\mathrm{if\ }Q_j=\varnothing,
 \end{aligned}
 \right.\qquad\Zc_{1,j}=\left\{
 \begin{aligned}&1,&{\rm{if }}\max_{\lf\in \Uc_j}N_\lf&\geq (10^3dp)^{-D}N_j;\\
&N_j^{-4\varepsilon},&{\rm{if }}\max_{\lf\in \Uc_j}N_\lf&< (10^3dp)^{-D}N_j.
 \end{aligned}
 \right.
 \end{equation}

 \item Type $R0^+$: similar to type $R0$, but instead of (\ref{type1}), in the support of $h^{(j)}$ we only assume that 
\begin{equation}\label{type1+}
 k_j-\sum_{\lf\in\Uc_j}k_\lf=m_j\end{equation} for some given $m_j\in \Zb^d$. Moreover the bounds (\ref{type1bd1})--(\ref{type1bd2}) also hold (in particular $\Zc_{1,j}=N_j^{-4\varepsilon}$ if $\Uc_j=\varnothing$), except that in (\ref{type1bd2}), when $Q_j=\varnothing$, we have $\Zc_{0,j}=1$ instead of $(\min_{\lf\in\Uc_j}N_\lf)^{d/2-\beta_1}$.

\item Type $R1$: where we assume, in the support of $h^{(j)}$, that
\begin{equation}\label{type2}\bigg|k_j-\sum_{\lf\in\Uc_j}\zeta_\lf k_\lf-m_j\bigg|\leq (N_*)^{3\delta}
\end{equation} for some given $m_j\in \Zb^d$. Moreover, for any partition $(P_j,Q_j)$ of $\Uc_j$ we have
\begin{equation}\label{type2bd2}\|h^{(j)}\|_{k_jk_{P_j}\to k_{Q_j}}\leq
\left\{
\begin{aligned}&\Xf_j\cdot \prod_{\lf\in \Uc_j}N_\lf^\beta\cdot \big(\max_{\lf\in Q_j}N_\lf\big)^{-\beta},&Q_j&\neq\varnothing;\\
&\Xf_j\cdot \prod_{\lf\in \Uc_j}N_\lf^\beta\cdot N_j^{-\varepsilon},&Q_j&=\varnothing.
\end{aligned}
\right. 
\end{equation}
\end{enumerate} Regarding the tensor $h$, we assume that $|h_{kk_1\cdots k_p}|\lesssim 1$, and that $h=h_{kk_1\cdots k_p}$ is supported in the set
\begin{equation}
\label{tensorsupport}k=\sum_{j=1}^{p}\zeta_jk_j,\quad |k|^2-\sum_{j=1}^p\zeta_j|k_j|^2=\Gamma,
\end{equation} and that any pairing in $(k,k_1,\cdots,k_p)$ must be over-paired.

Then, for any partition $(P,Q)$ of $\Uc$, we have the bound
\begin{equation}\label{outputbd}\prod_{\lf\in\Qc}N_\lf^{-\alpha+4\varepsilon+\theta}\cdot\|H\|_{kk_P\to k_1k_Q}\lesssim\prod_{j=2}^{p}\Xf_j\cdot\prod_{\lf\in\Uc}N_\lf^\beta\cdot\prod_{j}^{(0,0^+)}N_j^{-2\varepsilon}\cdot(N_*)^{-\varepsilon^3},
\end{equation} where the product $\prod_{j}^{(0,0^+)}$ is taken over all $2\leq j\leq p$ such that $h^{(j)}$ is of type $R0$ or type $R0^+$. The result is uniform in all parameters $\Gamma, \Gamma_j,m_j$ etc., and remains true if we replace $p$ by an odd $3\leq q\leq p$.  It also remains true if instead of the second equation in the support condition in (\ref{tensorsupport}), we assume that $h$ satisfies (\ref{extrabdh}).
  \end{prop}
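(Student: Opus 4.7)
The plan is to view $H$ in (\ref{defH}) as a semi-product of the $p-1$ tensors $h^{(2)},\ldots,h^{(p)}$ together with the base tensor $h$, where each pairing $(\lf,\lf')\subset \Qc$ is identified into a single internal variable $k_\lf=k_{\lf'}$ summed over. With this identification, the internal variables are $k_2,\ldots,k_p$ and the (identified) $k_\Qc$, while the output partition is $X=\{k\}\cup k_P$, $Y=\{k_1\}\cup k_Q$. Then Proposition \ref{multibound} bounds $\|H\|_{kk_P\to k_1 k_Q}$ by a product of norms $\|h^{(j)}\|$ and $\|h\|$ at partitions determined by a chosen ordering of the tensors; the dependence on the ordering, through the intermediate sets $B_j,C_j$ appearing in Proposition \ref{multibound}, is the lever that we will use.

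I would then design the selection algorithm so that the partitions inherited by each $h^{(j)}$ are ones for which the hypothesized bounds (\ref{type1bd1}) or (\ref{type2bd2}) are applicable in their favorable form. Heuristically, type-$R1$ tensors (weak support, only $\beta$-level bound) should be placed so that their paired indices sit in the $B_j$ slot (input side); type-$R0$ and $R0^+$ tensors should be arranged to expose the extra $\Zc_{1,j}$ gain; and for each pairing $(\lf,\lf')$ straddling $\Uc_i$ and $\Uc_{i'}$, the shared index should be routed to input on one side and output on the other. An additional constraint is that the partition $(P_0,Q_0)$ inherited by the base tensor must have $1\in Q_0$, so the algorithm must keep $k_1$ on the output side of $h$.

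For the base tensor itself, the support (\ref{tensorsupport}) (or the alternative (\ref{extrabdh})), the bound $|h|\lesssim 1$, the simplicity of the coefficients, and the over-pairing hypothesis match exactly the hypotheses of Proposition \ref{final}, which yields $\|h\|_{kk_{P_0}\to k_{Q_0}}\lesssim \prod_{j=2}^p N_j^{\alpha_0+\theta}$ up to the $(\min_y N_{\ell_v(y)})^{\alpha_0-d/2}$ correction factors. Combining this with the individual tensor bounds, the factor $\prod_{\lf\in\Uc}N_\lf^\beta$ on the right-hand side of (\ref{outputbd}) assembles from the weights carried by unpaired leaves; the left-hand factor $\prod_{\lf\in\Qc}N_\lf^{-\alpha+4\varepsilon+\theta}$ is absorbed by the $N_j^{\alpha_0}$ powers from Proposition \ref{final} combined with the $N_\lf^{\beta_1}$ weights on paired leaves, using that $\alpha-\alpha_0>0$ by (\ref{subcrit}); the saving $(N_*)^{-\varepsilon^3}$ comes from unused slack on the tensor of highest frequency $N_*$, and the factor $\prod_j^{(0,0^+)}N_j^{-2\varepsilon}$ comes either from $\Zc_{1,j}$ directly or, when $\Zc_{1,j}=1$, from the gap between the $\beta_1$-type bound on $R0/R0^+$ tensors and the $\beta$-type bound used for $R1$ tensors.

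The hard part will be making the selection algorithm explicit and verifying that the chosen ordering simultaneously satisfies all constraints: the partition for the base tensor must permit application of Proposition \ref{final} (with $1\in Q_0$ and the correct no-pairing / over-pairing structure inherited from the hypothesis on $(k,k_1,\ldots,k_p)$), and the partition for each $h^{(j)}$ must be one for which (\ref{type1bd1}) or (\ref{type2bd2}) delivers the favorable factor. A secondary delicate point is the exceptional $(d,p)=(1,7)$ configuration in Proposition \ref{final}, where the counting degrades; the algorithm must either avoid that configuration or, when unavoidable, exploit the structural over-pairing it forces ($|A_1|=|A_2|=2$ with a shared coordinate) to recover the missing factor by a short case analysis. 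The reductions to odd $3\leq q\leq p$ and to the variant (\ref{extrabdh}) should then follow with no essential change, since Proposition \ref{final} handles both uniformly.
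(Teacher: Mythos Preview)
Your high-level framework is correct---Proposition \ref{multibound} plus Proposition \ref{final} plus a selection algorithm---but two structural ingredients are missing, and without them the argument will not close.

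First, you misidentify the role of the selection algorithm. Its purpose is not to route paired indices to favorable sides of each $h^{(j)}$; it is to generate the \emph{localization inequalities} that Proposition \ref{final} needs as hypotheses. Concretely: once you order the $j$'s as $n_1,\ldots,n_{c-1}$, the support conditions (\ref{type1}), (\ref{type1+}), (\ref{type2}) on each $h^{(j)}$ combine (via cancellation of paired leaves) to give $\bigl|\sum_{z\le y}\zeta_{n_z}k_{n_z}-m'_{n_y}\bigr|\lesssim M_{n_y}$ where $M_{n_y}=\max\bigl((N_*)^{3\delta},N_{\lf_y}\bigr)$ for a specific leaf $\lf_y$. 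These $M_{n_y}$, not the $N_j$, are the radii plugged into (\ref{finalset1})--(\ref{finalset3}); Proposition \ref{final} then returns $\prod M_j^{\alpha_0+\theta}$, and it is this that cancels against the $\bigl(\max_{\lf\in Q_j}N_\lf\bigr)^{-\beta}$ factors from (\ref{type1bd1})/(\ref{type2bd2}). The paper's algorithm is therefore frequency-driven, not type-driven: at each stage one picks the leaf of \emph{largest} $N_\lf$ among those appearing only once in the remaining $\Uc_j$'s, selects its containing $\Uc_{n_y}$, and assigns $n_y$ to class $P$ or $Q$ according to whether $\lf_y$ (or the earlier $n_{y'}$ it pairs with) lies in $P$ or $Q$. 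Your heuristic ``place $R1$ tensors so paired indices sit on the input side'' does not produce these localization radii and would leave you with $\prod N_j^{\alpha_0}$, which is too large.

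Second, you omit the decomposition into fully-paired blocks. The greedy algorithm above only starts if some leaf appears exactly once in the current collection; when a subcollection $\{\Uc_j:j\in A_v\}$ has every leaf paired internally, no such leaf exists. The paper handles this by first isolating the minimal such blocks $A_v$ (all $R0$) and $B_u$ (containing some $R0^+$ or $R1$), treating the corresponding factors $H^{[v]},R^{[u]}$ separately via a variant algorithm (pick the \emph{smallest}-frequency leaf first to anchor the block, then proceed greedily), and only then running the main algorithm on the remaining indices $\Dc$. The extra quadratic constraints (\ref{finalset2}) that the $A_v$-blocks carry are exactly what produce the $(\min_y M_{\ell_v(y)})^{\alpha_0-d/2}$ correction in (\ref{finalbound}), and these are cancelled by the $\Zc_{0,j}=(\min_\lf N_\lf)^{d/2-\beta_1}$ factor from the anchor tensor. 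Without this block decomposition the algorithm stalls and the $A_v$ corrections in Proposition \ref{final} have no counterpart on the tensor side.
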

  \begin{proof} 
  The two key ingredients in the proof are Proposition \ref{multibound} and Proposition \ref{final}.
  
\emph{Step 1: first reductions.} We start by making an adjustment in notations, just like the one in the proof of Proposition \ref{gausscont}, which will allow us to apply Proposition \ref{multibound} below. For each pairing  $(\lf,\lf')$, where $\lf\in \Uc_j$ and $\lf'\in\Uc_{j'}$, since in the sum (\ref{defH}) we are always assuming $k_\lf=k_{\lf'}$, we may combine them into a single element and include this element in both $\Uc_j$ and $\Uc_{j'}$. In this way we are changing pairings between different $\Uc_j$'s to intersections of different $\Uc_j$'s, which is the setting of Proposition \ref{multibound}. Then $\Uc$ will be the set of elements that occurs once in all the $\Uc_j$'s, and $\Qc$ is the set of elements that occur twice.
  
  Next, we will identify all subsets $A\subset\{2,\cdots,p\}$ such that each and every element in the union of $\Uc_j\,(j\in A)$ occurs twice in these sets $\Uc_j\,(j\in A)$. We only need to consider \emph{minimal} subsets $A$ that satisfy this, which will be pairwise disjoint. Let them be $A_v\,(1\leq v\leq s)$ and $B_u\,(1\leq u\leq t)$, where for each $v$, the tensor $h^{(j)}$ is of type $R0$ for each $j\in A_v$, while for each $u$, there is at least one $j\in B_u$ such that the tensor $h^{(j)}$ is of type $R0^+$ or $R1$. Let $\Ec$ be the union of all these sets $A_v$ and $B_u$ and $\Dc=\{2,\cdots,p\}\backslash \Ec$; See Figure \ref{graph6.2} for an illustration of the above  subsets of $\{2,\cdots,p\}$. We know that $\Uc_j\subset\Qc$ for $j\in \Ec$.
   \begin{figure}
\centering
\includegraphics[width=0.65\textwidth]{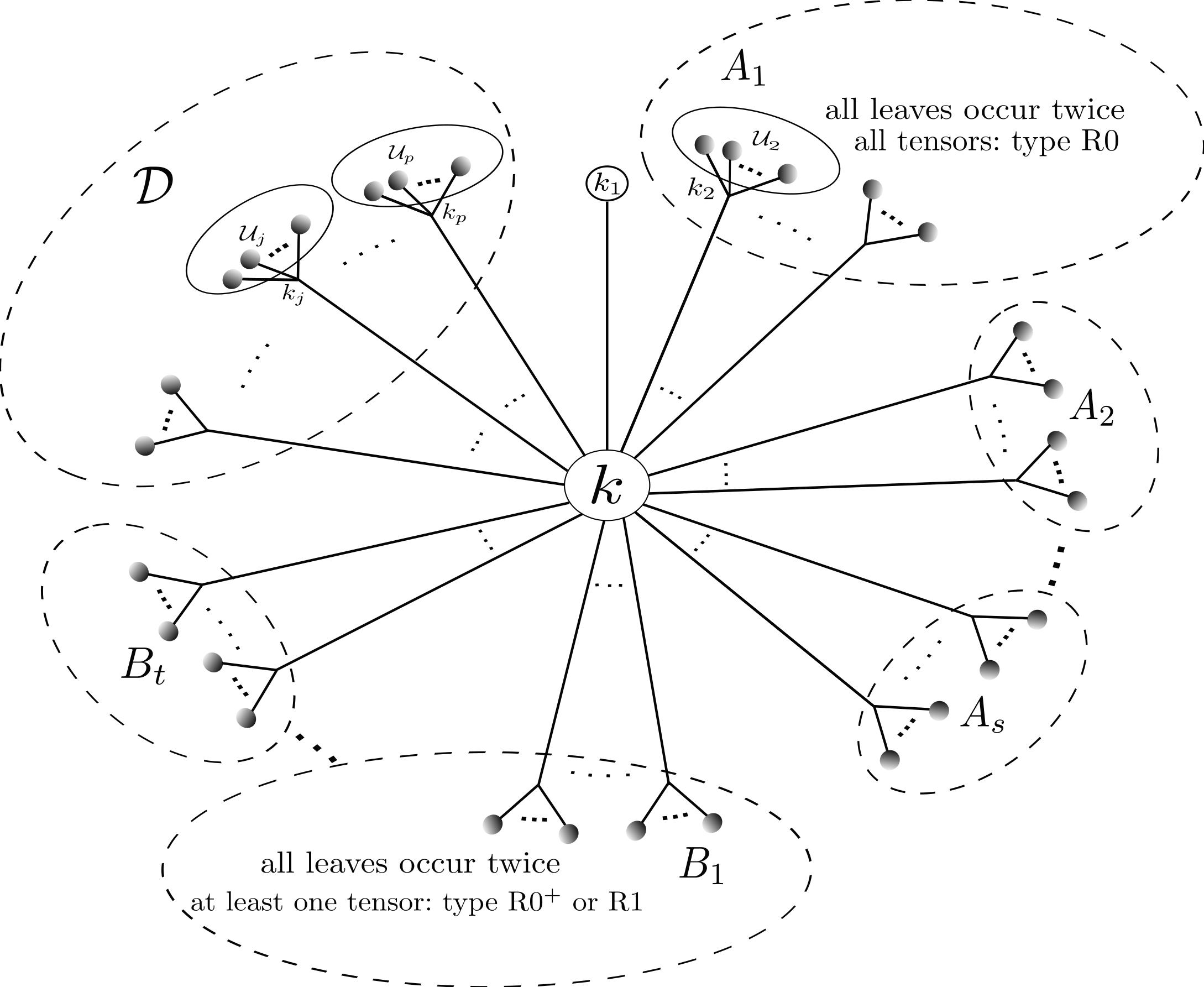}
\caption{Classification of $\Uc_j$, $j\in \{2,\cdots,p\}$}\label{graph6.2}
\end{figure}
  Let $\Rc\subset \Qc$ be the set of elements that occur twice in the sets $\Uc_j (j\in\Dc)$. Then we have
  \begin{equation}\label{altexp1}H=H_{kk_1k_\Uc}=\sum_{(k_{A_1},\cdots k_{A_s},k_{B_1},\cdots k_{B_t})}\prod_{v=1}^s H_{k_{A_v}}^{[v]}\prod_{u=1}^tR_{k_{B_u}}^{[u]}\cdot H^{\mathrm{sg}}_{kk_1k_{\Ec}k_\Uc},
  \end{equation} where
    \begin{equation}\label{altexp2}H_{k_{A_v}}^{[v]}:=\sum_{(k_{\Uc_j}:j\in A_v)}\prod_{j\in A_v}h_{k_jk_{\Uc_j}}^{(j)},\qquad R_{k_{B_u}}^{[u]}:=\sum_{(k_{\Uc_j}:j\in B_u)}\prod_{j\in B_u}h_{k_jk_{\Uc_j}}^{(j)},
  \end{equation} 
  \begin{equation}\label{altexp3}H^{\mathrm{sg}}_{kk_1k_{\Ec}k_\Uc}:=\sum_{(k_{\Dc},k_\Rc)}h_{kk_1\cdots k_p}\cdot\prod_{j\in \Dc}h_{k_jk_{\Uc_j}}^{(j)}.
  \end{equation} By (\ref{altexp1}) and Proposition \ref{multibound} we have
  \begin{equation}\label{mainest1}\|H\|_{kk_P\to k_1k_Q}\leq \prod_{q=1}^s\|H^{[v]}\|_{k_{A_v}}\prod_{u=1}^t\|R^{[u]}\|_{k_{B_u}}\cdot\|H^{\mathrm{sg}}\|_{kk_P\to k_1k_{\Ec}k_Q},\end{equation} so it suffices to bound the norm $\|H^{\mathrm{sg}}\|_{kk_P\to k_1k_{\Ec} k_Q}$ and the other norms $\|H^{[v]}\|_{k_{A_v}}$ and $\|R^{[u]}\|_{k_{B_u}}$.
  
  By (\ref{altexp3}), the tensor $H^{\mathrm{sg}}$ is a semi-product the tensors $h$ and $h^{(j)}\,(j\in \Dc)$ in the sense of (\ref{combination2}) in Proposition \ref{multibound}, so our strategy is to select these tensors in some particular order and apply Proposition \ref{multibound}. The selection algorithm is described as follows.

\emph{Step 2: the selection algorithm for $H^{\mathrm{sg}}$.} First, by our choice of the set $\Dc$, there will be at least one element in the union of the sets $\Uc_j\,(j\in \Dc)$ that appears only once in these sets. Consider such an element $\lf$ with $N_\lf$ being the biggest. Denote this $\lf$ by $\lf_{c-1}$ and  the $j\in \Dc$ such that $\lf_{c-1}\in \Uc_j$ by $n_{c-1}$, where $c-1=|\Cc|-1=|\Dc|$ and $\Cc=\Dc\cup\{1\}$. Next, there will be at least one element in the union of the sets $\Uc_j\,(j\in \Dc\backslash\{n_{c-1}\})$ that appears only once in these sets. Consider such an element $\lf=\lf_{c-2}$ such that $N_\lf$ is the biggest, and suppose such $\lf_{c-2}\in \Uc_{j'}$, where $j'\in \Dc\backslash\{n_{c-1}\}$; we shall denote $n_{c-2}=j'$. Next there will be at least one element in the union of the sets $\Uc_j\,(j\in \Dc\backslash\{n_{c-1},n_{c-2}\})$ that appears only once in these sets, and so on. Repeating this process, we can label the elements of $\Dc$ as $n_1, n_2, \cdots, n_{c-1}$. Notice that by (\ref{type1}) and (\ref{type2}) and our selection algorithm, for each $1\leq y\leq c-1$ we must have 
  \begin{equation}\label{cancellation1}\bigg|\sum_{z=1}^y\zeta_{n_z}k_{n_z}-m_{n_y}'\bigg|\lesssim \max((N_*)^{3\delta}, N_{\lf_y}):=M_{n_y},
  \end{equation} where $m_{n_y}'\in\Zb^d$ is some fixed vector, otherwise the product in (\ref{altexp3}) will be zero. Let us provide some details to explain (\ref{cancellation1}). When $y=1$, $\lf_1\in \Uc_{n_1}$ and $N_{\lf_1}$ is the biggest among all $\lf$ that appear only once in $\Uc_{n_1}$ (here these $\lf$'s are just all elements $\lf\in \Uc_{n_1}$). By (\ref{type1}), (\ref{type1+}) and (\ref{type2}), setting $m_{n_1}'=\zeta_{n_1}m_{n_1}$, we then have\begin{equation}\label{cancellation1'}
  |\zeta_{n_1}k_{n_1} - m'_{n_1}| \leq |\Uc_{n_1}| N_{\lf_1} +(N_*)^{3\delta},
  \end{equation}
  where we understand that $m_{n_1}=0$ if $h^{(n_1)}$ has type $R0$ (same below), which implies (\ref{cancellation1}) since $|\Uc_{n_1}|\leq  D$.
  When $y=2$, $\lf_2\in \Uc_{n_2}$ and $N_{\lf_2}$ is the biggest among all $\lf$ that appear only once in $\Uc_{n_1}\cup \Uc_{n_2}$ (i.e. all $\lf\in\Uc_{n_1}\Delta \Uc_{n_2}$), then by
  (\ref{type1}), (\ref{type1+}) and (\ref{type2}), we have $|\zeta_{n_1}k_{n_1}+\zeta_{n_2}k_{n_2}-m'_{n_2}|\lesssim  \max((N_*)^{3\delta}, N_{\lf_2})$ with $m'_{n_2}=\zeta_{n_1} m_{n_1}+\zeta_{n_2} m_{n_2}$, since the $k_\lf$ terms for $\lf\in\Uc_{n_1}\cap \Uc_{n_2}$ always cancel themselves thanks to our assumption about signs of paired elements. For the other $y$'s, (\ref{cancellation1}) is obtained similarly. The above selection for $n_{c-1},\cdots, n_1$ is designed to fit the hypothesis (\ref{finalset3}) in Proposition \ref{final} via (\ref{cancellation1}).  Proposition \ref{final} will be applied to $h$ later in \emph{Step 4}.
  
  Next, we divide these $n_y$ into two classes: first $n_{c-1}$ will be class $P$ (or $Q$) if $\lf_{c-1}$ which belongs to $\Uc$, is in $P$ (or $Q$). Next, if $\lf_{c-2}\in \Uc$, then $n_{c-2}$ will be class $P$ (or $Q$) if $\lf_{c-2}$ is in $P$ (or $Q$); if $\lf_{c-2}\in\Qc$, then $\lf_{c-2}$ also belongs to $\Uc_j$ for some previously selected $j$ (here $j=n_{c-1}$), then $n_{c-2}$ will be same class as $j$. Then consider $\lf_{c-3}$, and so on. Repeating this process we can assign a class $P$ or $Q$ to each $n_y\,(1\leq y\leq c-1)$. Now we can apply Proposition \ref{multibound} by arranging the tensors $h$ and $h^{(j)}\,(j\in \Dc)$ in a particular order (which will correspond to the superscripts in the tensors in Proposition \ref{multibound}): first list all the $h^{(n_y)}$, where $n_y$ has class $Q$, in the \emph{decreasing} order for $y$, then list $h$, then list all the $h^{(n_y)}$, where $n_y$ has class $P$, in the \emph{increasing} order for $y$. By applying Proposition \ref{multibound} to (\ref{altexp3}) with $(k_A, k_C, k_X, k_Y) = (kk_1k_\Ec k_\Uc, k_\Dc k_\Rc, kk_P, k_1 k_\Ec k_Q)$ in the above order we then have
\begin{equation}  \label{estproof}
  \|H^{\mathrm{sg}}\|_{kk_P\to k_1k_\Ec k_Q}\lesssim\|h\|_{kk_{P_0}\to k_{Q_0}}\prod_{j\text{\,of\,class\,}Q}\|h^{(j)}\|_{k_jk_{P_j}\to k_{Q_j}} \prod_{j\text{\,of\,class\,}P}\|h^{(j)}\|_{k_{Q_j}\to k_j k_{P_j}},
  \end{equation} where $(P_0,Q_0)$ and $(P_j,Q_j)\,(j\in\Dc)$ are sets defined according to Proposition \ref{multibound}, which are explained next. Next we analyze the individual factors on the right hand side of (\ref{estproof}).
  
  (1) For $j=n_y$ of class $Q$, by Proposition \ref{multibound} and our algorithm, the set $Q_j$ will consist of all $\lf\in \Uc_{n_y}$ such that either $\lf\in Q$, or $\lf$ belongs to $\Uc_{n_{y'}}$ for some $y'>y$ with $n_{y'}$ of class $Q$. Furthermore $P_j = \Uc_j\backslash Q_j$. By the definition of classes, this implies that $\lf_{y}\in Q_j$.  By (\ref{type1bd1}) and (\ref{type2bd2}) we then have
  \begin{equation}\label{actbd1}\|h^{(j)}\|_{k_jk_{P_j}\to k_{Q_j}}\lesssim \Xf_j\cdot\prod_{\lf\in\Uc_j} N_\lf^\beta\cdot N_{\lf_y}^{-\beta}\lesssim \Xf_j\cdot\prod_{\lf\in\Uc_j} N_\lf^\beta\cdot
  (N_*)^{C\delta}M_{n_y}^{-\beta};
  \end{equation} moreover if $h^{(j)}$ has type $R0$ or $R0^+$, we gain an extra factor namely $\Zc_{1,j}$ from (\ref{type1bd2}).
  
  (2) Similarly, for $j=n_y$ of class $P$, by Proposition \ref{multibound} and our algorithm, the set $Q_j$ will consist of all $\lf\in\Uc_{n_y}$ such that either $\lf\in P$, or $\lf$ belongs to $\Uc_{n_{y'}}$ for some $y'>y$ with $n_{y'}$ of class $P$. Furthermore $P_j = \Uc_j\backslash Q_j$. By the definition of class, we also have $\lf_{y}\in Q_j$. 
Now by (\ref{type1bd1}), (\ref{type2bd2}), and using the duality of the operator norm $\|h^{(j)}\|_{k_{Q_j}\to k_jk_{P_j}}=\|h^{(j)}\|_{k_jk_{P_j}\to k_{Q_j}}$, we know that (\ref{actbd1}) is true (with the gain $\Zc_{1,j}$ for types $R0$ and $R0^+$) also in this case.
  
  (3) For the tensor $h$, by our algorithm we have that $P_0$ consists of all $j\in \Dc$ of class $P$, and $Q_0$ consists of all $j\in \Dc$ of class $Q$, as well as $1$ and all $j\in \Ec$.  
  
We illustrate the above algorithm with an explicit example. Suppose $p=7$, $\Dc =\{2, 3, 4, 5\}$, $\Ec=\{6,7\}$ and $\Uc_2 =\{\af, \bff\}$, $\Uc_3 =\{\cf, \dff, \ef\}$, $\Uc_4 =\{\ef, \ff, \gf\}$, $\Uc_5 =\{\af, \ff, \hf\}$ with $N_\af\geq N_{\bff}\geq N_{\cf}\geq N_{\dff}\geq N_{\ef}\geq N_{\ff}\geq N_{\gf}\geq N_{\hf}$, then $\Uc=\{\bff,\cf,\dff,\gf,\hf\}$, $\Rc =\{\af,\ef,\ff\}$ and
\begin{equation*}H^{\mathrm{sg}}_{kk_1k_6k_7k_\Uc}=\sum_{(k_2,\cdots,k_5)}\sum_{(k_\af,k_\ef,k_\ff)}h_{kk_1\cdots k_7}\cdot h_{k_2k_\af k_\bff}^{(2)} \cdot h_{k_3k_\cf k_\dff k_\ef}^{(3)}\cdot h_{k_4k_\ef k_\ff k_\gf}^{(4)} \cdot h_{k_5k_\af k_\ff k_\hf}^{(5)}.
\end{equation*} Suppose $P=\{\bff, \dff\}$ and $Q=\{\cf, \gf, \hf\}$ is a partition of all unpaired leaves. Then, by our algorithm we have $n_4 = 2$ since $N_\bff = \max_{\lf\in \{\bff, \cf, \dff, \gf, \hf\}} (N_\lf)$ and $\bff \in \Uc_2$.  Then $n_3 = 5$ since $N_\af=\max_{\lf\in\{\af,\cf,\dff,\gf,\hf\}} N_\lf$ and $\af\in \Uc_5$. Similarly we have $n_2 = 3$ and $n_1 = 4$. Next, by definition $n_4=2$ and $n_3=5$ will have class $P$, while $n_2=3$ and $n_1=4$ will have class $Q$. Then we can apply Proposition \ref{multibound} to (\ref{altexp3}) in the following order: $h^{(3)}\to h^{(4)}\to h\to h^{(5)}\to h^{(2)}$ and obtain that
\begin{multline*}
 \|H^{\mathrm{sg}}\|_{kk_\bff k_\dff\to k_1k_6k_7k_\cf k_\gf k_\hf}\lesssim\|h^{(3)}\|_{k_{3}k_\dff k_\ef \to k_\cf} \cdot\|h^{(4)}\|_{k_{4}k_{\ff}\to k_\ef k_\gf} \\\times\|h\|_{kk_2k_5\to k_1k_3k_4k_6k_7}\cdot\|h^{(5)}\|_{k_{\af}\to k_{5} k_\ff k_\hf}\cdot\|h^{(2)}\|_{k_{\bff}\to k_{2} k_\af}.
\end{multline*} Moreover we have the following inequalities (assume $m_j=0$, and up to error $(N_*)^{3\delta}$):
\[|k_2\pm k_3\pm k_4\pm k_5|\lesssim N_\bff,\quad |k_3\pm k_4\pm k_5|\lesssim N_\af,\quad |k_3\pm k_4|\lesssim N_\cf,\quad |k_4|\lesssim N_\ef.\]
 
  \emph{Step 3: the selection algorithms for $H^{[v]}$ and $R^{[u]}$.} Now we discuss the estimates for the norms $\|H^{[v]}\|_{k_{A_v}}$ and $\|R^{[u]}\|_{k_{B_u}}$. The basic idea is the same as before, but as each element in the sets $\Uc_j\,(j\in A_v\textrm{ or }B_u)$ occurs twice in these sets, we have to make some small adjustments. Let us first look at $A_v$. Let $a_v=|A_v|$, and first choose $\lf=\lf_{a_v}$ in the sets $\Uc_j\,(j\in A_v)$ such that $N_\lf$ is the smallest over all $\lf$ in the sets $\Uc_j\,(j\in A_v)$. We denote the $j$ such that $\lf_{a_v}\in\Uc_j$ by $\ell_v(a_v)$. Next, as in \emph{Step 2} there will be at least one element in the union of the sets $\Uc_j\,(j\in A_v\backslash\{\ell_v(a_v)\})$ that appears only once in these sets. 
Consider such an element $\lf$ with $N_\lf$ being the biggest. We shall denote this $\lf$ by $\lf_{a_v-1}$ and  the $j\in A_v\backslash \{\ell_v(a_v)\}$ such that $\lf_{a_v-1}\in \Uc_j$ by $\ell_v (a_v-1)$.
  Then we can repeat the process in \emph{Step 2} and label the elements of $A_v$ as $\ell_v(1),\cdots ,\ell_v(a_v)$. Recall that $h^{(j)}$ has type $R0$ for all $j\in A_v$.
  By (\ref{type1}), in the same way as in \emph{Step 2}, for $1\leq y\leq a_v-1$ we have
  \begin{equation}\label{cancellation2}\bigg|\sum_{z=1}^y\zeta_{\ell_v(z)}k_{\ell_v(z)}\bigg|\lesssim N_{\lf_y}:=M_{\ell_v(y)},
  \end{equation} as well as 
  \begin{equation}\label{cancellation3}\sum_{z=1}^{a_v}\zeta_{\ell_v(z)}k_{\ell_v(z)}=0,\quad \sum_{z=1}^{a_v}\zeta_{\ell_v(z)}|k_{\ell_v(z)}|^2=\widetilde{\Gamma},
  \end{equation}
where $\widetilde{\Gamma} = \sum_{z=1}^{a_v}\zeta_{\ell_v(z)}\Gamma_{\ell_v(z)}$.
  Now we apply Proposition \ref{multibound} to $H^{[v]}$ in (\ref{altexp2}) with $(A, C, X, Y)=(A_v, \bigcup\{\Uc_j:j\in A_v\}, A_v, \varnothing)$ by arranging the tensors $h^{(\ell_v(y))}$ in the decreasing order for $y$, hence
  \begin{equation}\label{estproof2}\|H^{[v]}\|_{k_{A_v}}\lesssim\prod_{j\in A_v}\|h^{(j)}\|_{k_jk_{P_j}\to k_{Q_j}},
  \end{equation} where $Q_j=\varnothing$ for $j=\ell_v(a_v)$, and for $j=\ell_v(y)$ with $y<a_v$, $Q_j$ consists of all $\lf\in \Uc_{\ell_v(y)}$ that belongs to $\Uc_{\ell_v(y')}$ for some $y'>y$, so in particular $\lf_y\in Q_j$. Furthermore $P_j = \Uc_j\backslash Q_j$ for all $j\in A_v$. By (\ref{type1bd1}) and (\ref{type1bd2}) we then get
  \begin{equation}\label{actbd2}\|h^{(j)}\|_{k_jk_{P_j}\to k_{Q_j}}\lesssim \Xf_j\cdot\prod_{\lf\in \Uc_j}N_\lf^{\beta_1}\cdot M_{\ell_v(y)}^{-\beta_1}\cdot\Zc_{1,j},\quad \mathrm{if\ }j=\ell_v(y)\quad (y<a_v),
  \end{equation}
    \begin{equation}\label{actbd3} \|h^{(j)}\|_{k_jk_{P_j}\to k_{Q_j}}\lesssim \Xf_j\cdot\prod_{\lf\in \Uc_j}N_\lf^{\beta_1}\cdot (N_{\lf_{a_v}})^{\frac{d}{2}-\beta_1}\cdot\Zc_{1,j},\quad  \mathrm{if\ }j=\ell_v(a_v),
  \end{equation}noticing also that $N_{\lf_{a_v}}\leq M_{\ell_v(y)}$ for any $1\leq y\leq a_v-1$ by our choice.
  
  As for $B_u$, the argument is essentially the same as above. We choose $i_u(b_u)$ such that $h^{(j)}$ has type $R0^+$ or $R1$ for $j=i_u(b_u)$; then $i_u(y)$ for $y<b_u$ are chosen in the same manner as $\ell_{v}(y)$ above.
 In this case,  (\ref{cancellation2}) holds after translating by some fixed vector $m_{i_u(y)}'$, and with a loss of $(N_*)^{C\delta}$ due to the weaker bound (\ref{type2}). For (\ref{cancellation3}) in this case we don't have the equation for $\sum_{z\leq b_u}\zeta_{i_u(z)}|k_{i_u(z)}|^2$, and the sum $\sum_{z\leq b_u}\zeta_{i_u(z)}k_{i_u(z)}$ only belongs to a ball of radius $(N_*)^{3\delta}$. However by losing a factor $(N_*)^{C\delta}$ in the operator bound for the tensor $h$ we may assume $\sum_{z\leq b_u}\zeta_{i_u(z)}k_{i_u(z)}$ is constant. Therefore, for the norms appearing in (\ref{estproof2}), instead of (\ref{actbd2}) and (\ref{actbd3}), we now have\begin{equation}\label{actbd4}\|h^{(j)}\|_{k_jk_{P_j}\to k_{Q_j}}\lesssim \Xf_j\cdot(N_*)^{C\delta}\prod_{\lf\in \Uc_j}N_\lf^{\beta}\cdot M_{i_u(y)}^{-\beta},\quad \mathrm{if\ }j=i_u(y)\quad(y<b_u),
  \end{equation}
    \begin{equation}\label{actbd5} \|h^{(j)}\|_{k_jk_{P_j}\to k_{Q_j}}\lesssim \Xf_j\cdot(N_*)^{C\delta}\prod_{\lf\in \Uc_j}N_\lf^{\beta},\quad  \mathrm{if\ }j=i_u(b_u);
  \end{equation} moreover we gain an extra factor $\Zc_{1,j}$ in (\ref{actbd4}) and (\ref{actbd5}) if $h^{(j)}$ has type $R0^+$.
  
  \emph{Step 4: putting together.} We now come back to the estimate for $\|h\|_{kk_{P_0}\to k_{Q_0}}$ that appears in (\ref{estproof}). By all the previous discussions, we may assume that in the support of $h=h_{kk_1\cdots k_p}$ we have the equalities and inequalities (\ref{tensorsupport}), (\ref{cancellation1}), (\ref{cancellation2}), (\ref{cancellation3}), as well as the variants of (\ref{cancellation2}) and (\ref{cancellation3}) for $B_u$ (see \emph{Step 3}), and that any pairing in $(k,k_1,\cdots,k_p)$ must be over-paired.  All these allow us to apply Proposition \ref{final} (unless we are in the exceptional case, namely $(d,p)=(1,7)$, and up to permutation $|A_v|=2$ for $v=1, 2$, $k_{\ell_1(1)}=k_{\ell_2(1)}$) to obtain the bound
\begin{equation}
\label{topbound}\|h\|_{kk_{P_0}\to k_{Q_0}}\lesssim (N_*)^\theta\prod_{j=2}^{p}M_j^{\alpha_0}\prod_{v=1}^s(\min_{1\leq y<a_v}M_{\ell_v(y)})^{\alpha_0-\frac{d}{2}}.
\end{equation} Now combining (\ref{mainest1}), (\ref{estproof}), (\ref{actbd1}) and (\ref{estproof2})--(\ref{topbound}), we conclude that
\begin{equation}\label{lastbd}\prod_{\lf\in\Qc}N_\lf^{-\alpha+4\varepsilon+\theta}\cdot\|H\|_{kk_P\to k_1k_Q}\lesssim \prod_{j=2}^{p}\Xf_j\prod_{\lf\in\Uc}N_\lf^\beta\cdot (N_*)^{C\delta}(\max_{\lf\in\Qc}N_\lf)^{-4p\varepsilon}(\max_{2\leq j\leq p}M_j)^{-4p\varepsilon}\prod_j^{(0,0^+)}\Zc_{1,j},
\end{equation} 
where $\prod_j^{(0,0^+)}$ is defined as in (\ref{outputbd}). By definition of $M_j$ we have \[
\max_{2\leq j\leq p}M_j\geq\max_{\lf\in\Uc}N_\lf;\]
using also (\ref{lastbd}) and the fact that $\Uc \cup \Qc =\Uc_2\cup\cdots\cup \Uc_p$, we then have 
\begin{equation}\label{lastbd'}\prod_{\lf\in\Qc}N_\lf^{-\alpha+4\varepsilon+\theta}\cdot\|H\|_{kk_P\to k_1k_Q}\lesssim \prod_{j=2}^{p}\Xf_j\prod_{\lf\in\Uc}N_\lf^\beta\cdot (N_*)^{C\delta}(\max_{\lf\in\Uc_j, 2\leq j\leq p}N_\lf)^{-4p\varepsilon}\prod_j^{(0,0^+)}\Zc_{1,j}.
\end{equation} 
The factor $\prod_{j}^{(0,0^+)}N_j^{-2\varepsilon}$ in (\ref{outputbd}) will be provided by separating a square root of  the last two factors in the right hand side of (\ref{lastbd'}). 
To gain the other factor $(N_*)^{-\varepsilon^3}$ in (\ref{outputbd}), we consider two cases.
If $\max\{N_\lf:\lf\in\Uc_j,2\leq j\leq p\}\geq (N_*)^{\varepsilon^2}$, then the factor $(\max_{\lf\in\Uc_j, 2\leq j\leq p}N_\lf)^{-2p\varepsilon}$ in the other square root is bounded by $(N_*)^{-C\delta- \varepsilon^3}$, hence (\ref{outputbd}) is proved. Otherwise, let $2\leq j\leq p$ be such that $N_j=N_*$, then $N_\lf\ll N_j^{\varepsilon^2}$ for all $\lf\in\Uc_j$. If $h^{(j)}$ has type $R0$ or $R0^+$, the factor $\Zc_{1,j}^{1/2}=N_j^{-2\varepsilon}$ in the other square root  is bounded by $(N_*)^{-C\delta- \varepsilon^3}$. If $h^{(j)}$ has type $R1$, by using the bound of $\|h^{(j)}\|_{k_jk_{\Uc_j}}$ (the second bound in (\ref{type2bd2})) and that $N_j^{-\varepsilon/2}\leq (\max_{\lf\in Q_j}N_\lf)^{-\beta}$, we have 
\begin{equation}\label{type2bd2'}
\|h^{(j)}\|_{k_jk_{P_j}\to k_{Q_j}}\leq \Xf_j\cdot N_j^{-\varepsilon/2}\prod_{\lf\in\Uc_j}N_\lf^\beta\cdot\big(\max_{\lf\in Q_j}N_\lf\big)^{-\beta}\end{equation} 
for any partition $(P_j, Q_j)$ of $\Uc_j$. Then we gain an extra factor $N_j^{-\varepsilon/2}$ (which is less than $(N_*)^{-C\delta- \varepsilon^3}$) by using (\ref{type2bd2'}) instead of (\ref{type2bd2}) in (\ref{actbd1}), (\ref{actbd4}) or (\ref{actbd5}),  and hence (\ref{outputbd}) is proved.

Finally, in the exceptional case mentioned above, we may assume (up to permutation) that $A_1=\{2,3\}$ and $A_2=\{4,5\}$, so $(d,p)=(1,7)$ and $k_2=k_3=k_4=k_5:=k_*$ by the setting of $A_v$ in \emph{Step 1}. Here we may fix and sum in $k_*$, while for fixed $k_*$ the corresponding part of the tensor $H$ can be bounded as above, with $h^{(j)}\,(2\leq j\leq 5)$ measured in the norm
\[\sup_{k_j}\|h^{(j)}\|_{k_{\Uc_j}}\leq \|h^{(j)}\|_{k_j\to k_{\Uc_j}}.\] One can check that the power gain coming from using these norms is enough to cancel the summation in $k$, and the rest of the proof goes just like above. The cases when $p$ is replaced by odd $3\leq q\leq p$, or when $h$ satisfies (\ref{extrabdh}) can be proved in the same way, since Proposition \ref{final} works equally well.
\end{proof}
\begin{prop}[Selection algorithm: Case II]\label{algorithm2} Consider the same setting as in Proposition \ref{algorithm1}. Here we assume that each $h^{(j)}$ has type $R0$ or $R0^+$ (in the sense of Proposition \ref{algorithm1}), but in both cases the factor $\Zc_{1,j}$ in (\ref{type1bd1}) is replaced by 1 (the factor $\Zc_{0,j}$ remains the same). Then we have
\begin{equation}\label{outputbd2}\prod_{\lf\in\Qc}N_\lf^{-\alpha+8\varepsilon+\theta}\cdot\|H\|_{kk_P\to k_1k_Q}\lesssim\prod_{j=2}^{p}\Xf_j\cdot\prod_{\lf\in\Uc}N_\lf^{\beta_1}\cdot \big(\max_{2\leq j\leq p}\max_{\lf\in\Uc_j}N_\lf\big)^{-4p\varepsilon}.
\end{equation} The same holds if $p$ is replaced by odd $3\leq q\leq p$, without changing the power $4p\varepsilon$ in (\ref{outputbd2}).
\end{prop}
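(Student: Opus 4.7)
The proof follows the same three-step structure as the proof of Proposition \ref{algorithm1}: reduction to semi-products via Proposition \ref{multibound}, the selection algorithm, and the lattice point counting of Proposition \ref{final}. Since the hypotheses differ only in that all input tensors have type $R0$ or $R0^+$ (with $\Zc_{1,j}$ replaced by $1$ in (\ref{type1bd2})), essentially the same argument applies; the key simplification is that the per-leaf operator bounds in (\ref{actbd1}), (\ref{actbd2}), and (\ref{actbd4})--(\ref{actbd5}) now all use exponent $\beta_1$ in place of $\beta$, which directly produces the $\prod_{\lf\in\Uc}N_\lf^{\beta_1}$ on the right-hand side of (\ref{outputbd2}).

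Concretely, I would first perform the same preliminary adjustment as in Step 1 of that proof, combining each pairing into a shared index between two $\Uc_j$'s, and partition $\{2,\ldots,p\}$ into minimal subsets $A_v$ (all type $R0$, with $\Uc_j\subset\Qc$) and $B_u$ (containing at least one type $R0^+$, also with $\Uc_j\subset\Qc$), together with the remainder $\Dc$. Writing $H$ in the factorized form (\ref{altexp1})--(\ref{altexp3}) and applying Proposition \ref{multibound} gives (\ref{mainest1}). Running Steps 2 and 3 of that proof with the $R0$/$R0^+$ operator bounds throughout, and estimating $h$ by Proposition \ref{final} exactly as in (\ref{topbound}), leads to the intermediate estimate
\[\prod_{\lf\in\Qc}N_\lf^{-\alpha+4\varepsilon+\theta}\cdot\|H\|_{kk_P\to k_1k_Q}\lesssim \prod_{j=2}^p \Xf_j\prod_{\lf\in\Uc}N_\lf^{\beta_1}\cdot (N_*)^{C\delta}\big(\max_{\lf\in\Uc_j,\,2\leq j\leq p}N_\lf\big)^{-4p\varepsilon},\]
the analogue of (\ref{lastbd'}) without the $\Zc_{1,j}$ factor. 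The exceptional $(d,p)=(1,7)$ case and the extension to odd $3\leq q\leq p$ are handled exactly as in the proof of Proposition \ref{algorithm1}, since Proposition \ref{final} is stated in that generality.

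The main obstacle is purely bookkeeping: passing from this intermediate bound to (\ref{outputbd2}) requires absorbing the factor $\prod_{\lf\in\Qc}N_\lf^{4\varepsilon}\leq (N_*)^{4pD\varepsilon}$ (shifting the $\Qc$ exponent from $-\alpha+4\varepsilon$ to $-\alpha+8\varepsilon$) as well as the $(N_*)^{C\delta}$ loss into the right-hand side. Both are comfortable because downgrading the operator-bound exponent from $\beta$ to $\beta_1$ saves an extra $N_\lf^{\beta-\beta_1}=N_\lf^{(\alpha-\alpha_0)/4}=N_\lf^{250\,dp\,\varepsilon}$ per leaf $\lf\in\Uc\cup\Qc$, a surplus that vastly exceeds what is needed given our choice $\delta\ll\varepsilon\ll\alpha-\alpha_0$. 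No new algorithmic idea beyond that of Proposition \ref{algorithm1} is required.
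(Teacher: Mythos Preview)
Your overall strategy matches the paper's, but there are two issues in the execution.

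The first is minor: the $(N_*)^{C\delta}$ loss you retain does not actually appear. In the proof of Proposition~\ref{algorithm1} that factor enters only through the approximate support condition (\ref{type2}) for type $R1$ tensors, via the definition $M_{n_y}:=\max((N_*)^{3\delta},N_{\lf_y})$ in (\ref{cancellation1}) and the analogous treatment of $B_u$ in Step~3. With only $R0$/$R0^+$ tensors the support relations (\ref{type1}) and (\ref{type1+}) are exact equalities, so one may take $M_{n_y}=N_{\lf_y}$ throughout and no $(N_*)^{C\delta}$ ever arises. The paper's proof singles out precisely this observation.

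The second is a real gap in your absorption step. The intermediate estimate you write down does \emph{not} imply (\ref{outputbd2}): passing from exponent $-\alpha+4\varepsilon$ to $-\alpha+8\varepsilon$ on the $\Qc$ side would require $\prod_{\lf\in\Qc}N_\lf^{4\varepsilon}\cdot(N_*)^{C\delta}\le 1$, which is false in general since $|\Qc|$ may be of order $pD$. The ``downgrading $\beta\to\beta_1$'' savings you invoke are not additionally available at that stage --- for $\lf\in\Uc$ they are already spent in producing the $N_\lf^{\beta_1}$ on the right, and for $\lf\in\Qc$ they were discarded when you bounded $\prod_{\lf\in\Qc}N_\lf^{\beta_1-\alpha+4\varepsilon+\theta}\le(\max_{\lf\in\Qc}N_\lf)^{-4p\varepsilon}$ to reach your intermediate. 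The fix, which is exactly what the paper does, is simply not to discard: since $\beta_1-\alpha+8\varepsilon=-\tfrac34(\alpha-\alpha_0)+8\varepsilon$ is still well below $-4p\varepsilon$, the same combination of (\ref{mainest1}), (\ref{estproof}), the $\beta_1$-versions of (\ref{actbd1})--(\ref{actbd5}), and (\ref{topbound}) directly yields the analogue of (\ref{lastbd}) with exponent $-\alpha+8\varepsilon+\theta$ on $\Qc$ and no $(N_*)^{C\delta}$, and then the gain $(\max_{\lf\in\Qc}N_\lf)^{-4p\varepsilon}(\max_jM_j)^{-4p\varepsilon}$ gives (\ref{outputbd2}).
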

\begin{proof} The proof is the same as Proposition \ref{algorithm1}, with the following adjustments. First due to the absence of type $R1$ tensors, we will not lose any $(N_*)^{C\delta}$ factor in the proof process; second, we do not gain any extra factor as in the proof of Proposition \ref{algorithm1}, since $\Zc_{1,j}$ has been replaced by 1. With these, we are still able to gain a factor
\[\big(\max_{\lf\in\Qc}N_\lf\big)^{-4p\varepsilon}\big(\max_{2\leq j\leq p}M_j\big)^{-4p\varepsilon}\] as in (\ref{lastbd}), which implies (\ref{outputbd2}) in the same way as in the proof of Proposition \ref{algorithm1}.
\end{proof}
\subsection{Merging estimates}\label{6.3} Finally we prove the general merging bounds, Propositions \ref{overpair}--\ref{overpair3}.
\begin{prop}[Merged tensor bounds: Case I]\label{overpair} Let $3\leq q\leq p$ be odd, $0\leq r\leq q$, and let $\Sc_j\,(1\leq j\leq r)$ be regular plants with frequency $N(\Sc_j)=N_j\leq M$ and $|\Sc_j|\leq D$. Fix  $N_j\leq M/2$ for $r+1\leq j\leq q$ and $\zeta_j\in\{\pm\}$ for $1\leq j\leq q$, and assume that $\sum_{j=1}^q\zeta_j=1$. Denote $\Bs=(M,q,r,\zeta_1,\cdots,\zeta_q,N_1,\cdots,N_q)$. Let $h=h_{kk_1\cdots k_q}(\lambda_{r+1},\cdots,\lambda_q)$ be a constant tensor\footnote{i.e. which does not depend on $\omega$.} supported in the set \begin{equation}\label{mergesupp}
\begin{aligned}\langle k\rangle&\leq M,&\langle k_j\rangle&\leq N_j\,(1\leq j\leq q),\quad\langle\lambda_j\rangle\leq 2M^{\kappa^2}\,(r+1\leq j\leq q),\\
k&=\sum_{j=1}^q\zeta_jk_j,&\bigg||k|^2&-\sum_{j=1}^q\zeta_j|k_j|^2+\sum_{j=r+1}^q\zeta_j\lambda_j+\widetilde{\Gamma}\bigg|\lesssim 1,
\end{aligned}\end{equation} where $\widetilde{\Gamma}\in\Zb$ is fixed. Assume that
\begin{equation}\label{mergetensor}|h|+|\partial_{\lambda_j}h|\lesssim \tau^{-\theta},\quad r+1\leq j\leq q,
\end{equation} 
and that any pairing in $(k,k_1,\cdots,k_q)$ must be over-paired.
Now let $h^{(j)}=h_{k_jk_{\Uc_j}}^{(j)}(k_{\Vc_j},\lambda_{\Vc_j})$ be an $\Sc_j$-tensor for $1\leq j\leq r$, $\Os$ be as in Definition \ref{defmerge}, and let
\begin{equation}\label{mergetrim}
\begin{aligned}\Sc&=(\Lc,\Vc,\Yc)=\mathtt{Trim}(\mathtt{Merge}(\Sc_1,\cdots,\Sc_r,\Bs,\Os),M^\delta),\\
H&=\mathtt{Trim}(\mathtt{Merge}(h^{(1)},\cdots,h^{(r)},h,\Bs,\Os),M^\delta).
\end{aligned}
\end{equation} 
Let $N_*:=\max(N_2,\cdots,N_{q})$, assume $N_*\geq M^\delta$, and $\Upsilon$ be a factor such that
\begin{equation}\label{prop5.4:Upsilon}
\Upsilon\leq \tau^{-\theta};\qquad \Upsilon\leq \tau^{-\theta}M^{-40dp\varepsilon}\,\,\,\mathrm{if}\,\,\,\max_{1\leq j\leq q}N_j\leq (50dp)^{-1}M.
\end{equation}
We assume that the tensor $h^{(1)}$ is $\Bc_{M^{[\delta]}}$ measurable, and $N_\lf\geq M^\delta$ for $\lf\in\Lc_1$; for $2\leq j\leq r$, the tensor $h^{(j)}$ is $\Bc_{(N_*)^{[\delta]}}$ measurable, and $N_\lf\geq (N_*)^\delta$ for $\lf\in\Lc_j$.
Furthermore we assume that for $1\leq j\leq r$, $\Sc_j$ and $h^{(j)}$ have one of the following two types:

(1) Type 0: where $\Vc_j=\varnothing$, and in the support of $h^{(j)}$ we have
\begin{equation}\label{supportmerge}\sum_{\lf\in\Uc_j}\zeta_\lf=1,\quad k_j=\sum_{\lf\in\Uc_j}\zeta_\lf k_\lf,\quad |k_j|^2-\sum_{\lf\in\Uc_j}\zeta_\lf|k_\lf|^2=\Gamma_j,
\end{equation} where $\Gamma_j\in\Zb$ is fixed, and $h^{(j)}$ satisfies the bound
\begin{equation}\label{boundmerge1}\|h^{(j)}\|_{k_jk_{B_j}\to k_{C_j}}\lesssim\Xf_j\cdot\tau^{-\theta}\prod_{\lf\in B_j\cup C_j} N_\lf^{\beta_1}\prod_{\lf\in\Pc_j}N_\lf^{-8\varepsilon}\prod_{\lf\in E_j}N_\lf^{4\varepsilon}\prod_{\pf\in\Yc_j}N_\pf^{-\delta^3}\cdot\Xc_{0,j}\Xc_{1,j},
\end{equation} for any subpartition $(B_j,C_j)$ of $\Uc_j$, where $E_j=\Uc_j\backslash(B_j\cup C_j)$, and $\Xc_{0,j}$ and $\Xc_{1,j}$ are defined as in (\ref{estimate02}) but are associated with $\Sc_j$ and $(B_j,C_j)$ instead.

(2) Type 1: where $h^{(j)}$ satisfies the bounds (with $B_j,C_j,E_j$ same as (1)) \begin{equation}\label{boundmerge2}\|h^{(j)}\|_{X_{\Vc_j}^{-b_0}[k_jk_{B_j}\to k_{C_j}]}\lesssim\Xf_j\cdot\tau^{-\theta}\prod_{\lf\in B_j\cup C_j}N_\lf^\beta\prod_{\lf\in\Pc_j}N_\lf^{-4\varepsilon}\prod_{\lf\in E_j}N_\lf^{8\varepsilon}\prod_{\pf\in\Yc_j}N_\pf^{-\delta^3} \prod_{\ff \in \Vc_j} N_\ff^d\cdot \big(\max_{\lf\in C_j}N_\lf\big)^{-\beta},\quad \mathrm{if\ }C_j\neq\varnothing,
 \end{equation} 
 \begin{equation}
 \label{boundmerge3}\|h^{(j)}\|_{X_{\Vc_j}^{-b_0}[k_jk_{B_j}]}\lesssim\Xf_j\cdot\tau^{-\theta}\prod_{\lf\in B_j}N_\lf^\beta\prod_{\lf\in\Pc_j}N_\lf^{-4\varepsilon}\prod_{\lf\in E_j}N_\lf^{8\varepsilon}\prod_{\pf\in\Yc_j}N_\pf^{-\delta^3}\prod_{\ff \in \Vc_j} N_\ff^d\cdot N_j^{-\varepsilon},\quad\mathrm{if\ }C_j=\varnothing;
 \end{equation} and we also assume for $C_j=\varnothing$ the localization bound
 \begin{equation} \label{boundmerge5}\bigg\|\bigg(1+\frac{1}{R_j}\big|k_j-\sum_{\lf\in\Uc_j}\zeta_\lf k_\lf-\ell_j\big|\bigg)^\kappa h^{(j)}\bigg\|_{X^{-b_0}_{\Vc_j}[k_jk_{B_j}]}\lesssim\Xf_j\cdot \tau^{-\theta}\prod_{\lf\in B_j}N_\lf^\beta\prod_{\lf\in\Pc_j}N_\lf^{-4\varepsilon}\prod_{\lf\in E_j}N_\lf^{8\varepsilon}\prod_{\pf\in\Yc_j}N_\pf^{-\delta^3}\prod_{\ff \in \Vc_j} N_\ff^d,
 \end{equation} where $R_1=M^{2\delta}$, $R_j=(N_*)^{2\delta}$ for $j\geq 2$ and $\ell_j=\sum_{\ff\in\Vc_j}\zeta_\ff k_\ff$, and the $\lambda_{\Vc_j}$-derivative bound (for $C_j=E_j=\varnothing$),
  \begin{equation} \label{boundmerge6}\|\partial_{\lambda_{\Vc_j}}h^{(j)}\|_{X_{\Vc_j}^{-b_0}[k_jk_{\Uc_j}]}\lesssim \Xf_j\cdot\tau^{-\theta}\exp[(\log N_j)^5+|\Sc_j|(\log N_j)^3].
 \end{equation}

 Finally fix a subpartition $(B,C)$ of $\Uc$, and let $E=\Uc\backslash(B\cup C)$.
Then, under all of the above assumptions, we have the following results, where we denote
 \begin{equation}\label{boundmerge10}\Yf=\prod_{j=1}^r\Xf_j\cdot\tau^{-\theta}M^{\theta}(N_*)^{-2\varepsilon^4}:
 \end{equation}

(i) If $C\neq\varnothing$, and assume that $\max\{N_\lf:\lf\in C\cap\Uc_1\}\sim\max\{N_\lf:\lf\in C\}$, then $\tau^{-1}M$-certainly, the tensor $H=H_{kk_\Uc}(k_\Vc,\lambda_\Vc)$ satisfies the bounds \begin{equation}\label{boundmerge7}\sqrt{\Upsilon}\cdot\|H\|_{X_{\Vc}^{-b_0}[kk_{B}\to k_{C}]}\lesssim\Yf\cdot\tau^{-\theta}\prod_{\lf\in B\cup C}N_\lf^\beta\prod_{\lf\in\Pc}N_\lf^{-4\varepsilon}\prod_{\lf\in E}N_\lf^{8\varepsilon}\prod_{\pf\in\Yc}N_\pf^{-\delta^3} \prod_{\ff \in \Vc} N_\ff^d\cdot \big(\max_{\lf\in C}N_\lf\big)^{-\beta}.
 \end{equation}

(ii) If $C=\varnothing$, and assume that $h^{(1)}$ has type 1, and $N_1\gtrsim N_j$ for all $1\leq j\leq r$ such that $h^{(j)}$ has type 1, then $\tau^{-1}M$-certainly we have  \begin{equation}  \label{boundmerge8}\sqrt{\Upsilon}\cdot\|H\|_{X_{\Vc}^{-b_0}[kk_{B}]}\lesssim\Yf\cdot\tau^{-\theta}\prod_{\lf\in B}N_\lf^\beta\prod_{\lf\in\Pc}N_\lf^{-4\varepsilon}\prod_{\lf\in E}N_\lf^{8\varepsilon}\prod_{\pf\in\Yc}N_\pf^{-\delta^3}\prod_{\ff \in \Vc} N_\ff^d\cdot M^{-\varepsilon}.
 \end{equation}
 
 (iii) If $C=\varnothing$, and assume that $h^{(1)}$ has type 1. Moreover, assume we restrict the tensors $H_{kk_\Uc}(k_\Vc,\lambda_\Vc)$ and $h_{k_1k_{\Uc_1}}^{(1)}(k_{\Vc_1},\lambda_{\Vc_1})$ to the sets
 \begin{equation}\label{Prop5.4:localsupp}
 1+\frac{1}{M^{2\delta}}\bigg|k-\sum_{\lf\in\Uc}\zeta_\lf^* k_\lf-\ell\bigg|\sim K,\quad 1+\frac{1}{M^{2\delta}}\bigg|k_1-\sum_{\lf\in\Uc_1}\zeta_\lf k_\lf-\ell_1\bigg|\sim K_1,\end{equation} where $K\lesssim K_1$ are two dyadic numbers, $\zeta_\nf^*$ represents the sign of $\nf$ in $\Sc$, $\ell=\sum_{\ff\in\Vc}\zeta_\ff^* k_\ff$, and $\ell_1=\sum_{\ff\in\Vc_1}\zeta_\ff k_\ff$ as above. Then $\tau^{-1}M$-certainly we have  \begin{equation} \label{boundmerge9}\sqrt{\Upsilon}\cdot\bigg\|\bigg(1+\frac{1}{M^{2\delta}}\big|k-\sum_{\lf\in\Uc}\zeta_\lf^* k_\lf-\ell\big|\bigg)^\kappa H\bigg\|_{X_\Vc^{-b_0}[kk_{B}]}\lesssim\Yf\cdot \tau^{-\theta}\prod_{\lf\in B}N_\lf^\beta\prod_{\lf\in\Pc}N_\lf^{-4\varepsilon}\prod_{\lf\in E}N_\lf^{8\varepsilon}\prod_{\pf\in\Yc}N_\pf^{-\delta^3} \prod_{\ff \in \Vc} N_\ff^d.
 \end{equation}
\end{prop}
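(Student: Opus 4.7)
My plan is to reduce Proposition \ref{overpair} to a combination of Proposition \ref{gausscont} (for the Gaussian contractions built into the outer $\mathtt{Trim}$) and Proposition \ref{algorithm1} (for the merged tensor before trimming), after first handling the modulation variables $\lambda_\Vc$ and $(\lambda_{r+1},\ldots,\lambda_q)$ by a meshing argument. The overall flow is: fix modulations by averaging, write the merged tensor $M := \mathtt{Merge}(h^{(1)},\ldots,h^{(r)},h,\Bs,\Os)$ explicitly, contract it against the Gaussians (from $f_{N_\lf}$ with $N_\lf<M^\delta$) and against the low-frequency $\widehat{z_{N_\ff}}$ (with $N_\ff<M^\delta$) to obtain $H$, and finally invoke the selection algorithm of Proposition \ref{algorithm1} to estimate $M$ itself.

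First, I would partition $\{|\lambda_\ff|\lesssim M^{\kappa^2}\}$ and $\{|\lambda_j|\lesssim M^{\kappa^2}\}$ into small boxes of size $\nu=\exp(-(\log M)^6)$ and replace each $h^{(j)}$ and $h$ by its average on each box, as in the proof of Proposition \ref{trimbd}. The derivative bounds (\ref{mergetensor}) and (\ref{boundmerge6}) control the error at a super-polynomially small price. This reduces each $X_\Vc^{-b_0}[\cdots]$ norm estimate to a uniform bound at fixed values of all modulation variables, at the cost of a factor $M^\theta$. Once modulations are frozen, write
\[
M_{kk_\Wc}(k_\Vc,\lambda_\Vc)=\prod\nolimits^{(1,2)}_{\lf,\lf'}(\cdots)\cdot\sum_{k_1,\ldots,k_r}\sum\nolimits^{(3)}_{k_\Qc}h_{kk_1\cdots k_q}\prod_{j=1}^{r}\bigl[h^{(j)}_{k_jk_{\Uc_j}}(k_{\Vc_j},\lambda_{\Vc_j})\bigr]^{\zeta_j},
\]
with $\Wc=\Uc_1\cup\cdots\cup\Uc_r\setminus\Qc$. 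The trimmed tensor $H$ is then the contraction of $M$ against $\{f_{N_\lf}\}$ for $\lf\in\Wc$ with $N_\lf<M^\delta$ and against $\{\widehat{z_{N_\ff}}\}$ for $\ff\in\Vc_1\cup\cdots\cup\Vc_r$ with $N_\ff<M^\delta$. The $z$-contractions are harmless via Cauchy--Schwarz and $\|z_{N_\ff}\|_{X^{b_0}}\leq N_\ff^{-D_1}$; the Gaussian contraction, combined with the measurability hypothesis on the $h^{(j)}$'s (so that the Gaussians being contracted against are independent of $M$) and Proposition \ref{gausscont} (plus an additional meshing argument in $\lambda_\Vc$) reduces the norm of $H$ to the supremum, over partitions $(F,G)$ of the trimmed-out index set, of $\|M\|_{X_\Vc^{-b_0}[kk_{B\cup F}\to k_{C\cup G}]}$ (times $\tau^{-\theta}M^\theta$ and the product of $\gamma_{k_\lf}\lesssim N_\lf^{-\alpha+\theta}\tau^{-\theta}$ from (\ref{constants})).

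Second, to estimate $\|M\|_{kk_B\to k_C}$ at fixed modulations I apply Proposition \ref{algorithm1}, after matching each $(\Sc_j,h^{(j)})$ to one of its types. For $j$ of type 0, the support condition (\ref{supportmerge}) and the operator bounds (\ref{boundmerge1}) are precisely the hypotheses of type R0 (or of type R0+ if an appropriate subsum against low-frequency Gaussians produces only the weaker constraint (\ref{type1+})). For $j$ of type 1, the localization bound (\ref{boundmerge5}) localizes $h^{(j)}$ (after freezing $\lambda_{\Vc_j}$ and summing against $\widehat{z_{N_\ff}}$) on a ball of radius $R_j\leq(N_*)^{3\delta}$ around $\sum_{\lf\in\Uc_j}\zeta_\lf k_\lf$ (up to constants), matching type R1 with operator bound (\ref{type2bd2}) provided by (\ref{boundmerge2})--(\ref{boundmerge3}). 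The base tensor $h$ satisfies the hypotheses of Proposition \ref{algorithm1} by (\ref{mergesupp}) and the over-pairing assumption, so the conclusion (\ref{outputbd}) yields an operator norm bound on $M$ of exactly the shape on the right-hand side of (\ref{boundmerge7}) (and of (\ref{boundmerge8}) after accounting for the extra $N_j^{-\varepsilon}$ factor of (\ref{boundmerge3}) applied to $h^{(1)}$ when $C=\varnothing$). Assembling the factors (the $\prod_{\lf\in\Qc}N_\lf^{-\alpha+4\varepsilon+\theta}$ on the left of (\ref{outputbd}) cancels the $\prod N_\lf^{-\alpha+\theta}$ weight from the Gaussian contractions, and the $\prod_{\pf\in\Yc_j}N_\pf^{-\delta^3}$ and $\Yc=\{0\}\cup\bigcup_j\Yc_j$ match naturally) gives (\ref{boundmerge7})--(\ref{boundmerge8}).

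The hard part, and the main obstacle, is twofold. The first subtlety is accumulating the required $(N_*)^{-2\varepsilon^4}$ decay in $\Yf$ and, in case (ii), the $M^{-\varepsilon}$ decay: Proposition \ref{algorithm1} outputs a raw $(N_*)^{-\varepsilon^3}$, and one must argue that either (a) $\max_j N_j\leq(50dp)^{-1}M$, in which case (\ref{prop5.4:Upsilon}) supplies $\sqrt{\Upsilon}\leq \tau^{-\theta}M^{-20dp\varepsilon}$ which already exceeds $M^{-\varepsilon}$, or (b) $\max_j N_j\sim M$, in which case the max is achieved either by a type-1 $h^{(j)}$, forcing $N_1\sim M$ by hypothesis and giving $N_1^{-\varepsilon}=M^{-\varepsilon}$ from (\ref{boundmerge3}), or by a type-0 (or type-0+) $h^{(j)}$, in which case the factor $\prod_j^{(0,0^+)}N_j^{-2\varepsilon}$ in (\ref{outputbd}) gives $M^{-2\varepsilon}$. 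The second subtlety is the localization bound (iii): here I propagate the weight $(1+M^{-2\delta}|k-\sum_{\lf\in\Uc}\zeta_\lf^* k_\lf-\ell|)^\kappa$ by noting that on the support of $h$ one has $k=\sum_{j=1}^q\zeta_j k_j$, while the type-1 tensors contribute $|k_j-\sum_{\lf\in\Uc_j}\zeta_\lf k_\lf-\ell_j|\leq R_j K_1\leq M^{2\delta}K_1$ for $j=1$ and $\leq R_j\leq(N_*)^{2\delta}$ for $j\geq2$ on the support of (\ref{Prop5.4:localsupp}), forcing $|k-\sum\zeta_\lf^* k_\lf-\ell|\lesssim M^{2\delta}K_1$, so that $K\lesssim K_1$ and the weight is absorbable into the $K_1^{-\kappa}$ gain from (\ref{boundmerge5}); this requires careful bookkeeping of the partition $(B,C)$ of $\Uc$ against the partitions $(B_j,C_j)$ selected by Proposition \ref{algorithm1}'s algorithm.
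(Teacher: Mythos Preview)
There are two genuine gaps in your plan.

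The more serious one is independence. The Gaussians contracted away by the outer $\mathtt{Trim}$ at frequency $M^\delta$ are the $\eta_{k_\lf}$ for leaves $\lf\in\widetilde{\Uc}\setminus\Uc$, i.e.\ those with $N_\lf<M^\delta$; by hypothesis these all lie in $\Lc_j$ for some $j\geq 2$ (since $N_\lf\geq M^\delta$ on $\Lc_1$), and they satisfy $\langle k_\lf\rangle\leq M^{[\delta]}$. But the full merged tensor contains $h^{(1)}$, which is only $\Bc_{M^{[\delta]}}$-measurable, so these Gaussians are \emph{not} independent of it and Proposition \ref{gausscont} does not apply. The paper's proof avoids this by first separating $h^{(1)}$ from the rest via Proposition \ref{bilineartensor} (its Step 3): once $h^{(1,\dagger)}$ is pulled out, the remaining tensor $\Hc$ is built only from $h^{(j)}$ with $j\geq 2$, hence is $\Bc_{(N_*)^{[\delta]}}$-measurable, and since the contracted Gaussians have $N_\lf\geq(N_*)^\delta$ the independence needed for Proposition \ref{gausscont} is restored. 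This separation is not optional bookkeeping; without it the descent step is unjustified.

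The second gap is over-pairings. Proposition \ref{algorithm1} is stated only for collections of two-element pairings between the $\Uc_j$'s, whereas the $\Os$ in $\mathtt{Merge}$ may contain sets $\Ac_i$ with $|\Ac_i|\geq 3$. You cannot feed the merged tensor directly into Proposition \ref{algorithm1}. The paper handles this (its Steps 2 and 3) by fixing the common value of $k_\lf$ on each over-paired group, passing to a $\sup$ over these values, and tracking the resulting losses and gains via the quantity $\Zf$ in (\ref{defZ}); only after this reduction is Proposition \ref{algorithm1} invoked (on $\Hc^\dagger$). Your outline also does not explain how the leaves in $E$ are handled --- in the paper they are absorbed into the same fixing procedure (Case 1 of Step 2), which is what produces the $\prod_{\lf\in E}N_\lf^{8\varepsilon}$ factor on the right-hand side.
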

\begin{proof}
We would like to apply Proposition \ref{algorithm1}.
The technical difficulty before doing so is two-folded. On the one hand, we must separate the tensor $h^{(1)}$ from the tensor $H$ defined in (\ref{mergetrim}).
On the other hand, since there are no over-pairings in the statement of Proposition \ref{algorithm1}
we must remove these from $H$. Once we have dealt with this technical difficulty, the heart of the matter lies in implementing Proposition \ref{algorithm1} to obtain the desired bounds above.

 In the proof below we will mainly focus on (\ref{boundmerge7}). The proof of (\ref{boundmerge8}) will be analogous, and we will only point out the necessary changes in the course of the proof.  
 Moreover, in proving (\ref{boundmerge8}) we will only use the bound (\ref{boundmerge3}) for the tensor $h^{(1)}$, so (\ref{boundmerge9}) will follow from the same arguments as (\ref{boundmerge8}) once we use (\ref{boundmerge5}) instead of (\ref{boundmerge3}), in view of (\ref{Prop5.4:localsupp}) and $K\lesssim K_1$, $N_1\leq M$.
 
The proof will proceed in four steps. In \emph{Step 1}, we reduce the desired estimates for the tensor $H$ to those for the tensor $H^\circ$ defined in (\ref{prop5.4:H0}) below. In \emph{Step 2}, we remove and estimate the over-pairings, and reduce the desired estimates for $H^\circ$ to those for $(H^\circ)^\dagger$ defined in (\ref{reducedtrim2}). In \emph{Step 3}, we first single out $h^{(1)}$ in $(H^\circ)^\dagger$ and in turn apply Propositions \ref{bilineartensor} and \ref{gausscont}, then remove and estimate the over-pairings as in \emph{Step 2}, to reduce the desired estimates for $(H^\circ)^\dagger$ to those for $\Hc^\dagger$ defined in (\ref{reducedmerge4}). Finally in \emph{Step 4}, we implement Proposition \ref{algorithm1} and conclude the proof.

\emph{Step 1: pre-processing.} Define \[\widetilde{\Sc}=(\widetilde{\Lc},\widetilde{\Vc},\widetilde{\Yc})=\mathtt{Merge}(\Sc_1,\cdots,\Sc_r,\Bs,\Os),\]\[\widetilde{H}=\widetilde{H}_{kk_{\widetilde{\Uc}}}(k_{\widetilde{\Vc}},\lambda_{\widetilde{\Vc}})=\mathtt{Merge}(h^{(1)},\cdots, h^{(r)},h,\Bs,\Os),\] noticing that $\widetilde{\Vc}=\Vc_1\cup\cdots\cup\Vc_r\cup\{r+1,\cdots,q\}$, and define
\begin{equation}\label{prop5.4:H0}
H^\circ=(H^\circ)_{kk_\Uc}(k_{\widetilde{\Vc}},\lambda_{\widetilde{\Vc}})=\sum_{k_{\widetilde{\Uc}\backslash\Uc}}\widetilde{H}_{kk_{\widetilde{\Uc}}}(k_{\widetilde{\Vc}},\lambda_{\widetilde{\Vc}})\prod_{\lf\in \widetilde{\Uc}\backslash\Uc}(f_{N_\lf})_{k_\lf}^{\zeta_\lf^*},\end{equation} then we have
\[H_{kk_\Uc}(k_\Vc,\lambda_\Vc)=\sum_{k_{\widetilde{\Vc}\backslash\Vc}}\int\mathrm{d}\lambda_{\widetilde{\Vc}\backslash\Vc}\cdot(H^\circ)_{kk_\Uc}(k_{\widetilde{\Vc}},\lambda_{\widetilde{\Vc}})\prod_{\ff\in\widetilde{\Vc}\backslash\Vc}(\widehat{z_{N_\ff}})_{k_\ff}^{\zeta_\ff^*}(\lambda_\ff).\] By the same proof as part (1) of Proposition \ref{trimbd}, for any $X_\Vc^{-b_0}[\cdots]$ norm we have
\begin{equation}\label{gainofv}\|H\|_{X_\Vc^{-b_0}[\cdots]}\lesssim\|H^\circ\|_{X_{\widetilde{\Vc}}^{-b_0}[\cdots]}\cdot\prod_{\ff\in\widetilde{\Vc}\backslash\Vc}N_\ff^{-D_1},\end{equation} as well as for the weighted norm in (\ref{boundmerge9}). Therefore, it suffices to estimate the $X_{\widetilde{\Vc}}^{-b_0}[\cdots]$ norms (as well as the weighted ones) for $H^\circ$. 

For each $1\leq j\leq r$, if $h^{(j)}$ has type 1, we can define $\Xf_j^*=\Xf_j^*(k_{\Vc_j},\lambda_{\Vc_j})$ to be the smallest positive number such that the bounds (\ref{boundmerge2})--(\ref{boundmerge6}) are true for this choice of $(k_{\Vc_j},\lambda_{\Vc_j})$ with $X_{\Vc_j}^{-b_0}$ in the norms removed (for example $X_{\Vc_j}^{-b_0}[k_jk_{B_j}\to k_{C_j}]$ replaced by $k_jk_{B_j}\to k_{C_j}$), and with $\Xf_j$ replaced by $\Xf_j^*(k_{\Vc_j},\lambda_{\Vc_j})$; for example one of the inequalities satisfied by $\Xf_j^*=\Xf_j^*(k_{\Vc_j},\lambda_{\Vc_j})$, which corresponds to (\ref{boundmerge2}), would be
\begin{equation}\label{eq:modified6.40}
\|h^{(j)}\|_{k_jk_{B_j}\to k_{C_j}}\lesssim\Xf_j^*\cdot\tau^{-\theta}\prod_{\lf\in B_j\cup C_j}N_\lf^\beta\prod_{\lf\in\Pc_j}N_\lf^{-4\varepsilon}\prod_{\lf\in E_j}N_\lf^{8\varepsilon}\prod_{\pf\in\Yc_j}N_\pf^{-\delta^3} \prod_{\ff \in \Vc_j} N_\ff^d\cdot \big(\max_{\lf\in C_j}N_\lf\big)^{-\beta}
\end{equation}
 with $C_j\neq\varnothing$, for fixed $(k_{\Vc_j},\lambda_{\Vc_j})$. If $h^{(j)}$ has type 0, we simply define $\Xf_j^*=\Xf_j$. Then we have
\begin{equation}\label{reducednorms}\sum_{k_{\Vc_j}}\int\mathrm{d}\lambda_{\Vc_j}\cdot\prod_{\ff\in\Vc_j}\langle \lambda_\ff\rangle^{-2b_0}\cdot\Xf_j^*(k_{\Vc_j},\lambda_{\Vc_j})^2\lesssim \Xf_j^2
\end{equation} for type 1 tensors. When $(k_{\widetilde{\Vc}},\lambda_{\widetilde{\Vc}})$ is fixed, which means $(k_{\Vc_j},\lambda_{\Vc_j})$ are fixed for $1\leq j\leq r$ and $(k_j,\lambda_j)$ are fixed for $r+1\leq j\leq q$, we shall view $h=h_{kk_1\cdots k_r}$ as a tensor depending on $(k,k_1,\cdots,k_r)$; for $1\leq j\leq r$ we shall view $h^{(j)}=h_{k_jk_{\Uc_j}}^{(j)}$ as a tensor depending on $(k_j,k_{\Uc_j})$.

With these reductions, we can view $\widetilde{H}=\widetilde{H}_{kk_{\widetilde{\Uc}}}$ as a tensor depending on $(k,k_{\widetilde{\Uc}})$ and $H^\circ=H_{kk_\Uc}^\circ$ depending on $(k,k_\Uc)$, namely
\begin{equation}\label{reducedmerge}\widetilde{H}_{kk_{\widetilde{\Uc}}}=\prod_{\lf,\lf'}^{(1)}\mathbf{1}_{k_\lf=k_{\lf'}}\prod_{\lf,\lf'}^{(2)}\mathbf{1}_{k_\lf\neq k_{\lf'}}\cdot\sum_{(k_1,\cdots,k_r)}h_{kk_1\cdots k_r}\sum_{k_{\Qc}}^{(3)}\prod_{\lf\in\Qc}\Delta_{N_\lf}\gamma_{k_\lf}\prod_{j=1}^{r}\big[h_{k_jk_{\Uc_j}}^{(j)}\big]^{\zeta_j},
\end{equation}
\begin{equation}\label{reducedtrim}(H^\circ)_{kk_\Uc}=\sum_{k_{\widetilde{\Uc}\backslash\Uc}}\widetilde{H}_{kk_{\widetilde{\Uc}}}\prod_{\lf\in \widetilde{\Uc}\backslash\Uc}(f_{N_\lf})_{k_\lf}^{\zeta_\lf^*},
\end{equation} where in (\ref{reducedmerge}), the sum and products are taken in the same way as (\ref{merge1}) when merging the tensors $(h^{(1)},\cdots,h^{(r)})$ via $(h,\Bs,\Os)$. Similarly $\Qc$ is defined as in Definition \ref{defmerge}. Also note that
$\Uc =\{\lf\in\widetilde{\Uc}:N_\lf\geq M^\delta\}$.

Our goal is to prove that, for fixed values of $(k_{\widetilde{\Vc}},\lambda_{\widetilde{\Vc}})$ defined in (\ref{prop5.4:H0}), the tensor $H^\circ=H_{kk_\Uc}^\circ$ satisfies (\ref{boundmerge7}), (\ref{boundmerge8}) and (\ref{boundmerge9}) $\tau^{-1}M$-certainly, but with the following three adjustments: (a) we remove the $X_\Vc^{-b_0}$ parts in the norms
 (for example $X_\Vc^{-b_0}[kk_B\to k_C]$ is replaced by $kk_B\to k_C$), and multiply the left hand sides by the extra factor $\prod_{j=r+1}^q N_j^{-d/2}$; (b) the set $\Vc$ in the factors $\prod_{\ff\in\Vc}N_\ff^d$ on the right hand sides is replaced by $\Vc_1\cup\cdots\cup\Vc_r$ and $\Yc$ is replaced by $\widetilde{\Yc}$; and (c) the $\Xf_j$ in the definition (\ref{boundmerge10}) of $\Yf$ is replaced by $\Xf_j^*$. 
 For example, the analogue of (\ref{boundmerge7}) with the above adjustments (a)--(c) amounts to showing:
\begin{equation}\label{prop5.4:5.45'}
\begin{aligned}
\sqrt{\Upsilon}\cdot\prod_{j=r+1}^q N_j^{-d/2}\,\|H^\circ\|_{kk_{B}\to k_{C}}\lesssim&\tau^{-\theta}\prod_{\lf\in B\cup C}N_\lf^\beta\prod_{\lf\in\Pc}N_\lf^{-4\varepsilon}\prod_{\lf\in E}N_\lf^{8\varepsilon}\big(\max_{\lf\in C}N_\lf\big)^{-\beta}\\
&\times \prod_{\pf\in\widetilde{\Yc}}N_\pf^{-\delta^3}\, \prod_{\ff \in \Vc_1\cup\cdots\cup\Vc_r} N_\ff^d
\cdot\left(
\prod_{j=1}^r\Xf^*_j\cdot\tau^{-\theta}M^{\theta}(N_*)^{-2\varepsilon^4}\right),
\end{aligned}
\end{equation}
where   $\prod_{j=1}^r\Xf^*_j\cdot\tau^{-\theta}M^{\theta}(N_*)^{-2\varepsilon^4}$ is $\Yf$ in  (\ref{boundmerge10}) with replacements of $\Xf_j$ by $\Xf_j^*$. The corresponding analogues of (\ref{boundmerge8}) and (\ref{boundmerge9}) with the adjustments (a)--(c) are similar.

If we can prove (\ref{prop5.4:5.45'}) for a fixed choice of $(k_{\widetilde{\Vc}}, \lambda_{\widetilde{\Vc}})$ in $H^{\circ}$, then by applying the \emph{meshing argument} in the same way as we did in the proof of Proposition \ref{trimbd}, using (\ref{mergetensor}) and (\ref{boundmerge6}), we can reduce to the case of at most $\exp(\kappa(\log M)^6)$ choices for $(k_{\widetilde{\Vc}},\lambda_{\widetilde{\Vc}})$, hence $\tau^{-1}M$-certainly we may assume that (\ref{prop5.4:5.45'}) holds for $H_{kk_\Uc}^\circ$ for all choices of $(k_{\widetilde{\Vc}},\lambda_{\widetilde{\Vc}})$. Since $\widetilde{\Vc}=\Vc_1\cup\cdots\cup\Vc_r\cup\{r+1,\cdots,q\}$ and the definition of $X_{\widetilde{\Vc}}^{-b_0}[\cdots]$ involves summing and integrating over $(k_{\widetilde{\Vc}}, \lambda_{\widetilde{\Vc}})$, then $\tau^{-1}M$-certainly we have the following estimate 
\begin{equation}\label{prop5.4:5.45"}
\sqrt{\Upsilon}\cdot\|H^\circ\|_{X_{\widetilde{\Vc}}^{-b_0}[kk_{B}\to k_{C}]}\lesssim\Yf\cdot\tau^{-\theta}\prod_{\lf\in B\cup C}N_\lf^\beta\prod_{\lf\in\Pc}N_\lf^{-4\varepsilon}\prod_{\lf\in E}N_\lf^{8\varepsilon}\prod_{\pf\in\widetilde{\Yc}}N_\pf^{-\delta^3} \prod_{\ff \in \widetilde{\Vc}} N_\ff^d\cdot \big(\max_{\lf\in C}N_\lf\big)^{-\beta},
\end{equation}
which follows from (\ref{prop5.4:5.45'}) and
\begin{equation}\label{powersave}\bigg(\sum_{k_{\widetilde{\Vc}}}\int\mathrm{d}\lambda_{\widetilde{\Vc}}\cdot\prod_{\ff\in\widetilde{\Vc}}\langle\lambda_\ff\rangle^{-2b_0}\prod_{j=1}^r\Xf_j^*(k_{\Vc_j},\lambda_{\Vc_j})^2\bigg)^{1/2}\lesssim\prod_{j=1}^r\Xf_j\prod_{j=r+1}^q N_j^{d/2}.\end{equation} Note that (\ref{powersave}) follows from taking the tensor product of (\ref{reducednorms}) for $1\leq j\leq r$ and summing and integrating over $(k_j,\lambda_j)$ for $r+1\leq j\leq q$.
Finally, the desired bound (\ref{boundmerge7}) for $H_{kk_\Uc}(k_\Vc,\lambda_\Vc)$ follows from (\ref{prop5.4:5.45"}) and (\ref{gainofv}) in view of the $N_\ff^{-D_1}$ powers on the right hand side of (\ref{gainofv}). 
The desired bounds (\ref{boundmerge8}) and (\ref{boundmerge9}) for $H_{kk_\Uc}(k_\Vc,\lambda_\Vc)$ can be obtained in a similar way.

\emph{Step 2: removing over-pairings.} From now on we will fix the value of $(k_{\widetilde{\Vc}},\lambda_{\widetilde{\Vc}})$ and reduce to the setting of the tensor $\widetilde{H}$ in (\ref{reducedmerge}) and the tensor $H^\circ$ in (\ref{reducedtrim}).

We will first focus on (\ref{boundmerge7}), fix a subpartition $(B,C)$ of $\Uc$ with $C\neq\varnothing$, and denote $E=\Uc\backslash(B\cup C)$. Recall that $\Os=\{\Ac_1,\cdots,\Ac_m\}$ is the collection of all pairings and over-pairings (see Definition \ref{defmerge}), and notice that the frequencies $N_\lf$ are the same for $\lf\in\Ac_i$. Without loss of generality,  we may assume the frequency  $N_\lf$ for $\lf\in\Ac_i$ is $\geq M^\delta$ for all $1\leq i\leq v$ (where $1\leq v\leq m$), and $\leq M^{[\delta]}$ for $v+1\leq i\leq m$. In particular, we have that $\Ac_i\cap\Uc=\varnothing$ and that $\Ac_i\cap \Uc_1=\varnothing$ for $i\geq v+1$ (since $N_\lf\geq M^\delta$ for $\lf\in\Uc$ and $\lf\in\Uc_1$).

Next, in (\ref{reducedmerge}), the product $\prod_{\lf,\lf'}^{(2)}$ has two parts, namely $\prod_{\lf,\lf'}^{(2,\geq)}$ containing $(\lf,\lf')$ such that $N_\lf=N_{\lf'}\geq M^\delta$, and $\prod_{\lf,\lf'}^{(2,<)}$ containing $(\lf,\lf')$ such that $N_\lf=N_{\lf'}< M^\delta$. In the proof below we will slightly modify (\ref{reducedmerge}) by changing $\prod_{\lf,\lf'}^{(2)} \mathbf{1}_{k_\lf\neq k_{\lf'}}$ into $\prod_{\lf,\lf'}^{(2,<)} \mathbf{1}_{k_\lf\neq k_{\lf'}}$. This will be necessary in order to separate the tensor $h^{(1)}$ from the rest later in the proof, and will not cause a problem, since the original $\widetilde{H}$ equals the modified $\widetilde{H}$ multiplied by $\prod_{\lf,\lf'}^{(2,\geq)}\mathbf{1}_{k_\lf\neq k_{\lf'}}$, and the original $H^\circ$ equals the modified $H^\circ$ multiplied by $\prod_{\lf,\lf'}^{(2,\geq)}\mathbf{1}_{k_\lf\neq k_{\lf'}}$, which is a bounded operation due to Lemma \ref{adjustment}.
The reason we need to keep the factors in the product $\prod_{\lf,\lf'}^{(2,<)}\mathbf{1}_{k_\lf\neq k_{\lf'}}$ is to guarantee the no-pairing assumption required to apply Proposition \ref{gausscont} later in \emph{Step 3}.

Recall that when $C\neq\varnothing$ we have $\max\{N_\lf:\lf\in C\cap\Uc_1\}\sim\max\{N_\lf:\lf\in C\}$  (see part (i) in the assumption). Denote the particular $\lf\in C\cap\Uc_1$  where the maximum is attained by $\lf_{\mathrm{top}}$. In this step we shall fix the values of $k_\lf$ for $\lf\in E$ and for $\lf\in \Ac_i$,  where $1\leq i\leq v$ and $|\Ac_i|\geq 3$. These $\lf$'s are divided into groups according to the pairing and over-pairing relations, and there are four possible cases:

\emph{Case 1}: 
For each $\lf\in E$ that does not belong to any $\Ac_i$, we form a group with only one element $\lf$. Each of these $\lf$ belongs to a unique $\Uc_j$ for some $1\leq j\leq r$.

\emph{Case 2}:
For each $i$ such that $\Ac_i\cap E\neq\varnothing$, we form a group containing all elements of this $\Ac_i$.
Define\footnote{Recall that $\Qc$ is defined in (\ref{def3.6:Q}). Since $\Qc$ contains the two-element pairings in $\Ac_i$, $|\Ac_i\cap\Qc|$ is even.} $y_i$ and $z_i$ such that $|\Ac_i\cap\Qc|=2y_i$ and $|\Ac_i\cap\widetilde{\Uc}|=z_i$. We then have $|\Ac_i|=2y_i+z_i$.

\emph{Case 3}:
For each $i$ such that $\Ac_i\cap E=\varnothing$ and $\Ac_i\cap\widetilde{\Uc}\neq\varnothing$, we form a group containing all elements of this $\Ac_i$. Define $y_i$ and $z_i$ such that $|\Ac_i\cap\Qc|=2y_i$ and $|\Ac_i\cap\widetilde{\Uc}|=z_i$, then $|\Ac_i|=2y_i+z_i$.

\emph{Case 4}: For each $i$ such that $\Ac_i\cap\widetilde{\Uc}=\varnothing$, we form a group containing all elements of this $\Ac_i$. Note that in this case $|\Ac_i|\geq 4$. Define $y_i$ such that $|\Ac_i|=2y_i$.

Note that in \emph{Cases 1--3}, some $\lf$ in the group belong to $\widetilde{\Uc}$ (and hence $\Uc$) and thus appear as a variable of $\widetilde{H}$ and $H^\circ$, while some $\lf$ in the group may not. In \emph{Case 4}, no $\lf$ in the group appear as a variable of $\widetilde{H}$ or $H^\circ$, and they only appear in the summation $\sum_{k_\Qc}^{(3)}$ in (\ref{reducedmerge}).

Now let $G$ be the union of all groups in \emph{Cases 1--4}. Define $\widetilde{\Uc}^\dagger=\widetilde{\Uc}\backslash G$ and similarly $\Uc^\dagger=\Uc\backslash G$, and similarly $\Qc^\dagger$, $B^\dagger$ and $C^\dagger$. Note that $\widetilde{\Uc}^\dagger\backslash\Uc^\dagger=\widetilde{\Uc}\backslash\Uc$.  Let $\Uc_j^\dagger=\Uc_j\backslash G$ for $1\leq j\leq r$, and define\footnote{Note this definition is only for $j=1$ and not for $2\leq j\leq r$, which we will define later.} $G_1=\Uc_1\backslash\Uc_1^\dagger=\Uc_1\cap G$. Let $\Os^\dagger$ be $\Os$ after removing the $\Ac_i$'s involved in \emph{Cases 2--4} above. Thus, for any $\Ac_i\in\Os^\dagger$, if $1\leq i\leq v$ (in particular if $\Ac_i\cap \Uc_1^\dagger\neq\varnothing$), then $|\Ac_i|=2$.

Once we fix all the variables $k_\lf$ for $\lf \in G$ described above, we can view $\widetilde{H}$ as a tensor depending on $(k,k_{\widetilde{\Uc}^\dagger})$, and $H^\circ$ as a tensor depending on $(k,k_{\Uc^\dagger})$. In the same way $h^{(j)}$ can be viewed as a tensor depending on $(k_j,k_{\Uc_j^\dagger})$. More precisely, we define $h^{(j,\dagger)}=h_{k_jk_{\Uc_j^{\dagger}}}^{(j,\dagger)}$ to be $h_{k_jk_{\Uc_j}}^{(j)}$ with the values of $k_\lf$ for $\lf\in\Uc_j\backslash \Uc_j^{\dagger} = G \cap \Uc_j$ fixed.

If we use the triangle inequality, as well as the simple fact  that
\[\|h_{k_Xk_Yk_Zk_W}\|_{k_Xk_Y\to k_Zk_W}\lesssim\prod_{\lf\in X\cup Z}N_\lf^{d/2}\cdot\sup_{k_X,k_Z}\|h_{k_Xk_Yk_Zk_W}\|_{k_Y\to k_W}\] (where $X,Y,Z,W$ are arbitrary sets) under the assumption that $\langle k_\lf\rangle\lesssim N_\lf$ for $\lf\in X\cup Z$, then we can deduce that
\begin{equation}\label{reducedest}\|H^\circ\|_{kk_B\to k_C}\lesssim\prod_{i}^{(\Os,2,\geq)}N_{\lf_i}^{-2y_i(\alpha-\theta)}\prod_i^{(\Os,3,\geq)}N_{\lf_i}^{(d/2)-2y_i(\alpha-\theta)}\prod_i^{(\Os,4,\geq)}N_{\lf_i}^{d-2y_i(\alpha-\theta)}\cdot\sup_{(k_\lf)}\|(H^\circ)^\dagger\|_{kk_{B^\dagger}\to k_{C^\dagger}},
\end{equation} where $\prod_i^{(\Os,n,\geq)}$ is taken over all groups $\Ac_i$ of \emph{Case $n$} for $2\leq n\leq 4$, $\lf_i$ is any element of $\Ac_i$, and $\sup_{(k_\lf)}$ is taken over all choices of the $k_\lf$'s  with $\lf\in G$. The tensor $(H^\circ)^\dagger$ is defined by
\begin{equation}\label{reducedtrim2}(H^\circ)^\dagger=(H^\circ)_{kk_{\Uc^\dagger}}^\dagger=\sum_{k_{\widetilde{\Uc}^\dagger\backslash\Uc^\dagger}}(\widetilde{H})_{kk_{\widetilde{\Uc}^\dagger}}^\dagger\prod_{\lf\in \widetilde{\Uc}^\dagger\backslash\Uc^\dagger}(f_{N_\lf})_{k_\lf}^{\zeta_\lf^*},
\end{equation}
\begin{equation}\label{reducedmerge2}(\widetilde{H})_{kk_{\widetilde{\Uc}^\dagger}}^\dagger=\prod_{\lf,\lf'}^{(1,\dagger)}\mathbf{1}_{k_\lf=k_{\lf'}}\prod_{\lf,\lf'}^{(2,<)}\mathbf{1}_{k_\lf\neq k_{\lf'}}\cdot\sum_{(k_1,\cdots,k_r)}h_{kk_1\cdots k_r}\sum_{k_{\Qc^\dagger}}^{(3,\dagger)}\prod_{\lf\in\Qc^\dagger}\Delta_{N_\lf}\gamma_{k_\lf}\prod_{j=1}^{r}\big[h_{k_jk_{\Uc_j^\dagger}}^{(j,\dagger)}\big]^{\zeta_j}.
\end{equation} Here the product $\prod_{\lf,\lf'}^{(2,<)}$ is the same as the one defined above in the third paragraph of \emph{Step 2}, and the product $\prod_{\lf,\lf'}^{(1,\dagger)}$ is the same as $\prod_{(\lf,\lf')}^{(1)}$ defined in (\ref{reducedmerge}) (which is taken from (\ref{merge1})), except that the product here is only taken over $\lf,\lf'\in\Ac_i$ for $\Ac_i\in\Os^\dagger$. Similarly the sum $\sum_{k_{\Qc^\dagger}}^{(3,\dagger)}$ is the same as $\sum_{k_\Qc}^{(3)}$ defined  in (\ref{reducedmerge}) (which is taken from (\ref{merge1})), except that the sum here does not involve the variables $k_{\Qc\backslash\Qc^\dagger}$. Also note that, the product $\prod_{\lf,\lf'}^{(1,\dagger)}$ and sum $\sum_{k_{\Qc^\dagger}}^{(3,\dagger)}$ are exactly the same as the ones defined in (\ref{merge1}) when merging the tensors $(h^{(1,\dagger)},\cdots, h^{(r,\dagger)})$ via $(h,\Bs,\Os^\dagger)$.

Let us illustrate the above \emph{Step 2} with an explicit example, which is an extension of the example in \emph{Step 2} of the proof of Proposition \ref{algorithm1}; for simplicity, assume there is no blossoms (i.e. $\Vc_j=\varnothing$) and all frequencies are $\geq M^\delta$, so no trimming is needed throughout the process. Suppose $q=r=7$, \[(\Uc_1,\Uc_2,\cdots,\Uc_7)= (\{\lf_{\mathrm{top}}, \cf, \jf, \kf, \mf\},\,\{\af, \bff, \kf,\of\},\,\{\cf, \dff, \ef, \kf\},\,\{\ef, \ff, \gf\},\,\{\af, \ff, \hf, \jf, \mf\},\,\{\iif, \jf, \kf,\mf\},\,\{\kf,\mf\}),\] and the corresponding frequencies satisfy \[N_{\lf_{\mathrm{top}}}\geq N_\af\geq N_{\bff}\geq N_{\cf}\geq N_{\dff}\geq N_{\ef}\geq N_{\ff}\geq N_{\gf}\geq N_{\hf} \geq N_{\iif}\geq N_{\jf}\geq N_{\kf}\geq N_{\mf}\geq N_{\of}\geq M^\delta.\] Like in the proof of Proposition \ref{algorithm1}, a leaf occurring in at least two sets represents a pairing or over-pairing; for these leaves $\lf$, we also use $\lf_j$ to indicate the copy of $\lf$ in $\Uc_j$. Then $\Uc =\widetilde{\Uc} =\{\lf_{\mathrm{top}},\bff,\dff, \gf, \hf,\iif,\jf,\kf,\of\}$ and $\widetilde{H}=H^{\circ}$, and
\begin{multline*}(H^\circ)_{kk_\Uc}=\sum_{(k_1,\cdots,k_7)}\sum_{(k_\af,k_\cf,k_\ef,k_\ff,k_\mf)}h_{kk_1\cdots k_7}\cdot h^{(1)}_{k_1k_{\lf_{\mathrm{top}}}k_\cf k_\jf k_\kf k_\mf}\cdot h^{(2)}_{k_2k_\af k_\bff k_\kf k_\of}\\\times h^{(3)}_{k_3k_\cf k_\dff k_\ef k_\kf}\cdot h^{(4)}_{k_4k_\ef k_\ff k_\gf}\cdot h^{(5)}_{k_5k_\af k_\ff k_\hf k_\jf k_\mf}\cdot h^{(6)}_{k_6 k_\iif k_\jf k_\kf k_\mf}\cdot h^{(7)}_{k_7k_\kf k_\mf},
\end{multline*} where for simplicity,  we have omitted the various $\Delta_{N_\lf}\gamma_{k_\lf}$ factors.

Suppose $B=\{\bff,\dff\}$ and $C=\{\lf_{\mathrm{top}},\gf, \hf, \kf\}$ is a subpartition of $\Uc$ and $E=\{\iif, \jf, \of\}$.
Our goal is to estimate $\|H^\circ\|_{kk_B\to k_C}$. The groups in \emph{Cases 1--4} in the above process are then:
\begin{itemize}
\item\emph{Case 1}: $\{\iif\}$ and $\{\of\}$;
\item\emph{Case 2}: $\Ac_1 =\{\jf_1, \jf_5, \jf_6\}$;
\item\emph{Case 3}: $\Ac_2 =\{\kf_1, \kf_2, \kf_3,\kf_6,\kf_7\}$;
\item\emph{Case 4}: $\Ac_3 = \{\mf_1, \mf_5,\mf_6,\mf_7\}$.
\end{itemize}
After fixing the values of $(k_\iif,k_\of,k_\jf,k_\kf,k_\mf)$, which are considered in \emph{Cases 1--4} above, we can reduce $\|H^\circ\|_{kk_B\to k_C}$ to $\|(H^\circ)^\dagger\|_{kk_{B^\dagger}\to k_{C^\dagger}}$ by the above argument, where the relevant sets
\begin{equation}\label{prop6.4:example}
B^\dagger =\{\bff,\dff\},\,\, C^{\dagger} =\{\lf_{\mathrm{top}}, \gf, \hf\},\,\, \Uc^\dagger = \{\lf_{\mathrm{top}},\bff,\dff, \gf, \hf\},\,\, \Qc^{\dagger} =\{\af_2, \af_5, \cf_1,\cf_3, \ef_3, \ef_4, \ff_4, \ff_5\},
\end{equation}
\[(\Uc_1^\dagger,\cdots, \Uc_7^\dagger)=(\{\lf_{\mathrm{top}}, \cf\},\,\{\af, \bff\},\,\{\cf, \dff, \ef\},\,\{\ef, \ff, \gf\},\,\{\af, \ff, \hf\},\,\varnothing,\,\varnothing),\] and the tensors
\[(H^\circ)_{kk_{\Uc^\dagger}}^\dagger=\sum_{(k_1,\cdots,k_7)}\sum_{(k_\af,k_\cf,k_\ef,k_\ff)}h_{kk_1\cdots k_7}\cdot h^{(1,\dagger)}_{k_1k_{\lf_{\mathrm{top}}}k_\cf }\cdot h^{(2,\dagger)}_{k_2k_\af k_\bff }\cdot h^{(3,\dagger)}_{k_3k_\cf k_\dff k_\ef }\cdot h^{(4,\dagger)}_{k_4k_\ef k_\ff k_\gf}\cdot h^{(5,\dagger)}_{k_5k_\af k_\ff k_\hf }\cdot h^{(6,\dagger)}_{k_6 }\cdot h^{(7,\dagger)}_{k_7},\] where $h_{k_jk_{\Uc_j^\dagger}}^{(j,\dagger)}$ is $h_{k_jk_{\Uc_j}}^{(j)}$ after fixing the $k_{\Uc_j\backslash\Uc_j^\dagger}$ for $1\leq j\leq 7$.

\emph{Step 3: the method of descent.} Now we need to estimate $(H^\circ)^\dagger$. A key step is to implement Proposition \ref{algorithm1} by singling out $h^{1,\dagger}$. To that effect, consider those $\Ac_i\in\Os^\dagger$ such that $\Ac_i\cap \Uc_1^\dagger\neq\varnothing$; we know that each such $\Ac_i$ contains exactly one pair. Let $(\Os^{\dagger})'$ be $\Os^\dagger$ after removing these $\Ac_i$'s, and $\Dc$ be the union of these $\Ac_i$'s. Define $\Fc=(\Uc^\dagger\cup\Dc)\backslash\Uc_1^\dagger$ and similarly define $\widetilde{\Fc}=(\widetilde{\Uc}^\dagger\cup\Dc)\backslash\Uc_1^\dagger$, so that $\widetilde{\Uc}^\dagger\backslash \Uc^\dagger=\widetilde{\Fc}\backslash\Fc$.  Then by (\ref{reducedtrim2})--(\ref{reducedmerge2}) we have
\begin{equation}\label{separate}
(H^\circ)_{kk_{\Uc^\dagger}}^\dagger=\sum_{k_1}\sum_{k_\Dc}\prod_{\lf\in\Dc}\Delta_{N_\lf}\gamma_{k_\lf}\cdot\big[h_{k_1k_{\Uc_1^\dagger}}^{(1,\dagger)}\big]^{\zeta_1}\cdot (H^*)_{kk_1k_\Fc},
\end{equation} where $\sum_{k_\Dc}$ is taken over all $k_\Dc$ such that $k_\lf=k_{\lf'}$ for any pairing $\Ac_i=\{\lf,\lf'\}\subset\Dc$, and the tensor $(H^*)_{kk_1k_\Fc}$ is defined by
\begin{equation}\label{reducedtrim3}(H^*)_{kk_1k_\Fc}=\sum_{k_{\widetilde{\Fc}\backslash\Fc}}\Hc_{kk_1k_{\widetilde{\Fc}}}\prod_{\lf\in\widetilde{\Fc}\backslash\Fc}(f_{N_\lf})_{k_\lf}^{\zeta_\lf^*},
\end{equation}
\begin{equation}\label{reducedmerge3}\Hc_{kk_1k_{\widetilde{\Fc}}}=\prod_{\lf,\lf'}^{(1',\dagger)}\mathbf{1}_{k_\lf=k_{\lf'}}\prod_{\lf,\lf'}^{(2,<)}\mathbf{1}_{k_\lf\neq k_{\lf'}}\cdot\sum_{(k_2,\cdots,k_r)}h_{kk_1\cdots k_r}\sum_{k_{\Rc}}^{(3',\dagger)}\prod_{\lf\in\Rc}\Delta_{N_\lf}\gamma_{k_\lf}\prod_{j=2}^{r}\big[h_{k_jk_{\Uc_j^\dagger}}^{(j,\dagger)}\big]^{\zeta_j}.
\end{equation}  Here $\Rc=\Qc^\dagger\backslash\Dc$; in (\ref{reducedmerge3}) the product $\prod_{\lf,\lf'}^{(2,<)}$ is the same as the one defined above in \emph{Step 2}, and the product $\prod_{\lf,\lf'}^{(1',\dagger)}$ is the same as $\prod_{\lf,\lf'}^{(1,\dagger)}$ defined in (\ref{reducedmerge2}). The sum $\sum_{k_{\Rc}}^{(3',\dagger)}$ is the same as $\sum_{k_{\Qc^\dagger}}^{(3,\dagger)}$ defined {in (\ref{reducedmerge2})}, except that the sum here does not involve the variables $k_\Dc$. 
The tensor $H^*$ can be understood as a ``partial trimming'' of $\Hc$ at frequency $M^\delta$, i.e. only the tree part $\widetilde{\Fc}$ is trimmed.

Note that $N_\lf^{\alpha-\theta}\cdot\Delta_{N_\lf}\gamma_{k_\lf}$ is a bounded function of $k_\lf$ and can be absorbed into one of the tensors $h^{(1,\dagger)}$  or $H^*$.
Applying Proposition \ref{bilineartensor} to (\ref{separate})
 with $(k_X,k_Y,k_{A_1},k_{A_2})=(kk_{B^\dagger},k_{C^\dagger},k_1k_{\Uc^\dagger_1},kk_1k_{\Fc})$, where we combine any two elements in $\Dc$ that form a pairing into a single element as we did in the proof of Proposition \ref{gausscont}, we obtain
\begin{equation}\label{reducedest2}\|(H^\circ)^\dagger\|_{kk_{B^\dagger}\to k_{C^\dagger}}\lesssim\prod_{\lf\in\Dc}N_\lf^{-\alpha+\theta}\cdot\|h_{k_1k_{\Uc_1^\dagger}}^{(1,\dagger)}\|_{k_1k_{(B^\dagger\cup\Dc)\cap\Uc_1^\dagger}\to k_{C^\dagger\cap \Uc_1^\dagger}}\cdot\|(H^*)_{kk_1k_\Fc}\|_{kk_{B^\dagger\cap \Fc}\to k_1k_{(C^\dagger\cup\Dc)\cap\Fc}}.\end{equation}

To estimate $H^*$ in (\ref{reducedest2}) we shall apply Proposition \ref{gausscont}. First recall
 $N_*=\max(N_2,\cdots,N_q)$, and that since the expression (\ref{reducedmerge3}) for $\Hc$ involves only $h$ and $h^{(j,\dagger)}$ for $2\leq j\leq r$, by assumption this $\Hc$ \emph{is $\Bc_{(N_*)^{[\delta]}}$ measurable}. Next note that by assumption $N_\lf\geq (N_*)^\delta$ for each $\lf\in\widetilde{\Fc}\backslash\Fc$ and that there is no pairing among $k_{\widetilde{\Fc}\backslash\Fc}$ in (\ref{reducedtrim3}) in view of the product $\prod_{\lf,\lf'}^{(2,<)} \mathbf{1}_{k_\lf\neq k_{\lf'}}$ in the definition (\ref{reducedmerge3}) of $\Hc$. Now we apply Proposition \ref{gausscont} to (\ref{reducedtrim3}), setting $(b,c)=(kk_{B^\dagger\cap \Fc},\,k_1k_{(C^\dagger\cup\Dc)\cap\Fc})$, then for some partition $(B_0,C_0)$ of $\widetilde{\Fc}\backslash\Fc$ we have $\tau^{-1}M$-certainly that 
\begin{equation}\label{step3}\|(H^*)_{kk_1k_\Fc}\|_{kk_{B^\dagger\cap \Fc}\to k_1k_{(C^\dagger\cup\Dc)\cap\Fc}}\lesssim(\tau^{-1}M)^\theta\prod_{\lf\in\widetilde{\Fc}\backslash\Fc}N_\lf^{-\alpha+\theta}\cdot\|\Hc\|_{kk_S\to k_1k_T},
\end{equation}
where $S=B_0\cup (B^\dagger\cap \Fc)$ and $T=C_0\cup((C^\dagger\cup\Dc)\cap\Fc)$. Note that $(S,T)$ form a partition of $\widetilde{\Fc}$ such that $S\supset B^\dagger\cap \Fc$ and $T\supset (C^\dagger\cup\Dc)\cap\Fc$.

It now remains to estimate $\Hc$. By applying Lemma \ref{adjustment} we may remove the product $\prod_{\lf,\lf'}^{(2,<)} \mathbf{1}_{k_\lf\neq k_{\lf'}}$ from the definition (\ref{reducedmerge3}). Then, we can repeat the arguments in \emph{Step 2} above, and fix the values of $k_\lf$ for $\lf\in\Ac_i$ with $\Ac_i\in(\Os^\dagger)'$ and $|\Ac_i|\geq 3$ (where necessarily $i\geq v+1$). Here we only have two cases:

\emph{Case 3}:
 For each $i$ such that $\Ac_i\cap\widetilde{\Fc}\neq\varnothing$, we form a group containing all elements of this $\Ac_i$. Define $y_i$ and $z_i$ such that $|\Ac_i\cap\Rc|=2y_i$ and $|\Ac_i\cap\widetilde{\Fc}|=z_i$, then $|\Ac_i|=2y_i+z_i$.

\emph{Case 4}: 
For each $i$ such that $\Ac_i\cap\widetilde{\Fc}=\varnothing$, we form a group containing all elements of this $\Ac_i$. Note that in this case $|\Ac_i|\geq 4$. Define $y_i$ such that $|\Ac_i|=2y_i$.

As in \emph{Step 2}, we define $G^\dagger$ to be the union of all groups in \emph{Cases 3--4} defined above. Define $(\Os^{\dagger\dagger})'$ to be $(\Os^{\dagger})'$ after removing the $\Ac_i$'s involved in \emph{Cases 3--4} above, $\widetilde{\Fc}^\dagger=\widetilde{\Fc}\backslash G^\dagger$, and similarly define $S^\dagger$, $T^\dagger$ and $\Rc^\dagger$. We also define $\Uc_j^{\dagger\dagger}=\Uc_j^\dagger\backslash G^\dagger$. With these variables $k_\lf$ for $\lf\in G^\dagger$ fixed, we can view $\Hc$ as a tensor depending on $(k,k_1,k_{\widetilde{\Fc}^\dagger})$, and view $h^{(j,\dagger)}$ as a tensor depending on $(k_j,k_{\Uc_j^{\dagger\dagger}})$. We define $h^{(j,\dagger\dagger)}=h_{kk_{\Uc_j^{\dagger\dagger}}}^{(j,\dagger\dagger)}$ to be $h_{kk_{\Uc_j^{\dagger}}}^{(j,\dagger)}$ with the values of $k_\lf$ for $\lf\in\Uc_j^\dagger\backslash \Uc_j^{\dagger\dagger}=G^\dagger\cap \Uc_j^\dagger$ fixed. By the same arguments as in \emph{Step 2}, we deduce that
\begin{equation}\label{reducedest3}\|\Hc\|_{kk_S\to k_1k_T}\lesssim\prod_i^{(\Os,3,<)}N_{\lf_i}^{(d/2)-2y_i(\alpha-\theta)}\prod_i^{(\Os,4,<)}N_{\lf_i}^{d-2y_i(\alpha-\theta)}\cdot\sup_{(k_\lf)}\|(\Hc^\dagger)_{kk_1k_{\widetilde{\Fc}^\dagger}}\|_{kk_{S^\dagger}\to k_1k_{T^\dagger}},
\end{equation} where the products $\prod_i^{(\Os,n,<)}$ and the supremum $\sup_{(k_\lf)}$ are defined in the same way as in \emph{Step 2} above, and
\begin{equation}\label{reducedmerge4}(\Hc^\dagger)_{kk_1k_{\widetilde{\Fc}^\dagger}}=\prod_{\lf,\lf'}^{(1',\dagger\dagger)}\mathbf{1}_{k_\lf=k_{\lf'}}\sum_{(k_2,\cdots,k_r)}h_{kk_1\cdots k_r}\sum_{k_{\Rc^\dagger}}^{(3',\dagger\dagger)}\prod_{\lf\in\Rc^\dagger}\Delta_{N_\lf}\gamma_{k_\lf}\prod_{j=2}^{r}\big[h_{k_jk_{\Uc_j^{\dagger\dagger}}}^{(j,\dagger\dagger)}\big]^{\zeta_j}.
\end{equation} In (\ref{reducedmerge4}) the product $\prod_{\lf,\lf'}^{(1',\dagger\dagger)}$ is the same as $\prod_{\lf,\lf'}^{(1',\dagger)}$ defined  in (\ref{reducedmerge3}), except that the product here is only taken over $\lf,\lf'\in\Ac_i$ for some $\Ac_i\in(\Os^{\dagger\dagger})'$; similarly, the sum $\sum_{k_{\Rc^\dagger}}^{(3',\dagger\dagger)}$ is the same as $\sum_{k_{\Rc}}^{(3',\dagger)}$ defined in (\ref{reducedmerge3}), except that the sum here does not involve the variables $k_{\Rc\backslash\Rc^\dagger}$. Note that $|\Ac_i|=2$ and $\Ac_i\cap \widetilde{\Fc}^\dagger=\varnothing$ for any $\Ac_i\in (\Os^{\dagger\dagger})'$, so the product $\prod_{\lf,\lf'}^{(1',\dagger\dagger)}$ vacuously equals $1$, and the condition of the sum $\sum_{k_{\Rc^\dagger}}^{(3',\dagger\dagger)}$ is just $k_\lf=k_{\lf'}$ for each pairing $\{\lf,\lf'\}\in(\Os^{\dagger\dagger})'$ (recall that $\Rc^\dagger = \Rc\backslash G^\dagger = \Qc\backslash (\Dc\cup G\cup G^\dagger)$).

Now recall the example in \emph{Step 2}. To estimate $\|(H^\circ)^\dagger\|_{kk_{B^\dagger}\to k_{C^\dagger}}$, we separate $h^{(1)}$ from the others as in (\ref{separate})--(\ref{reducedmerge3}), where  $\Dc = \{\cf_1, \cf_3\}$, $\Uc_1^\dagger=\{\lf_{\mathrm{top}}, \cf\}$, $\Fc =\widetilde{\Fc} =\{\bff, \cf, \dff, \gf, \hf\}$ and $\Rc = \Qc^{\dagger}\backslash \Dc=\{\af_2,\af_5,\ef_3,\ef_4,\ff_4,\ff_5\}$, namely
\[(H^\circ)_{kk_{\Uc^\dagger}}^\dagger=\sum_{k_1,k_\cf}h^{(1,\dagger)}_{k_1k_{\lf_{\mathrm{top}}}k_\cf}(H^*)_{kk_1k_\Fc},\]
\[(H^*)_{kk_1k_\Fc}=\sum_{(k_2,\cdots k_7)}\sum_{(k_\af,k_\ef,k_\ff)}h_{kk_1\cdots k_7}\cdot h_{k_2k_\af k_\bff}^{(2,\dagger)} \cdot h_{k_3k_\cf k_\dff k_\ef}^{(3,\dagger)}\cdot h_{k_4k_\ef k_\ff k_\gf}^{(4,\dagger)} \cdot h_{k_5k_\af k_\ff k_\hf}^{(5,\dagger)}\cdot h_{k_6}^{(6,\dagger)}\cdot h_{k_7}^{(7,\dagger)},\]
where for simplicity, we have again omitted the various $\Delta_{N_\lf}\gamma_{k_\lf}$ factors (and also the power factors below). Note that $H^* = \Hc = \Hc^{\dagger}$ as all frequencies are $\geq M^\delta$, in particular no partial trimming or \emph{Cases 3--4} in \emph{Step 3} above is involved. By (\ref{reducedest2}) we have
\[
\|(H^\circ)_{kk_{\Uc^\dagger}}^\dagger\|_{kk_\bff k_\dff\to k_{\lf_{\mathrm{top}}}k_\gf k_\hf}\leq\|h^{(1,\dagger)}_{k_1k_{\lf_{\mathrm{top}}}k_{\cf}}\|_{k_1k_\cf\to k_{\lf_{\mathrm{top}}}}\cdot
\|(H^*)_{kk_1k_\Fc}\|_{kk_\bff k_{\dff}\to k_1k_{\cf}k_{\gf}k_{\hf}}.
\] The norm $\|h^{(1,\dagger)}\|_{k_1k_\cf\to k_{\lf_{\mathrm{top}}}}$ is then controlled using (\ref{boundmerge1})--(\ref{boundmerge3}), and $\|H^*\|_{kk_\bff k_{\dff}\to k_1k_{\cf}k_{\gf}k_{\hf}}$ is controlled using Proposition \ref{algorithm1}; note that here $\Ec=\{6,7\}$ as in the example in \emph{Step 2} of the proof of Proposition \ref{algorithm1}, and the corresponding $H^{\mathrm{sg}}$ is the same as the one in that example, but with $h^{(j)}$ replaced by $h^{(j,\dagger)}$. After putting these two bounds together and calculating the various powers involved (see \emph{Step 4}), we can obtain the desired estimate (\ref{boundmerge7}) for this example.

\emph{Step 4: putting together.} We now need to estimate $\Hc^\dagger$. As in \emph{Step 3}, we may replace $\Delta_{N_\lf}\gamma_{k_\lf}$ by $N_\lf^{-\alpha+\theta}$ and absorb the resulting factor into one of the tensors $h^{(j,\dagger\dagger)}$ using Lemma \ref{adjustment}, so instead of $\Hc^\dagger$ we only need to consider
\begin{equation}\label{reducedmerge5}\Mc_{kk_1k_{\widetilde{\Fc}^\dagger}}:=\prod_{\lf\in\Rc^\dagger}N_\lf^{-\alpha+\theta}\sum_{(k_2,\cdots,k_r)}h_{kk_1\cdots k_r}\sum_{k_{\Rc^\dagger}}\prod_{j=2}^{r}\big[h_{k_jk_{\Uc_j^{\dagger\dagger}}}^{(j,\dagger\dagger)}\big]^{\zeta_j},
\end{equation} where $\sum_{k_{\Rc^\dagger}}$ is the sum such that $k_\lf=k_{\lf'}$ for each pairing $\{\lf,\lf'\}\in (\Os^{\dagger\dagger})'$  which is just $\sum_{k_{\Rc^\dagger}}^{(3',\dagger\dagger)}$ in (\ref{reducedmerge4}). We shall apply Proposition \ref{algorithm1} to estimate (\ref{reducedmerge5}), but we first need to make a few adjustments to fit the framework of (\ref{defH})--(\ref{outputbd}).

First, for $r+1\leq j\leq q$, we may define $\Uc_j^{\dagger\dagger}=\varnothing$ and $h^{(j,\dagger\dagger)}=h_{k_j}^{(j,\dagger\dagger)}$ be supported at a single point $k_j$ (the one fixed in \emph{Step 1}).  Moreover, in view of the extra factor  $\prod_{j=r+1}^q N_j^{-d/2}$ described in \emph{Step 1} when stating the norm bounds we want to prove for $H_{kk_\Uc}^\circ$, we may assume $|h_{k_j}^{(j,\dagger\dagger)}|\lesssim N_j^{-d/2}$ for $r+1\leq j\leq q$, so it satisfies the type $R0^+$ conditions\footnote{The support condition (\ref{type1+}) for type $R0^+$ can be immediately verified, since $\Uc_j^{\dagger\dagger} = \varnothing$.} of Proposition \ref{algorithm1} with $\Xf_j$ replaced by $1$.
Second, we also view $h_{kk_1\cdots k_r}$ as a tensor depending on $(k,k_1,\cdots,k_q$), in the support of which $k_j\,(r+1\leq j\leq q)$ takes a single value, and denote it by $h_{kk_1\cdots k_q}$. Moreover, the tensor $h=h_{kk_1\cdots k_q}$ satisfies the support condition (\ref{tensorsupport}) for some choice of $\Gamma$, as well as other conditions of $h$ listed in Proposition \ref{algorithm1}. 

Next we check that $h_{k_jk_{\Uc_j^{\dagger\dagger}}}^{(j,\dagger\dagger)}$ for $2\leq j\leq r$ satisfy the conditions in Proposition \ref{algorithm1}.
Note that $\Uc_j^{\dagger\dagger}$ is a subset of $\Uc_j$, and $h^{(j,\dagger\dagger)}$ is formed from $h^{(j)}$ by fixing the values of $k_{\Uc_j\backslash\Uc_j^{\dagger\dagger}}$ (for simplicity we denote it by $k_{G_j}$, where $G_j = \Uc_j\backslash\Uc_j^{\dagger\dagger}= (G\cap \Uc_j)\cup (G^\dagger\cap \Uc_j)$). We consider the following scenarios.

\emph{Scenario 1}: when $h^{(j)}$ has type 1. Then $h^{(j,\dagger\dagger)}$ will have type $R1$ in the sense of Proposition \ref{algorithm1}. Indeed by (\ref{boundmerge5}) (actually the modified version of it\footnote{In the analogous sense that (\ref{eq:modified6.40}) is the modified version of (\ref{boundmerge2}).} in \emph{Step 1}), we may assume
\[\bigg|k_j-\sum_{\lf\in\Uc_j}\zeta_\lf k_\lf-\ell_j\bigg|\leq (N_*)^{3\delta},\quad \ell_j=\sum_{\ff\in\Vc_j}\zeta_\ff k_\ff,\] otherwise we gain a huge power of $M$ from (\ref{boundmerge5}) that would suffice (note that $\kappa\gg_{C_\delta}1$ and $N_*\geq M^\delta$). When $k_{G_j}$ is fixed, the above implies that $h^{(j,\dagger\dagger)}$ satisfies (\ref{type2}) with $\Uc_j$ replaced by $\Uc_j^{\dagger\dagger}$ and some fixed $m_j= \ell_j + \sum_{\lf\in G_j}\zeta_\lf k_\lf$. Moreover, by (\ref{boundmerge2})--(\ref{boundmerge3}) we deduce that $h^{(j,\dagger\dagger)}$ satisfies (\ref{type2bd2}) with $\Uc_j$ replaced by $\Uc_j^{\dagger\dagger}$, and $\Xf_j$ replaced by
\[\Xf_j^*\cdot\prod_{\lf\in\Pc_j}N_\lf^{-4\varepsilon}\prod_{\lf\in G_j}N_\lf^{8\varepsilon}\prod_{\pf\in\Yc_j}N_\pf^{-\delta^3}\prod_{\ff\in\Vc_j}N_\ff^d.\]

\emph{Scenario 2}: when $h^{(j)}$ has type 0, and $G_j=\varnothing$. Then in this case, $h^{(j,\dagger\dagger)}=h^{(j)}$  and it will have type $R0$ in the sense of Proposition \ref{algorithm1}. It satisfies (\ref{type1bd1})--(\ref{type1bd2}) with $\Xf_j$ replaced by
\begin{equation}\label{newadd1}\Xf_j^*\cdot\prod_{\lf\in\Pc_j}N_\lf^{-4\varepsilon}\prod_{\pf\in\Yc_j}N_\pf^{-\delta^3}.\end{equation}

\emph{Scenario 3}: when $h^{(j)}$ has type 0, and $G_j\neq\varnothing$. Then $h^{(j,\dagger\dagger)}$ will have type $R0^+$ in the sense of Proposition \ref{algorithm1}. It satisfies the modified versions of (\ref{type1bd1})--(\ref{type1bd2}) with $\Uc_j$ replaced by $\Uc_j^{\dagger\dagger}$, and $\Xf_j$ replaced by
\begin{equation}\label{newadd2}\Xf_j^*\cdot\prod_{\lf\in\Pc_j}N_\lf^{-4\varepsilon}\prod_{\lf\in G_j}N_\lf^{8\varepsilon}\prod_{\pf\in\Yc_j}N_\pf^{-\delta^3}.\end{equation} In \emph{Scenarios 2--3}, note the different powers of $N_\lf$ for $\lf\in \Pc_j\cup G_j$ between (\ref{newadd1})--(\ref{newadd2}) and (\ref{boundmerge1}) for type 0 tensors, which allow us to bridge from $\Xc_{1,j}$ in (\ref{boundmerge1}) and (\ref{estimate02}) to $\Zc_{1,j}$ in (\ref{type1bd1})--(\ref{type1bd2}).

Therefore, by applying Proposition \ref{algorithm1}  with $P$, $Q$, $\Uc$ and $\Qc$ replaced by $S^\dagger$, $T^\dagger$, $\widetilde\Fc^\dagger$ and $\Rc^\dagger$ respectively, we obtain that
\begin{multline}\label{prop5.4:curlM}\prod_{j=r+1}^qN_j^{-d/2}\cdot\|\Mc\|_{kk_{S^\dagger}\to k_1k_{T^\dagger}}\lesssim\prod_{j=2}^r\Xf_j^*\cdot\prod_{\lf\in\Pc_2\cup\cdots\cup\Pc_r\cup\Rc^\dagger}N_\lf^{-4\varepsilon}\cdot\prod_{\lf\in G_2\cup\cdots\cup G_r}N_\lf^{8\varepsilon}\cdot\prod_{\pf\in\Yc_2\cup\cdots\cup\Yc_r}N_\lf^{-\delta^3}\\\times\prod_{\lf\in\Vc_2\cup\cdots\cup\Vc_r}N_\lf^{d}\cdot\prod_{\lf\in\widetilde{\Fc}^\dagger}N_\lf^\beta\cdot\prod_j ^{(0)}N_j^{-2\varepsilon}\cdot (N_*)^{-\varepsilon^3},
\end{multline} where $\prod_j^{(0)}$ is taken over all $j$ such that $r+1\leq j\leq q$, or $1\leq j\leq r$ and $h^{(j)}$ has type 0. 

This implies the same bound for $\Hc^\dagger$. Next we can control the norm \begin{equation}\label{prop5.4:h1norm}
\|h_{k_1k_{\Uc_1^\dagger}}^{(1,\dagger)}\|_{k_1k_{(B^\dagger\cup\Dc)\cap\Uc_1^\dagger}\to k_{C^\dagger\cap \Uc_1^\dagger}}
\end{equation} using  the modified versions of (\ref{boundmerge1})--(\ref{boundmerge3}) in \emph{Step 1}. In fact, if $h^{(1)}$ has type 1, the norm (\ref{prop5.4:h1norm}) is bounded as follows
\begin{equation}\label{prop5.4:h(1)1}
(\ref{prop5.4:h1norm})\lesssim \Xf_1^*\cdot\tau^{-\theta}\prod_{\lf\in  \Uc^\dagger_1}N_\lf^\beta\prod_{\lf\in\Pc_1}N_\lf^{-4\varepsilon}\prod_{\lf\in G_1}N_\lf^{8\varepsilon}\prod_{\pf\in\Yc_1}N_\pf^{-\delta^3} \prod_{\ff \in \Vc_1} N_\ff^d\cdot \Xc_1^{\textrm{type1}},
\end{equation}
by the modified versions of (\ref{boundmerge2}) and (\ref{boundmerge3}), where $\Xc_1^{\textrm{type1}}$ equals $(\max_{\lf\in C^\dagger\cap \Uc_1^\dagger}N_\lf)^{-\beta}$ if $C^\dagger\cap\Uc_1^\dagger \neq \varnothing$, and equals $N_1^{-\varepsilon}$ if $C^\dagger\cap\Uc_1^\dagger = \varnothing$. If $h^{(1)}$ has type 0, the norm (\ref{prop5.4:h1norm}) is bounded as follows
\begin{equation}\label{prop5.4:h(1)2}
(\ref{prop5.4:h1norm})\lesssim \Xf_1^*\cdot\tau^{-\theta}\prod_{\lf\in \Uc^\dagger_1}N_\lf^{\beta_1}\prod_{\lf\in\Pc_1}N_\lf^{-8\varepsilon}\prod_{\lf\in G_1}N_\lf^{4\varepsilon}
\prod_{\pf\in\Yc_1}N_\pf^{-\delta^3}\cdot\Xc_{0,1}\Xc_{1,1}
\end{equation} by (\ref{boundmerge1}), where $\Xc_{0,1}$ and $\Xc_{1,1}$ are as in (\ref{boundmerge1}) (which is taken from (\ref{estimate02})), corresponding to $B_1=(B^\dagger\cup\Dc)\cap\Uc_1^\dagger$ and $C_1=C^\dagger\cap\Uc_1^\dagger$ (and $E_1=G_1$).

Now, by plugging the bound for $\Hc^\dagger$ (which follows from (\ref{prop5.4:curlM})) back into (\ref{reducedest3}), (\ref{step3}), (\ref{reducedest2}) and (\ref{reducedest}), using the bounds for (\ref{prop5.4:h1norm}) described above, and exploiting the relations between the various sets defined before, we can eventually obtain the bound for $H^\circ$ that implies (\ref{prop5.4:5.45'}), and hence (\ref{boundmerge7}). In fact, by separating the two cases according to whether $\lf_{\mathrm{top}}\in \Uc_1^\dagger$ or not (recall that $\lf_{\mathrm{top}}\in C\cap \Uc_1$ is such that $N_{\lf_{\mathrm{top}}}\sim\max\{N_\lf:\lf\in C\}$), and by using (\ref{prop5.4:h(1)1}) if $h^{(1)}$ has type 1 or (\ref{prop5.4:h(1)2}) if\footnote{In fact in the cases we consider the bound in (\ref{prop5.4:h(1)2}) is always better than that in (\ref{prop5.4:h(1)1}) due to the $\Xc_{1,1}$ factor in (\ref{prop5.4:h(1)2}), so below we will always assume that we are using (\ref{prop5.4:h(1)1}).} $h^{(1)}$ has type 0, we can obtain in both two cases that
\begin{multline}\label{boundmergefinal}\prod_{j=r+1}^q N_j^{-d/2}\cdot\|H^\circ\|_{kk_{B}\to k_{C}}\lesssim(\tau^{-1}M)^\theta
 (N_*)^{-\varepsilon^3/2}
\prod_{j=1}^r\Xf_j^*\cdot
\prod_{(\lf\in B\cup C)\backslash\{\lf_{\mathrm{top}}\}}N_\lf^\beta
\cdot\prod_{\lf\in\Pc}N_\lf^{-4\varepsilon}\\\times\prod_{\lf\in E}N_\lf^{8\varepsilon}\cdot\prod_{\pf\in\widetilde{\Yc}}N_\pf^{-\delta^3}\prod_{\ff \in \Vc_1\cup\cdots\cup\Vc_r} N_\ff^d\cdot\prod_{j}^{(0)}N_j^{-2\varepsilon}\cdot\Zf,
 \end{multline} where $\prod_{j}^{(0)}$ is defined in (\ref{prop5.4:curlM}) and $\Zf$ is a quantity such that
 \begin{equation}\label{defZ}\Zf\leq\prod_i^{(\Os,2)}N_{\lf_i}^{-2y_i(\alpha-\theta)}\cdot\prod_i^{(\Os,3)}N_{\lf_i}^{(d/2)-2y_i(\alpha-\theta)}\cdot\prod_i^{(\Os,4)}N_{\lf_i}^{d-2y_i(\alpha-\theta)}\cdot\prod_{\lf}N_\lf^{50\varepsilon}\cdot\big(\max_\lf N_\lf\big)^{50\varepsilon}.\end{equation} In (\ref{defZ}), the product $\prod_i^{(\Os,n)}$, for $2\leq n\leq 4$, is the product of $\prod_i^{(\Os,n,\geq)}$ defined in \emph{Step 2} and $\prod_i^{(\Os,n,<)}$ defined in \emph{Step 3} (there is no $\prod_i^{(\Os,2,<)}$, which is replaced by 1). In the last two factors in (\ref{defZ}), the product is taken over all $\lf\in (G\cup G^\dagger)\cap\Qc$, and the maximum is taken over all $\lf\in G\cup G^\dagger$ involved in \emph{Cases 2--4} in \emph{Step 2} and \emph{Cases 3--4} in \emph{Step 3}.
 
 We make a few remarks regarding the calculations leading to (\ref{boundmergefinal})--(\ref{defZ}):
 \begin{enumerate}[(I)]
 \item The factor $\prod_{\pf\in\widetilde{\Yc}}N_\pf^{-\delta^3}$ in (\ref{boundmergefinal}) is obtained from $N_*^{-\varepsilon^3/2}$ and $\prod_{\lf\in\Yc_1\cup\cdots\cup\Yc_r}N_\lf^{-\delta^3}$ coming from (\ref{prop5.4:h(1)1}) and (\ref{prop5.4:curlM}), given that  $\widetilde{\Yc}$ arises from the merging process  as in Definition \ref{defmerge}.
 \item By combining all terms involving $N_\lf^{\beta}$ or $N_\lf^{-\alpha+\theta}$ in (\ref{prop5.4:h(1)1}), (\ref{prop5.4:curlM}), (\ref{step3}) and (\ref{reducedest2}), we obtain $\prod_{(\lf\in B\cup C)\backslash\{\lf_{\mathrm{top}}\}}N_\lf^\beta$ in (\ref{boundmergefinal}) with extra decays $\prod_{\lf\in ((B\cup C)\cap G)\backslash\{\lf_{\mathrm{top}}\}}N_\lf^{-\beta}$ and $\prod_{\lf\in \widetilde{\Fc}\cap G^\dagger}N_\lf^{-\beta}$ and $\prod_{\lf\in\Dc} N_\lf^{-(\alpha-\beta)+\theta}$.
 \item By comparing $\prod_{\lf\in E}N_\lf^{8\varepsilon}$ in (\ref{boundmergefinal}) with $\prod_{\lf\in G_1\cup\cdots\cup G_r}N_\lf^{8\varepsilon}$ coming from (\ref{prop5.4:h(1)1}) and (\ref{prop5.4:curlM}), we observe that the quotient between these two products arises from the $\lf$'s in $(G\cup G^\dagger)\backslash E$. This quotient, multiplied by the extra decays $\prod_{\lf\in ((B\cup C)\cap G)\backslash\{\lf_{\mathrm{top}}\}}N_\lf^{-\beta}$ and $\prod_{\lf\in \widetilde{\Fc}\cap G^\dagger}N_\lf^{-\beta}$ obtained in (II), accounts for one square root of the factor $\prod_{\lf}N_\lf^{50\varepsilon}\cdot\big(\max_\lf N_\lf\big)^{50\varepsilon}$ in (\ref{defZ}) (the other square root will be used in (IV)).
 \item Recall from Definition \ref{defmerge} that $\Pc\subset\widetilde{\Pc}= \Pc_1\cup\cdots\cup \Pc_r \cup \Qc$, also $\Rc^\dagger = \Qc\backslash (\Dc\cup G\cup G^\dagger)$. Considering
 $\prod_{\lf\in\Pc}N_\lf^{-4\varepsilon}$ in (\ref{boundmergefinal}), we know that $\prod_{\lf\in\Pc_1\cup\cdots\cup\Pc_r\cup\Rc^\dagger}N_\lf^{-4\varepsilon}$ coming from (\ref{prop5.4:h(1)1}) and (\ref{prop5.4:curlM}), multiplied by the extra decay $\prod_{\lf\in\Dc} N_\lf^{-(\alpha-\beta)+\theta}$ obtained in (II), can be bounded by $\prod_{\lf\in\Pc}N_\lf^{-4\varepsilon}$  multiplied by the other square root of $\prod_{\lf}N_\lf^{50\varepsilon}\cdot\big(\max_\lf N_\lf\big)^{50\varepsilon}$ in (\ref{defZ}).
 \end{enumerate}
 
 Now we have verified (\ref{boundmergefinal})--(\ref{defZ}). Since $y_i\geq 1$ in \emph{Cases 2--3} and $y_i\geq 2$ in \emph{Case 4} (this holds in both \emph{Step 2} and \emph{Step 3}), it is easy to verify that $\Zf$ is a product of negative powers of $N_{\lf_i}$ in (\ref{defZ}), thus $\Zf\leq 1$, hence (\ref{boundmergefinal}) implies the desired estimate (\ref{prop5.4:5.45'}) for $H_{kk_\Uc}^\circ$. This finishes the proof of (\ref{boundmerge7}).
 
 Finally we prove (\ref{boundmerge8}) ((\ref{boundmerge9}) follows from the same arguments as explained in \emph{Step 1}). When $C=\varnothing$, we know that $h^{(1)}$ has type 1 and $C^\dagger\cap \Uc_1^\dagger=\varnothing$, thus we only need to use the modified version of the bound (\ref{boundmerge3}) for $h^{(1)}$. The same arguments as above yield, instead of (\ref{boundmergefinal}), that
 \begin{multline}\label{boundmergefinal3}\prod_{j=r+1}^q N_j^{-d/2}\cdot\|H^\circ\|_{kk_{B}}\lesssim(\tau^{-1}M)^\theta (N_*)^{-\varepsilon^3/2}
\prod_{j=1}^r\Xf_j^*\cdot\prod_{\lf\in B}N_\lf^\beta\cdot\prod_{\lf\in\Pc}N_\lf^{-4\varepsilon}\\\times\prod_{\lf\in E}N_\lf^{8\varepsilon}\cdot\prod_{\pf\in\widetilde{\Yc}}N_\pf^{-\delta^3}\prod_{\ff \in \Vc_1\cup\cdots\cup\Vc_r} N_\ff^d\cdot \bigg(N_1^{-\varepsilon}\prod_{j}^{(0)}N_j^{-2\varepsilon}\bigg)\cdot\Zf.
 \end{multline} Therefore, it suffices to prove that
 \[\sqrt{\Upsilon}\cdot N_1^{-\varepsilon}\cdot \prod_{j}^{(0)}N_j^{-2\varepsilon}\lesssim M^{-\varepsilon},\] which easily follows from the definition of $\Upsilon$, the product $\prod_j^{(0)}$, and the assumption that $N_1\gtrsim N_j$ for all $2\leq j\leq r$ such that $h^{(j)}$ has type 1.
\end{proof}
\begin{prop}[Merged tensor bounds: Case II]\label{overpair2} Consider the same setting as Proposition \ref{overpair}, but assume $q=r$ and each $\Sc_j$ and $h^{(j)}$ have type 0. Moreover, assume that $N_\lf\geq M^\delta$ for each $\lf\in\Lc_j$ and $1\leq j\leq q$. Then, $H$ satisfies the bound 
\begin{equation}\label{boundmergenew1}\sqrt{\Upsilon}\cdot\|H\|_{kk_{B}\to k_{C}}\lesssim\Yf\cdot\tau^{-\theta}\prod_{\lf\in B\cup C} N_\lf^{\beta_1}\prod_{\lf\in\Pc}N_\lf^{-8\varepsilon}\prod_{\lf\in E}N_\lf^{4\varepsilon}\prod_{\pf\in\Yc}N_\pf^{-\delta^3}\cdot\Xc_{0}\Xc_{1},
\end{equation} where $\Upsilon$ is defined in (\ref{prop5.4:Upsilon}), $\Xc_0, \Xc_1$ are defined as in (\ref{estimate02}) but with $N$ replaced by $M$, and
\begin{equation}\label{boundmergenew2}\Yf=\prod_{j=1}^q\Xf_j\cdot \tau^{-\theta}M^{-\delta^3}.
\end{equation}
 Note that unlike Proposition \ref{overpair}, this bound (\ref{boundmergenew1}) is deterministic, i.e. we do not need to remove any exceptional set.
\end{prop}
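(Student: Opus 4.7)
The plan is to follow the general structure of the proof of Proposition \ref{overpair}, but the situation simplifies dramatically in several ways: since each $\Sc_j$ has type $0$ we have $\Vc_j=\varnothing$ for all $j$, so there is no meshing argument needed; since $N_\lf\geq M^\delta$ for every $\lf\in\Lc_j$, no trimming takes place at the outer level, so we may simply write $\Sc=\mathtt{Merge}(\Sc_1,\cdots,\Sc_q,\Bs,\Os)$ and $H=\mathtt{Merge}(h^{(1)},\cdots,h^{(q)},h,\Bs,\Os)$; and since no Gaussian contractions of unpaired leaves are involved (every $\lf\in\Uc$ simply remains as an index of $H$), we need not invoke Proposition \ref{gausscont}, which accounts for the bound being deterministic. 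The main tool replacing Proposition \ref{algorithm1}/Proposition \ref{gausscont} in this setting is the purely deterministic Proposition \ref{algorithm2}.

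First I would rewrite $H=H_{kk_\Uc}$ as a semi-product as in (\ref{reducedmerge}), with the products of $\mathbf{1}_{k_\lf=k_{\lf'}}$ coming from the over-pairings in $\Os$ and the $\mathbf{1}_{k_\lf\neq k_{\lf'}}$ ensuring the no-pairing condition in $k_\Uc$. Next, I would remove the over-pairings exactly as in \emph{Step 2} of the proof of Proposition \ref{overpair}: group the elements $\lf\in\Ac_i$ with $|\Ac_i|\geq 3$ into the four cases (those containing elements of $E$, those not but containing elements of $\Uc$, and those entirely inside $\Qc$, etc.), fix the values of $k_\lf$ for each such group, and absorb the factors $\prod \Delta_{N_\lf}\gamma_{k_\lf}$ into negative powers of $N_\lf$ using (\ref{constants}). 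The resulting loss from fixing these variables is of the form
\begin{equation*}
\prod_i N_{\lf_i}^{\,d\cdot(\#\text{free indices})-2y_i(\alpha-\theta)},
\end{equation*}
which, exactly as in (\ref{defZ}), is uniformly bounded by $1$ once one uses $y_i\geq 1$ for groups containing pairs from $\Qc$ and $y_i\geq 2$ for pure over-pairing groups ($\Ac_i\cap\widetilde\Uc=\varnothing$, $|\Ac_i|\geq 4$). This reduces us to a tensor $H^\dagger$ in which $\Os$ now consists only of $2$-element pairings and every leaf $\lf\in\Uc^\dagger$ satisfies $N_\lf\geq M^\delta$.

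At this point I would apply Proposition \ref{algorithm2} directly to $H^\dagger$, relabeling so that $(h^{(1)},\ldots,h^{(q)})$ of the present proposition become $(h^{(2)},\ldots,h^{(q+1)})$ of Proposition \ref{algorithm2}, and the role of $k_1$ there is played by one of the $k_\lf$ in $C$ (or is harmless if $C=\varnothing$). All hypotheses of Proposition \ref{algorithm2} are immediate: the type~$R0$ bounds (\ref{type1bd1})--(\ref{type1bd2}) with $\Zc_{1,j}$ replaced by $1$ are a direct rewriting of (\ref{boundmerge1}) (with the extra weights $\prod_{\lf\in\Pc_j}N_\lf^{-4\varepsilon}\prod_{\pf\in\Yc_j}N_\pf^{-\delta^3}$ absorbed into $\Xf_j$), and the base tensor $h$ satisfies the support and size conditions by (\ref{mergesupp})--(\ref{mergetensor}). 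Proposition \ref{algorithm2} then gives
\begin{equation*}
\prod_{\lf\in\Qc}N_\lf^{-\alpha+8\varepsilon+\theta}\|H^\dagger\|_{kk_B\to k_C}\lesssim\prod_{j=1}^{q}\Xf_j\cdot\prod_{\lf\in\Uc}N_\lf^{\beta_1}\cdot\big(\max_{\lf}N_\lf\big)^{-4p\varepsilon},
\end{equation*}
where the maximum is over all $\lf$ appearing in any $\Uc_j$.

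The final step is bookkeeping to match (\ref{boundmergenew1})--(\ref{boundmergenew2}). The factor $\prod_{\lf\in\Qc}N_\lf^{-\alpha}$ combines with $\prod_{\lf\in\Uc}N_\lf^{\beta_1}$ to give $\prod_{\lf\in B\cup C}N_\lf^{\beta_1}\prod_{\lf\in\Pc}N_\lf^{-8\varepsilon}\prod_{\lf\in E}N_\lf^{4\varepsilon}\prod_{\pf\in\Yc}N_\pf^{-\delta^3}\,\Xc_0\Xc_1$ after distributing the appropriate negative powers to the $\Pc$, $E$ and (inherited) $\Yc_j$ components of $\Yc$ and comparing the $\Xc_0$, $\Xc_1$ factors with the corresponding gains in Proposition \ref{algorithm2}; the factor $\Xc_1$ in particular is obtained from the $(\max N_\lf)^{-4p\varepsilon}$ gain when $\max_{\lf\in\Lc}N_\lf<(10^3dp)^{-|\Lc|}M$. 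The required additional $M^{-\delta^3}$ in $\Yf$ comes from the remainder of the $(\max_\lf N_\lf)^{-4p\varepsilon}$ gain together with the factor $\Upsilon^{1/2}$: since $N_\lf\geq M^\delta$ for all $\lf$, this gain is at least $M^{-4p\varepsilon\delta}$, which dominates $M^{-\delta^3}$ because $\delta\ll\varepsilon$; alternatively, if $\max_j N_j\leq(50dp)^{-1}M$ one uses $\sqrt{\Upsilon}\leq\tau^{-\theta}M^{-20dp\varepsilon}$ from (\ref{prop5.4:Upsilon}). The main obstacle is this last accounting step, i.e.\ carefully distributing the various single gains produced by Proposition \ref{algorithm2} and by the removal of over-pairings among the weights $\prod_{\lf\in\Pc}N_\lf^{-8\varepsilon}$, $\prod_{\lf\in E}N_\lf^{4\varepsilon}$, $\prod_{\pf\in\Yc}N_\pf^{-\delta^3}$, $\Xc_0$, $\Xc_1$, and the final $M^{-\delta^3}$; this is entirely analogous to remarks (I)--(IV) at the end of the proof of Proposition \ref{overpair} but technically simpler because no Gaussian contraction is involved.
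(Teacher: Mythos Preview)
Your identification of the simplifications (no blossoms, no trimming of leaves, no Gaussian contractions, hence deterministic) and of Proposition \ref{algorithm2} as the main tool is correct. But the proposed application of Proposition \ref{algorithm2} has a structural gap.

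In Propositions \ref{algorithm1}--\ref{algorithm2} the semi-product $H=H_{kk_1k_\Uc}$ retains $k_1$ as a \emph{free} index, and the conclusion is a bound on $\|H\|_{kk_P\to k_1k_Q}$. In the present situation all of $k_1,\dots,k_q$ are contracted in the merging, so $H=H_{kk_\Uc}$ has no such variable. Your relabeling ``$(h^{(1)},\dots,h^{(q)})\mapsto(h^{(2)},\dots,h^{(q+1)})$ with $k_1$ played by some $k_\lf\in C$'' does not fit: the base tensor $h$ has only the $q$ indices $k_1,\dots,k_q$ besides $k$, and a leaf variable $k_\lf$ does not occupy the position that $k_1$ holds in (\ref{defH}). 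The paper resolves this exactly as in \emph{Step 3} of the proof of Proposition \ref{overpair}: it first \emph{separates} $h^{(1)}$ via Proposition \ref{bilineartensor}, bounding $\|(H^\circ)^\dagger\|_{kk_{B^\dagger}\to k_{C^\dagger}}$ by a norm of $h^{(1,\dagger)}$ times a norm of the form $\|H^*\|_{kk_{\cdots}\to k_1k_{\cdots}}$, and only this last factor has the right shape for Proposition \ref{algorithm2}.

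This separation is also what produces $\Xc_0$, which your sketch does not actually account for. The paper first \emph{chooses} which tensor to call $h^{(1)}$ according to $(B,C,E)$: if $C\neq\varnothing$, so that $\lf_{\mathrm{top}}\in C\cap\Uc_1$; if $C=E=\varnothing$, so that $\min_{\lf\in\Lc_1}N_\lf\sim\min_{\lf\in\Lc}N_\lf$; if $C=\varnothing$, $E\neq\varnothing$, so that $E\cap\Uc_1\neq\varnothing$. The factor $\Xc_{0,1}$ in (\ref{boundmerge1}) for $h^{(1,\dagger)}$ then supplies exactly the required form of $\Xc_0$ (the case $B=C=\varnothing$, where $\Xc_0=M^{-\varepsilon\delta}$, uses the third line of (\ref{estimate02}) for \emph{every} $j$ together with the remaining $\Upsilon^{1/4}$). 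Proposition \ref{algorithm2} by itself yields only the gain $(\max_\lf N_\lf)^{-4p\varepsilon}$, which cannot produce the $(\max_{\lf\in C}N_\lf)^{-\beta_1}$ or $(\min_{\lf\in\Lc}N_\lf)^{d/2-\beta_1}$ forms of $\Xc_0$. Similarly, $\Xc_1$ is not extracted from that gain either: the paper disposes of $\Xc_1$ \emph{before} invoking Proposition \ref{algorithm2}, via the elementary inequality $\Upsilon^{1/4}\prod_j\Xc_{1,j}\lesssim\Xc_1$, after which $\Xc_1$ and all $\Xc_{1,j}$ are replaced by $1$; the $(\max_\lf N_\lf)^{-4p\varepsilon}$ from Proposition \ref{algorithm2} is then reserved for the final $M^{-\delta^3}$ in $\Yf$.
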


\begin{proof}The proof will be similar to the proof of Proposition \ref{overpair}, so we mainly focus on the parts where the two proofs are different.

Since $q=r$ and each $\Sc_j$ and $h^{(j)}$ have type 0, we know that $\Vc =\varnothing$. Since also $N_\lf\geq M^\delta$ for each $\lf$, the possible trimming process in this proof will only affect the $\Yc$ set and we may omit it; in particular we will not need Propositions \ref{gausscont}, and can obtain (\ref{boundmergenew1}) deterministically. Next, it can be easily verified that \[\Upsilon^{1/4}\cdot\prod_{j=1}^q\Xc_{1,j}\lesssim \Xc_1\] where 
$\Xc_1$ and $\Xc_{1,j}$ are defined as in (\ref{estimate02}) relative to $\Sc$ and $\Sc_j$ respectively, so in the proof below we will replace $\sqrt{\Upsilon}$ by $\Upsilon^{1/4}$, and replace $\Xc_1$ and $\Xc_{1,j}$ for all $j$ by $1$.

By rearranging the tensors, we may assume that (i) if $C\neq\varnothing$, then $\max\{N_\lf:\lf\in C\cap\Uc_1\}\sim\max\{N_\lf:\lf\in C\}$, denote the particular $\lf\in C\cap\Uc_1$  where the maximum is attained by $\lf_{\mathrm{top}}$; (ii) if $C=\varnothing$ and $E\neq\varnothing$ then $E\cap\Uc_1\neq\varnothing$; and (iii) if $C=E=\varnothing$ then $\min\{N_\lf:\lf\in\Lc_1\}\sim\min\{N_\lf:\lf\in\Lc\}$, denote the particular $\lf\in C\cap\Lc_1$  where the minimum is attained by $\lf_{\mathrm{bot}}$. We shall repeat the same arguments in \emph{Steps 2--3} of the proof of Proposition \ref{overpair}. Namely, we first remove the over-pairings as in \emph{Step 2} (in particular, the \emph{Cases 1--4} are the same as in \emph{Step 2}, except that $\widetilde{\Uc}$ there is replaced by $\Uc$, and we do not require $i\leq v$ since there is no $v$ in the current case), then separate $h^{(1)}$ from the others as in \emph{Step 3} up to (\ref{reducedest2}). After these, we shall apply Proposition \ref{algorithm2} (instead of Proposition \ref{algorithm1} as in \emph{Step 4} of the proof of Proposition \ref{overpair}) to get that
\begin{multline}\label{Prop5.5:bound}\|H\|_{kk_B\to k_C}\lesssim\tau^{-\theta}\prod_{j=1}^q\Xf_j\cdot\prod_i^{(\Os,2)}N_{\lf_i}^{-2y_i(\alpha-\theta)}\prod_i^{(\Os,3)}N_{\lf_i}^{(d/2)-2y_i(\alpha-\theta)}\prod_i^{(\Os,4)}N_{\lf_i}^{d-2y_i(\alpha-\theta)}\\\times \prod_{\lf\in\Dc}N_\lf^{-\alpha+\theta}\prod_{\lf\in\Uc_1^\dagger}N_\lf^{\beta_1}\prod_{\lf\in\Fc}N_\lf^{\beta_1}\cdot\prod_{\lf\in \Pc_1\cup\cdots \Pc_q\cup\Rc}N_\lf^{-8\varepsilon}\prod_{\lf\in G_1\cup\cdots\cup G_q}N_\lf^{4\varepsilon}\prod_{\pf\in\Yc_1\cup\cdots\cup\Yc_q}N_\pf^{-\delta^3}\cdot\Xc\Xc'.
\end{multline}  Here in the above:
\begin{itemize}
\item The product $\prod_i^{(\Os,n)}$ for $2\leq n\leq 4$, as well as the parameters $y_i$ and $z_i$, are defined in the same way as in \emph{Step 2} of the proof of Proposition \ref{overpair}.
\item The set $G_j=G\cap\Uc_j$, where $G$ is the union of all groups in \emph{Cases 1--4}, and $\Uc_j^\dagger=\Uc_j\backslash G_j$.
\item The set $\Rc=\Qc^\dagger\backslash \Dc$, where $\Qc^\dagger$ is the union of all $\Ac_i$'s with $|\Ac_i|=2$ (equivalently $\Qc^\dagger=\Qc\backslash G$), and $\Dc$ is the union of all $\Ac_i$'s with $|\Ac_i|=2$ and $\Ac_i\cap\Uc_1^\dagger\neq\varnothing$. The set $\Fc=(\Uc^\dagger\cup\Dc)\backslash\Uc_1^\dagger$ where $\Uc^\dagger=\Uc\backslash G$.
\item The factor $\Xc$ is such that (i) if $C\neq\varnothing$ and $\lf_{\mathrm{top}}\in\Uc_1^\dagger$ then $\Xc=N_{\lf_{\mathrm{top}}}^{-\beta_1}$, (ii) if $C=G_1=\varnothing$ (which implies $E=\varnothing$) then $\Xc=(N_{\lf_{\mathrm{bot}}})^{(d/2)-\beta_1}$, (iii) if $\Uc_1^\dagger=\varnothing$ then $\Xc=N_1^{-\varepsilon\delta}$, (iv) in other cases $\Xc=1$.
\item The factor $\Xc'$ is such that (i) if $\Uc_j^\dagger\neq\varnothing$ for some $2\leq j\leq q$, then $\Xc'=(\max N_\lf)^{-4p\varepsilon}$ where the maximum is taken over all $\lf\in\Uc_j^\dagger$ for all $2\leq j\leq q$, (ii) if $\Uc_j^\dagger=\varnothing$ for all $2\leq j\leq q$ then $\Xc'=(N_2\cdots N_q)^{-\varepsilon\delta}$.
\end{itemize}
In the last two points above regarding $\Xc$ and $\Xc'$, the powers $N_{\lf_{\mathrm{top}}}^{-\beta_1}$ and $(N_{\lf_{\mathrm{bot}}})^{(d/2)-\beta_1}$ in cases (i) and (ii) for $\Xc$ are obtained from the first and second lines in the definition (\ref{estimate02}) of $\Xc_{0,j}$ (which is the factor appearing in (\ref{boundmerge1})) with $j=1$. 
The powers $N_1^{-\varepsilon\delta}$ in case (iii) for $\Xc$, and $(N_2\cdots N_q)^{-\varepsilon\delta}$ in case (ii) for $\Xc'$ are obtained from the third line in the definition (\ref{estimate02}) of $\Xc_{0,j}$, which is only used here and not needed in other parts of the proof. 
Finally, the power $(\max N_\lf)^{-4p\varepsilon}$ in case (i) for $\Xc'$ is obtained from the last term of (\ref{outputbd2}).

Now, by the same arguments as in the proof of Proposition \ref{overpair} (in particular using the quantity $\Zf$ in (\ref{defZ})), we can simplify the right hand side of (\ref{Prop5.5:bound}) as follows. When $C\neq\varnothing$ we obtain that
\begin{equation}\label{Prop5.5:bound2}\Upsilon^{1/4}\cdot\|H\|_{kk_B\to k_C}\lesssim\tau^{-\theta}\prod_{j=1}^q\Xf_j\cdot\prod_{\lf\in(B\cup C)\backslash\{\lf_{\mathrm{top}}\}}N_\lf^{\beta_1}\prod_{\lf\in\Pc}N_\lf^{-8\varepsilon}\prod_{\lf\in E}N_\lf^{4\varepsilon}\prod_{\pf\in\Yc}N_\pf^{-\delta^3}\cdot M^{\delta^3}\Xc',
\end{equation} where we notice by definition that $\Xc'\leq M^{-\varepsilon\delta^2}\leq M^{-4\delta^3}$. This easily implies (\ref{boundmergenew1}). When $C=\varnothing$ we obtain that
\begin{equation}\label{Prop5.5:bound3}\Upsilon^{1/4}\cdot\|H\|_{kk_B}\lesssim\tau^{-\theta}\prod_{j=1}^q\Xf_j\cdot\prod_{\lf\in B}N_\lf^{\beta_1}\prod_{\lf\in\Pc}N_\lf^{-8\varepsilon}\prod_{\lf\in E}N_\lf^{4\varepsilon}\prod_{\pf\in\Yc}N_\pf^{-\delta^3}\cdot M^{\delta^3}\Xc\Xc';
\end{equation} in this case, if $E=\varnothing$, then we have $\Xc\leq (N_{\lf_{\mathrm{bot}}})^{(d/2)-\beta_1}$, which is acceptable as $\Xc_0=(N_{\lf_{\mathrm{bot}}})^{(d/2)-\beta_1}$ in (\ref{boundmergenew1}), and the bound $\Xc'\leq M^{-\varepsilon\delta^2}$ provides the needed gain  as above; if $E\neq\varnothing$ and $B\neq\varnothing$, then $\Xc\leq 1$ and $\Xc_0=1$ in (\ref{boundmergenew1}), so once again we can use the bound of $\Xc'$. Finally, if $B=\varnothing$, then in particular $\Uc_j^\dagger=\varnothing$ for each $1\leq j\leq q$. In this case $\Xc=N_1^{-\varepsilon\delta}$, $\Xc'=(N_2\cdots N_q)^{-\varepsilon\delta}$ and $\Xc_0=M^{-\varepsilon\delta}$ in (\ref{boundmergenew1}), so it suffices to prove that
\[M^{\delta^3}\Upsilon^{1/4}\prod_{j=1}^q N_j^{-\varepsilon\delta}\leq M^{-\delta^3}M^{-\varepsilon\delta}.\] This is obviously true if $N_j\ll M$ for each $j$, due to the definition (\ref{prop5.4:Upsilon}) of $\Upsilon$; if $N_j\sim M$ for some $j$, this is also true since the other $N_{j'}$ still satisfy $N_{j'}^{-\varepsilon\delta}\leq M^{-\varepsilon\delta^2}$. This completes the proof.
\end{proof}
\begin{prop}[A special case: Operator bounds]\label{overpair3} Let $3\leq q\leq p$ be odd and $1\leq r\leq q$. For $2\leq j\leq r$, assume that $\Psi_{k_j}^{(j)}=\Psi_{k_j}[\Sc_j,h^{(j)}]$ as in (\ref{defpsi}), where $\Sc_j$ is a regular plant with $|\Sc_j|\leq D$ and $N(\Sc_j)=N_j\leq M$ such that $N_\lf\geq M^\delta$ for each $\lf\in\Lc_j$, $h^{(j)}$ is an $\Sc_j$-tensor that is $\Bc_{M^{[\delta]}}$ measurable. Moreover we assume that $h^{(j)}$ either has type 0 and satisfies the assumptions of Proposition \ref{overpair} (1), or has type 1 and satisfies the assumptions of Proposition \ref{overpair} (2).

We also fix $\zeta_j\,(1\leq j\leq q)$ and $N_j\leq M/2\,(r+1\leq j\leq q)$, and assume that $\max(N_2,\cdots,N_q)=M$. Let $h=h_{kk_1\cdots k_q}(\lambda_{r+1},\cdots,\lambda_q)$ be a constant tensor (which does not depend on $\omega$) supported in the set \begin{equation}\label{operatorbdnew1}
\langle k_j\rangle\leq N_j\,(2\leq j\leq q),\quad\langle\lambda_j\rangle\leq 2M^{\kappa^2}\,(r+1\leq j\leq q),\quad k=\sum_{j=1}^q\zeta_jk_j;
\end{equation} also assume that $h$ \emph{can be written as a function of\footnote{We may also need to multiply this $h$ by functions $\mathbf{1}_{\langle k\rangle\geq M^2}$ or $\mathbf{1}_{\langle k_1\rangle\geq M^2}$, but they do not affect Proposition \ref{gausscont2} (which can be easily checked), so the proof below will proceed in the same way.} $k-\zeta_1k_1$, $|k|^2-\zeta_1|k_1|^2$, and $(k_2,\cdots,k_q,\lambda_{r+1},\cdots,\lambda_q)$}, and satisfies that
\begin{equation}\label{operatorbdnew2}|h|+|\partial_{\lambda_j}h|\lesssim \frac{\tau^{-\theta}}{\langle\Omega+
\zeta_{r+1}\lambda_{r+1}+\cdots+\zeta_q\lambda_q+\widetilde{\Xi}\rangle},\quad r+1\leq j\leq q,
\end{equation}where $\Omega=|k|^2-\sum_{j=1}^q\zeta_j|k_j|^2$, $\widetilde{\Xi}\in\Rb$ is fixed with $|\widetilde{\Xi}|\lesssim M^{\kappa^2}$, and that any pairing in $(k,k_1,\cdots,k_q)$ must be over-paired.
Now define 
\begin{equation}\label{operatorbdnew3}\Ms_{kk_1}=\sum_{k_2,\cdots,k_q}\int\mathrm{d}\lambda_{r+1}\cdots\mathrm{d}\lambda_q\cdot h_{kk_1\cdots k_q}(\lambda_{r+1},\cdots,\lambda_q)\prod_{j=2}^r(\Psi_{k_j}^{(j)})^{\zeta_j}\prod_{j=r+1}^q(\widehat{z_{N_j}})_{k_j}^{\zeta_j}(\lambda_j),
\end{equation} possibly with Fourier truncations on $z_{N_j}$ as in part (5) of Proposition \ref{mainprop}, then $\tau^{-1}M$-certainly we have
\begin{equation}\label{operatorbdnew4}\|\Ms_{kk_1}\|_{k\to k_1}\leq\prod_{j=2}^r\Xf_j\cdot\tau^{-\theta}M^{-\varepsilon^4}.
\end{equation}
\end{prop}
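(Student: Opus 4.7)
The plan is to combine the merging/selection framework from Propositions \ref{algorithm1}--\ref{overpair} with Proposition \ref{gausscont2}, whose special hypothesis on the dependence on $b-\zeta c$ and $|b|^2-\zeta|c|^2$ is precisely what is granted for $h$ here in terms of $k-\zeta_1 k_1$ and $|k|^2-\zeta_1|k_1|^2$. First I will expand each $\Psi_{k_j}^{(j)}=\Psi_{k_j}[\Sc_j,h^{(j)}]$ by (\ref{defpsi}) and combine with the $\widehat{z_{N_j}}$ factors to write
\[
\Ms_{kk_1}=\sum_{k_\Wc,k_{\widetilde{\Vc}}}\int d\lambda_{\widetilde{\Vc}}\; T_{kk_1k_\Wc}(k_{\widetilde{\Vc}},\lambda_{\widetilde{\Vc}})\prod_{\lf\in\Wc}\eta_{k_\lf}^{\zeta_\lf^*}\prod_{\ff\in\widetilde{\Vc}}(\widehat{z_{N_\ff}})_{k_\ff}^{\zeta_\ff^*}(\lambda_\ff),
\]
where $\Wc=\bigsqcup_{j=2}^r\Uc_j$ (with paired indices identified as in Definition \ref{defmerge}) and $\widetilde\Vc$ collects the blossom sets of the $\Sc_j$'s together with $\{r+1,\dots,q\}$. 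I then fix $(k_{\widetilde\Vc},\lambda_{\widetilde\Vc})$ via the meshing argument of Step 1 in the proof of Proposition \ref{overpair}, using (\ref{boundmerge6}) for type-1 tensors and the $\partial_{\lambda_j}$ bound in (\ref{operatorbdnew2}) for $h$; the integrations over $(k_{\widetilde\Vc},\lambda_{\widetilde\Vc})$ will be recovered at the end through estimates analogous to (\ref{reducednorms})--(\ref{powersave}).

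With these parameters fixed, the over-pairings are removed exactly as in Step 2 of the proof of Proposition \ref{overpair}: after grouping $\Os=\{\Ac_1,\dots,\Ac_m\}$ and fixing $k_\lf$ for $\lf$ in any $\Ac_i$ with $|\Ac_i|\geq 3$, we reduce to a tensor $T^\dagger_{kk_1k_{\Uc^\dagger}}$ whose remaining pairings are two-element subsets. Crucially, because $h$ depends on $(k,k_1)$ only through $k-\zeta_1 k_1$ and $|k|^2-\zeta_1|k_1|^2$, and summation over $k_2,\dots,k_q$ preserves this, the tensor $T^\dagger$ continues to have the same structure, and lives on the support $|k-\zeta_1 k_1|\lesssim M$, $||k|^2-\zeta_1|k_1|^2|\lesssim M^{\kappa^2}$. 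Hence Proposition \ref{gausscont2} applies directly and gives
\[
\|\Ms\|_{k\to k_1}\lesssim \tau^{-\theta} M^\theta\cdot\sup_{(B,C)}\|T^\dagger\|_{kk_B\to k_1k_C},
\]
with $(B,C)$ ranging over partitions of $\Uc^\dagger$.

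The inner operator norm will be estimated by Proposition \ref{algorithm1}. The tensor $T^\dagger$ is a semi-product of $h$ with the tensors $h^{(j,\dagger)}$ obtained from the $h^{(j)}$'s by fixing the coordinates above; these satisfy the type-$R0$, $R0^+$ or $R1$ hypotheses of Proposition \ref{algorithm1} verified in the same way as in Step 4 of the proof of Proposition \ref{overpair}. Since $h$ obeys the modulation decay (\ref{operatorbdnew2}), the last sentence of Proposition \ref{final} supplies the required bound for $\|h\|_{kk_{P_0}\to k_{Q_0}}$ in place of (\ref{finalbound}), and Proposition \ref{algorithm1} then produces the gain $(N_*)^{-\varepsilon^3}=M^{-\varepsilon^3}$ because $\max(N_2,\dots,N_q)=M$. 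Combining: the contraction against free Gaussians brings in $\prod_{\lf\in\Uc^\dagger}N_\lf^{-\alpha+\theta}$ (from the absorbed $\gamma$-weights), which together with the $\prod_{\lf\in\Uc^\dagger}N_\lf^{\beta}$ of (\ref{outputbd}) collapses to a product of negative powers; the $\tau^{-\theta}M^\theta$ losses from the meshing and Proposition \ref{gausscont2} are absorbed by $M^{-\varepsilon^3}$ in view of the parameter hierarchy $\theta\ll\varepsilon^3$, producing the claimed bound $\prod_{j=2}^r\Xf_j\cdot\tau^{-\theta}M^{-\varepsilon^4}$.

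The hardest part will be ensuring that, after all the summations and over-pairing removals, $T^\dagger$ retains the exact $(k-\zeta_1k_1,\,|k|^2-\zeta_1|k_1|^2)$-dependence required by Proposition \ref{gausscont2}; here the hypothesis on $h$ is essential and dictates the order of the reductions. A secondary technical difficulty is that $\Ms_{kk_1}$ has no ``tensor'' indices of its own, so the partition $(B,C)$ in the subsequent application of Proposition \ref{algorithm1} is a partition of $\Uc^\dagger$ only; the selection algorithm of Proposition \ref{algorithm1} must then be run carefully so that all the $N_\lf$ exponents ($\beta$, $\beta_1$, $4\varepsilon$, $8\varepsilon$, $-\alpha$) balance to leave no residual positive power of any $N_\lf$ that could spoil the gain $M^{-\varepsilon^4}$.
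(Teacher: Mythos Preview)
Your overall strategy matches the paper's: reduce to fixed $(k_{\widetilde\Vc},\lambda_{\widetilde\Vc})$ by meshing, invoke Proposition~\ref{gausscont2} (using the crucial hypothesis that $h$ depends on $(k,k_1)$ only through $k-\zeta_1k_1$ and $|k|^2-\zeta_1|k_1|^2$), remove over-pairings, and finish with Proposition~\ref{algorithm1}. However, you have the order of the middle two steps reversed, and this creates a genuine gap.

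You propose to remove over-pairings \emph{first}, obtaining $T^\dagger_{kk_1k_{\Uc^\dagger}}$, and then apply Proposition~\ref{gausscont2} to the contraction of $T^\dagger$ against $\prod_{\lf\in\Uc^\dagger}\eta_{k_\lf}^{\zeta_\lf^*}$. But $\Ms$ is the contraction of $T$ against $\prod_{\lf\in\Uc}\eta_{k_\lf}^{\zeta_\lf^*}$ over the \emph{full} unpaired set $\Uc$; after fixing the common value $k^{(i)}$ for each over-paired group $\Ac_i$ with $\Ac_i\cap\Uc\neq\varnothing$, you are left with residual random phases $\eta_{k^{(i)}}^{\,\mathrm{net}}$ outside the contraction, together with a sum over $k^{(i)}$. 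Proposition~\ref{gausscont2} applied to $T^\dagger$ therefore does not bound $\|\Ms\|_{k\to k_1}$ directly; to pass to it you would need a triangle inequality over $k^{(i)}$, which costs $N_{\lf_i}^{d}$ instead of the $N_{\lf_i}^{d/2}$ the paper obtains via the ``simple fact'' in (\ref{reducedest}). For Case~3 over-pairings with $y_i=1$ and $\alpha$ close to $\alpha_0$ this extra half-power of $N_{\lf_i}^d$ does not close (e.g.\ for $(d,p)=(2,3)$ one gets $N_{\lf_i}^{d-2\alpha}>1$). The paper instead applies Proposition~\ref{gausscont2} \emph{first}, contracting over all of $\Uc$, obtaining $\|\Ms\|_{k\to k_1}\lesssim\max_{(S,T)}\|\Hc\|_{kk_S\to k_1k_T}$; only then are the over-pairings removed from this \emph{tensor norm}, where the $\ell^2$-to-sup conversion costs only $N_{\lf_i}^{d/2}$.

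A secondary omission: your assertion that $T^\dagger$ is supported in $||k|^2-\zeta_1|k_1|^2|\lesssim M^{\kappa^2}$ is not automatic. The paper disposes of the region $||k|^2-\zeta_1|k_1|^2|>M^{\kappa^3}$ separately at the outset, using the decay (\ref{operatorbdnew2}) to gain $M^{-\kappa^3}$; only then is Proposition~\ref{gausscont2} applicable on the complementary region.
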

\begin{proof} The proof will be similar to the proof of Proposition \ref{overpair}, so we mainly focus on the parts where the two proofs are different.

Let $\Sc_1$ be the empty plant (each component being empty). For any $\Os$ as in Definition \ref{defmerge}, define 
\begin{equation*}\Sc=(\Lc,\Vc,\Yc)=\mathtt{Merge}(\Sc_1,\cdots,\Sc_r,\Bs,\Os),\end{equation*} and define
 \begin{multline}\label{Prop5.6:defH}
\Hc_{kk_1k_{\Uc}} (k_{\Vc}, \lambda_{\Vc})=\prod_{\lf,\lf'}^{(1)}\mathbf{1}_{k_\lf=k_{\lf'}}\prod_{\lf,\lf'}^{(2)}\mathbf{1}_{k_\lf\neq k_{\lf'}}\cdot\sum_{(k_2,\cdots,k_{r})}h_{kk_1\cdots k_q}(\lambda_{r+1},\cdots,\lambda_q)\\\times\sum_{k_{\Qc}}^{(3)}\prod_{\lf\in\Qc}\Delta_{N_\lf}\gamma_{k_\lf}\prod_{j=2}^{r}\big[h_{k_jk_{\Uc_j}}^{(j)}(k_{\Vc_j}, \lambda_{\Vc_j})\big]^{\zeta_j}.
\end{multline} Here in (\ref{Prop5.6:defH}), the set $\Qc$ (as well as $\Uc$ etc. below) is defined from $\Os$ in the same way as in Definition \ref{defmerge}; the products $\prod_{\lf,\lf'}^{(1)}$ and $\prod_{\lf,\lf'}^{(2)}$, and the sum $\sum_{k_{\Qc}}^{(3)}$, are defined as in (\ref{merge1}). By the same proof as Proposition \ref{formulas} (2), we can write $\Ms_{kk_1}$ as a linear combination (for different choices of $\Os$) 
of
 \begin{equation}\label{Prop5.6:M}
 \Ns_{kk_1}=\sum_{k_\Uc,k_\Vc}\int\mathrm{d}\lambda_\Vc\cdot \Hc_{kk_1 k_\Uc}(k_\Vc,\lambda_\Vc)\cdot\prod_{\lf\in\Uc}(f_{N_\lf})_{k_\lf}^{\zeta_\lf^*}\prod_{\ff\in\Vc}(\widehat{z_{N_\ff}})_{k_\ff}^{\zeta_\ff^*}(\lambda_\ff),
 \end{equation} 
 where $\zeta_\lf^*$ and $\zeta_\ff^*$ are defined as in Definition \ref{defmerge} when merging $(\Sc_1,\cdots,\Sc_r)$ via $(\Bs,\Os)$. If we assume $\left||k|^2-\zeta_1|k_1|^2\right|> M^{\kappa^3}$, then by (\ref{operatorbdnew2}) and (\ref{operatorbdnew1}), we have 
\[
|h|+|\partial_{\lambda_j} h|\lesssim \tau^{-\theta} M^{-\kappa^3}, \quad r+1\leq j\leq q,
\]
which easily implies (\ref{operatorbdnew4}) thanks to the dominant decay $M^{-\kappa^3}$. Therefore, below we will focus on the estimate for $\Ns$ with a fixed $\Os$ as in Definition \ref{defmerge}, and assume that $\left||k|^2-\zeta_1|k_1|^2\right|\leq M^{\kappa^3}$ (which will allow us to apply Proposition \ref{gausscont2}).

First, notice that $\Vc=\Vc_2\cup\cdots\cup\Vc_r\cup\{r+1,\cdots,q\}$; define $H^*=(H^*)_{kk_1}(k_\Vc,\lambda_\Vc)$ so that we have
\begin{equation}\label{Prop5.6:defH*}(H^*)_{kk_1}(k_\Vc,\lambda_\Vc)=\sum_{k_\Uc}\Hc_{kk_1 k_\Uc}(k_\Vc,\lambda_\Vc)\cdot\prod_{\lf\in\Uc}(f_{N_\lf})_{k_\lf}^{\zeta_\lf^*},
\end{equation}
\begin{equation}\label{Prop5.6:forH*}\Ns_{kk_1}=\sum_{k_\Vc}\int\mathrm{d}\lambda_\Vc\cdot (H^*)_{kk_1}(k_\Vc,\lambda_\Vc)\cdot\prod_{\ff\in\Vc}(\widehat{z_{N_\ff}})_{k_\ff}^{\zeta_\ff^*}(\lambda_\ff).
\end{equation} By the same argument as in \emph{Step 1} of the proof of Proposition \ref{overpair}, using Cauchy-Schwartz, we obtain that
\begin{equation}\label{Prop5.6:reduce1}\|\Ns_{kk_1}\|_{k\to k_1}\lesssim\|H^*\|_{X_\Vc^{-b_0}[k\to k_1]}\cdot\prod_{\ff\in\Vc}N_\ff^{-D_1},
\end{equation}
 so it suffices to control $\|H^*\|_{X_\Vc^{-b_0}[k\to k_1]}$.  For $2\leq j\leq r$, define $\Xf_j^*=\Xf_j^*(k_{\Vc_j},\lambda_{\Vc_j})$ as in the proof of Proposition \ref{overpair}. Using the inequality
\begin{equation*}\bigg(\sum_{k_\Vc}\int\mathrm{d}\lambda_{\Vc}\cdot\prod_{\ff\in\Vc}\langle\lambda_\ff\rangle^{-2b_0}\prod_{j=2}^r\Xf_j^*(k_{\Vc_j},\lambda_{\Vc_j})^2\bigg)^{1/2}\lesssim\prod_{j=2}^r\Xf_j\prod_{j=r+1}^q N_j^{d/2},\end{equation*} which is proved in the same way as (\ref{powersave}), it suffices to prove $\tau^{-1}M$-certainly that
\begin{equation}\label{Prop5.6:reduce2}
\prod_{j=r+1}^qN_j^{-d/2}\cdot\|(H^*)_{kk_1}(k_\Vc,\lambda_\Vc)\|_{k\to k_1}\lesssim\prod_{j=2}^r\Xf_j^*(k_{\Vc_j},\lambda_{\Vc_j})\cdot\tau^{-\theta}M^{-\varepsilon^4}\prod_{\ff\in\Vc_2\cup\cdots\cup\Vc_r}N_\ff^d
\end{equation} for any choice of $(k_\Vc,\lambda_\Vc)$. By a meshing argument as in the proof of Proposition \ref{overpair}, we may fix a single choice of $(k_\Vc,\lambda_\Vc)$ and view $h^{(j)}=h_{k_jk_{\Uc_j}}^{(j)}$ as depending only on $(k_j,k_{\Uc_j})$, $h=h_{kk_1\cdots k_r}$ as depending only on $(k,k_1,\cdots,k_r)$, $\Hc=\Hc_{kk_1k_\Uc}$ as depending only on $(k,k_1,k_\Uc)$, and $(H^*)_{kk_1}$ as depending only on $(k,k_1)$. Also note the extra factor $\prod_{j=r+1}^qN_j^{-d/2}$ in the norm bound (\ref{Prop5.6:reduce2}) that we want to prove for $(H^*)_{kk_1}$, which is analogous to \emph{Step 1} of the proof of Proposition \ref{overpair}, and can be exploited in exactly the same way.

Now, using (\ref{Prop5.6:defH*}), noticing that $\Hc$ is $\Bc_{M^{[\delta]}}$ measurable and $N_\lf\geq M^\delta$ for $\lf\in\Uc$, that $h$ (and hence $\Hc$) depends on $(k,k_1)$ only via the quantities $k-\zeta_1k_1$ and $|k|^2-\zeta_1|k_1|^2$ and is supported in $||k|^2-\zeta_1|k_1|^2|\leq M^{\kappa^3}$, and that no pairing occurs in $k_\Uc$ in view of the product $\prod_{\lf,\lf'}^{(2)}$ in the definition (\ref{Prop5.6:defH}) of $\Hc$, we can apply Proposition \ref{gausscont2}  to (\ref{Prop5.6:defH*}) with $(b, c, A)= (k, k_1, \Uc)$ and obtain that $\tau^{-1}M$-certainly,
\begin{equation}\label{Prop5.6:reduce3}\|H^*\|_{k\to k_1}\lesssim(\tau^{-1}M)^\theta\prod_{\lf\in\Uc}N_\lf^{-\alpha+\theta}\cdot\max_{(S,T)}\|\Hc\|_{kk_S\to k_1k_T},
\end{equation} where $(S,T)$ is any partition of $\Uc$. Then, repeating the same arguments in \emph{Steps 3--4} of the proof of Proposition \ref{overpair} (namely, first removing the over-pairings as in \emph{Step 3} after (\ref{step3}), then applying Proposition \ref{algorithm1} as in \emph{Step 4}---note that here we do not have the set $E$), we obtain (after omitting factors that are $\leq 1$) that
\begin{equation}\label{Prop5.6:bdH3}
\prod_{j=r+1}^qN_j^{-d/2}\cdot\|\Hc\|_{kk_S\to k_1k_T}
\lesssim  \prod_{j=2}^{r} \Xf_j^* \cdot \prod_{\ff \in \Vc_2\cup\cdots\cup\Vc_r} N_\ff^d \cdot\prod_{\lf\in \Uc}N_\lf^\beta\cdot M^{-2\varepsilon^4}.
\end{equation}
Plugging (\ref{Prop5.6:bdH3}) into (\ref{Prop5.6:reduce3}) we get (\ref{Prop5.6:reduce2}), as desired.
\end{proof}
\section{Proof of Proposition \ref{mainprop}}\label{mainproof1} In this section we apply Propositions \ref{algorithm1}--\ref{overpair3} to complete the inductive proof of Proposition \ref{mainprop}. Namely, assuming $\mathtt{Local}(M)$, we shall prove that $\mathtt{Local}(2M)$ holds $\tau^{-1}M$-certainly. Recall the choice of $M$ and the basic assumptions and facts listed in the beginning of Section \ref{mainproof}.
\subsection{The operator $\Vs^M$} We start by obtaining suitable bounds for the operator $\Vs^M$ (as well as $\Rs^M=\Vs^M+1$), which will follow from the corresponding bounds for $\Ls^M$, which in turn follow from the bounds for $\Ls^\zeta$ in part (5) of $\mathtt{Local}(M)$ in Proposition \ref{mainprop}.
\begin{prop}\label{linextra} Assume $\mathtt{Local}(M)$ is true. Then $\|\Ls^{M,\zeta}\|_{X^{1-b}\to X^{b}}\leq\tau^{(6\kappa)^{-1}}$ for $\zeta\in\{\pm\}$, so in particular $\Rs^M=(1-\Ls^M)^{-1}$ and $\|\Vs^{M,\zeta}\|_{X^{1-b}\to X^b}\leq \tau^{(7\kappa)^{-1}}$. For the kernel $\Ls^{M,\zeta}$ we also have
\begin{equation}\label{extrabd2}\int_{\Rb}\langle\lambda\rangle^{2(1-b)}\|\langle\lambda'\rangle^{-(1-b)}(\widehat{\Ls^{M,\zeta}})_{kk'}(\lambda,\lambda')\|_{\ell_{k'}^2L_{\lambda'}^2\to \ell_k^2}^2\,\mathrm{d}\lambda\leq\tau^{(3\kappa)^{-1}},
\end{equation} and the same bound (\ref{extrabd2}) holds also for the kernel $\Vs^{M,\zeta}$, with the power $(3\kappa)^{-1}$ on the right hand side replaced by $(7\kappa/2)^{-1}$. Moreover for any $\widetilde{N}$ we have
\begin{equation}\label{opervmbd2}\|\mathbf{1}_{\langle k'\rangle\leq \widetilde{N}}(\Vs^{M,\zeta})_{kk'}\|_{X^{b,-(1-b)}[kk']}\leq \tau^{(8\kappa)^{-1}}(\widetilde{N})^{\alpha_0}\cdot M^{C\delta},
\end{equation}
\begin{equation}\label{opervmbd3}\|(1+M^{-\delta}|k-\zeta k'|)^{\kappa^2}(\Vs^{M,\zeta})_{kk'}\|_{X^{b,-(1-b)}[k k']}\leq \tau^{(8\kappa)^{-1}}M^{\beta_1-\varepsilon}.
\end{equation}
\end{prop}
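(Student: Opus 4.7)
The first step is a decomposition. Since $v_{M^{[\delta]}}^\dagger=\sum_{N'\leq M^{[\delta]}}y_{N'}$ and each $y_{N'}$ is itself a finite sum of tensor-terms $\Psi[\Sc,h^{(\Sc,n)}]$ plus the remainder $z_{N'}$ by the ansatz (\ref{ansatzsum}), plugging this expansion into the definition (\ref{deflin}) of $\Ls^M$ exhibits $\Ls^{M,\zeta}$ as a linear combination of operators of exactly the form (\ref{deflinoper}) appearing in part (5) of $\mathtt{Local}(M)$: the sum ranges over $q$, over the position where $w$ is inserted, over dyadic scales $N_2,\dots,N_q\leq M^{[\delta]}$, and over the choice of each $y_{N_j}^*$. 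The coefficients $(m_M^*)^{(p-q)/2}$ are bounded by (\ref{constants}), and (\ref{linbd}) gives $\|\Ls^\zeta\|_{X^{1-b,-b}[k\to k']}\leq \tau^{(5\kappa)^{-1}}(\max_j N_j)^{-4\varepsilon\delta}$ for every summand. Since only $(\log M)^{O(1)}$ dyadic choices appear and the decay $(\max_j N_j)^{-4\varepsilon\delta}$ absorbs every $M^\theta$ loss, summing yields $\|\Ls^{M,\zeta}\|_{X^{1-b,-b}[k\to k']}\lesssim\tau^{(5\kappa)^{-1}-\theta}$.

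Next we upgrade this kernel bound to the operator bound $\|\Ls^{M,\zeta}\|_{X^{1-b}\to X^b}\leq\tau^{(6\kappa)^{-1}}$. A direct Cauchy--Schwartz in $\lambda'$ against the $X^{1-b,-b}$ kernel norm only produces an $X^{1-b}\to X^{1-b}$ operator bound, so the jump from target exponent $1-b$ to $b$ exploits the factorization $\Ls^M=\chi_\tau\cdot\Ic_\chi\circ(\text{mult})$. Lemma \ref{duhamelform0} equips the $\Ic_\chi$ kernel with output decay $\langle\lambda\rangle^{-1}$; since $b-(1-b)=16\kappa^{-1}$ is tiny, trading $\langle\lambda\rangle^{-(2b-1)}$ of this decay promotes the output weight from $\langle\lambda\rangle^{1-b}$ to $\langle\lambda\rangle^b$, at the price of a $\langle\lambda-\mu\rangle^{-1}$ convolution against $\widehat{\chi_\tau}$ which is integrable; the remaining mismatch in the input weight is absorbed by the $\tau^{b^+-b}=\tau^{8\kappa^{-1}}$ gain from Lemma \ref{localization} applied to $\chi_\tau$. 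This delivers the $X^{1-b}\to X^b$ bound with exponent $\tau^{(6\kappa)^{-1}}$; in particular $\|\Ls^{M,\zeta}\|<1/2$, so the Neumann series $\Rs^M=\sum_{n\geq 0}(\Ls^M)^n$ converges and $\Vs^{M,\zeta}=\Rs^{M,\zeta}-\delta_{\zeta,+}$ inherits the operator bound with a slightly worse exponent $\tau^{(7\kappa)^{-1}}$ after summing the geometric series.

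The kernel estimates (\ref{extrabd2}) and its $\Vs$-analog are obtained by reinterpreting the established $X^{1-b,-b}[k\to k']$ bound. For fixed $\lambda$, a Cauchy--Schwartz argument in $(k',\lambda')$ controls the operator norm $\|\langle\lambda'\rangle^{-(1-b)}(\widehat{\Ls^{M,\zeta}})(\lambda,\lambda')\|_{\ell^2_{k'}L^2_{\lambda'}\to\ell^2_k}$ by $\bigl(\int\langle\lambda'\rangle^{-2(1-b)}\|(\widehat{\Ls^{M,\zeta}})(\lambda,\lambda')\|_{k\to k'}^2\,d\lambda'\bigr)^{1/2}$, and integrating $\langle\lambda\rangle^{2(1-b)}$ over $\lambda$ reconstructs the $X^{1-b,-b}$ kernel norm, yielding (\ref{extrabd2}) with exponent $\tau^{(3\kappa)^{-1}}$. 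The corresponding bound for $\Vs^{M,\zeta}$ follows from the identity $\Vs=\Ls+\Ls\cdot\Vs$: the composition $\Ls\cdot\Vs$ contributes an operator-norm factor of $\Ls$ already bounded by $\tau^{(6\kappa)^{-1}}$ multiplied by the kernel norm of $\Vs$, so iterating the identity preserves the kernel bound with only a geometric loss, yielding exponent $\tau^{(7\kappa/2)^{-1}}$.

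For (\ref{opervmbd2}) and (\ref{opervmbd3}) the strategy is to exploit the explicit structure of the base tensor in $\Mc_q$. Inside $\Ls^{M,\zeta}$ one has $k-\zeta k'=\sum_{j\geq 2}\zeta_jk_j$ with each $\langle k_j\rangle\leq M^{[\delta]}$, giving the elementary localization $|k-\zeta k'|\lesssim M^\delta$ at a single iterate, which saturates the $(1+M^{-\delta}|k-\zeta k'|)^{\kappa^2}$ weight in (\ref{opervmbd3}); the refined operator-norm gain $M^{\beta_1-\varepsilon}$ comes from Proposition \ref{final} applied to the counting behind one multilinear composition, using the size hypothesis $\max_j N_j=M$. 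The $\widetilde N^{\alpha_0}$ factor in (\ref{opervmbd2}) similarly arises from restricting the base tensor via $\mathbf{1}_{\langle k'\rangle\leq\widetilde N}$, which shrinks the effective counting set. Both bounds then propagate from $\Ls^{M,\zeta}$ to $\Vs^{M,\zeta}=\sum_{n\geq 1}(\Ls^{M,\zeta})^n$ by iterating the identity $\Vs=\Ls+\Ls\cdot\Vs$, and Lemma \ref{weightedbd} ensures that the high-power weight $(1+M^{-\delta}|k-\zeta k'|)^{\kappa^2}$ survives each composition with only a bounded multiplicative loss. I expect the main technical obstacle to be precisely this propagation of the $\kappa^2$-power weight through the Neumann expansion: one must carefully balance the additive accumulation $|k-\zeta k'|\lesssim n\cdot M^\delta$ over $n$ compositions against the geometric $\tau$-gain, and verify that Lemma \ref{weightedbd} gives uniform control in $n$.
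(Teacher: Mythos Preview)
Your decomposition in the first paragraph matches the paper.

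In paragraph 2, your route to $\|\Ls^{M,\zeta}\|_{X^{1-b}\to X^b}$ differs from the paper's and is under-specified. The paper observes that Cauchy--Schwartz in $\lambda'$ yields $\|\Ls^\zeta\|_{X^b\to X^{1-b}}\leq\|\Ls^\zeta\|_{X^{1-b,-b}[k\to k']}$ (input $X^b$, output $X^{1-b}$), then separately establishes the crude endpoint $\|\Ls^\zeta\|_{X^0\to X^1}\lesssim\tau^{-\theta}L^{dp}$ via the elementary inequality $\|\chi_\tau\Ic_\chi v\|_{H^1_t}\lesssim\|v\|_{L^2_t}$ together with $\ell^1L^1$ bounds on the $y_{N_j}^*$. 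Interpolating these two gives $X^{1-b}\to X^b$. Your Duhamel-decay argument needs to simultaneously raise the output exponent and lower the input exponent, and it is not clear the $\chi_\tau$ localization alone handles the input side.

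Paragraph 3 contains a genuine gap. Your Cauchy--Schwartz step correctly gives
\[
\|\langle\lambda'\rangle^{-(1-b)}\widehat{\Ls}(\lambda,\cdot)\|_{\ell^2_{k'}L^2_{\lambda'}\to\ell^2_k}^2\leq\int\langle\lambda'\rangle^{-2(1-b)}\|\widehat{\Ls}(\lambda,\lambda')\|_{k\to k'}^2\,d\lambda',
\]
but integrating $\langle\lambda\rangle^{2(1-b)}$ reconstructs the $X^{1-b,-(1-b)}[k\to k']$ norm, \emph{not} $X^{1-b,-b}[k\to k']$. Since $1-b<b$, the former is strictly larger, and (\ref{linbd}) only controls the latter. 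The paper closes this by interpolation: the bound with weight $\langle\lambda'\rangle^{-b}$ does follow from (\ref{linbd}), while at the other endpoint the paper proves the unweighted estimate $\int\langle\lambda\rangle^{2(1-b)}\|\widehat{\Ls^\zeta}(\lambda,\lambda')\|_{\ell^2_{k'}L^2_{\lambda'}\to\ell^2_k}^2\,d\lambda\leq\tau^{-\theta}L^{2dp}$ by the same elementary argument used above; interpolating gives (\ref{extrabd2}). For the $\Vs^{M,\zeta}$ version the paper uses the identity $\Vs=\Ls+\Vs\Ls$ (not $\Ls\Vs$), since the mixed norm in (\ref{extrabd2}) is asymmetric in $(\lambda,\lambda')$ and one needs the $\Ls$-factor on the input side.

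In paragraph 4, note that inside $\Ls^{M,\zeta}$ all $N_j< M^\delta$, so ``$\max_j N_j=M$'' is false. The $\min(\widetilde N,M)^{\alpha_0}$ gain comes directly from counting $(k,k')$ with $k-\zeta k'$ and $|k|^2-\zeta|k'|^2$ both fixed (at most $\min(\widetilde N,M)^{d-1}$ choices), after expanding the $y_{N_j}^*$ in $\ell^1L^1$ and fixing their Fourier variables at cost $M^{C\delta}$; Proposition~\ref{final} is not needed. The propagation to $\Vs^{M,\zeta}$ via $\Vs=\Ls+\Ls\Vs$ and Lemma~\ref{weightedbd} matches the paper.
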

\begin{proof} \emph{Step 1: bounds for $\Ls^{M,\zeta}$.} For $L<M^\delta$, define the operators
\begin{equation}\label{deflml}(\Ls^{ML}w)_k(t)=-i\sum_{3\leq q\leq p}a_{pq}(m_M^*)^{(p-q)/2} \chi_\tau(t)\cdot\Ic_\chi\Pi_M\sum_{\mathrm{sym}}\Mc_q(w,v_{L}^\dagger,\cdots,v_{L}^\dagger)_k(t)
\end{equation} and $\widetilde{\Ls}^{ML}=\Ls^{ML}-\Ls^{M,L/2}$, then by definitions (\ref{deflml}) and (\ref{deflin}) we have 
\[\Ls^{M,\zeta}=\Ls^{M,M^{[\delta]},\zeta}=\sum_{L<M^\delta}\widetilde{\Ls}^{ML,\zeta},\] 
where the corresponding operators with $\zeta$ are defined as in Section \ref{notations}.
Moreover, each $\widetilde{\Ls}^{ML,\zeta}$  can be written as a superposition of at most $(\log L)^{C_\theta}$ operators of the form $\Ls^\zeta$, defined by (\ref{deflinoper}) with this fixed $\zeta$, where $N$ is replaced by $M$ and $\max(N_2,\cdots,N_{q})=L$. Therefore, to bound $\|\Ls^{M,\zeta}\|_{X^{1-b}\to X^{b}}$, it suffices to control the same norm for $\Ls^\zeta$ with a gain of a power of $L$.

Let the kernel of $\Ls^\zeta$ be $(\Ls^\zeta)_{kk'}(t,t')$, with Fourier transform $(\widehat{\Ls^\zeta})_{kk'}(\lambda,\lambda')$, then by (\ref{kernelfourier}),
\[(\widehat{\Ls^\zeta w})_k(\lambda)=\sum_{k'}\int\mathrm{d}\lambda'\cdot (\widehat{\Ls^\zeta})_{kk'}(\lambda,-\zeta\lambda')(\widehat{w})_{k'}(\lambda').\] For any $w$ with $\|w\|_{X^{b}}=1$, we can estimate
\[
\begin{split}\|\Ls^\zeta w\|_{X^{1-b}}^2&\leq\int_\Rb\langle \lambda\rangle^{2(1-b)}\,\mathrm{d}\lambda\cdot\bigg(\int_{\Rb}\|(\widehat{\Ls^\zeta})_{kk'}(\lambda,-\zeta\lambda')\|_{k\to k'}\cdot\|(\widehat{w})_{k'}(\lambda')\|_{\ell_{k'}^2}\,\mathrm{d}\lambda'\bigg)^2\\
&\leq\int_{\Rb^2}\langle \lambda\rangle^{2(1-b)}\langle\lambda'\rangle^{-2b}\|(\widehat{\Ls^\zeta})_{kk'}(\lambda,-\zeta\lambda')\|_{k\to k'}^2\,\mathrm{d}\lambda\mathrm{d}\lambda'\cdot\|w\|_{X^b}^2,
\end{split}\] so we have by (\ref{linbd}) that \begin{equation}\label{prop7.1:interpolate1}\|\Ls^\zeta\|_{X^b\to X^{1-b}}\leq\|\Ls^\zeta\|_{X^{1-b,-b}[k\to k']}\leq\tau^{(5\kappa)^{-1}}L^{-4\varepsilon\delta}.\end{equation} On the other hand, noticing that the $X^0$ and $X^1$ norms can be viewed as Sobolev $L^2$ and $H^1$ norms in the $t$ variable (for $\ell_k^2$ valued functions), and using the elementary inequalities leading to
\begin{equation}\label{elementary}\|\chi_\tau(t)\cdot\Ic_\chi v(t)\|_{H_t^1}\lesssim\|v(t)\|_{L_t^2}\end{equation} for both scalar and vector valued functions, we can deduce that
\begin{equation}\label{prop7.1:interpolate2}
\begin{split}\|\Ls^\zeta\|_{X^0\to X^1}&\lesssim\sup\big\{\|\Mc_q(y_{N_2}^*,\cdots,w,\cdots,y_{N_{q}}^*)\|_{X^0}:\|w\|_{X^0}=1\big\}\\
&\lesssim\prod_{j=2}^{q}\|(\widehat{y_{N_j}^*})_{k_j}(\lambda_j)\|_{\ell_{k_j}^1L_{\lambda_j}^1}\lesssim\tau^{-\theta}L^{dp},
\end{split}
\end{equation} where we have used $\mathtt{Local}(M)$ parts (3) and (5) to control the norms of $y_{N_j}^*$. Interpolating (\ref{prop7.1:interpolate1}) and (\ref{prop7.1:interpolate2}) gives $\|\Ls^\zeta\|_{X^{1-b}\to X^{b}}\lesssim\tau^{(6\kappa)^{-1}}L^{-3\varepsilon\delta}$, which implies the desired bound for $\Ls^{M,\zeta}$. In particular we also get that $\Rs^M=(1-\Ls^M)^{-1}$ and the corresponding bound for $\Vs^{M,\zeta}$. Note that the estimates for $\Ls^{\zeta}$ in this step \emph{do not require} that $L<M^\delta$.

\emph{Step 2: more bounds for $\Ls^\zeta$.} We need to prove (\ref{extrabd2}) for $\Ls^{M,\zeta}$. Clearly we may replace $\Ls^{M,\zeta}$ by $\Ls^{\zeta}$, provided we can prove (\ref{extrabd2}) with right hand side replaced by $\tau^{(3\kappa)^{-1}}L^{-6\varepsilon\delta}$. For the purpose of (\ref{opervmbd2})--(\ref{opervmbd3}) we will also prove an additional bound, which holds \emph{assuming} $L<M^\delta$, namely: \begin{equation}\label{extrabd0}\|\mathbf{1}_{\langle k'\rangle\leq \widetilde{N}}(\Ls^\zeta)_{kk'}\|_{X^{b,-(1-b)}[kk']}\leq \tau^{(6\kappa)^{-1}}\min(\widetilde{N},M)^{\alpha_0}M^{C\delta}.
\end{equation}

The proof of (\ref{extrabd0}) is straightforward. In fact, since $N_j\leq L<M^\delta$ for $2\leq j\leq q$, we may expand the functions $y_{N_j}^*$ using their Fourier transforms which satisfy the $\ell_{k_j}^1L_{\lambda_j}^1$ bounds as in \emph{Step 1}, and then reduce to fixed values of $k_j$ and $\lambda_j$ at a loss of $M^{C\delta}$. Using also Lemma \ref{localization} (note that $(\Ls^\zeta)_{kk'}(0,t')\equiv 0$), we may get rid of the $\chi_\tau$ factor at the price of replacing the $X^{b,-(1-b)}[kk']$ norm by the slightly larger $X^{b^+,-(1-b)}[kk']$ norm. With these reductions, let the resulting operator be $\Ls^{*,\zeta}$, then by Lemma \ref{duhamelform0} we have
\begin{equation}\label{auxkerest}|(\widehat{\Ls^{*,\zeta}})_{kk'}(\lambda,\lambda')|\lesssim\mathbf{1}_{\langle k\rangle\leq M}\mathbf{1}_{\langle k'\rangle\leq  \widetilde{N}}\mathbf{1}_{k-\zeta k'=k^*}\bigg(\frac{1}{\langle\lambda\rangle^3}+\frac{1}{\langle\lambda\pm(\lambda'\pm\Omega\pm\lambda^*)\rangle^3}\bigg)\frac{1}{\langle\lambda'\pm\Omega\pm\lambda^*\rangle},\end{equation} where $k^*$ and $\lambda^*$ are fixed, and $\Omega=|k|^2-\zeta|k'|^2$. The bound (\ref{extrabd0}) for $\Ls^{*,\zeta}$ then follows from (\ref{auxkerest}), elementary integral bounds and the fact that the number of choices for $(k,k')$ with $\langle k\rangle\leq M$ and $\langle k'\rangle\leq \widetilde{N}$, and the values of $k-\zeta k'=k^*$ and $|k|^2-\zeta |k'|^2=\Omega$ fixed, under the simplicity assumption\footnote{See Definition \ref{def:simple}. Here simplicity implies that, if $\zeta=+$ and $k=k'$, then $k$ must also equal some other $k_j$, which has already been fixed.}, is $\lesssim\min(\widetilde{N},M)^{d-1} \leq \min(\widetilde{N},M)^{2\alpha_0}$.

Now we prove (\ref{extrabd2}) for $\Ls^{\zeta}$ with right hand side replaced by $\tau^{(3\kappa)^{-1}}L^{-6\varepsilon\delta}$. Since
\[\|\langle\lambda'\rangle^{-b}(\widehat{\Ls^\zeta})_{kk'}(\lambda,\lambda')\|_{\ell_{k'}^2L_{\lambda'}^2\to \ell_k^2}^2\leq\int_\Rb\langle\lambda'\rangle^{-2b}\|(\widehat{\Ls^\zeta})_{kk'}(\lambda,\lambda')\|_{\ell_{k'}^2\to \ell_k^2}^2\,\mathrm{d}\lambda'\] and thanks to (\ref{linbd}), we know that the desired bound is true with $\langle\lambda'\rangle^{-(1-b)}$ replaced by $\langle\lambda'\rangle^{-b}$, and the right hand side replaced by $\tau^{(5\kappa/2)^{-1}}L^{-8\varepsilon\delta}$, in (\ref{extrabd2}). By interpolation, it then suffices to verify that
\begin{equation}\label{extrabd4}\int_\Rb\langle \lambda\rangle^{2(1-b)}\|(\widehat{\Ls^\zeta})_{kk'}(\lambda,\lambda')\|_{\ell_{k'}^2L_{\lambda'}^2\to \ell_k^2}^2\,\mathrm{d}\lambda\leq\tau^{-\theta}L^{2dp}.
\end{equation} Here we first use Lemma \ref{localization} (again since $(\Ls^\zeta)_{kk'}(0,t')\equiv 0$) to get rid of the $\chi_\tau$ factor in $\Ls^{\zeta}$, then apply the same arguments as in (\ref{prop7.1:interpolate2}) in \emph{Step 1} to control the spacetime $L^2$ norm of $\Mc_q(y_{N_2}^*,\cdots,w,\cdots,y_{N_{q}}^*)$, and reduce (\ref{extrabd4}) to the bound (viewing $w$ as an $\ell_k^2$ valued function)
\[|\Fc_t\Ic_\chi w(\lambda)|\lesssim\langle\lambda\rangle^{-1}\|w(\lambda')\|_{L_{\lambda'}^2},\] which easily follows from Lemma \ref{duhamelform0}.

\emph{Step 3: bounds for $\Vs^M$.} Now we can prove (\ref{extrabd2})--(\ref{opervmbd3}) for $\Vs^{M,\zeta}$. First look at (\ref{opervmbd2})--(\ref{opervmbd3}); note that $\Vs^M=\Ls^M+\Ls^M\Vs^M$, so in terms of kernels we have
\[(\widehat{\Vs^{M,\zeta}})_{kk'}(\lambda,\lambda')=(\widehat{\Ls^{M,\zeta}})_{kk'}(\lambda,\lambda')+\sum_{\substack{\iota_1,\iota_2\in\{\pm\}\\\iota_1\iota_2=\zeta}}\sum_{m}\int\mathrm{d}\mu\cdot(\widehat{\Ls^{M,\iota_1}})_{km}(\lambda,\mu)(\widehat{\Vs^{M,\iota_2}})_{m k'}^{\iota_2}(-\mu,\lambda').\] We may multiply by the truncation $\mathbf{1}_{\langle k'\rangle\leq \widetilde{N}}$ on both sides; then, by fixing $(k',\lambda')$ and applying the kernel $(\widehat{\Ls^{M,\iota_1}})_{km}(\lambda,\mu)$ to $(\widehat{\Vs^{M,\iota_2}})_{m k'}^{\iota_2}(-\mu,\lambda')$ as a function of $(m, \mu)$, we obtain that
\begin{multline}\sum_{\zeta\in\{\pm\}}\|\mathbf{1}_{\langle k'\rangle\leq \widetilde{N}}(\Vs^{M,\zeta})_{kk'}\|_{X^{b,-(1-b)}[kk']}\leq \sum_{\zeta\in\{\pm\}}\|\mathbf{1}_{\langle k'\rangle\leq \widetilde{N}}(\Ls^{M,\zeta})_{kk'}\|_{X^{b,-(1-b)}[kk']}\\+\sum_{\iota_1\in\{\pm\}}\|\Ls^{M,\iota_1}\|_{X^b\to X^b}\cdot\sum_{\iota_2\in\{\pm\}}\|\mathbf{1}_{\langle k'\rangle\leq  \widetilde{N}}\Vs_{kk'}^{M,\iota_2}\|_{X^{b,-(1-b)}[kk']}.\end{multline} Using (\ref{extrabd0}) for $\Ls^{M,\zeta}$, and the estimates for $\Ls^{M,\zeta}$ obtained in \emph{Step 1}, we get (\ref{opervmbd2}). The proof for (\ref{opervmbd3}) is similar, where we use Lemma \ref{weightedbd} (with $\kappa_1=\kappa^2$) to control the weighted norm of $\Vs^M$, noticing that $(\widehat{\Ls^{M,\iota_1}})_{km}(\lambda,\mu)$ is supported in $|k-\iota_1m|\lesssim M^\delta$.

Finally we prove (\ref{extrabd2}) for $\Vs^{M,\zeta}$. Since $\Vs^M=\Ls^M+\Vs^M\Ls^M$, similar to the above argument we can write \[(\widehat{\Vs^{M,\zeta}})_{kk'}(\lambda,\lambda')=(\widehat{\Ls^{M,\zeta}})_{kk'}(\lambda,\lambda')+\sum_{\substack{\iota_1,\iota_2\in\{\pm\}\\\iota_1\iota_2=\zeta}}\sum_{m}\int\mathrm{d}\mu\cdot(\widehat{\Vs^{M,\iota_1}})_{km}(\lambda,\mu)(\widehat{\Ls^{M,\iota_2}})_{m k'}^{\iota_2}(-\mu,\lambda').\] This implies, for any fixed $\lambda$, that
\begin{multline}\sum_{\zeta\in\{\pm\}}\|\langle\lambda'\rangle^{-(1-b)}(\widehat{\Vs^{M,\zeta}})_{kk'}(\lambda,\lambda')\|_{\ell_{k'}^2L_{\lambda'}^2\to \ell_k^2}\leq\sum_{\zeta\in\{\pm\}}\|\langle\lambda'\rangle^{-(1-b)}(\widehat{\Ls^{M,\zeta}})_{kk'}(\lambda,\lambda')\|_{\ell_{k'}^2L_{\lambda'}^2\to \ell_k^2}\\+\sum_{\iota_2\in\{\pm\}}\|\Ls^{M,\iota_2}\|_{X^{1-b}\to X^{1-b}}\cdot\sum_{\iota_1\in\{\pm\}}\|\langle\mu\rangle^{-(1-b)}(\widehat{\Vs^{M,\iota_1}})_{km}(\lambda,\mu)\|_{\ell_{m}^2L_{\mu}^2\to \ell_k^2}.
\end{multline} Using (\ref{extrabd2}) for $\Ls^{M,\zeta}$, and the estimates for $\Ls^{M,\zeta}$ obtained in \emph{Step 1}, we get (\ref{extrabd2}) for $\Vs^{M,\zeta}$.
\end{proof}
\subsection{The $h^{(\Sc,0)}$ tensors} In this section we prove part (1) of $\mathtt{Local}(2M)$.
\begin{prop}\label{induct1} Assume $\mathtt{Local}(M)$ is true. Then part (1) of $\mathtt{Local}(2M)$ is true. More precisely, $h^{(\Sc,0)}$ satisfies (\ref{support01}) and (\ref{estimate01}), for each plain regular plant $\Sc$ with $N(\Sc)=M$ and $|\Sc|\leq D$.
\end{prop}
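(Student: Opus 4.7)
The support property (\ref{support01}) is proved by induction on $|\Sc|$. For the base case $\Sc=\Sc_M^+$, the first term in (\ref{deftensor1}) is obviously supported in $k=k_\lf$. For the inductive step, each $h^{(\Sc_j,0)}$ in the summation $\sum_{(a)}$ is (by induction hypothesis) supported in $k_j=\sum_{\lf\in\Uc_j}\zeta_\lf k_\lf$, and the base tensor $h$ in (\ref{basetensor}) imposes $k=\sum_{j=1}^q\zeta_j k_j$. Combining these through the merging formula (\ref{merge1}), and using that paired leaves satisfy $k_\lf=k_{\lf'}$ with $\zeta_{\lf'}^*=-\zeta_\lf^*$ so their contributions cancel, yields $k=\sum_{\lf\in\Uc}\zeta_\lf^* k_\lf$ in the support of the merged (and then trimmed) tensor. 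Proposition \ref{formulas} (5) guarantees plainness is preserved under the merges that appear in $\sum_{(a)}$.

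For the norm bound (\ref{estimate01}), I would analyze each term in $\sum_{(a)}$ and apply the merging and trimming machinery of Section \ref{mainproof}. Since $n_j=0$ and $N_\lf\geq M^\delta$ for all $\lf$ in the sum $\sum_{(a)}$, every subtensor has type 0 in the sense of Proposition \ref{overpair} (and its operator bound (\ref{boundmerge1}) is equivalent to (\ref{estimate01}) after summing in $\Gamma_j$ using the induction hypothesis), and $q=r$, so we are exactly in the situation of Proposition \ref{overpair2}. This yields, for the merged but untrimmed tensor $H$, the deterministic bound (\ref{boundmergenew1}) with the desired factor structure in $N_\lf^{\beta_1}$, $N_\lf^{-8\varepsilon}$, $N_\pf^{-\delta^3}$, and the correct $\Xc_0\Xc_1$ factors. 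Applying Proposition \ref{trimbd} (2) to the subsequent trimming at frequency $M^\delta$ then gives the full trimmed tensor bound in operator norm; the power loss $\tau^{-\theta}M^\theta(1+M^{-3\varepsilon}R^{d/2-\beta_1})$ there, combined with $R=M^\delta$ and $\beta_1>d/2-\theta$, is absorbed into the extra gain in (\ref{boundmergenew2}).

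The key issue is to convert these pointwise-in-$(t,\Gamma)$ operator bounds into the norm (\ref{estimate01}), which couples an $\ell^1$ sum in $\Gamma$ with an $L^2_\lambda$ integral weighted by $\langle\lambda\rangle^{2b}$. For this, I observe that $\Gamma_{\text{top}}=\Omega_h+\sum_{j=1}^r\zeta_j\Gamma_j$, where $\Omega_h=|k|^2-\sum_j\zeta_j|k_j|^2$ is the phase carried by the base tensor (\ref{basetensor}) and $\Gamma_j$ is the $\Gamma$-index of $h^{(\Sc_j,0)}$. Hence $\sum_{\Gamma_{\text{top}}}$ can be traded for $\sum_{\Omega_h}\prod_j\sum_{\Gamma_j}$ via triangle inequality, and the $\ell^1_{\Gamma_j}$-in-$L^2_\lambda$ bounds come directly from the induction hypothesis. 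The $\ell^1$ summability in the remaining variable $\Omega_h$ is then produced by the Duhamel kernel: by Lemma \ref{duhamelform0}, $\widehat{\Ic_\chi(e^{it\Omega_h}g(t))}(\lambda)$ has kernel decay $\langle\lambda\rangle^{-1}\langle\lambda+\Omega_h\rangle^{-1}$, which, combined with $\langle\lambda\rangle^{2b}$ ($b>1/2$) and Cauchy--Schwarz in $\Omega_h$, yields absolute summability with only a $\log$ loss, and also transfers the $\lambda$-weight correctly onto the inner $h^{(\Sc_j,0)}$ factors via convolution in time. Lemma \ref{localization} is used to absorb $\chi_\tau$ at the price of a factor $\tau^{b-b^+}\lesssim\tau^{-\theta}$, and the coefficient bound on $\Upsilon$ in (\ref{prop5.4:Upsilon}) provides the factor $\Xc_1$ in the $\max_\lf N_\lf < (10^3dp)^{-|\Lc|}N$ branch of (\ref{estimate02}).

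The main obstacle is precisely this last step, i.e.\ producing the $\ell^1_\Gamma$ summability \emph{simultaneously} with the $\langle\lambda\rangle^{2b}$ weight in $L^2_\lambda$, while at the same time preserving the sharp factorization of $\|H\|_{kk_B\to k_C}$ from Proposition \ref{overpair2} (which a priori controls only $H$ at fixed $t$, not a sum over modulations $\Omega_h$). The remedy, which I would carry out in detail, is to decompose the base tensor (\ref{basetensor}) dyadically in $\langle\Omega_h\rangle$, apply Proposition \ref{overpair2} on each piece (the bound in (\ref{boundmergenew1}) is uniform in $\Omega_h$), and use the Duhamel decay $\langle\lambda+\Omega_h\rangle^{-1}$ together with the induction hypothesis (\ref{estimate01}) on the subtensors $h^{(\Sc_j,0,\Gamma_j)}$. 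The finitely many cases $(C\neq\varnothing$, $C=E=\varnothing$, $B=C=\varnothing$, etc.) in (\ref{estimate02}) are handled separately but identically, and the degenerate subcase $\max_\lf N_\lf<(10^3dp)^{-|\Lc|}N$ is precisely the scenario in which $\Upsilon\leq\tau^{-\theta}M^{-40dp\varepsilon}$ in (\ref{prop5.4:Upsilon}) supplies the extra $N^{-4\varepsilon}$ of $\Xc_1$.
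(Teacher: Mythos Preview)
Your overall strategy matches the paper's: prove (\ref{support01}) by the cancellation identity, then apply Proposition~\ref{overpair2} (the type-0 merging bound) to each term in $\sum_{(a)}$, and couple this with the Duhamel kernel decay to produce the $\ell^1_\Gamma$--weighted $L^2_\lambda$ norm in (\ref{estimate01}). Two points need correction.

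First, the trimming step is spurious. The restriction defining $\sum_{(a)}$ forces $N_\lf\geq M^\delta$ for every $\lf\in\Lc_j$, so the outer $\mathtt{Trim}(\,\cdot\,,M^\delta)$ in (\ref{newH}) acts only on the $\Yc$-part and leaves the tensor itself unchanged; Proposition~\ref{overpair2} already produces the final bound directly. Proposition~\ref{trimbd}(2) is neither needed nor applicable here: it outputs the type-1 bounds (\ref{trim-bd1})--(\ref{trim-bd3}) with exponent $\beta$, not the type-0 bound (\ref{estimate01}) with exponent $\beta_1$ that you must establish. Your side claim $\beta_1>d/2-\theta$ is also false whenever $\alpha$ is close to $\alpha_0$, since then $\beta_1\approx\alpha_0=d/2-1/(p-1)$.

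Second, and more substantively, your handling of the $\lambda$-integration has a real gap. After the $\log$-loss from $\sum_{\Omega_h}\langle\lambda-\Omega_h-\Xi\rangle^{-1}$, the remaining integral $\int_\Rb\langle\lambda\rangle^{2b^+-2}(\log M)^2\,\mathrm d\lambda$ \emph{diverges}, since $2b^+-2=-1+32\kappa^{-1}>-1$. The paper cures this by a high/low modulation split in $\lambda$. In the \emph{high modulation} regime $|\lambda|\geq M^{\sqrt\kappa}$ one abandons Proposition~\ref{overpair2} entirely, fixes all discrete variables, and uses the crude pointwise bound $|\widehat{h^{(\Sc,0)}}(\lambda)|\lesssim\langle\lambda\rangle^{-1}\langle\lambda-\Xi\rangle^{-1}$; the stronger weight $\langle\lambda\rangle^2$ is then admissible, and the gain $M^{\sqrt\kappa(2-2b^+)}$ overwhelms the summation loss $M^{CD}$. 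In the \emph{low modulation} regime $|\lambda|\leq M^{\sqrt\kappa}$ one proceeds exactly as you describe via Proposition~\ref{overpair2}, but now the truncated $\lambda$-integral loses only $M^{O(1/\sqrt\kappa)}$, which is absorbed by the $M^{-\delta^3}$ gain in (\ref{boundmergenew2}). Two minor slips: Lemma~\ref{localization} \emph{gains} $\tau^{b^+-b}$ rather than costing $\tau^{b-b^+}$, and to match the support hypothesis (\ref{mergesupp}) of Proposition~\ref{overpair2} you must slice $\widetilde h$ into unit slabs in $\Omega$, not dyadic shells.
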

\begin{proof} We induct in $|\Sc|$, using the inductive definition (\ref{deftensor1}). For the first term on the right hand of (\ref{deftensor1}), which corresponds to the mini-plant $\Sc=\Sc_M^+$, the desired bounds are obvious, so we just need to consider the second term, which is a multilinear expression of the input tensors $h^{(\Sc_j,0)}$. By induction hypothesis, each input tensor satisfies (\ref{support01}) and (\ref{estimate01}) associated with $\Sc_j$. Recall also that $\Vc_j=\Vc=\varnothing$ for $1\leq j\leq q$ when considering $h^{(\Sc,0)}$ tensors.

First, to prove (\ref{support01}) for $h^{(\Sc,0)}$, we notice that the sign of $\lf\in\Uc$ in $\Sc$ is given by $\zeta_\lf^*=\zeta_j\zeta_\lf$ where $\lf\in\Uc_j$ and $\zeta_\lf$ is the sign of $\lf$ in $\Sc_j$ (see Definition \ref{defmerge}). In the support of $\Sc$ we have
\begin{equation}\label{cencelnew}\sum_{\lf\in\Uc}\zeta_\lf^* k_\lf=\sum_{\lf\in\Wc}\zeta_\lf^* k_\lf=\sum_{j=1}^q\zeta_j\sum_{\lf\in\Uc_j}\zeta_\lf k_\lf=\sum_{j=1}^q\zeta_jk_j=k,\end{equation} where $\Wc=\Uc_1\cup\cdots\cup\Uc_q$, using the induction hypothesis, and the definition (\ref{basetensor}) of the tensor $h$ used in the merging process. Now let us prove (\ref{estimate01}) for $h^{(\Sc,0)}$.

\emph{Step 1: first reductions.} By definition of $\sum_{(a)}$ in (\ref{deftensor1}), we know $N_\lf\geq M^\delta$ for each $\lf\in\Lc_j$ and $1\leq j\leq q$, so we can omit the trimmings in (\ref{newH}), as they involve only the $\Yc$ sets which do not appear in the tensors. Applying Lemma \ref{localization}, we may remove the localization factor $\chi_\tau(t)$ on the right hand side of (\ref{deftensor1}) and gain a power $\tau^{8\kappa^{-1}}$ (which would overwhelm any possible $\tau^{-\theta}$ loss), provided we estimate this expression without $\chi_\tau$ in the stronger norm with the power $\langle\lambda\rangle^{2b}$ in (\ref{estimate01}) replaced by $\langle\lambda\rangle^{2b^+}$. By abusing notation we will still use $h^{(\Sc,0)}$ to denote the expression after these reductions; moreover, since $\sum_{\mathrm{sym}}$ and $\sum_{(a)}$ involve at most $(\log M)^{\kappa}$ terms, we may focus on one single term in the discussion below.

With the above reductions, and applying also Lemma \ref{duhamelform0} and the definition (\ref{basetensor}) of $h$, we can then take the Fourier transform in time and obtain that
\begin{equation}\label{fourierexpand}
\widehat{h^{(\Sc,0)}}(\lambda)=\int\mathrm{d}\lambda_1\cdots\mathrm{d}\lambda_q\cdot H(\lambda,\lambda_1,\cdots,\lambda_q),
\end{equation} where $H(\lambda,\lambda_1,\cdots,\lambda_q)=[H(\lambda,\lambda_1,\cdots,\lambda_q)]_{kk_\Uc}$ is a tensor (with $(\lambda,\lambda_1,\cdots,\lambda_q)$ being parameters) defined by
\begin{equation}\label{fourierexpand2}H(\lambda,\lambda_1,\cdots,\lambda_q)=\Upsilon\cdot\mathtt{Merge}(\widehat{h^{(\Sc_1,0)}}(\lambda_1),\cdots,\widehat{h^{(\Sc_q,0)}}(\lambda_q),\widetilde{h},\Bs,\Os).
\end{equation} In the above formula $(\Upsilon,\Bs,\Os)$ are as in Section \ref{constructansatz}, $\widetilde{h}=[\widetilde{h}(\lambda,\lambda_1,\cdots,\lambda_q)]_{kk_1\cdots k_q}$ is a function of $(k,k_1,\cdots,k_q)$ with parameters $(\lambda,\lambda_1,\cdots,\lambda_q)$, that is supported in the set $k=\sum_{j=1}^q\zeta_jk_j$, and satisfies the bound
\begin{equation}\label{boundbase1}|\widetilde{h}|\lesssim\frac{\tau^{-\theta}}{\langle\lambda\rangle\langle\lambda-\Omega-\zeta_1\lambda_1-\cdots-\zeta_q\lambda_q\rangle};\quad \Omega:=|k|^2-\sum_{j=1}^q\zeta_j|k_j|^2.
\end{equation} We shall separate two cases: the \emph{high modulation case} where $|\lambda|\geq M^{\sqrt{\kappa}}$, and the \emph{low modulation case} where $|\lambda|\leq M^{\sqrt{\kappa}}$.

\emph{Step 2: the high modulation case.} Assume $|\lambda|\geq M^{\sqrt{\kappa}}$. If we can estimate the norm in (\ref{estimate01}) for $h^{(\Sc,0)}$, but with $\langle\lambda\rangle^{2b^+}$ replaced by $\langle\lambda\rangle^{2}$, then this gain of power in $\lambda$ will overwhelm any possible loss coming from any summation of any $k_j$ and $k_\lf$ variables (the latter summation loses at most a power $M^{C\cdot D}$, while $\sqrt{\kappa}\gg D$). Because of this we can fix the values of $k$, $k_j$ and all $k_\lf$, and view $\widehat{h^{(\Sc,0)}}(\lambda)$ as a function of $\lambda$ only, and $\widehat{h^{(\Sc_j,0)}}(\lambda_j)$ as a function of $\lambda_j$ only; moreover by induction hypothesis (\ref{estimate01}) and H\"{o}lder, this function of $\lambda_j$ can be controlled in $L_{\lambda_j}^1$, so upon integrating in $\lambda_j$, we can also fix the value of $\lambda_j$, in which case $\widehat{h^{(\Sc,0)}}(\lambda)$ satisfies the bound
\[|\widehat{h^{(\Sc,0)}}(\lambda)|\lesssim\frac{\tau^{-\theta}}{\langle\lambda\rangle\langle\lambda-\Xi\rangle}\] due to (\ref{boundbase1}), where $\Xi$ is a fixed real number depending on the choices of $k$, $k_j$, $k_\lf$ and $\lambda_j$. Clearly this implies $\int_\Rb \langle \lambda\rangle^2|\widehat{h^{(\Sc,0)}}(\lambda)|^2\mathrm{d}\lambda\lesssim \tau^{-\theta}$ uniformly in all choices of the fixed parameters, so the desired estimate (\ref{estimate01}) follows.

\emph{Step 3: the low modulation case.} Assume $|\lambda|\leq M^{\sqrt{\kappa}}$. Recall the formula (\ref{fourierexpand}); we shall further decompose $h^{(\Sc,0)}$ into $h^{(\Sc,0,\Gamma)}$ and $h^{(\Sc_j,0)}$ into $h^{(\Sc_j,0,\Gamma_j)}$, where $\Gamma$ and $\Gamma_j$ are integers, as in part (1) of Proposition \ref{mainprop}. By induction hypothesis (\ref{estimate01}), if we define $\Xf_j=\Xf_j(\lambda_j,\Gamma_j)$ to be the smallest value such that $\widehat{h^{(\Sc_j,0,\Gamma_j)}}(\lambda_j)$ satisfies the type 0 bounds (\ref{supportmerge})--(\ref{boundmerge1}) in Proposition \ref{overpair}, then
\begin{equation}\label{integrability}\bigg(\sum_{\Gamma_j}\int_\Rb \Xf_j(\lambda_j,\Gamma_j)\,\mathrm{d}\lambda_j\bigg)^2\lesssim\int_\Rb\langle\lambda_j\rangle^{2b}\bigg(\sum_{\Gamma_j}\Xf(\lambda_j,\Gamma_j)\bigg)^{2}\,\mathrm{d}\lambda_j\lesssim 1.\end{equation} For fixed values of $(\lambda,\lambda_j)$ and $(\Gamma,\Gamma_j)$, if we replace $h^{(\Sc_j,0)}$ by $h^{(\Sc_j,0,\Gamma_j)}$ in (\ref{fourierexpand2}), restrict to the set $|k|^2-\sum_{\lf\in \Uc}\zeta_\lf^*|k_\lf|^2=\Gamma$, and denote the resulting tensor by $H^{(\Gamma,\Gamma_1,\cdots,\Gamma_q)}(\lambda,\lambda_1,\cdots,\lambda_q)$, then by similar arguments as in (\ref{cencelnew}) (but with $k_\lf$ replaced by $|k_\lf|^2$), we know that in this situation, we may further restrict $\widetilde{h}$ to the set $|k|^2-\sum_{j=1}^q\zeta_j|k_j|^2=\Gamma-\widetilde{\Gamma}$ in (\ref{fourierexpand2}), where $\widetilde{\Gamma}$ depends only on the fixed parameters $\Gamma_j$. Therefore we can apply Proposition \ref{overpair2}, also using (\ref{boundbase1}), to deduce that
\begin{multline}\label{atombd}\|H^{(\Gamma,\Gamma_1,\cdots,\Gamma_q)}(\lambda,\lambda_1,\cdots,\lambda_q)\|_{kk_B\to k_C}\\\lesssim\tau^{-\theta}M^{-\delta^3}\langle\lambda\rangle^{-1}\langle\lambda-\Gamma+\Xi\rangle^{-1}\prod_{j=1}^q\Xf_j(\lambda_j,\Gamma_j)\cdot\prod_{\lf\in B\cup C} N_\lf^{\beta_1}\prod_{\lf\in\Pc}N_\lf^{-8\varepsilon}\prod_{\lf\in E}N_\lf^{4\varepsilon}\prod_{\pf\in\Yc}N_\pf^{-\delta^3}\cdot\Xc_{0}\Xc_{1},
\end{multline} where $\Xc_0$ and $\Xc_1$ are as in (\ref{estimate02}), and $\Xi\in\Rb$ is a quantity depending only on the fixed parameters $\lambda_j$ and $\Gamma_j$. Note that here no meshing argument is required since Proposition \ref{overpair2} holds deterministically. Then, after summing in $\Gamma$, then taking the weighted $L^2$ norm in $\lambda$ within the set $|\lambda|\leq M^{\sqrt{\kappa}}$, then integrating in $\lambda_j$ and summing in $\Gamma_j$ using (\ref{integrability}), we deduce that
\[\int_\Rb\langle \lambda\rangle^{2b^+}\bigg(\sum_{\Gamma\in\Zb}\|\widehat{h^{(\Sc,0,\Gamma)}}(\lambda)\|_{kk_B\to k_C}\bigg)^2\,\mathrm{d}\lambda\lesssim\bigg(\tau^{-\theta}M^{-\delta^4}\prod_{\lf\in B\cup C}N_\lf^{\beta_1}\prod_{\lf\in\Pc}N_\lf^{-8\varepsilon}\prod_{\lf\in E}N_\lf^{4\varepsilon}\prod_{\pf\in\Yc}N_\pf^{-\delta^3}\cdot\Xc_0\Xc_1\bigg)^2,\] which implies (\ref{estimate01}) in view of Lemma \ref{localization}.
\end{proof}
\subsection{The $h^{(\Sc,1)}$ tensors} In this section we prove part (2) of $\mathtt{Local}(2M)$.
\begin{prop}\label{induct2} Assume $\mathtt{Local}(M)$ and part (1) of $\mathtt{Local}(2M)$ are true. Then $\tau^{-1}M$-certainly, part (2) of $\mathtt{Local}(2M)$ is true. More precisely, $h^{(\Sc,1)}$ satisfies (\ref{estimate11})--(\ref{estimate14}), for each regular plant $\Sc$ with $N(\Sc)=M$ and $|\Sc|\leq D$.
\end{prop}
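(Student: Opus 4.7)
The proof will proceed by induction on $|\Sc|$, tracking each of the three types of terms appearing on the right-hand sides of the defining identities (\ref{deftensor1})--(\ref{deftensor2}). The base step corresponds to the mini-plants $\Sc = \Sc_M^\zeta$, where $h^{(\Sc,1)}$ is given directly in terms of the kernel of $\Vs^{M,\zeta}$; the bounds (\ref{estimate11})--(\ref{estimate13}) then follow immediately from Proposition \ref{linextra}, specifically (\ref{extrabd2}), (\ref{opervmbd2}) and (\ref{opervmbd3}), after noting that the $\lambda_\Vc$-derivative bound (\ref{estimate14}) is vacuous since $\Vc = \varnothing$ in this case. For the inductive step, we fix a regular plant $\Sc$ with $N(\Sc) = M$ and $|\Sc| \leq D$, and assume the desired bounds hold for all regular plants of strictly smaller size (at any frequency $\leq M$), in addition to the $\mathtt{Local}(M)$ hypothesis for all smaller frequencies.

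Turning to the second term in (\ref{deftensor1}) and (\ref{deftensor2}) (the merging contribution $\chi_\tau(t) \cdot \Ic_\chi \Pi H_{kk_\Uc}$), I would first apply Lemma \ref{localization} to pay $\tau^{\theta}$ and absorb the $\chi_\tau$ cutoff, upgrading the weight $(1-b)$ to $(1-b)^+$ in the ambient $X^{1-b,-b_0}$ or $X^{\widetilde{b},-b_0}$ norm. Next, using Lemma \ref{duhamelform0}, the Duhamel operator $\Ic_\chi$ in the time-Fourier side acts as a kernel of the form $\frac{1}{\langle\lambda\rangle\langle\lambda-\lambda'\rangle}$, which effectively converts the $X^{1-b,-b_0}$ norm of $\Ic_\chi \Pi H$ into the $X^{-b,-b_0}$ norm of $H$ itself (after freezing $\lambda'$ and gaining $\langle\lambda\rangle^{-1}$). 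At that point the tensor $H$ is built by merging the inductively controlled tensors $h^{(\Sc_j, n_j)}$ via the base tensor (\ref{basetensor}); since $\sum_{(b)}$ only contributes terms where at least one $h^{(\Sc_j,n_j)}$ is of type 1 (with the convention that type 1 corresponds to the $n_j=1$ tensors, to which (\ref{estimate11})--(\ref{estimate13}) give the required bounds), one can directly invoke Proposition \ref{overpair} parts (i)--(iii). The selection of which $\Sc_j$ plays the role of $\Sc_1$ in Proposition \ref{overpair} must be made carefully: in case (i) ($C\neq\varnothing$) pick the plant containing a leaf of maximal frequency in $C$, and in cases (ii)--(iii) pick the plant $\Sc_{j_0}$ of type 1 with the largest $N_j$; this matches the hypotheses of Proposition \ref{overpair}. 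The base tensor (\ref{basetensor}) is a constant tensor obeying (\ref{mergetensor}) and the support conditions (\ref{mergesupp}), while the factor $\Upsilon$ in (\ref{deftensor1})--(\ref{deftensor2}) satisfies (\ref{prop5.4:Upsilon}) by the gauging and non-resonance bookkeeping from (\ref{workingeqn}); the $\Bc_{M^{[\delta]}}$-measurability of $h^{(1)}$ holds because $u^\dagger_{M^{[\delta]}}$ is already defined at the inductive step. The resulting bounds exactly match (\ref{estimate11})--(\ref{estimate13}), after absorbing the auxiliary factor $\tau^{-\theta}M^\theta(N_*)^{-2\varepsilon^4}\leq M^{-\delta^3}$ in the frequency-of-past gain $\prod_{\pf\in\Yc}N_\pf^{-\delta^3}$ via Proposition \ref{formulas} (4).

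For the third, most delicate term of (\ref{deftensor2}), namely the composition $\sum_{k'} (\Vs^{M,\zeta})_{kk'}(t,t') \cdot \chi_\tau(t') \Ic_\chi \Pi H_{k'k_\Uc}$, I would first estimate the inner $\Ic_\chi \Pi H$ factor exactly as in the preceding paragraph, producing a function of $(k',t')$ whose $X^b$-type norm obeys a bound in the shape of the right-hand side of (\ref{estimate11})--(\ref{estimate13}), but with $k$ replaced by $k'$. Then one applies the kernel bounds (\ref{extrabd2}) for $\Vs^{M,\zeta}$ to convert between the $k'$-norm at regularity $b$ and the $k$-norm at regularity $1-b$, using Plancherel in $\lambda$ and the duality between $X^b$ and $X^{-(1-b)}$ weights. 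The localization bound (\ref{estimate13}) requires an extra ingredient: one must keep track of the weight $(1+M^{-2\delta}|k - \sum\zeta_\lf k_\lf - \ell|)^\kappa$ across the composition; this is where Proposition \ref{overpair}(iii) provides the localized bound for $\Ic_\chi \Pi H$ with the $k'$-weight, and Proposition \ref{linextra} (\ref{opervmbd3}) provides a kernel that is itself concentrated near $k = \zeta k'$. By a standard weighted composition argument (cf. Lemma \ref{weightedbd}) these two localizations combine to give the desired concentration of $h^{(\Sc,1)}$ around the hyperplane $k = \sum_\lf \zeta_\lf k_\lf + \ell$ with the required power of $M^{2\delta}$.

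Finally, the auxiliary $\lambda_\Vc$-derivative bound (\ref{estimate14}) follows by straightforward differentiation of the defining formulas (\ref{deftensor1})--(\ref{deftensor2}): derivatives hit either the inductively controlled tensors $h^{(\Sc_j,n_j)}$ (producing, by the inductive hypothesis, a factor $\exp[(\log N_j)^5 + |\Sc_j|(\log N_j)^3]$) or the $\chi(M^{-\kappa^2}\lambda_j)$ localization (producing a loss of $M^{\kappa^2}$), or the constant base tensor (\ref{basetensor}) (producing a loss of $|t'| \leq 1$ per derivative). The key observation is that the size increment from merging, which adds $1$ to $|\Sc|$ via the new element in $\Yc$, gives enough room to absorb the $\exp[(\log N)^3]$-type loss; the $N^{\kappa^2}$ localization loss is absorbed by the $\exp[(\log N)^5]$ term. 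The main obstacle, as noted above, is the careful selection of the distinguished input tensor needed to invoke Proposition \ref{overpair} (so as to preserve independence of the Gaussians that are averaged over in the merging, which in turn justifies the $\tau^{-1}M$-certain large-deviation estimates) while simultaneously matching the extremality conditions in (i)--(iii); this is bookkeeping-intensive but structurally forced by the definition of the sum $\sum_{(b)}$.
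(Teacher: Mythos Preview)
Your outline captures the broad architecture (induction on $|\Sc|$, Lemma \ref{localization} to strip $\chi_\tau$, Duhamel kernel via Lemma \ref{duhamelform0}, then Proposition \ref{overpair}), but there are two genuine gaps where the argument as written would not go through.

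\textbf{First, the trimming step before merging is missing.} To invoke Proposition \ref{overpair} you need, for $2\leq j\leq r$, that $h^{(j)}$ is $\Bc_{(N_*)^{[\delta]}}$-measurable \emph{and} that $N_\lf\geq (N_*)^\delta$ for every $\lf\in\Lc_j$. The inductive hypothesis only gives $N_\lf\geq N_j^\delta$, and since $N_j$ can be much smaller than $N_*=\max(N_2,\ldots,N_q)$, there may be leaves with $N_j^\delta\leq N_\lf<(N_*)^\delta$. The paper's proof therefore first replaces each input by $h^{(j)}=\mathtt{Trim}(h^{(\Sc_j,n_j)},R_j)$ with $R_1=M^\delta$ and $R_j=(N_*)^\delta$ for $j\geq 2$, and uses Proposition \ref{trimbd} to show these trimmed tensors inherit the type 0 / type 1 bounds (this is where Proposition \ref{gausscont} enters and where part of the ``$\tau^{-1}M$-certainly'' originates). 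A related oversight: your claim that ``$\sum_{(b)}$ only contributes terms where at least one $h^{(\Sc_j,n_j)}$ is of type 1'' is not correct---$\sum_{(b)}$ also contains contributions with all $n_j=0$ but some $N_\lf<M^\delta$, or with $q>r$; after trimming, such $n_j=0$ inputs become type 1 in the sense of Proposition \ref{overpair} (case (a) vs.\ case (b) in the paper's Step 1).

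\textbf{Second, the case where the second maximum of the $N_j$ is $<M^\delta$ is not covered.} In that regime $N_*<M^\delta$, so the gain $(N_*)^{-2\varepsilon^4}$ from Proposition \ref{overpair} is negligible and your inequality $\tau^{-\theta}M^\theta(N_*)^{-2\varepsilon^4}\leq M^{-\delta^3}$ fails. The paper treats this case separately (its Step 2): the second line of (\ref{deftensor2}) is rewritten as an $\Rb$-linear operator $\Ls^\zeta$ (of the form (\ref{deflinoper}) with $\max N_j = L < M^\delta$) applied to $h'=\mathtt{Trim}(h^{(\Sc_1,n_1)},M^\delta)$, and then four subcases (a)--(d) are analyzed according to whether $C=\varnothing$, $E=\varnothing$, $n_1\in\{0,1\}$, etc., using the operator bounds from Proposition \ref{linextra} (including (\ref{extrabd0})) rather than the merging estimate. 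The third line of (\ref{deftensor2}) and the first line (the mini-plant term) are handled by the same mechanism with $\Vs^{M,\zeta}$ in place of $\Ls^\zeta$. Your plan collapses all of this into a single application of Proposition \ref{overpair}, which does not work here.

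A smaller point: the high/low modulation split ($\max(|\lambda|,|\lambda_1|,\ldots,|\lambda_r|)\gtrless M^{\sqrt{\kappa}}$) is also needed, both to make the meshing argument rigorous when applying Proposition \ref{overpair} at fixed $(\lambda,\lambda_j)$ and to justify replacing the exponent $\widetilde b$ by $b$ at the cost of $M^{C/\sqrt\kappa}$. Your derivative-bound argument for (\ref{estimate14}) is essentially right.
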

\begin{proof} Again we proceed by induction on $|\Sc|$, using the inductive definition (\ref{deftensor2}). We first focus on the \emph{main case}, namely the estimates (\ref{estimate11})--(\ref{estimate13}) for the second line of (\ref{deftensor2}), assuming the second maximum of $N_j\,(1\leq j\leq q)$ is $\geq M^\delta$; then we will treat the remaining estimates. By induction hypothesis, if $n_j=0$ then $h^{(\Sc_j,n_j)}$ satisfies (\ref{support01}) and (\ref{estimate01}) associated with $\Sc_j$; if $n_j=1$ it satisfies the bounds (\ref{estimate11})--(\ref{estimate14}) associated with $\Sc_j$. In various steps below, we will abuse notation and refer to some components of $h^{(\Sc,1)}$ in (\ref{deftensor2}) still as $h^{(\Sc,1)}$ for simplicity.

\emph{Step 1: the main case.} We start with the second line of (\ref{deftensor2}), assuming the second maximum of $N_j\,(1\leq j\leq q)$ is $\geq M^\delta$. Here we will prove (\ref{estimate11})--(\ref{estimate13}) with all norms replaced by the stronger ones $X_\Vc^{b,-b_0}[\cdots]$. By Lemma \ref{localization}, we can get rid of the $\chi_\tau(t)$ localization with a gain of $\tau^{8\kappa^{-1}}$, as long as we estimate the expression without $\chi_\tau$ in the $X_\Vc^{b^+,-b_0}[\cdots]$ norms. Repeating the proof of Proposition \ref{induct1}, we can reduce to
\begin{equation}\label{prop7.3:hs1_1}
\widehat{h^{(\Sc,1)}}(\lambda)=\int\mathrm{d}\lambda_1\cdots\mathrm{d}\lambda_r\cdot H(\lambda,\lambda_1,\cdots,\lambda_r)
\end{equation} in the same way as (\ref{fourierexpand}), but instead of (\ref{fourierexpand2}) we have
\begin{equation}\label{newmergedef}H(\lambda,\lambda_1,\cdots,\lambda_r)=\Upsilon\cdot\mathtt{Trim}(\mathtt{Merge}(\mathtt{Trim}(\widehat{h^{(\Sc_1,n_1)}}(\lambda_1),M^\delta),\cdots,\mathtt{Trim}(\widehat{h^{(\Sc_r,n_r)}}(\lambda_r),M^\delta),\widetilde{h},\Bs,\Os),M^\delta),
\end{equation} where $\widetilde{h}=[\widetilde{h}(\lambda,\lambda_1,\cdots,\lambda_r)]_{kk_1\cdots k_q}(\lambda_{r+1},\cdots,\lambda_q)$ satisfies the same bound (\ref{boundbase1}), as does any $\lambda_j$ derivative of $\widetilde{h}$ for $r+1\leq j\leq q$.

Similar to the proof of Proposition \ref{induct1}, we shall consider two cases, the high modulation case where $\max(|\lambda|,|\lambda_1|,\cdots,|\lambda_r|)\geq M^{\sqrt{\kappa}}$, and the low modulation case where $\max(|\lambda|,|\lambda_1|,\cdots,|\lambda_r|)\leq M^{\sqrt{\kappa}}$. In the high-modulation case, we may again fix the values of $k$, $k_j$ and all $k_\lf$ and $k_\ff$; then $\widehat{h^{(\Sc_j,n_j)}}(\lambda_j)$ can be viewed as a function of $\lambda_j$ and $\lambda_{\Vc_j}$ only, and $\widehat{h^{(\Sc,1)}}(\lambda)$ can be viewed as a function of $\lambda$ and $\lambda_\Vc$ only, where recall $\Vc$ is associated to the plant $\Sc$ in (\ref{newpsi}). The trimming steps follow easily from Cauchy-Schwartz as in the proof of Proposition \ref{trimbd}, so we may omit them and consider only the merging step. We may take the $X_{\Vc_j}^{-b_0}$ norm for $\widehat{h^{(\Sc_j,n_j)}}(\lambda_j)$ and denote the result by $\hf_j(\lambda_j)$; similarly we may take the $X_\Vc^{-b_0}$ norm of $\widehat{h^{(\Sc,1)}}(\lambda)$ and denote it by $\hf(\lambda)$. Then each $\hf_j$ is bounded in a weighted $L^2$ space embedded in $L_{\lambda_j}^1$, so we may fix the value of $\lambda_j$ as in the proof of Proposition \ref{induct1} whenever wanted. Now if $|\lambda|=\max(|\lambda|,|\lambda_1|,\cdots,|\lambda_r|)$, then the same argument as in \emph{Step 2} of the proof of Proposition \ref{induct1} works and implies the desired bound (with significant decay) for $\int_\Rb \langle \lambda\rangle^{2b^+}|\hf(\lambda)|^2\mathrm{d}\lambda$ uniformly in all choices of the fixed parameters; instead, if (say) $|\lambda_1|$ is the maximum, then we can fix $\lambda_j\,(j\geq 2)$, also using the definition of $X_\Vc^{-b_0}$ norm, to get that
\[|\hf(\lambda)|^2\lesssim\tau^{-\theta}\int\bigg(\frac{1}{\langle\lambda\rangle}\int_\Rb\frac{1}{\langle\lambda\pm\lambda_1\pm\Xi\rangle}|\hf_1(\lambda_1)|\,\mathrm{d}\lambda_1\bigg)^2\cdot\prod_{j=r+1}^q\langle \lambda_j\rangle^{-2b_0}\,\mathrm{d}\lambda_{r+1}\cdots\mathrm{d}\lambda_q,\] where $\Xi$ is a real number depending on $(\lambda_{r+1},\cdots,\lambda_q)$, and the choices of the fixed variables. We can then fix $(\lambda_{r+1},\cdots,\lambda_q)$, estimate the $\lambda_1$ integral using Cauchy-Schwartz and the $L^2$ norm of $\hf_1(\lambda_1)$, save the $\langle\lambda_1\rangle^b$ weight to gain an $M^{\sqrt{\kappa}}$ power, and bound $\int_\Rb \langle \lambda\rangle^{2b^+}|\hf^*(\lambda)|^2\mathrm{d}\lambda$ (with significant decay) uniformly in $(\lambda_{r+1},\cdots,\lambda_q)$ and all choices of the fixed variables, where $\hf^*(\lambda)$ is the above integral in $\lambda_1$. This implies (\ref{estimate11})--(\ref{estimate12}).

As for (\ref{estimate13}), just notice that $\widetilde{h}$ is supported in $k=\sum_{j=1}^q\zeta_jk_j$, which implies that
\begin{equation}\label{maxweight}1+\frac{1}{M^{2\delta}}\bigg|k-\sum_{\lf\in\Uc}\zeta_\lf^* k_\lf-\ell\bigg|\lesssim\max_{1\leq j\leq r}\bigg(1+\frac{1}{M^{2\delta}}\bigg|k_j-\sum_{\lf\in\Uc_j}\zeta_\lf k_\lf-\ell_j\bigg|\bigg),\end{equation} where $\ell_j=\sum_{\ff\in\Vc_j}\zeta_\ff k_\ff$ and $\ell=\sum_{\ff\in\Vc}\zeta_\ff^* k_\ff$ (note that trimming at frequency $M^\delta$ or lower will not affect this inequality). This allows to control the weight in (\ref{estimate13}) for $h^{(\Sc,1)}$ by the weights in (\ref{estimate13}) for $h^{(\Sc_j,n_j)}$, so the $M^{\sqrt{\kappa}}$ power gain above also implies (\ref{estimate13}).

From now on we can restrict to the low modulation case. We first look at (\ref{estimate11})--(\ref{estimate12}); the proof of (\ref{estimate13}) requires slightly different arguments and is left to the end of this step. Recall the bounds (\ref{estimate11})--(\ref{estimate14}) for the norms $X_{\Vc_j}^{\widetilde{b},-b_0}[\cdots]$ where $1\leq j\leq r$ and $\widetilde{b}\in\{b,1-b\}$; since $|\lambda_j|\leq M^{\sqrt{\kappa}}$, we may replace $\widetilde{b}$ by $b$ in all these bounds, with a price of $M^{C/\sqrt{\kappa}}$ which in the end will be negligible as $\kappa\gg_{C_\delta}1$. Suppose we want to estimate the $X_\Vc^{b^+,-b_0}[kk_B\to k_C]$ norm of $h^{(\Sc,1)}$. If $C\neq\varnothing$, we shall rearrange the tensors such that $\max\{N_\lf:\lf\in C\cap \Uc_1\}\sim\max\{N_\lf:\lf\in C\}$; if $C=\varnothing$, we shall select all $1\leq j\leq q$ such that either $j>r$, or $n_j=1$ or $\min_{\lf\in\Lc_j}N_\lf<M^\delta$ (such $j$ exists by definition of $\sum_{(b)}$ in (\ref{deftensor2})), and by rearranging the tensors we may assume that the maximum of $N_j$ for such $j$ corresponds to\footnote{Here we have assumed $1\leq j\leq r$. If $r+1\leq j\leq q$, then this $j$ would correspond to the input function $z_{N_j}$ in (\ref{sums1}), which gains a big power $N_j^{-D_1}$. Thus the case where the maximum $N_j$ occurs at this $j$ will be strictly easier than the cases we actually treat in the proof.} $j=1$.

Next, for $1\leq j\leq r$, define $h^{(j)}=\mathtt{Trim}(h^{(\Sc_j,n_j)},R_j)$ where $R_1=M^\delta$ and $R_j=(N_*)^\delta$ for $j\geq 2$, with $N_*=\max(N_2,\cdots,N_q)$. Then, by the rearrangements we made above, it can be verified that the relevant parts (measurability, etc.) of the assumptions of Proposition \ref{overpair} are satisfied. Moreover, since $\partial_{\lambda_j}\widehat{h^{(\Sc_j,n_j)}}$ can be controlled using the fact that $h^{(\Sc_j,n_j)}$ is compactly supported in $t$, we can apply a meshing argument in $\lambda_j$ in the same way as in the proof of Proposition \ref{trimbd}. By combining the bounds (\ref{estimate11})--(\ref{estimate14}) for $h^{(\Sc_j,n_j)}$, Proposition \ref{trimbd}, and the above meshing argument in $\lambda_j$, we obtain that the following holds $\tau^{-1}M$-certainly:
\begin{enumerate}[(a)]
\item If $j$ is such that either $n_j=1$, or $n_j=0$ and $\min_{\lf\in\Lc_j}N_\lf<R_j$, define $\Xf_j=\Xf_j(\lambda_j)$ to be the smallest value such that the type 1 bounds (\ref{boundmerge2})--(\ref{boundmerge6}) in Proposition \ref{overpair} hold for $\widehat{h^{(j)}}(\lambda_j)$, then we have
\begin{equation}\label{integrability1}\bigg(\int_\Rb\Xf_j(\lambda_j)\,\mathrm{d}\lambda_j\bigg)^2\lesssim\int_\Rb\langle \lambda_j\rangle^{2b}\Xf_j(\lambda_j)^2\,\mathrm{d}\lambda_j\lesssim\big[\tau^{-\theta}M^{C/\sqrt{\kappa}}(1+R_j^CN_j^{-3\varepsilon})\big]^2.
\end{equation}
\item If $n_j=0$ and $\min_{\lf\in\Lc_j}N_\lf\geq R_j$ (in particular $h^{(j)}=h^{(\Sc_j,0)}$), we may decompose $h^{(j)}$ into $h^{(j,\Gamma_j)}=h^{(\Sc_j,0,\Gamma_j)}$ as in part (1) of Proposition \ref{mainprop}; define $\Xf_j=\Xf_j(\lambda_j,\Gamma_j)$ to be the smallest value such that the type 0 bounds (\ref{supportmerge})--(\ref{boundmerge1}) in Proposition \ref{overpair} hold for $\widehat{h^{(j,\Gamma_j)}}(\lambda_j)$, then we have (\ref{integrability}).
\end{enumerate} Moreover, since $\widetilde{h}$ satisfies (\ref{boundbase1}), we can apply a further meshing argument to replace it by some function (which we still denote by $\widetilde{h}$ for simplicity) that is supported in the big box $|\lambda|,|\lambda_j|\leq M^{\sqrt{\kappa}}$ and is constant on each small box of size (say) $\exp(-(\log M)^6)$. Let $\Xi:=\lambda-\Omega-\zeta_1\lambda_1-\cdots-\zeta_q\lambda_q$ (see (\ref{boundbase1})), we may also decompose $\widetilde{h}$ into $\widetilde{h}_{\Xi^*}$, which are restrictions of $\widetilde{h}$ to the set $\lfloor\Xi\rfloor=\Xi^*$ for $\Xi^*\in\Zb,|\Xi^*|\lesssim M^{2\sqrt{\kappa}}$.  Then $\widetilde{h}_{\Xi^*}$ satisfies the assumptions (\ref{mergesupp}) and (\ref{mergetensor}) in Proposition \ref{overpair}, with $\widetilde{\Gamma}$ in (\ref{mergesupp}) depending on $(\lambda,\lambda_1,\cdots,\lambda_r,\Xi^*)$, and the right hand side of (\ref{mergetensor}) multiplied by $\langle\lambda\rangle^{-1}\langle\Xi^*\rangle^{-1}$.

Let the tensor $H(\lambda,\lambda_1,\cdots,\lambda_r,\Gamma_j,\Xi^*)$ be defined as in (\ref{newmergedef}) but with $\widetilde{h}$ replaced by $\widetilde{h}_{\Xi^*}$, and $\mathtt{Trim}(\widehat{h^{(\Sc_j,n_j)}}(\lambda_j),M^\delta)$ replaced by $\widehat{h^{(j)}}(\lambda_j)$ or $\widehat{h^{(j,\Gamma_j)}}(\lambda_j)$ in case (a) or (b) above, and $\Os$ replaced by some $\Os^+$ containing $\Os$. By Proposition \ref{formulas} (3), $\widehat{h^{(\Sc,1)}}(\lambda)$ can be written as a linear combination of
\[\int\mathrm{d}\lambda_1\cdots\mathrm{d}\lambda_r\cdot\sum_{\Xi^*}\sum_{(\Gamma_j)}H(\lambda,\lambda_1,\cdots,\lambda_r,\Gamma_j,\Xi^*),\] where $\sum_{(\Gamma_j)}$ are present only for those $j$ in case (b) above. We now apply Proposition \ref{overpair} to conclude that the tensor $H(\lambda,\lambda_1,\cdots,\lambda_r,\Gamma_j,\Xi^*)$ satisfies (\ref{boundmerge7})--(\ref{boundmerge8}) with 
\begin{equation}\label{defnewy}\Yf=\sqrt{\Upsilon}\frac{1}{\langle\lambda\rangle\langle\Xi_*\rangle}\prod_{j=1}^r\Xf_j\cdot\tau^{-\theta}M^{\theta}(N_*)^{-\varepsilon^4},\end{equation} where $\Upsilon$ is as in Proposition \ref{overpair}, and $\Xf_j=\Xf_j(\lambda_j)$ or $\Xf_j(\lambda_j,\Gamma_j)$ in case (a) or (b) above; the meshing argument guarantees that the above holds for all values of $(\lambda,\lambda_j)$ after removing a single exceptional set of probability $\leq C_\theta e^{-(\tau^{-1}M)^\theta}$. Therefore, by taking the weighted $L^2$ norm in $\lambda$ within the set $|\lambda|\leq M^{\sqrt{\kappa}}$, then summing in $\Xi^*$ and $\Gamma_j$ and integrating in $\lambda_j$, we will obtain the bounds (\ref{estimate11})--(\ref{estimate12}) for this component of $h^{(\Sc,1)}$ under consideration, once we show that
\begin{equation}\label{auxterm}\sqrt{\Upsilon}\cdot\tau^{-\theta}M^{C/\sqrt{\kappa}}\prod_{j=1}^r(1+R_j^C N_j^{-3\varepsilon})\leq (N_*)^{\varepsilon^5}.\end{equation} But this is true since $R_j=(N_*)^{\delta}$ for $j\geq 2$, so any power $R_j^C$ for $j\geq 2$ will be negligible; moreover $R_1=M^\delta$, so either $N_*\gtrsim M$ and $R_1^C$ is also negligible, or $N_1\sim M$ and the $R_1^C$ loss is covered by the $N_1^{-3\varepsilon}$ gain, or $\max(N_1,N_*)\ll M$ and the $R_1^C$ loss is covered by the $\sqrt{\Upsilon}$ gain. Finally since $N_*\geq M^\delta$, the gain $(N_*)^{\varepsilon^4}$ in (\ref{defnewy}) will overwhelm the loss $M^{C/\sqrt{\kappa}}$ by our choice of $\kappa$. This proves (\ref{estimate11})--(\ref{estimate12}) in the main case.

Now we turn to the proof of (\ref{estimate13}). Starting with $H(\lambda,\lambda_1,\cdots,\lambda_r)$, we shall further decompose it by attaching smooth truncations supported in sets
\begin{equation}\label{maxweight2}1+\frac{1}{M^{2\delta}}\bigg|k-\sum_{\lf\in\Uc}\zeta_\lf^* k_\lf-\ell\bigg|\sim K,\quad 1+\frac{1}{M^{2\delta}}\bigg|k_j-\sum_{\lf\in\Uc_j}\zeta_\lf k_\lf-\ell_j\bigg|\sim K_j\,(1\leq j\leq r),\end{equation}
where $1\leq K,K_j\leq M$ are dyadic numbers, $\ell$ and $\ell_j$ as in (\ref{maxweight}). The number of terms in this decomposition is $\leq (\log M)^C$, so we only need to consider a single term. By (\ref{maxweight}) we know $K\lesssim\max(K_1,\cdots,K_r)$. By rearrangement we may assume $K\lesssim K_1$; in particular we may assume $n_1=1$ (otherwise $K\lesssim K_1\sim 1$, so (\ref{estimate13}) follows directly from (\ref{estimate12})). At this point we can repeat the arguments in the above proof of (\ref{estimate11})--(\ref{estimate12}) (namely trimming $h^{(\Sc_j,n_j)}$ at frequency $R_j$ with $R_1=M^\delta$ and $R_j=(N_*)^\delta\,(j\geq 2)$, decomposing $H(\lambda,\lambda_1,\cdots,\lambda_r)$ into $H(\lambda,\lambda_1,\cdots,\lambda_r,\Gamma_j,\Xi^*)$, defining $\Xf_j$ as above, etc.) and then apply Proposition \ref{overpair} to conclude that $H(\lambda,\lambda_1,\cdots,\lambda_r,\Gamma_j,\Xi^*)$ satisfies (\ref{boundmerge9}) with $\Yf$ defined as in (\ref{defnewy}). Here the assumptions of Proposition \ref{overpair} are satisfied, since $K\lesssim K_1$, and multiplying by any of the smooth truncations we introduced does not increase any of the $X_{\Vc_j}^{\widetilde{b},-b_0}[k_jk_{B_j}\to k_{C_j}]$ norms due to Lemma \ref{adjustment}. After obtaining (\ref{boundmerge9}), we can again repeat the arguments in the above proof (namely taking the weighted $L^2$ norm in $\lambda$ within the set $|\lambda|\leq M^{\sqrt{\kappa}}$, then summing in $\Xi^*$ and $\Gamma_j$ and integrating in $\lambda_j$) and deduce (\ref{estimate13}) for this component of $h^{(\Sc,1)}$ under consideration, using (\ref{auxterm}). This proves (\ref{estimate13}) and completes the main case.

\emph{Step 2: adding the $\Rb$-linear operators.} Now we prove the estimates (\ref{estimate11})--(\ref{estimate13}) for the second line of (\ref{deftensor2}) assuming the second maximum of $N_j\,(1\leq j\leq q)$ is $<M^\delta$ (and in particular the maximum of $N_j$ is $\leq N/2$), and the same estimates for the third line of (\ref{deftensor2}). First look at the second line of (\ref{deftensor2}); we may assume the maximum of $N_j$ is $N_1\geq M^\delta$ (and $L:=\max(N_2,\cdots,N_q)<M^\delta$), since the cases when the maximum of $N_j$ occurs at $r+1\leq j\leq q$, or when $N_1<M^\delta$ also, are much easier. With such assumptions we must have $\Os=\varnothing$, and the current term can be written as an $\Rb$-linear operator\footnote{When $\Sc$ is fixed, the sum in $(N_j,\Sc_j)$ etc. for $j\geq 2$ involve at most $(\log L)^{\kappa}$ terms, which is negligible in view of the $L^{\varepsilon\delta}$ gain we will obtain below. The sum in $(N_1,\Sc_1)$ involves at most $\kappa$ terms if $N_1\sim M$ and $\Sc_1=\Sc'$, and at most $(\log M)^{\kappa}$ terms otherwise; either way this is negligible in view of the gain from $\Upsilon$, and the gain of at least $M^{\delta^5}$ coming from trimming assuming $\Sc_1\neq\Sc'$, which is evident from the proof of Proposition \ref{trimbd}.} $\Ls^\zeta$ (for some $\zeta\in\{\pm\}$, see (\ref{deflinoper}) for definition, where $N$ should be replaced by $M$) applied to the tensor $h':=\mathtt{Trim}(h^{(\Sc_1,n_1)},M^\delta)$. More precisely, $\Sc':=\mathtt{Trim}(\Sc_1,M^\delta)$ has the same sets of leaves, pairings, blossoms and pasts as $\Sc$ (in particular the sets $\Lc,\Uc,\Vc$ etc. are common for both tensors), the only differences being that $N(\Sc')=N_1$ while $N(\Sc)=M$, and the sign $\zeta_\nf^*$ of $\nf\in\Lc\cup\Vc$ in $\Sc$ equals the sign $\zeta_\nf$ of $\nf$ in $\Sc'$ multiplied by $\zeta$; for the tensors we have
\begin{equation}\label{tensoroperator}(\widehat{h^{(\Sc,1)}})_{kk_\Uc}(\lambda,k_\Vc,\lambda_\Vc)=\Upsilon\cdot\sum_{k'}\int_\Rb\mathrm{d}\lambda'\cdot(\widehat{\Ls^\zeta})_{kk'}(\lambda,-\zeta\lambda')(\widehat{h'})_{k'k_\Uc}^\zeta(\lambda',k_\Vc,\lambda_\Vc).
\end{equation} 
To estimate the $X^{\widetilde{b},-b_0}[kk_B\to k_C]$ norms of $h^{(\Sc,1)}$ (including the weighted ones in (\ref{estimate13})), where $(B,C)$ is a subpartition of $\Uc$ and we denote $E:=\Uc\backslash(B\cup C)$, we will consider four cases.

(a) Assume $C\neq\varnothing$, then for any fixed $\lambda$, $(k_\Vc,\lambda_\Vc)$ and $k_E$, by (\ref{tensoroperator}) and a variant of Proposition \ref{bilineartensor} we have
\[\|(\widehat{h^{(\Sc,1)}})_{kk_\Uc}\|_{kk_B\to k_C}\leq\Upsilon\cdot\|\langle\lambda'\rangle^{1-b}(\widehat{h'})_{k'k_\Uc}(\lambda')\|_{\ell_{k'k_B}^2L_{\lambda'}^2\to \ell_{k_C}^2}\cdot\|\langle\lambda'\rangle^{-(1-b)}(\widehat{\Ls^\zeta})_{kk'}(\lambda,-\zeta\lambda')\|_{\ell_{k'}^2L_{\lambda'}^2\to\ell_k^2}.\] The third factor above is a function of $\lambda$ only, and we shall temporarily denote it by $G(\lambda)$; the second factor is bounded by\[\big\|\langle \lambda'\rangle^{1-b}\cdot\|(\widehat{h'})_{k'k_\Uc}(\lambda')\|_{k'k_B\to k_C}\big\|_{L_{\lambda'}^2},\] so upon taking supremum in $k_E$, and then taking the weighted $L^2$ norm in $\lambda$ and the weighted $\ell^2L^2$ norm in $(k_\Vc,\lambda_\Vc)$, we obtain that
\[\|h^{(\Sc,1)}\|_{X_\Vc^{1-b,-b_0}[kk_B\to k_C]}\leq \Upsilon\cdot\|h'\|_{X_\Vc^{1-b,-b_0}[k'k_B\to k_C]}\cdot\|\langle\lambda\rangle^{1-b}G(\lambda)\|_{L_\lambda^2}.\] By the proof of Proposition \ref{linextra}, the weighted norm of $G$ above is bounded by $\tau^{(6\kappa)^{-1}}L^{-3\varepsilon\delta}$, hence
\[\|h^{(\Sc,1)}\|_{X_\Vc^{1-b,-b_0}[kk_B\to k_C]}\lesssim\tau^{(6\kappa)^{-1}}L^{-3\varepsilon\delta}\cdot\Upsilon\cdot\|h'\|_{X_\Vc^{1-b,-b_0}[k'k_B\to k_C]}.\] By using the induction hypothesis for $h^{(\Sc_1,n_1)}$, Proposition \ref{trimbd}, and controlling the potential loss factor $1+M^{C\delta}N_1^{-3\varepsilon}$ occurring in Proposition \ref{trimbd} by the $\Upsilon$ factor, we deduce (\ref{estimate11}) for $h^{(\Sc,1)}$. Note that, if $\Sc_1=\Sc'$ then the above estimate has no loss. If $\Sc_1\neq\Sc'$, then applying Proposition \ref{trimbd} loses a factor $M^\theta$, but by examining the proof of Proposition \ref{trimbd} we see that we can also gain a small power of $N_1$ (which is $\leq N_1^{-\delta^4}$). This, in view of the $\Upsilon$ factor, is enough to cover this loss together with the potential log loss coming from summing over all plants; moreover the continuous variable $\lambda'$ can be handled by restricting to $|\lambda'|\leq M^{\kappa^2}$ and performing another meshing argument exploiting the above power gain. The same comment also applies in the other cases below.

(b) Assume $C=\varnothing$ and $E\neq\varnothing$, then the same argument as in case (a) is enough to control the norm $\|h^{(\Sc,1)}\|_{X_\Vc^{1-b,-b_0}[kk_B]}$; note that to prove (\ref{estimate12}) for $h^{(\Sc,1)}$ we need to gain a power $M^{-\varepsilon}$, which is provided by the corresponding power $N_1^{-\varepsilon}$ (or the better powers and the $\Xc_{1,1}$ factor in (\ref{estimate01}) corresponding to $\Sc_1$) from the induction hypothesis if $N_1\sim M$, and by the $\Upsilon$ factor if $N_1\ll M$. The weighted norm in (\ref{estimate13}) is bounded in the same way using the induction hypothesis, Proposition \ref{trimbd}, (the proof of) Proposition \ref{linextra}, and (a variant of) Lemma \ref{weightedbd}, using the fact that $(\Ls^\zeta)_{kk'}$ is supported in $|k-\zeta k'|\lesssim M^\delta$.

(c) Assume $C=E=\varnothing$, and either $n_1=1$, or $\min_{\lf\in\Lc_1}N_\lf<M^\delta$, then the norm in question, namely $X_\Vc^{b,-b_0}[kk_\Uc]$, is a weighted $\ell^2L^2$ norm in $(k_\Vc,\lambda_\Vc)$, while for fixed $(k_\Vc,\lambda_\Vc)$ it is an $\ell^2L^2$ norm in $(k,k_\Uc,\lambda)$ weighted by $\langle \lambda\rangle^b$. Therefore, by (\ref{tensoroperator}) we have
\[\|h^{(\Sc,1)}\|_{X_\Vc^{b,-b_0}[kk_\Uc]}\leq\Upsilon\cdot\|h'\|_{X_\Vc^{1-b,-b_0}[k'k_\Uc]}\cdot\|\Ls^\zeta\|_{X^{1-b}\to X^b}.\] Using the induction hypothesis, Proposition \ref{trimbd} and the bound $\|\Ls^\zeta\|_{X^{1-b}\to X^b}\leq\tau^{(6\kappa)^{-1}}L^{-3\varepsilon\delta}$, which also follows from the proof of Proposition \ref{linextra}, we can prove the desired estimates in the same way as in parts (a) and (b).

(d) Assume $C=E=\varnothing$, $n_1=0$, and that $\min_{\lf\in\Lc_1}N_\lf\geq M^\delta$ (in particular $\Vc=\varnothing$ and $\Lc=\Lc_1$). In this case we will use a different estimate. Still starting with (\ref{tensoroperator}), we have
\[\|h^{(\Sc,1)}\|_{X^b[kk_\Uc]}\leq\Upsilon\cdot\|(\Ls^{\zeta})_{kk'}\|_{X^{b,-(1-b)}[kk']}\cdot\|h^{(\Sc_1,0)}\|_{X^{1-b}[k'\to k_\Uc]}.\] Now let $\widetilde{N}=\max_{\lf\in\Uc}N_\lf$, then in the above formula we may assume $\langle k'\rangle\lesssim \widetilde{N}$. By the proof of Proposition \ref{linextra} we have
\[\|\mathbf{1}_{\langle k'\rangle\lesssim\widetilde{N}}\cdot(\Ls^{\zeta})_{kk'}\|_{X^{b,-(1-b)}[kk']}\lesssim\tau^{(6\kappa)^{-1}}(\widetilde{N})^{\alpha_0}M^{C\delta};\] combining with the induction hypothesis (in particular summing over the $\Gamma$ variable in (\ref{estimate01})) we obtain that
\begin{equation}\label{auxbd01}\|h^{(\Sc,1)}\|_{X^b[kk_\Uc]}\lesssim\Upsilon\cdot\tau^{(6\kappa)^{-1}}(\widetilde{N})^{\alpha_0}M^{C\delta}\cdot\prod_{\lf\in\Uc}N_\lf^{\beta_1}\prod_{\lf\in\Pc} N_\lf^{-8\varepsilon}\prod_{\pf\in\Yc}N_\pf^{-\delta^3}\cdot(\widetilde{N})^{-\beta_1}\cdot\Xc_{1,1},\end{equation} where $\Xc_{1,1}$ is the one in (\ref{estimate02}) corresponding to $\Sc_1$. To prove (\ref{estimate12}) we just need to gain an extra $M^{-2\varepsilon}$ power, which is provided either by the difference in the powers of $N_\lf$ for $\lf\in\Lc$ between (\ref{estimate12}) and (\ref{auxbd01}), or by the $\Xc_{1,1}$ and $\Upsilon$ factors. The proof of (\ref{estimate13}) is the same, as the weight in (\ref{estimate13}) is in fact bounded by $1$ using the induction hypothesis and the support condition for $(\Ls^\zeta)_{kk'}$.

Next we look at the third line of (\ref{deftensor2}). For simplicity, we will consider the case $\zeta=+$; the case $\zeta=-$ is analogous, with $\Sc$ replaced by $\overline{\Sc}$ in a few places. This term can be written as
\begin{equation}\label{tensoroperator2}(\widehat{h^{(\Sc,1)}})_{kk_\Uc}(\lambda,k_\Vc,\lambda_\Vc)=\sum_{k'}\int_\Rb\mathrm{d}\lambda'\cdot(\widehat{\Vs^{M,+}})_{kk'}(\lambda,-\lambda')(\widehat{\Hc})_{k'k_\Uc}(\lambda',k_\Vc,\lambda_\Vc).
\end{equation} 
The term (\ref{tensoroperator2}) is similar to (\ref{tensoroperator}), except that $\Ls^{+}$ is replaced by $\Vs^{M,+}$, and $h'$ is replaced by $\mathcal{H}$, which is either the second term on the right hand side of (\ref{deftensor1}) (if we take $\sum_{(a)}$ instead of $\sum_{(c)}$ in the third line of (\ref{deftensor2})), or the second line of (\ref{deftensor2}) (if we take $\sum_{(b)}$ instead of $\sum_{(c)}$). By what we proved in Proposition \ref{induct1} and the above arguments, we know that $\Hc$ is an $\Sc$-tensor, which either satisfies (\ref{support01}) and (\ref{estimate01}), or satisfies the bounds (\ref{estimate11})--(\ref{estimate13}). Moreover for the $\Rb$-linear operator $\Vs^{M,+}$ by Proposition \ref{linextra} we have
\begin{equation}\label{vmbds}
\begin{aligned}\|\Vs^{M,+}\|_{X^{1-b}\to X^b}&\leq\tau^{(7\kappa)^{-1}},\\
\big\|\langle\lambda\rangle^{1-b}\|\langle\lambda'\rangle^{-(1-b)}(\widehat{\Vs^{M,+}})_{kk'}(\lambda,\lambda')\|_{\ell_{k'}^2L_{\lambda'}^2\to\ell_k^2}\big\|_{L_\lambda^2}&\leq \tau^{(7\kappa)^{-1}},\\
\|\mathbf{1}_{\langle k'\rangle\leq \widetilde{N}}\Vs^{M,+}\|_{X^{b,-(1-b)}[kk']}&\leq \tau^{(8\kappa)^{-1}}(\widetilde{N})^{\alpha_0}\cdot M^{C\delta},\\
\|(1+M^{-\delta}|k- k'|)^{\kappa^2}\Vs^{M,+}\|_{X^{b,-(1-b)}[k k']}&\leq \tau^{(8\kappa)^{-1}}M^{\beta_1-\varepsilon},
\end{aligned}
\end{equation} which are similar to the bounds for $\Ls^+$ we have used above. The estimate for the third line of (\ref{deftensor2}) can then be deduced by considering cases (a)--(d), in the same way as above. We only mention a few important points: (i) going from $\Hc$ to $h^{(\Sc,1)}$ does not involve trimming, so in these proofs no meshing argument is needed, hence they do not require the derivative bound (\ref{estimate14}) for $\Hc$ (which has not been proved yet); (ii) the $\Upsilon$ factor is not needed, because $\Hc$ is an $\Sc$-tensor, therefore the $N_1$ in the above proof will be replaced by $M$; (iii) the fact that $\Ls^+$ is supported in $|k- k'|\leq M^\delta$ is replaced by the last bound in (\ref{vmbds}), which leads to the restriction $|k- k'|\leq M^{2\delta}$, since otherwise we gain a sufficiently high power of $M$. This allows us to apply Lemma \ref{weightedbd} in estimating the weighted norms in (\ref{estimate13}) in cases (b) and (c), and also bound the weight in (\ref{estimate13}) in case (d).

\emph{Step 3: remaining estimates.} Next we shall prove (\ref{estimate11})--(\ref{estimate13}) for the first line of (\ref{deftensor2}). In fact, this term can be treated in the same way as the third line of (\ref{deftensor2}), see \emph{Step 2} above, where the only difference is that $\Hc$ is replaced by the $\Sc_M^+$-tensor occurring as the first term of the right hand side of (\ref{deftensor1}). Since this tensor also satisfies (\ref{support01}) and (\ref{estimate01}) by Proposition \ref{induct1}, the same arguments as in \emph{Step 2} above suffice to estimate this term.

Finally we prove the derivative bound (\ref{estimate14}) for all terms in (\ref{deftensor2}). This is a very loose bound, so it can be proved by very loose estimates. Just notice that:
\begin{itemize}
\item The $\Rb$-linear operator $\Vs^{M,\zeta}$ commutes with $\partial_{\lambda_\ff}$ and increases the norm in consideration by at most a constant multiple;
\item If $h'=\mathtt{Trim}(h,R)$, where $h$ is an $\Sc$-tensor and $h'$ is an $\Sc'$-tensor, then $\partial_{\lambda_\ff}h'=\mathtt{Trim}(\partial_{\lambda_\ff}h,R)$ for any $\ff\in\Vc'$;
\item If $H=\mathtt{Merge}(h^{(1)},\cdots,h^{(r)},h,\Bs,\Os)$, where $h^{(j)}$ is an $\Sc_j$-tensor and $H$ is an $\Sc$-tensor, then $\partial_{\lambda_\ff}H=\mathtt{Merge}(h^{(1)},\cdots,\partial_{\lambda_\ff}h^{(j)},\cdots,h^{(r)},h,\Bs,\Os)$ for any $\ff\in\Vc_j$; in the same way we also have $\partial_{\lambda_j}H=\mathtt{Merge}(h^{(1)},\cdots,h^{(r)},\partial_{\lambda_j}h,\Bs,\Os)$ for any $r+1\leq j\leq q$.
\end{itemize} Therefore, in order to estimate $\partial_{\lambda_\ff}h^{(\Sc,1)}$ for $\ff\in\Vc$, we only need to consider the same $\mathtt{Trim}$-$\mathtt{Merge}$ combination, where \emph{one} of the inputs $h^{(\Sc_j,n_j)}$ is replaced by $\partial_{\Vc_j}h^{(\Sc_j,n_j)}$. This input is then bounded by the induction hypothesis, noting that either $N_j\leq M/2$ or $|\Sc_j|<|\Sc|$, and the other inputs are bounded trivially (say using part (3) of $\mathtt{Local}(M)$) by a power $M^{C\cdot\kappa}$. Therefore we get, without removing any exceptional set, that
\[\|\partial_{\lambda_\Vc}h_{kk_\Uc}^{(\Sc,1)}(t,k_\Vc,\lambda_\Vc)\|_{X_\Vc^{b,-b_0}[kk_\Uc]}\lesssim M^{C\cdot\kappa}+M^{C\cdot\kappa}\exp[(\log \widetilde{N})^5+|\widetilde{\Sc}|(\log \widetilde{N})^3],\] where $\widetilde{N}\leq M$, $|\widetilde{\Sc}|\leq D$, and either $\widetilde{N}\leq M/2$ or $|\widetilde{\Sc}|<|\Sc|$. Therefore (\ref{estimate14}) follows from the bound \[\exp[(\log M)^5+|\Sc|(\log M)^3]\geq \exp((\log M)^2)\cdot\exp[(\log \widetilde{N})^5+|\widetilde{\Sc}|(\log \widetilde{N})^3]\] under these assumptions. This completes the proof.
\end{proof}
\subsection{The remaining parts} In this section we prove parts (3)--(5) of $\mathtt{Local}(2M)$.
\begin{prop}\label{induct3} Assume $\mathtt{Local}(M)$ and parts (1) and (2) of $\mathtt{Local}(2M)$ are true. Then $\tau^{-1}M$-certainly, parts (3)--(5) of $\mathtt{Local}(2M)$ are true. More precisely, (\ref{psibd}) is true for each $n\in\{0,1\}$ and regular plant $\Sc$ with $N(\Sc)=M$ and $|\Sc|\leq D$, and the mapping that defines $z_M$ (see the right hand side of (\ref{eqnzm})) is a contraction mapping from the ball $\{z:\|z\|_{X^{b_0}}\leq M^{-D_1}\}$ to itself, and (\ref{linbd}) is true for the kernel $\Ls^{\zeta}$ defined by (\ref{deflinoper}), if $\max(N_2,\cdots,N_q)=M$.
\end{prop}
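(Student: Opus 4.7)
The plan is to prove parts (3), (4), (5) in turn, each time leveraging the tensor norm bounds just established in Propositions~\ref{induct1} and~\ref{induct2} together with the algebraic/analytic machinery of Sections~\ref{prelimtensor} and~\ref{mainproof}, as well as the linear operator bounds in Proposition~\ref{linextra}.

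\medskip

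\noindent\textbf{Part (3).} For each regular plant $\Sc$ with $N(\Sc)=M$, $|\Sc|\leq D$, and $n\in\{0,1\}$, I would estimate $\Psi^{(\Sc,n)}_k = \Psi_k[\Sc,h^{(\Sc,n)}]$ defined by (\ref{defpsi}) by first taking a time Fourier transform, so $\widehat{\Psi^{(\Sc,n)}}(\lambda)$ is a sum over $k_\Uc$ and integral over $(k_\Vc,\lambda_\Vc)$ of $\widehat{h^{(\Sc,n)}}(\lambda,k_\Vc,\lambda_\Vc)$ multiplied by the Gaussians $(f_{N_\lf})^{\zeta_\lf}$ and the low/high-modulation components $\widehat{z_{N_\ff}}^{\zeta_\ff}(\lambda_\ff)$. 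For $n=1$, after Cauchy--Schwarz in $(k_\Vc,\lambda_\Vc)$ against $\prod_{\ff\in\Vc}\|z_{N_\ff}\|_{X^{b_0}}$ (each factor bounded by $N_\ff^{-D_1}$ from part (4) of $\mathtt{Local}(M)$), the problem reduces to controlling a random sum of the form $\sum_{k_\Uc}h\cdot\prod(f_{N_\lf})^{\zeta_\lf}$ with $h$ of unit operator norm up to the bound (\ref{estimate11})--(\ref{estimate12}). By Proposition~\ref{gausscont}, $\tau^{-1}M$-certainly this is bounded by the maximum of the $\|h\|_{kk_B\to k_C}$ operator norms, which by (\ref{estimate11})--(\ref{estimate12}) yields a weighted $\ell^2$-bound in $k$ with weight $\prod_{\lf\in\Uc}N_\lf^{\beta-\alpha}\cdot\prod_{\pf\in\Yc}N_\pf^{-\delta^3}\cdot M^{-\varepsilon}$, which after turning $\beta-\alpha$ into $s-\theta$ via the choice of $(\alpha,\beta)$ gives precisely the $X^{s',b_0}$ bound (\ref{psibd}) with the decay $\prod_{\nf\in\Lc\cup\Vc\cup\Yc}N_\nf^{-\delta^3}$. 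For $n=0$ one proceeds analogously using (\ref{estimate01}) instead, with no blossoms.

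\medskip

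\noindent\textbf{Part (4).} I would show the nonlinear mapping on the right side of (\ref{eqnzm}) sends $\{z:\|z\|_{X^{b_0}}\leq M^{-D_1}\}$ to itself and is contractive. By Proposition~\ref{linextra}, $\|\Rs^{M,\zeta}\|_{X^{b_0}\to X^{b_0}}\lesssim 1$, so it suffices to bound the inner $\chi_\tau\,\Ic_\chi\Pi\Mc_q(\Psi_{k_1}^{(\Sc_1,n_1)},\cdots,z_{N_q}^*)$ in $X^{b_0}$ by $\tfrac{1}{2}M^{-D_1}$ (and similarly the difference for contraction). Using Lemma~\ref{localization} and (\ref{elementary}) to peel off the time-localized Duhamel, the task reduces to controlling the corresponding spacetime $L^2_tL^2_x$ norm. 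Expanding each $\Psi_{k_j}^{(\Sc_j,n_j)}$ as in part (3) and each $z_{N_j}^*$ by its known $X^{b_0}$-bound, I would split $\sum_{(d)}$ into three cases according to the definition: (i) some $N_j=M$ with $z_{N_j}^*=z_{N_j}$, where the factor $M^{-D_1}$ from the induction hypothesis on $\|z\|_{X^{b_0}}$ wins easily; (ii) some $N_j\leq M/2$ with $z_{N_j}^*=z_{N_j}^{\mathrm{hi}}$, where the high-modulation cutoff against the $X^{b_0}$ norm of $z_{N_j}$ gives a gain $M^{-\kappa^2(b^+-b_0)}$; and (iii) all inputs low-frequency and low-modulation with the merged plant $\Sc$ satisfying $|\Sc|>D$, where Proposition~\ref{overpair} (combined with the selection algorithm in Proposition~\ref{algorithm1}) applied to the corresponding merged tensor provides at least $D$ factors of $N_\pf^{-\delta^3}$, hence a net gain $\lesssim M^{-D\delta^4}$ which dominates $M^{-D_1}=M^{-\delta^5D}$ since $\delta^4\gg\delta^5$. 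In each case the contraction estimate follows by differentiating a single input.

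\medskip

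\noindent\textbf{Part (5).} The operator $\Ls^\zeta$ is essentially $\chi_\tau\Ic_\chi\Pi_N\Mc_q(y_{N_2}^*,\cdots,w,\cdots,y_{N_q}^*)$. I would write each $y_{N_j}^*$ using the ansatz (\ref{ansatzsum}), expanding as $\Psi[\Sc_j,h^{(\Sc_j,n_j)}]$ or $z_{N_j}$ component; this realizes $\Ls^\zeta$ as a sum over plant configurations of expressions of exactly the form handled by Proposition~\ref{overpair3}, after Lemma~\ref{duhamelform0} is used to express the Duhamel kernel. Since $\max(N_2,\cdots,N_q)=M$, Proposition~\ref{overpair3} delivers the kernel operator-norm bound $\lesssim \tau^{-\theta}M^{-\varepsilon^4}$ in $k\to k'$, $\tau^{-1}M$-certainly. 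Combining with the $\langle\lambda\rangle^{1-b},\langle\lambda'\rangle^{-b}$ weights exactly as in \emph{Step~1} of the proof of Proposition~\ref{linextra} (interpolating the $X^{1-b,-b}[k\to k']$ bound with a crude $X^{0}\to X^{1}$ Sobolev bound coming from Duhamel smoothing) yields (\ref{linbd}) with the power $M^{-4\varepsilon\delta}$ and the required $\tau^{(5\kappa)^{-1}}$ prefactor from time localization.

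\medskip

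\noindent The main obstacle is the three-case bookkeeping in Part (4): one must ensure that the constraints imposed on $\sum_{(d)}$ are exhaustive and disjoint enough that each gain ($M^{-D_1}$, high-modulation decay, or $|\Sc|>D$) applies uniformly across the remaining inputs, and that the tensor norm bounds of Proposition~\ref{overpair} match the exponents of the $X^{b_0}$-norm one needs to close at level $M^{-D_1}$. The choice $D_1=\delta^5D$ in (\ref{defb}) and the hierarchy $\delta\ll D^{-1}\ll\kappa^{-1}$ of (\ref{defdelta}) are precisely what make this bookkeeping close.
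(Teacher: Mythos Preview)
Your overall architecture follows the paper closely, but there is a genuine gap in Part~(4), case~(i). When exactly one $z_{N_j}^*=z_M$ appears (i.e.\ $N_j=M$ for a single $r+1\leq j\leq q$), the single factor $\|z_M\|_{X^{b_0}}\leq M^{-D_1}$ is \emph{precisely} the size of the ball you are trying to map into---it does not ``win easily.'' To get a contraction you need the remaining operator, namely $\Rs^{M,\iota}\Ls^\zeta$ with $\Ls^\zeta$ as in (\ref{deflinoper}), to have $X^{b_0}\to X^{b_0}$ norm strictly less than one. This is exactly the content of part~(5). The paper makes this dependence explicit: its case analysis for the contraction mapping splits on the \emph{number} of $z_M$ factors (zero, one, or at least two), and for the one-factor case it proves (\ref{linbd}) first and then uses it to close. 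Your case~(i) as written would fail without importing that bound; you should either merge parts~(4) and~(5), or explicitly invoke the (\ref{linbd}) estimate inside case~(i).

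A second, smaller issue: for Part~(3) with $n=0$, ``proceed analogously'' with the $C=E=\varnothing$ choice of (\ref{estimate01}) does not work. When $C=E=\varnothing$ the factor $\Xc_0=(\min_{\lf\in\Lc}N_\lf)^{d/2-\beta_1}$ in (\ref{estimate02}) is a \emph{loss}, and for the mini-plant $\Sc_M^+$ one only recovers $\|f_M\|_{\ell^2}$ without any gain. The paper instead uses the $B=E=\varnothing$ case (i.e.\ the $\|h\|_{k\to k_\Uc}$ norm, via $\sup_k\|h_{kk_\Uc}\|_{k_\Uc}\leq\|h\|_{k\to k_\Uc}$) together with the support condition (\ref{support01}) to bound $\langle k\rangle\lesssim N_{\lf_{\mathrm{top}}}$, and handles the mini-plant directly from (\ref{constants2}). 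This is not a deep obstacle, but your sketch does not capture the right norm.

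Your treatment of Part~(5) via Proposition~\ref{overpair3} is essentially the paper's route, though you should also note the high/low modulation split in $\max(|\lambda_2|,\ldots,|\lambda_r|)$ and that, for the purposes of closing Part~(4), the inputs $y_{N_j}^*$ may have $N_j=M$ (with $y_{N_j}^*=\Psi[\Sc_j,h^{(\Sc_j,n_j)}]$), which goes slightly beyond the $N_j<M$ hypothesis in $\mathtt{Local}(M)$.
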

\begin{proof} First we shall prove (\ref{psibd}) for $\Psi^{(\Sc,n)}$, assuming either (\ref{support01}) and (\ref{estimate01}), or the estimates (\ref{estimate11})--(\ref{estimate14}) for $h^{(\Sc,n)}$. If $n=1$, by applying Cauchy-Schwartz (in the $(k_\Vc,\lambda_\Vc)$ variables), Lemma \ref{largedev0}, and a meshing argument in $\lambda$ and $\lambda_\Vc$ variables, we can get from (\ref{estimate11})--(\ref{estimate14}) (where we choose $C=E=\varnothing$) that $\tau^{-1}M$-certainly,
\[\|\Psi^{(\Sc,1)}\|_{X^{b_0}}\leq (\tau^{-1}M)^{\theta_0}M^{-\varepsilon}\prod_{\nf\in\Lc\cup\Vc\cup\Yc}N_\nf^{-\delta^3},\] which clearly implies (\ref{psibd}). Here for the meshing argument, just notice that the $\lambda$ and $\lambda_\Vc$ derivatives of $\widehat{h^{(\Sc,1)}}$ are bounded, and that the choice of $b_0$ in (\ref{psibd}) (compared to $b$) allows us to restrict $\lambda$ to the big box $|\lambda|\leq M^{\kappa^2}$, so we can apply the same arguments as in the proof of Proposition \ref{trimbd}.

Next, if $n=0$ and $\Sc\neq\Sc_M^+$ (which is the mini-tensor defined in Definition \ref{defstr}), then $|\Lc|\geq 3$. For each fixed $k\in\Zb^d$ with $\langle k\rangle \leq M$, using Lemma \ref{largedev0}, a meshing argument in $\lambda$ as above, and the simple inequality
\[\sup_k\|h_{kk_\Uc}\|_{k_\Uc}\leq\|h_{kk_\Uc}\|_{k\to k_\Uc}\] for any tensor $h=h_{kk_\Uc}$, we deduce from (\ref{estimate01}) (where we choose $B=E=\varnothing$ \emph{instead of} $C=E=\varnothing$) that $\tau^{-1}M$-certainly,
\[\bigg(\int_{\Rb}\langle\lambda\rangle^{2b_0}|\widehat{\Psi_k^{(\Sc,0)}}(\lambda)|^2\bigg)^{1/2}\leq(\tau^{-1}M)^{\theta_0}\Xc_1\cdot\prod_{\lf\in\Lc\backslash\{\lf_{\mathrm{top}}\}}N_\lf^{-4\varepsilon}\prod_{\pf\in\Yc}N_\pf^{-\delta^3}\cdot N_{\lf_{\mathrm{top}}}^{-\alpha+\theta},\] where $N_{\lf_{\mathrm{top}}}:=\max_{\lf\in\Uc}N_\lf$, and $\Xc_1$ is as in (\ref{estimate02}) but with $N$ replaced by $M$. We may assume that $\tau^{-1}M$-certainly the above holds for every $k$; since $\langle k\rangle\lesssim N_{\lf_{\mathrm{top}}}$ by (\ref{support01}), we conclude that
\[\|\Psi^{(\Sc,0)}\|_{X^{s',b_0}}\leq\tau^{-\theta_0}N_{\lf_{\mathrm{top}}}^{s'-s}M^{2\delta^3}\Xc_1\cdot\prod_{\lf\in\Lc\backslash\{\lf_{\mathrm{top}}\}}N_\lf^{-2\varepsilon}\cdot\prod_{\nf\in\Lc\cup\Yc}N_{\lf}^{-\delta^3},\] which implies (\ref{psibd}), noticing that
\[N_{\lf_{\mathrm{top}}}^{s'-s}M^{2\delta^3}\Xc_1\cdot\prod_{\lf\in\Lc\backslash\{\lf_{\mathrm{top}}\}}N_\lf^{-2\varepsilon}\leq M^{s'-s},\] which easily follows from the assumption $0<s-s'<\delta^2$ and the definition (\ref{estimate02}) of $\Xc_1$. Finally, if $n=0$ and $\Sc=\Sc_M^+$, then we simply have $\Psi_k^{(\Sc,0)}(t)=\chi(t)\cdot\mathbf{1}_{M/2<\langle k\rangle\leq M}(f_M)_k$, so (\ref{psibd}) follows from (\ref{constants2}).

Next we prove the contraction mapping part in the statement. We will only prove that the right hand side of (\ref{eqnzm}) maps the given ball to itself, since the contraction part follows in the same way. Suppose $\|z_M\|_{X^{b_0}}\leq M^{-D_1}$. The right hand side of (\ref{eqnzm}), which we shall denote by $z_{\mathrm{out}}$, contains three types of terms, which we shall analyze below. Like in the proof of Proposition \ref{induct2}, we will abuse notation and refer to some components of $z_{\mathrm{out}}$ on the right hand side of (\ref{eqnzm}) still as $z_{\mathrm{out}}$.

(1) Consider the terms on the right hand side of (\ref{eqnzm}) that contain no factor $z_M$ (that is, $N_j\leq M/2$ for all $r+1\leq j\leq q$). If some $z_{N_j}$ is replaced by $z_{N_j}^{\mathrm{hi}}$ whose Fourier transform is supported in $|\lambda_j|\geq M^{\kappa^2}$, then we can gain a power $M^\kappa$ by using the bound $\|z_{N_j}\|_{X^{b_0}}\leq 1$, which will overwhelm all loss and easily imply the desired estimate. Below we will assume each $z_{N_j}$ is replaced by $z_{N_j}^{\mathrm{lo}}$, so by definition of $\sum_{(d)}$ in (\ref{eqnzm}), the corresponding component (up to linear combination over different $\Os$) can be written in Fourier space as 
\[(\widehat{z_{\mathrm{out}}})_k(\lambda)=\Upsilon\cdot\sum_{\zeta\in\{\pm\}}\sum_{k'}\int\mathrm{d}\lambda'\cdot(\widehat{\Rs^{M,\zeta}})_{kk'}(\lambda,-\zeta\lambda')(\widehat{\Psi^{(\Sc,n)}})_{k'}^\zeta(\lambda'),\quad \Psi_{k'}^{(\Sc,n)}=\Psi_{k'}[\Sc,h^{(\Sc,n)}],\] where
\[\Sc=\mathtt{Trim}(\mathtt{Merge}(\mathtt{Trim}(\Sc_1,M^\delta),\cdots,\mathtt{Trim}(\Sc_r,M^\delta),\Bs,\Os),M^\delta)\] satisfies $|\Sc|>D$, and $h^{(\Sc,n)}$ is defined in the same way as the second term on the right hand side of (\ref{deftensor1}) (if $n=0$), or as the second line of (\ref{deftensor2}) (if $n=1$).

Note that we now have $D<|\Sc|\leq C\cdot D$; however in the proof of Propositions \ref{induct1} and \ref{induct2} we have not used the assumption $|\Sc|\leq D$, so the same proof also works in the current case and gives ($\tau^{-1}M$-certainly, i.e. after removing an exceptional set of probability $\leq C_\theta e^{-(\tau^{-1}M)^\theta}$) the bound (\ref{estimate01}) or the bounds (\ref{estimate11})--(\ref{estimate14}) for $h^{(\Sc,n)}$. Then, applying once more Lemma \ref{largedev0} and the meshing argument as before, we deduce that
\[\|\Psi^{(\Sc,n)}\|_{X^{b_0}}\leq M^{d/2}\prod_{\nf\in\Lc\cup\Vc\cup\Yc}N_\nf^{-\delta^3}\leq M^{d/2}M^{-\delta^4D}\leq M^{-2D_1},\] using the fact that $|\Lc|+|\Vc|+|\Yc|=|\Sc|>D$ and $N_\nf\geq M^\delta$ for each $\nf\in\Lc\cup\Vc\cup\Yc$. Using then the $X^{b_0}\to X^{b_0}$ norm bound for $\Rs^{M,\zeta}$ (which follows from the corresponding bound for $\Vs^{M,\zeta}$ proved in Proposition \ref{linextra}), we deduce the same bound for $z_{\mathrm{out}}$.

(2) Consider the terms on the right hand side of (\ref{eqnzm}) that contain at least two factors $z_M$ (that is, $N_j=M$ for at least two $r+1\leq j\leq q$). Then, these $z_M$ factors can be estimated in $X^{b_0}$ and lead to a gain of at least $M^{-2D_1}$. The other factors that are not $z_M$ can be bounded trivially using either the induction hypothesis or (\ref{psibd}) which we just proved and contributes at most an $M^{C}$ power. Since $D_1=\delta^5D$ and $D\gg_{C_\delta}1$, the power $M^{-2D_1}$ will be more than enough to close the estimate.

(3) It remains to consider the terms on the right hand side of (\ref{eqnzm}) that contains exactly one factor $z_M$ (that is, $N_j=M$ for exactly one $r+1\leq j\leq q$). By rearranging, we can write this term as $z_{\mathrm{out}}=\Rs^{M,\iota}\Ls^{\zeta}z_M$, where $\iota,\zeta\in\{\pm\}$, and $\Ls^{\zeta}$ is the $\Rb$-linear operator defined in (\ref{deflinoper}). Here in (\ref{deflinoper}), each $N_j\,(2\leq j\leq q)$ is assumed to be $\leq M$ instead of $<M$, but if $N_j=M$ then $y_{N_j}^*$ can only be one of the $\Psi[\Sc_j,h^{(\Sc_j,n_j)}]$ terms with $\Sc_j$ a regular plant of frequency $M$ and size at most $D$ (i.e. $y_{N_j}^*$ is not allowed to be $z_M$ or any Fourier truncation thereof). Our goal here is to prove that (\ref{linbd}) holds for such $\Ls^\zeta$. In fact, if (\ref{linbd}) holds, then by repeating \emph{Step 1} of the proof of Proposition \ref{linextra} we obtain that
\[\|\Ls^{\zeta}\|_{X^{b_0}\to X^{b_0}}\leq \tau^{(6\kappa)^{-1}}\big(\max_{2\leq j\leq q}N_j\big)^{-3\varepsilon\delta};\] summing over all possible choices of $\Ls^\zeta$ and using also the $X^{b_0}\to X^{b_0}$ bound of $\Rs^{M,\iota}$, we obtain
\[\|z_{\mathrm{out}}\|_{X^{b_0}}\leq\tau^{(8\kappa)^{-1}}\|z_M\|_{X^{b_0}}\leq\tau^{(8\kappa)^{-1}}M^{-D_1},\] which is acceptable. This means that, if we can prove (\ref{linbd}) for the $\Ls^\zeta$ as above, then part (4), i.e. the contraction mapping part of $\mathtt{Local}(2M)$ is true, and thus $z_M$, being the unique fixed point of a contraction mapping, does satisfy $\|z_M\|_{X^{b_0}}\leq M^{-D_1}$. Combining with Proposition \ref{linextra} and the construction in Section \ref{constructansatz}, we also obtain that $y_M$ defined by (\ref{ansatzsum}) solves (\ref{eqnyn}) with $N$ replaced by $M$. Finally, the bound (\ref{linbd}) for the $\Ls^\zeta$ as above also implies part (5) of $\mathtt{Local}(2M)$, since if any $y_{N_j}^*$ in (\ref{deflinoper}) is replaced by $z_M$ or its Fourier truncation, then the $M^{-D_1}$ decay will overwhelm any possible loss and immediately imply (\ref{linbd}).

In summary, we now only need to prove (\ref{linbd}) for $\Ls^\zeta$ as in (\ref{deflinoper}), where either $N_j<M$ or $N_j=M$ and $y_{N_j}^*=\Psi[\Sc_j,h^{(\Sc_j,n_j)}]$. We may assume $\max(N_2,\cdots,N_q)=M$ (otherwise use the induction hypothesis) and replace $z_{N_j}$ by $z_{N_j}^{\mathrm{lo}}$. Applying Lemma \ref{localization}, we can remove the $\chi_\tau$ factor in (\ref{deflinoper}) and gain a power $\tau^{\kappa^{-1}}$, which will overwhelm all possible $\tau^{-\theta}$ losses, provided we estimate the expression without $\chi_\tau$ in the stronger $X^{1-b_0,-b}[k\to k']$ norm.

By Proposition \ref{formulas} (1) we can write
\begin{equation}\label{fourierexpand3}(\widehat{\Ls^{\zeta}})_{kk'}(\lambda,\lambda')=\int[\Ms^{\zeta}(\lambda,\lambda',\lambda_2,\cdots,\lambda_r)]_{kk'}\,\mathrm{d}\lambda_2\cdots\mathrm{d}\lambda_r,
\end{equation} where for fixed values of $(\lambda,\lambda',\lambda_2,\cdots,\lambda_r)$, $\Ms^{\zeta}(\lambda,\lambda',\lambda_2,\cdots,\lambda_r)=[\Ms^{\zeta}(\lambda,\lambda',\lambda_2,\cdots,\lambda_r)]_{kk'}$ is the tensor $\Ms=\Ms_{kk_1}$ defined in (\ref{operatorbdnew3}), Proposition \ref{overpair3} (where we rename $k_1$ as $k'$). Here in (\ref{operatorbdnew3}), we assume that $\Psi_{k_j}^{(j)}=\Psi_{k_j}[\Sc_j',\widehat{h^{(j)}}(\lambda_j)]$ where $\Sc_j'=\mathtt{Trim}(\Sc_j,M^\delta)$ and $h^{(j)}=\mathtt{Trim}(h^{(\Sc_j,n_j)},M^\delta)$ for $2\leq j\leq r$; moreover, for fixed values of $(\lambda,\lambda',\lambda_2,\cdots,\lambda_r)$, the tensor $h=h(\lambda,\lambda',\lambda_2,\cdots,\lambda_r)=[h(\lambda,\lambda',\lambda_2,\cdots,\lambda_r)]_{kk'\cdots k_q}(\lambda_{r+1},\cdots,\lambda_q)$ satisfies (\ref{operatorbdnew1}) and \[|h|+|\partial_{\lambda_j}h|\lesssim \frac{\tau^{-\theta}}{\langle \lambda\rangle\langle\Omega+
\zeta_{r+1}\lambda_{r+1}+\cdots+\zeta_q\lambda_q+\widetilde{\Xi}\rangle},\,\,r+1\leq j\leq q;\quad \widetilde{\Xi}:=\zeta_2\lambda_2+\cdots +\zeta_r\lambda_r+\zeta\lambda'-\lambda.\] Moreover, $h$ is in fact a function of $(k-\zeta k',|k|^2-\zeta|k'|^2)$ and $(k_2,\cdots,k_q,\lambda_{r+1},\cdots,\lambda_q)$ only, in the same manner as in Proposition \ref{overpair3}.

Like before, we will separate the high modulation case $\max(|\lambda_2|,\cdots,|\lambda_r|)\geq M^{\sqrt{\kappa}}$, and the low modulation case $\max(|\lambda_2|,\cdots,|\lambda_r|)\leq M^{\sqrt{\kappa}}$. In the high modulation case we may assume (say) $|\lambda_2|=\max(|\lambda_2|,\cdots,|\lambda_r|)\geq M^{\sqrt{\kappa}}$, then as before, using the induction hypothesis and (\ref{psibd}), we can fix the values of $k_j\,(2\leq j\leq q)$ and $\lambda_j\,(3\leq j\leq q)$, and view $\Psi^{(2)}$ as a function of $\lambda_2$ only. Then we obtain, up to a loss of $M^{C}$, that
\[|(\widehat{\Ls^\zeta})_{kk'}(\lambda,\lambda')|\lesssim\frac{\tau^{-\theta}}{\langle\lambda\rangle}\mathbf{1}_{k-\zeta k'=k^*}\int_\Rb\frac{1}{\langle\lambda-\zeta\lambda'-|k|^2+\zeta|k'|^2-\zeta_2\lambda_2+\Xi\rangle}|\Psi^{(2)}(\lambda_2)|\,\mathrm{d}\lambda_2,\] where $k_*$ is a fixed $\Zb^d$ vector, and $\Xi$ is a fixed real number, depending on the choices of the fixed variables. Since $\Psi^{(2)}$ is bounded in $L_{\lambda_2}^2$ with the weight $\langle\lambda_2\rangle^{b_0}\geq M^{\sqrt{\kappa}/2}$, we can gain this $M^{\sqrt{\kappa}/2}$ power (which overwhelms all $M^{C}$ losses) and apply Cauchy-Schwartz, estimating $\Psi^{(2)}$ only in $L_{\lambda_2}^2$, to obtain that
\[\|(\widehat{\Ls^\zeta})_{kk'}(\lambda,\lambda')\|_{k\to k'}\lesssim\frac{\tau^{-\theta}}{\langle\lambda\rangle} M^{-\sqrt{\kappa}/4}\] uniformly in $(\lambda,\lambda')$, which is sufficient to prove (\ref{linbd}).

Now we can restrict to the low modulation case. With a loss of $M^{C/\sqrt{\kappa}}$, which will be negligible compare to the gain, we can replace the exponents $1-b$ and $\widetilde{b}$ in (\ref{estimate11})--(\ref{estimate14}) all by $b$. Therefore, by the same arguments as in the proof of Proposition \ref{induct2}, we obtain the followings:
\begin{enumerate}[(a)]
\item If $j$ is such that either $n_j=1$, or $n_j=0$ and $\min_{\lf\in\Lc_j}N_\lf<M^\delta$, define $\Xf_j=\Xf_j(\lambda_j)$ to be the smallest value such that the type 1 bounds (\ref{boundmerge2})--(\ref{boundmerge6}) in Proposition \ref{overpair} hold for $\widehat{h^{(j)}}(\lambda_j)$, then we have
\begin{equation}\label{integrability2}\bigg(\int_\Rb\Xf_j(\lambda_j)\,\mathrm{d}\lambda_j\bigg)^2\lesssim\int_\Rb\langle \lambda_j\rangle^{2b}\Xf_j(\lambda_j)^2\,\mathrm{d}\lambda_j\lesssim\tau^{-\theta}M^{C/\sqrt{\kappa}+C\delta}.
\end{equation}
\item If $n_j=0$ and $\min_{\lf\in\Lc_j}N_\lf\geq M^\delta$ (in particular $h^{(j)}=h^{(\Sc_j,0)}$), we may decompose $h^{(j)}$ into $h^{(j,\Gamma_j)}=h^{(\Sc_j,0,\Gamma_j)}$ as in part (1) of Proposition \ref{mainprop}; define $\Xf_j=\Xf_j(\lambda_j,\Gamma_j)$ to be the smallest value such that the type 0 bounds (\ref{supportmerge})--(\ref{boundmerge1}) in Proposition \ref{overpair} hold for $\widehat{h^{(j,\Gamma_j)}}(\lambda_j)$, then we have (\ref{integrability}).
\end{enumerate}

Moreover we may also assume $\max(|\lambda|,|\lambda'|)\leq M^{\kappa^2}$, otherwise we exploit the room coming from the exponents $1-b_0<1/2$ and $b>1/2$ to gain a power $M^{\kappa}$ that will overwhelm all possible losses. These assumptions allow us to apply Proposition \ref{overpair3} and get
\[\|[\Ms^{\zeta}(\lambda,\lambda',\lambda_2,\cdots,\lambda_r)]_{kk'}\|_{k\to k'}\leq\frac{1}{\langle\lambda\rangle}\prod_{j=2}^r\Xf_j\cdot M^{-\varepsilon^5},\] where $\Xf_j=\Xf_j(\lambda_j)$ or $\Xf_j(\lambda_j,\Gamma_j)$ in case (a) or (b) above. Since $\max(|\lambda_2|,\cdots,|\lambda_r|)\leq M^{\sqrt{\kappa}}$ and $\max(|\lambda|,|\lambda'|)\leq M^{\kappa^2}$, by a meshing argument we can remove a single exceptional set of probability $\leq C_\theta e^{-(\tau^{-1}M)^\theta}$ such that the above holds for all values of $(\lambda,\lambda',\lambda_2,\cdots,\lambda_r)$. Therefore, by taking the $L^2$ norm weighted by $\langle \lambda\rangle^{1-b_0}\langle\lambda'\rangle^{-b}$ in $(\lambda,\lambda')$, then summing in $\Gamma_j$ and integrating in $\lambda_j$, we obtain the bound (\ref{linbd}) for $\Ls^\zeta$. This completes the proof of parts (3)--(5) of $\mathtt{Local}(2M)$ and finishes the inductive proof of Proposition \ref{mainprop}.
\end{proof}
\section{Proof of the main results}\label{proofmain} In this section we prove our main theorems, Theorems \ref{main} and \ref{main2}. Theorem \ref{main} follows from Proposition \ref{mainprop} together with some arguments similar to those in Sections \ref{mainproof}--\ref{mainproof1}; Theorem \ref{main2} is easier and follows from simplified versions of these arguments.
\subsection{Proof of Theorem \ref{main}} First,  by Proposition \ref{mainprop}, after removing an exceptional set of probability at most $C_\theta e^{-\tau^{-\theta}}$, we may assume that $\mathtt{Local}(M)$ holds for all $M$. In particular, by (\ref{ansatzalt}), (\ref{psibd}) and part (4) of $\mathtt{Local}(M)$ in Proposition \ref{mainprop}, and in view of the fact that the number of plants $\Sc$ with frequency $N(\Sc)=N$ and size $|\Sc|\leq D$ is at most $(\log N)^{\kappa}$, we conclude that
\begin{equation}\label{ynbound}\|y_N\|_{X^{s',b_0}}\leq \tau^{-\theta} N^{(s'-s)/2}\end{equation} for each $s'<s$ and each $N$, thus \begin{equation}\label{limit1}\lim_{N\to\infty}v_N^\dagger=\lim_{N\to\infty}\sum_{N'\leq N}y_{N'}\qquad \textrm{exists in }X^{s',b_0}.\end{equation} Moreover, under all these $\mathtt{Local}(M)$ assumptions, $v_N^\dagger$  solves (\ref{reducedeqn3}), thus it must equal $v_N$ on $J=[-\tau,\tau]$. By definition $e^{it\Delta}v_N^\dagger$ also equals the solution $\widetilde{u_N}$  of (\ref{nlstruncgauged2}) on $J$.

Now, define
\begin{equation}\label{defbnt}B_N(t)=\frac{p+1}{2}\chi(t)\int_0^t\chi(t')\fint_{\Tb^d}W_N^{p-1}(e^{it'\Delta}v_N^\dagger)\,\mathrm{d}t',\end{equation} akin to the one in (\ref{gaugeinv}) but with the smooth cutoff $\chi$, and define $u_N^\dagger(t)=e^{it\Delta}v_N^\dagger(t)\cdot e^{-iB_N(t)}$, then $u_N^\dagger$ equals $u_N$, which is the solution to (\ref{nlstrunc}), on $J$. By analyzing the term $W_N^{p-1}$ as in \cite{DNY}, Proposition 2.2, and applying the same calculations as in Section \ref{condition1}, we can rewrite $B_N(t)$ as
\begin{equation}\label{formbnt}B_N(t)=\sum_{3\leq q\leq p}a_{pq}'(m_N^*)^{(p-q)/2}\cdot\Ic_\chi\Ab\Mc_{q-1}(v_N^\dagger,\cdots,v_N^\dagger)(t).\end{equation} In the above $q$ runs over odd integers, $a_{pq}'$ are constants, $m_N^*$ is defined as in Section \ref{condition1}, $\Ab$ is the projection onto frequency $k=0$, $\Mc_{q-1}$ is defined as in (\ref{multilin})--(\ref{defomega}) but with $q$ replaced by $q-1$. Note that instead of  the \emph{simplicity} condition as in Definition \ref{def:simple}, here the coefficients $c_{kk_1\cdots k_{q-1}}$ satisfy the slightly different \emph{input-simplicity} condition, namely that $c_{kk_1\cdots k_{q-1}}$ depends only on the set of pairings in $(k_1,\cdots,k_{q-1})$, and $c_{kk_1\cdots k_{q-1}}=0$ unless any pairing in $(k_1,\cdots,k_{q-1})$ is over-paired. However, in view of the projection $\Ab$ to $k=0$, this input-simplicity condition will imply the same tensor norm estimates in Section \ref{prelimtensor} that are proved under the simplicity condition.

Therefore, by decomposing $v_N^\dagger$ using (\ref{ansatzsum}) and repeating the proofs\footnote{If needed, we can always view an $\Rb$-multilinear operator of degree $q-1$ as one of degree $q$ by adding a trivial input function.} of Sections \ref{mainproof}--\ref{mainproof1}, after possibly removing another exceptional set of probability not exceeding $C_\theta e^{-\tau^{-\theta}}$, we conclude that $B_N(t)$ converges to some $B(t)$ in $H_t^{b_0}\hookrightarrow C_t^0$ as $N\to\infty$, where recall $b_0>1/2$ as in (\ref{defb}). Therefore $u_N$, which equals $u_N^\dagger=e^{it\Delta}v_N^\dagger\cdot e^{-iB_N}$ on $J$, converges in $C_t^0H_x^{s-}(J)$ as $N\to\infty$. This limit $u$ has the explicit expansion (which is valid on $J$)
\begin{multline}\label{expansionofu}
u_k(t)=e^{-i(|k|^2t+B(t))}\bigg[\sum_{n\in\{0,1\}}\sum_{|\Sc|\leq D}\sum_{k_\Uc}\bigg\{\sum_{k_\Vc}\int\mathrm{d}\lambda_\Vc\cdot h_{kk_\Uc}^{(\Sc,n)}(k_\Vc,\lambda_\Vc)\\\times\prod_{\ff\in\Vc}(\widehat{z_{N_\ff}})_{k_\ff}^{\zeta_\ff}(\lambda_\ff)\bigg\}\prod_{\lf\in\Uc}\frac{\Delta_{N_\lf}(k_\lf)}{\langle k_\lf\rangle^\alpha}\cdot g_{k_\lf}^{\zeta_\lf}(\omega)+z_k(t)\bigg],
\end{multline}where the sum is taken over all regular plants $\Sc$ with $|\Sc|\leq D$, the random tensors $h^{(\Sc,n)}$ and the functions $z_N$ are defined as in Section \ref{constructansatz}, and the remainder $z$ belongs to $C_t^0H_x^{D_1-1}(J)$.

It now remains to prove that the nonlinearity $W^p(u)$ defined by (\ref{wickpoly2}) exists as a spacetime distribution, and that $u$ solves (\ref{nls}) in the distributional sense. Define
\begin{equation}\label{defother}v^\dagger=\lim_{N\to\infty}v_N^\dagger=\sum_{N'}y_{N'},\quad\mathrm{and}\quad u^\dagger(t)=e^{it\Delta}v^{\dagger}(t)\cdot e^{-iB(t)},
\end{equation} then $u$ equals $u^\dagger$ on $J$, so it suffices to prove that
\begin{equation}\label{todolimit}\lim_{N\to\infty}W_N^p(\Pi_Nu^\dagger)=\lim_{N\to\infty}\Pi_NW_N^p(\Pi_Nu^\dagger)=\lim_{N\to\infty}W_N^p(u_N^\dagger)=\lim_{N\to\infty}\Pi_NW_N^p(u_N^\dagger)
\end{equation}in the sense of distributions. As $B_N\to B$, we may replace $u_N^\dagger$ by $e^{it\Delta}v_N^\dagger$ and $u^\dagger$ by $e^{it\Delta}v^\dagger$; then arguing as in Section \ref{condition1}, we can reduce to analyzing the terms
\begin{equation}\label{terms}\sum_{3\leq q \leq p}a_{pq}''(m_N^*)^{(p-q)/2}\,\Pi\Mc_q(w,\cdots,w)(t),
\end{equation} where $q$ is odd as before, $a_{pq}''$ are constants, $\Pi$ is either $1$ or $\Pi_N$, $w$ is either $v_N^\dagger$ or $\Pi_Nv^\dagger$, and $\Mc$ is as in (\ref{multilin})--(\ref{defomega}) but is input-simple instead of simple. Decomposing $w$ using (\ref{ansatzsum}), it then suffices to show that
\begin{equation}\label{terms2}\Phi:=\Mc_q(y_{N_1},\cdots,y_{N_q})\to 0,\quad \textrm{as }N_{\mathrm{max}}:=\max(N_1,\cdots,N_q)\to\infty,
\end{equation}in the sense of distributions. In fact we shall control the term $\Ic_\chi\Phi$ which, by Lemma \ref{duhamelform0}, satisfies \begin{equation}\label{duhamelPhi}\widehat{\Ic_\chi\Phi_k}(\lambda)=\int_\Rb\Ic(\lambda,\lambda')\widehat{\Phi_k}(\lambda')\,\mathrm{d}\lambda',\qquad |\Ic|+|\partial\Ic|\lesssim\bigg(\frac{1}{\langle\lambda\rangle^3}+\frac{1}{\langle\lambda-\lambda'\rangle^3}\bigg)\frac{1}{\langle\lambda'\rangle}.\end{equation}Note that, apart from simple modifications, $\Ic_\chi\Phi$ essentially has the same structure as $\Psi^{(\Sc,n)}$ in (\ref{psibd}) with the associated tensor $h^{(\Sc,n)}$ as in (\ref{deftensor1}) and (\ref{deftensor2}), so it can be estimated in the same way as in Sections \ref{mainproof}--\ref{mainproof1}. We only make two additional observations:
\begin{enumerate}[(a)]
\item The proofs of Sections \ref{mainproof}--\ref{mainproof1} do not depend on any cancellation in (\ref{duhamelPhi}), so the same arguments can be applied for the term $\Ic_\chi^{\mathrm{abs}}\Phi$ defined by
\begin{equation}\label{duhamelPhi2}\widehat{\Ic_\chi^{\mathrm{abs}}\Phi_k}(\lambda)=\int_\Rb\langle\lambda'\rangle^{-1}(\langle\lambda\rangle^{-3}+\langle\lambda-\lambda'\rangle^{-3})|\widehat{\Phi_k}(\lambda')|\,\mathrm{d}\lambda',\end{equation} leading to the control of $\Ic_\chi^{\mathrm{abs}}\Phi$ in the $X^{-d,b_0}$ norm (here the exponent $-d$ has to do with the potential power loss associated with fixing $k$, see observation (b) below), which in turn implies the control of $\Phi$ in the sense of distributions---for example, due to the trivial bound \[\|\Ic_\chi^{\mathrm{abs}}\Phi\|_{X^{-d,b_0}}\gtrsim\sum_{k}\int_{\Rb}\langle k\rangle^{-2d}\langle\lambda'\rangle^{-4}|\widehat{\Phi_k}(\lambda')|\,\mathrm{d}\lambda'.\]
\item The $\Rb$-multilinear operator $\Mc_q$ is input-simple instead of simple. However, in order to control the $X^{-d,b_0}$ norm of $\Ic_\chi^{\mathrm{abs}}\Phi$, we may fix the value of $k$ in $\widehat{\Phi_k}(\lambda)$ and it suffices to get a bound uniform in $k$  thanks to the exponent $-d$. Now once $k$ is fixed, the pairings between $k$ and any $k_j$ become unimportant (they no longer cause losses in any counting estimate as the paired and over-paired variables now have only one choice), so an input-simple $\Rb$-multilinear operator can be treated in the same way as a simple one, similar to the analysis of (\ref{formbnt}) above.
\end{enumerate} 

With the observations above, after possibly removing another exceptional set of probability not exceeding $C_\theta e^{-\tau^{-\theta}}$, the same proofs of Sections \ref{mainproof}--\ref{mainproof1} can be carried out to obtain, for example, that
\begin{equation}\label{nonlinbound}\|\Ic_\chi^{\mathrm{abs}}\Phi\|_{X^{-d,b_0}}\leq\tau^{-\theta}(N_{\mathrm{max}})^{-\delta^6}.
 \end{equation} 
 This proves (\ref{terms2}) and thus finishes the proof of Theorem \ref{main}.
 \subsection{Proof of Theorem \ref{main2}} Fix $\varepsilon>0$ small enough depending on $(d,p)$, $s-s_{pr}$ and $(p-1)(s-s_{pr})-\nu$ (note that this is different from (\ref{subcrit})). Let $(\delta,D,\kappa,\theta)$ etc. be defined as in Section \ref{notations}, we may assume $N\gg_{C_\theta}1$. Let
 \[B(T)=\frac{p+1}{2}\int_0^T\fint_{\Tb^d}|u_\mathrm{ho}|^{p-1}\,\mathrm{d}T',\] and repeat the gauging, conditioning and conjugating arguments as in Section \ref{reduct}, except that $\sigma_N$ is replaced by $0$ since we are dealing with the nonlinearity $|u_\mathrm{ho}|^{p-1}u_\mathrm{ho}$ instead of the Wick-ordered one. After also rescaling time, we can write $N$-certainly that
 \begin{equation}\label{formulauT}\widetilde{u}(T,x)=\sum_{k}w_k(N^{-\nu}T)e^{i(k\cdot x-|k|^2T-B(T))},\end{equation} where $w_k(t)$ is the solution to the system
 \begin{equation}\label{equationat}w_k(t)=f_k-iN^\nu\sum_{3\leq q\leq p}a_{pq}(m_\mathrm{ho})^{(p-q)/2}\int_0^t\Mc_q(w,\cdots,w)_k(t')\,\mathrm{d}t'
 \end{equation}
  similar to (\ref{reducedeqn}), where 
  \[m_\mathrm{ho}=\fint_{\Tb^d}|u_\mathrm{ho}|^2=N^{-2\alpha}\sum_{k}\phi^2\big(\frac{k}{N}\big)|g_k|^2,\]which is $N$-certainly bounded by $N^{d-2\alpha}$; like in (\ref{reducedeqn3}) we also consider the solution $w^\dagger$ to
 \begin{equation}\label{equationat2}w_k^\dagger(t)=\chi(t)f_k-iN^\nu\sum_{3\leq q\leq p}a_{pq}(m_\mathrm{ho})^{(p-q)/2}\cdot\Ic_\chi\Mc_q(w^\dagger,\cdots,w^\dagger)_k(t).
 \end{equation} Here in (\ref{equationat})--(\ref{equationat2}) we have $f_k=\gamma_k\cdot\eta_k(\omega)$ with constants $|\gamma_k|\leq N^{-\alpha+\theta}\langle k\rangle^\theta|\phi(k/N)|$, $a_{pq}$ are bounded constants and $|m_\mathrm{ho}|\leq N^{d-2\alpha}$ ($\alpha$ is defined as in (\ref{data1})), and the $\Rb$-multilinear expression
 \begin{equation}\label{multilinat}\Mc_q(w^{(1)},\cdots,w^{(q)})_k(t')=\sum_{\zeta_1k_1+\cdots+\zeta_qk_q=k}c_{kk_1\cdots k_q}\cdot e^{iN^\nu t'\Omega}\prod_{j=1}^q(w^{(j)})_{k_j}^{\zeta_j}(t'),
 \end{equation} with the signs $\zeta_j$ and coefficients $c_{kk_1\cdots k_q}$ as in (\ref{multilin}).
 
 Since the initial data $f_k$ is uniformly distributed in $k$, the analysis of (\ref{equationat}) will be significantly simpler than the arguments in Sections \ref{ansatz}--\ref{mainproof1}. More precisely, we will only need the tensors\footnote{Because we do not need to distinguish the low-frequency inputs as there is only one scale.} $h^{(\Sc,n)}$ with $n=0$ (which will be constant tensors, i.e. do not depend on $\omega$), and instead of the full plant structure, we will only need its tree part (which is called $\Lc$ before, see Definition \ref{defstr}), leaf pairings and signs of leaves. As such, we will define (in this proof only) $\Sc$ to be a set of leaves $\lf$ with possible pairings, together with the sign $\zeta_\lf\in\{\pm\}$ for each $\lf\in\Sc$. Let $\Pc$ (resp. $\Uc$) be the set of paired (resp. unpaired) leaves, we require that $\zeta_{\lf'}=-\zeta_\lf$ for any pair $(\lf,\lf')$, and that $\sum_{\lf\in\Lc}\zeta_\lf=1$. The $\Sc$ tensors $h=h_{kk_\Uc}$ are defined as in Definition \ref{deftensor} but instead of condition (1) we only assume\footnote{The choice of $N^{1+\theta}$ is because $\phi$ is not compactly supported and may have a Schwartz tail.} $\langle k_\lf\rangle\leq N^{1+\theta}$ for each $\lf\in\Sc$, and $\Psi_k=\Psi_k[\Sc,h]$ is defined as in (\ref{defpsi}). We do not need the $\mathtt{Trim}$ function, and $\mathtt{Merge}$ is defined in the same way as in Definition \ref{defmerge} (with only the tree part, and without the frequency parameters such as $N_\lf$ for leaves $\lf$; also the factors $\Delta_{N_\lf}\gamma_{k_\lf}$ in (\ref{merge1}) are replaced by $\gamma_{k_\lf}$).
 
Next, for any $\Sc$ with $|\Sc|\leq D$, define the $\Sc$ tensor $h^{(\Sc)}=h_{kk_\Uc}^{(\Sc)}(t)$ inductively by
\begin{equation}\label{defhsnew}h_{kk_\Uc}^{(\Sc)}(t)=\mathbf{1}_{|\Sc|=1}\cdot\mathbf{1}_{k=k_\lf}\mathbf{1}_{\langle k\rangle\leq N^{1+\theta}}+N^\nu\sum_{3\leq q\leq p}a_{pq}(m_{\mathrm{ho}})^{(p-q)/2}\sum_{(*)}\Ic_\chi H_{kk_\Uc}(t),
\end{equation} similar to (\ref{deftensor1}). Here the sum $\sum_{(*)}$ is taken over $\Bs=(\zeta_1,\cdots,\zeta_q)$, $\Sc_j$ as defined above, and $\Os$, such that $\Sc=\mathtt{Merge}(\Sc_1,\cdots,\Sc_q,\Bs,\Os)$; $H=H_{kk_\Uc}$ is defined by $H=\mathtt{Merge}(h^{(\Sc_1)},\cdots,h^{(\Sc_q)},h,\Bs,\Os)$ with \begin{equation}\label{basetensorat}h=h_{k_1\cdots k_q}(t')=\mathbf{1}_{k=\zeta_1k_1+\cdots +\zeta_qk_q}\cdot\mathbf{1}_{\langle k\rangle\leq N^{1+2\theta}}\prod_{j=1}^q\mathbf{1}_{\langle k_j\rangle\leq N^{1+2\theta}}\cdot c_{kk_1\cdots k_q}e^{iN^\nu t'\Omega}\end{equation} similar to (\ref{newpsi})--(\ref{basetensor}). By arguing similarly as in Propositions \ref{final}, \ref{algorithm2}, \ref{overpair2} and \ref{induct1}, we can prove inductively that $h_{kk_\Uc}^{(\Sc)}$ satisfies the support condition (\ref{support01}); moreover, for each subpartition $(B,C)$ of $\Uc$, we can prove that
\begin{equation}\label{tensorbdan}\int_\Rb\langle\lambda\rangle^{2b}\bigg(\sum_{\Gamma\in\Zb}\|\widehat{h_{kk_\Uc}^{(\Sc,\Gamma)}}(\lambda)\|_{kk_B\to k_C}\bigg)^2\,\mathrm{d}\lambda\leq \big( N^{(\alpha-2\varepsilon)|B\cup C|}N^{-\varepsilon|\Pc|}N^{\varepsilon|E|}\cdot \Xc_0\big)^2,
\end{equation} similar to (\ref{estimate01}). Here $\Pc$ is the set of paired leaves as defined above, $E=\Uc\backslash(B\cup C)$, and $h^{(\Sc,\Gamma)}$ is the restriction of $h^{(\Sc)}$ to the set (\ref{support02}) as in part (1) of Proposition \ref{mainprop}; moreover, $\Xc_0$ is defined to be $N^{-(\alpha-2\varepsilon)}$ if $C\neq\varnothing$, and $N^{\max(0,d/2-\alpha+2\varepsilon)}$ if $C=E=\varnothing$, and $N^{-\varepsilon\delta}$ if $B=C=\varnothing$, and $1$ otherwise, as in (\ref{estimate02}). 

We note that the proof of (\ref{tensorbdan}) is much easier than that of (\ref{estimate01}), as we do not need to apply the careful selection algorithm in Proposition \ref{algorithm1}. There are only two nontrivial differences. The first is due to the extra $N^\nu$ factor in  $e^{iN^\nu t'\Omega}$ in (\ref{basetensorat}), which actually helps us as $\nu>0$, since $N^\nu\Omega$ belonging to an interval of length $O(1)$ will force $\Omega$ to belong to an interval of length $O(1)$. The second is the extra factor $N^\nu$ on the right hand side of (\ref{defhsnew}), which gets cancelled by the $N^{-\alpha}$ decay of $\gamma_k$ and the $(m_\mathrm{ho})^{(p-q)/2}$ factor in (\ref{defhsnew}), in view of the inequality $\nu\leq (p-1)(\alpha-\alpha_0-10\varepsilon)$ ($\alpha_0$ is defined as in (\ref{subcrit})). In particular we have
\[(m_\mathrm{ho})^{(p-q)/2}N^\nu\cdot\prod_{j=2}^q N^{\alpha_0'+\theta}\leq \prod_{j=2}^q N^{\alpha-8\varepsilon},\quad \alpha_0':=\frac{d}{2}-\frac{1}{q-1},\] so the bound for the $h$ tensor we see when merging $h^{(\Sc_j)}$ tensors---which is the one on the right hand side of (\ref{finalbound}) with $p$ replaced by $q$---can be cancelled by the $N^{-\alpha}$ decay of $\gamma_k$ with extra gain of $N^{\varepsilon}$ powers, after being multiplied by $N^\nu$ and $(m_\mathrm{ho})^{(p-q)/2}$ factors.

Now, with (\ref{tensorbdan}) available, we can construct the solution $w^\dagger$ to (\ref{equationat2}) by the ansatz 
\begin{equation}\label{ansatzwd}w_k^\dagger(t)=\sum_{|\Sc|\leq D}\Psi_k[\Sc,h^{(\Sc)}(t)]+z_k^\dagger(t),\end{equation} where $z^\dagger$ satisfies the equation
\begin{equation}\label{equationforz}z_k^\dagger(t)=\chi(t)\mathbf{1}_{\langle k\rangle\geq N^{1+\theta}}\cdot f_k-i N^\nu\sum_{3\leq q \leq p}a_{pq}(m_*)^{(p-q)/2}\sum_{(v^{(1)},\cdots,v^{(q)})}\Ic_\chi\Mc_q(v^{(1)},\cdots,v^{(q)})_k(t),
\end{equation} with the sum taken over $(v^{(1)},\cdots,v^{(q)})$ such that each $v^{(j)}$ is either $z^\dagger$ or $\Psi_k[\Sc_j,h^{(\Sc)j)}]$ for some $\Sc_j$ with $|\Sc_j|\leq D$, and that either (i) at least one $v^{(j)}=z$, or (ii) $ v^{(j)}=\Psi_k[\Sc_j,h^{(\Sc_j)}]$ for each $j$, and $\Sc=\mathtt{Merge}(\Sc_1,\cdots,\Sc_r,\Bs,\Os)$ satisfies $|\Sc|>D$ (for any $\Os$).

Note that, due to the lack of projection $\Pi_N$ on the right hand side of (\ref{nlstrunc2}), the remainder $z^\dagger$ is not guaranteed to have compact support in $k$. Thus, instead of the $X^{b_0}$ norm as in Proposition \ref{mainprop}, we should control the $\widetilde{X}^{C_0,b_0}$ norm of $z^\dagger$, defined by
\[\|z^\dagger\|_{\widetilde{X}^{C_0,b_0}}^2:=\sum_{k}\int_\Rb\bigg(1+\frac{|k|}{N^{1+\theta}}\bigg)^{2C_0}\langle \lambda\rangle^{2b_0}|(\widehat{z^\dagger})_k(\lambda)|^2\,\mathrm{d}\lambda,\] where $C_0>0$ is a large absolute constant depending only on $(d,p)$ and $b_0$ is as in (\ref{defb}). Indeed we shall prove that $\|z^\dagger\|_{\widetilde{X}^{C_0,b_0}}\leq N^{-D_1}$ by a contraction mapping. Note that again we only need to prove that the right hand side of (\ref{equationforz}) satisfies this same inequality assuming that $z^\dagger$ does. This right hand side contains four types of terms, which are listed as follows\footnote{The reader may notice the similarity with the construction of $z_M$ in (\ref{eqnzm}).}:
\begin{itemize}
\item The first term on the right hand side of (\ref{equationforz}), which is acceptable because $|f_k|\leq|\phi(k/N)|$ where $\phi$ is Schwartz and $\langle k\rangle\geq N^{1+\theta}$.
\item The term where at least two $v^{(j)}$ equal $z^\dagger$, which is acceptable thanks to the decay $N^{-D_1}$ of $z^\dagger$ and the choice of large $C_0$.
\item The term where $v^{(j)}=\Psi_k[\Sc_j,h^{(\Sc_j)}]$ for each $j$. This term is acceptable because it can be written as a linear combination of $\Psi_k[\Sc,h^{(\Sc)}]$ for some $\Sc=\mathtt{Merge}(\Sc_1,\cdots,\Sc_q,\Bs,\Os)$ with $|\Sc|>D$, and the corresponding tensor $h^{(\Sc)}=\mathtt{Merge}(h^{(\Sc_1)},\cdots,h^{(\Sc_q)},h,\Bs,\Os)$, which can be shown to satisfy (\ref{tensorbdan}) by repeating the proofs above. By applying Lemma \ref{largedev0} again, this $\Psi_k$ term can be bounded by $N^{-D_1}$ in $X^{b_0}$, and hence in $\widetilde{X}^{C_0,b_0}$ because it is supported in $|k|\lesssim N^{1+\theta}$.
\item Finally, the term where exactly one $v^{(j)}$ equals $z^\dagger$. This term can be written as an $\Rb$-linear operator $\Ls^\zeta$ applied to $z^\dagger$, where this $\Ls^\zeta$ has similar form as the one in (\ref{deflinoper}). Now by repeating the same arguments as in Propositions \ref{final}, \ref{algorithm1}, \ref{overpair3}, \ref{linextra} and \ref{induct3}, we can bound the $X^{b_0}\to X^{b_0}$ norm of this operator by a negative power of $N$. As the kernel $(\Ls^\zeta)_{kk'}$ is supported in $|k-\zeta k'|\lesssim N^{1+\theta}$, by Lemma \ref{weightedbd}, the $\widetilde{X}^{C_0,b_0}\to \widetilde{X}^{C_0,b_0}$ norm of $\Ls^\zeta$ is also bounded by a negative power of $N$, so this term is also acceptable.
\end{itemize}

As such, we have closed the estimates for $z^\dagger$ and obtained the solution $w^\dagger$ in the form of (\ref{ansatzwd}). This means that the equation (\ref{equationat2}) is well-posed at least up to time $t=1$, so the equation (\ref{nlstrunc2}) is well-posed at least up to time $T=N^\nu$. Moreover (\ref{longtimecon}) easily follows from the bound for $z^\dagger$, as well as the bounds for nonlinear components $\Psi_k[\Sc,h^{(\Sc)}]$ with $|\Sc|>1$, which in turn follow from (\ref{tensorbdan}) and Lemma \ref{largedev0}. This completes the proof of Theorem \ref{main2}.
\section{Final remarks}\label{secfinrem} In this section we make some final remarks. These include a comparison with parabolic equations in Section \ref{comparison} and some future directions in Section \ref{furthercomment}. We also list some open problems.
\subsection{Comparison with parabolic equations}\label{comparison} The random data Schr\"{o}dinger equation
\begin{equation}\label{nlsend}(i\partial_t+\Delta)u=W^p(u),\quad u(0)=f(\omega),
\end{equation}
 is closely linked to, and fundamentally different from, the stochastic heat equation
\begin{equation}\label{parabend} (\partial_t-\Delta)u=\widetilde{W}^p(u)+\zeta,
\end{equation} if both are suitably renormalized. In this section we will explain their differences and connections.
\subsubsection{Difference in scaling}\label{comparison1} In Section \ref{heuristic} we explained the heuristics behind the \emph{probabilistic scaling} critical index $s_{pr}$ for (\ref{nlsend}). In fact the same philosophy can be applied to (\ref{parabend}), leading to the \emph{parabolic scaling} critical index $s_{pa}$ in Remark \ref{remscale}, which is the one appearing in works such as \cite{Hairer,GIP}. Note that $s_{pa}$ is strictly lower than $s_{pr}$.

Fix some value of $s$, and let $\alpha=s+d/2$. As in Section \ref{heuristic} we will make simplifications to (\ref{parabend}) by replacing the nonlinearity by $\Nc_{\mathrm{np}}$ defined in (\ref{nopairintro}) and neglecting the renormalization\footnote{Even with the $\Nc_{\mathrm{np}}$ nonlinearity, some renormalization may still be needed for higher order iterations, but not for the first nonlinear iteration which is discussed here. Also whether $u$ is real or complex valued, and whether $\Nc_{\mathrm{np}}$ contains complex conjugates, does not affect the scaling heuristics.}; we also set initial data $u(0)=0$. Similar to (\ref{randata}), assume the noise $\zeta$ (or its regularization) has the form
\[\zeta(t,x)=N^{-\alpha+1}\sum_{|k|\sim N}\partial_t{\beta_k}(t)\cdot e^{ik\cdot x},\] where $\beta_k(t)$ are independent Brownian motions. Let $\psi=(\partial_t-\Delta)^{-1}\zeta$ be the linear evolution of noise (which plays the same role as $e^{it\Delta}u(0)$ in Section \ref{heuristic}), then
\[\psi(t,x)=N^{-\alpha}\sum_{|k|\sim N}G_k(t)e^{ik\cdot x},\quad G_k(t):=N\int_0^t e^{-(t-t')|k|^2}\,\mathrm{d}\beta_k(t').\] For fixed $|t|\sim 1$ these $G_k(t)$ form a collection of independent Gaussian variables with $\Eb|G_k(t)|^2\sim 1$, hence $\psi$ is bounded in $C_t^0H_x^s$ (also in $C_t^0C_x^s$ by Khintchine's inequality), just as in Section \ref{heuristic}.

Now, plugging into (the simplified version of) (\ref{parabend}), we need to control the first nonlinear iteration
\begin{equation}u^{(1)}(t)=\int_0^t e^{(t-t')\Delta}\Nc_{\mathrm{np}}(\psi(t'))\,\mathrm{d}t',
\end{equation} where $|t|\sim 1$, in $H^s$ (or equivalently $C^s$). Similar to Section \ref{heuristic}, on the Fourier side we have
\begin{equation}\label{parabcase}u_k^{(1)}(t)\sim N^{-p\alpha}\sum_{\substack{k_j\in\Zb^d,|k_j|\sim N\\k_1-\cdots +k_p=k}}\int_0^t e^{-(t-t')|k|^2}G_{k_1}(t')\overline{G_{k_2}(t')}\cdots G_{k_p}(t')\,\mathrm{d}t'.\end{equation} Suppose $|k|\sim N$, using the square root cancellation in the sum in (\ref{parabcase}) (from independence, as in Section \ref{heuristic}) and the $N^{-2}$ gain from the $t'$ integral, we see that with high probability, the inner sum-integral has size $N^{(pd-d)/2-2}$, hence
\[\|u^{(1)}(t)\|_{H^s}\sim N^{-(p-1)s-2};\qquad \|u^{(1)}(t)\|_{H^s}\lesssim 1\Leftrightarrow s\geq  -\frac{2}{p-1}:=s_{pa}.\]
We make a few observations on the above heuristic calculation:

(a) It is no surprise that $s_{pa}=s_{cr}-d/2$. Indeed this makes $C^{s_{pa}}$ and $H^{s_{cr}}$ have equal scaling, and in the usual (deterministic) sense $H^{s_{cr}}$, thus also $C^{s_{pa}}$, is critical for the heat equation. The effect of randomness then comes through Khintchine's inequality, where a Gaussian random function which belongs to $H^{s_{pa}}$ must also belong to $C^{s_{pa}}$ which scaling-wise equals the critical space $H^{s_{cr}}$. This is essentially how $s_{pa}$ is calculated in \cite{Hairer}.

(b) The above argument does not work for Schr\"{o}dinger equations, because even though $H^{s_{cr}}$ and $C^{s_{pa}}$ are still scaling critical in the usual sense, the latter is not compatible with Schr\"{o}dinger flows. However, this does not tell us what \emph{is} the right notion of criticality for Schr\"{o}dinger.

(c) To exactly see the difference between $s_{pa}$ and $s_{pr}$, we have to compare the calculations in here and in Section \ref{heuristic}. Note that in (\ref{parabcase}) the $t'$ integral gains two derivatives $N^{-2}$; in comparison in (\ref{detersum}) and (\ref{randsum}) there is no derivative gain---since the Schr\"{o}dinger flow has no smoothing---only the denominator $\langle \Omega\rangle^{-1}$ which restricts to the submanifold $\Omega=0$. This is the fundamental difference between heat and Schr\"{o}dinger that eventually leads to different scalings in the random setting.

(d) More precisely, note that restricting to $\Omega=0$ reduces the number of dimensions by two. In the deterministic setting, this gains two derivatives $N^{-2}$ in the summation in (\ref{detersum}), which matches the two-derivative gain from heat, leading to the same criticality threshold $s_{cr}$; however in the random setting the summation in (\ref{randsum}) gets square rooted due to randomness, which means the $N^{-2}$ gain \emph{also} gets square rooted, leading to the different criticality thresholds $s_{pa}$ and $s_{pr}$.
\subsubsection{Necessary renormalizations} Another difference between our theory and the parabolic theories is that, in the latter more and more renormalization terms are needed when one gets close to criticality (for example with the $\Phi_{4-\delta}^4$ model \cite{BCCH,CMW}), while in the former we stay with Wick ordering in the full subcritical regime.

The main reason for this is the difference in the notions of scaling. For example, in the $\Phi_3^4$ setting where $(d,p)=(3,3)$, if solutions have regularity $C^{-1/2-}$ or equivalently $H^{-1/2-}$, then (\ref{parabend}) is still subcritical though needs a log correction $3C_2$ beyond Wick ordering (see (\ref{parab1})), but (\ref{nlsend}) is already \emph{critical} in the probabilistic scaling (comparable to the \emph{four dimensional} $\Phi_4^4$ problem which is critical in the parabolic scaling, due to the reason explained in Section \ref{comparison1}). Conversely, if (\ref{nlsend}) is subcritical, then a calculation shows that the $3C_2$ in (\ref{parab1}) will not appear as $\mathbb{E}(\<K*2_black>\cdot\<2_black>)$ is not divergent in the limit, so (\ref{parabend}) only needs Wick ordering.

More precisely, for (\ref{parabend}) there are two types of renormalization terms\footnote{This distinction may be artificial from the regularity structure perspective, but is convenient in comparison with the dispersive case here.}, namely those coming from the mass (which is just Wick ordering for $\Phi_3^4$ in (\ref{parab1})) and those not coming from the mass (such as the $\log$ term in (\ref{parab1}) and the further corrections described in \cite{BCCH}, Section 2.8.2 for $\Phi_{4-\delta}^4$). Now for (\ref{nlsend}) only the mass terms diverge (and need to be renormalized) in the probabilistically subcritical regime; moreover since the Schr\"{o}dinger equation \emph{conserves} mass, we can always replace the mass by the mass of initial data, which just leads to Wick ordering and no further renormalization is needed.

Given this difference, one might ask whether for (\ref{nlsend}) we can go strictly below $s_{pr}$ and down to $s_{pa}$ by including additional counterterms such as the ones in \cite{BCCH}. We believe the answer is \emph{no} due to the following reason. In all previous works, the counterterms in the renormalization process are needed because certain \emph{specific terms} in the formal expansion of the solution with respect to the random initial data or noise become unbounded when $\varepsilon\to 0$ (or $N\to\infty$ in the setting of Theorem \ref{main}); however if one considers (\ref{nlsend}) in the supercritical regime $s<s_{pr}$, then an extended version of the calculation in Section \ref{heuristic} shows that \emph{every} term in the formal expansion---not just those with a renormalizable structure---becomes unbounded (say with respect to the regularity of initial data) as $N\to\infty$. This is similar to\footnote{Put in another way, imagine one has all the input and output frequencies being the same in the nonlinearity. Then (\ref{parabend}) is locally well-posed, without any renormalization or special arguments, if and only if $s>s_{pa}$, while for (\ref{nlsend}) the same thing holds if and only if $s>s_{pr}$.} what happens to (\ref{parabend}) when $s<s_{pa}$. Therefore, it is at least highly improbable that the local-in-time problem for (\ref{nlsend}) can be solved perturbatively via a renormalization process similar to those in the theory of regularity structures \cite{Hairer,Hairer3,Hairer4, BCCH}.
\subsubsection{Invariant measures and quantum field theory}\label{invmeas} If the random initial data of (\ref{nlsend}) is given by (\ref{data0}) with $\alpha=1$, or if $\zeta$ in (\ref{parabend}) is the spacetime white noise, then both equations will possess the same formally invariant \emph{Gibbs measure}, which is the $\Phi_d^{p+1}$ measure in quantum field theory (up to real/complex distinction), formally defined by
\begin{equation}\label{measureend}\mathrm{d}\mu\sim\exp\bigg[\frac{-2}{p+1}\int_{\Tb^d}\widetilde{W}^{p+1}(u)\,\mathrm{d}x\bigg]\cdot\exp\bigg[-\int_{\Tb^d}|\nabla u|^2\,\mathrm{d}x\bigg]\prod_{x\in\Tb^d}\mathrm{d}x
\end{equation} for some renormalization $\widetilde{W}^{p+1}$ of $|u|^{p+1}$. The justification of the formal definition (\ref{measureend}) is a major problem in constructive quantum field theory, see \cite{GJ1,Simon,Nel2} and recently \cite{BaGu0}. It has been done in dimension $d\in\{1,2\}$ for any $p$, and in dimension $d=3$ for $p=3$. The other cases are not super-renormalizable in the sense of \cite{CMW}, and such constructions are either unknown or proved impossible \cite{Aiz,AiCo,Fro}.

The study of the dynamics of the measure (\ref{measureend}) under the flow of (\ref{parabend}), commonly known as \emph{stochastic quantization}, starts with \cite{PW}. The invariance of the $\Phi_d^{p+1}$ measure with $d\in\{1,2\}$ and any $p$ is proved in \cite{DaDe2}. Recent developments of parabolic theories has led to the resolution of the $\Phi_3^4$ case, with proof of invariance in \cite{MouWe3,AK}.

On the other hand, the Gibbs measure problem for (\ref{nlsend}) is harder, both conceptually and technically, due to lack of smoothing and (consequently) the different scalings as described above. The invariance of $\Phi_1^{p+1}$ (for any $p$) and $\Phi_2^4$ measures are proved by Bourgain \cite{Bourgain94,Bourgain} (see also \cite{LRS}). The $\Phi_2^{p+1}$ case for $p\geq 5$ is much more challenging and is resolved only in our recent work \cite{DNY}. This matches the results of the Gibbs measure problem for (\ref{nlsend}) with those of the measure construction problem, and of the stochastic quantization problem for (\ref{parabend}), \emph{except} in the $\Phi_3^4$ case $(d,p)=(3,3)$.

The Gibbs measure problem for $(d,p)=(3,3)$ has two main difficulties. First it is probabilistically critical. This is not as bad as supercritical cases which we believe---as mentioned above---cannot be renormalized (at least through a process similar to \cite{Hairer}), but still log divergences seem unavoidable in all aspects, even for short time. Second, the $\Phi_3^4$ measure is mutually singular with the reference Gaussian measure, as proved in \cite{BaGu}, thus $f(\omega)$ in (\ref{nlsend}) will not be given by the simple formula\footnote{This should be compared to the stochastic quantization problem for (\ref{parabend}) where the solution theory relies on the \emph{Gaussian noise} instead of the non-Gaussian measure, as observed in \cite{BaGu}.} (\ref{data0}). Therefore the hope is to somehow get rid of the log divergences by moving to the right measure, i.e. Gibbs instead of Gaussian, but then a local solution theory has to be developed without independence of Fourier coefficients.

{\textbf{Open problem 1.} Prove invariance of the Gibbs measure for (\ref{nlsend}) with suitable renormalizations, when $d=p=3$.
\subsection{Future directions}\label{furthercomment} Though in this paper we have restricted to Schr\"{o}dinger equations, our method can be applied to more general settings. In this last section we list some future directions.
\subsubsection{The stochastic setting and other dispersion relations} Consider (\ref{nlsend}), but with additive noise $\zeta$ instead of random data, for example
\[(i\partial_t+\Delta)u=W^p(u)+\zeta,\quad u(0)=0,\] see \cite{deD, FOW, FX, CM} for some previous works. Here the role of the linear evolution $e^{it\Delta}f(\omega)$ is played by $\psi:=(i\partial_t-\Delta)^{-1}\zeta$. Note that if we formally periodize the time, then $\psi$ will have the form
\[\psi(t,x)=\sum_{k,\lambda}a_{k,\lambda}g_{k,\lambda}(\omega)e^{i(k\cdot x-|k|^2t+\lambda t)},\] where $\lambda$ is the modulation variable, $a_{k,\lambda}$ is some fixed function of $(k,\lambda)$ and $g_{k,\lambda}$ are i.i.d. Gaussian random variables. Therefore, in addition to the $k$ variables, we should include also the $\lambda$ variables as input variables for our random tensor, which will then look like $h_{kk_A,\lambda\lambda_A}$ for some sets $A$. The counting estimate should be adjusted, which may lead to changes in the selection algorithm.

Similarly we may consider other dispersion relations, still in the semilinear setting. The main difference is again in the counting estimates: suppose the new dispersion relation is $\Lambda(k)$ for some function $\Lambda$, then we should look at the cardinality of sets
\[\{(k_1,\cdots,k_p):k_1-\cdots+k_p=k,\,\Lambda(k_1)-\cdots +\Lambda(k_p)=\Gamma+O(1)\}\] with fixed $k$ and $\Gamma$, perhaps with some additional linear relations between $k_j$ like those in Section \ref{counting}. Note that while \emph{parabolic equations are all alike, each dispersive equation is dispersive in its own way}. As a result, the above counting bound will depend on the exact form of $\Lambda$ (not just its homogeneity), and a selection algorithm is then designed to match the counting bound. In particular we will not have a general black-box argument working for all $\Lambda$, and the proof has to be done in a dispersion-specific way.
\subsubsection{Quasilinear problems} Recently there have been attempts to extend the existing parabolic theories to quasilinear equations \cite{FM,GH}. This is also of interest in the dispersive setting, especially in view of the recent results in low regularity deterministic local well-posedness \cite{ABZ,KRS}.

Of course, compared to parabolic equations, moving to the quasilinear (or even variable-coefficient semilinear) setting completely changes the methodology for dispersive equations. The $X^{s,b}$-based approaches become unavailable and dispersion has to be observed on the level of energy estimates or parametrices. In the deterministic setting, it is expected that the local well-posedness threshold is higher than $s_{cr}$, but the precise value is only known in some cases; in the random setting we also expect the threshold to be higher than $s_{pr}$, but are unable to decide or even guess the correct value. The method of random averaging operators can be applied to the quasilinear setting but may not achieve the same power as the semilinear version, and the quasilinear version of random tensor theory still needs to be explored.

\textbf{Open problem 2.} Build a random data theory for quasilinear dispersive (including wave) equations, and determine the threshold for almost-sure local well-posedness.
\subsubsection{Long-time propagation of randomness} It is natural to ask whether the short-time solutions for (\ref{nlsend}) constructed in Theorem \ref{main} can be extended to longer or infinite time; i.e. whether the randomness structure can be propagated beyond the perturbative regime. Such global-in-time extensions are immediate if an invariant Gibbs measure is available at the regularity we are considering, but as discussed in Section \ref{invmeas} this happens only in a few specific cases.

Note that the theory of random tensors, like the theory of regularity structures, is a short-time theory by nature; thus to get global results it has to be combined with separate global or large-scale techniques. In the context of (\ref{parabend}), the work \cite{MouWe3} combines the para-controlled calculus with energy estimates, and the more recent works \cite{MoWe,CMW} combines the regularity structures theory with the maximum principle. In the context of (\ref{nlsend}), the main global technique known is energy conservation, and the associated high-low and $I$-methods \cite{Bourgain98,CLO,CKSTT,CO}. Note that these require deterministic analysis at the $H^1$ (energy) level, so they need (deterministic) $H^1$ subcriticality, i.e. $s_{cr}<1$, to work.

In the $H^1$ supercritical ($s_{cr}>1$) case, another natural question is whether \emph{classical} solutions with random initial data (such as (\ref{data0}) with $\alpha$ suitably large, as opposed to low regularity solutions of Theorem \ref{main}) are almost surely global. This is also important from the PDE point of view, as it would mean that blowup for defocusing $H^1$ supercritical nonlinear Schr\"{o}dinger equations is \emph{non-generic} and unstable\footnote{For example, invariance of Gibbs measure and the associated almost-sure global well-posedness result would imply that there is no stable blowup mechanism at the regularity of the support of the Gibbs measure.}. Note that the blowup example in $\Rb^d$, recently constructed in \cite{MRRS}, is indeed non-generic.

\textbf{Open problem 3.} In the energy subcritical case, do the singular solutions constructed in Theorem \ref{main} extend to all time? In the energy supercritical case, does almost-sure \emph{global} well-posedness hold for random initial data of high regularity?


\begin{thebibliography}{99}
 \bibitem{Aiz} M. Aizenman. Geometric analysis of $\Phi^4$ fields and Ising models. Part I and II. \emph{Comm. Math. Phys.} 86 (1982), issue 1, 1--48.
 \bibitem{AiCo} M. Aizenman and H. Copin. Marginal triviality of the scaling limits of critical 4D Ising and $\Phi_4^4$ models. \emph{arXiv preprint} 1912.07973.
 \bibitem{ABZ} T. Alazard, N. Burq, and C. Zuily. Strichartz estimates and the Cauchy problem for the gravity water waves equations. \emph{Memoirs of the AMS} 256 (2014), no. 1229.
 \bibitem{AK} S. Albeverio and S. Kusuoka. The invariant measure and the flow associated to the $\Phi^ 4_3$-quantum field model. \emph{arXiv preprint} 1711.07108.
 \bibitem{BaBe2} I. Bailleul and F. Bernicot. Heat semigroup and singular PDEs. \emph{J. Funct. Anal.} 270 (2016), no. 9, 3344--3452.
 \bibitem{BaBe} I. Bailleul and F. Bernicot. High order paracontrolled calculus. \emph{Forum of Mathematics Sigma} 7,  e44 (2019), 1--94. 
\bibitem{BaGu0} N. Barashkov and M. Gubinelli. A variational method for $\Phi_3^4$. \emph{arXiv preprint} 1805.10814.
\bibitem{BaGu} N. Barashkov and M. Gubinelli. The $\Phi_3^4$ measure via Girsanov's theorem. \emph{arXiv preprint} 2004.01513.
\bibitem{BOP} A. B\'{e}nyi, T. Oh and O. Pocovnicu. Higher order expansions for the probabilistic local Cauchy theory of the cubic nonlinear Schr\"{o}dinger equation on $\mathbb{R}^3$. \emph{Trans. Amer. Math. Soc.} 6 (2019), 114--160.
\bibitem{Bourgain94} J. Bourgain. Periodic nonlinear Schr\"{o}dinger equation and invariant measures. \emph{Comm. Math. Phys.} 166 (1994), 1--26.
 \bibitem{Bourgain} J. Bourgain. Invariant measures for the 2D-defocusing nonlinear Schr\"{o}dinger equation. \emph{Comm. Math. Phys.} 176 (1996), 421--445.
 \bibitem{Bourgain98}J. Bourgain. Refinements of Strichartz inequality and applications to 2D--NLS with critical nonlinearity. \emph{Int. Math. Res. Notices} (1998), no. 5, 253--283.
 \bibitem{Br} B. Bringmann. Almost sure local well-posedness for a derivative nonlinear wave equation. \emph{Int. Math. Res. Notices (IMRN)}, to appear.
\bibitem{BCCH}
Y. Bruned, A. Chandra, I. Chevyrev and M. Hairer. Renormalising SPDEs in regularity structures. \emph{arXiv preprint} 1711.10239.
\bibitem{BHZ}  Y. Bruned, M. Hairer and L. Zambotti.  Algebraic renormalisation of regularity structures. \emph{Invent. Math.} 215 (2019), no. 3, 1039--1156.
\bibitem{BGHS0} T. Buckmaster, P. Germain, Z. Hani and J. Shatah. Effective dynamics of the nonlinear Schr\"{o}dinger equation on large domains. \emph{Comm. Pure Appl. Math.} 71 (2018), no. 7, 1407--1460.
 \bibitem{BGHS}T. Buckmaster, P. Germain, Z. Hani and J. Shatah. Onset of the wave turbulence description of the longtime behavior of the nonlinear Schr\"{o}dinger equation. \emph{arXiv preprint} 1907.03667.
  \bibitem{CCh} R. Catellier and K. Chouk.  Paracontrolled distributions and the 3-dimensional stochastic quantization equation.  {\em Ann Prob.} 46 (2018), no. 5, 2621--2679.
   \bibitem{CMW} A. Chandra, A. Moinat and H. Weber. A priori bounds for the $\Phi^4$ equation in the full sub-critical regime. \emph{arXiv preprint} 1910.13854.
 \bibitem{ChW} A. Chandra and H. Weber.   Stochastic PDEs, Regularity structures, and interacting particle systems.\emph{ Annales de la Facult\'e des Sciences de Toulouse Math\'{e}matiques} 26 (2017), no. 4,  847--909.
 \bibitem{CLO}  K. Cheung, G. Li and T. Oh. Almost conservation laws for stochastic nonlinear Schr\"{o}dinger equations. \emph{arXiv preprint} 1910.14558.
\bibitem{CM} K. Cheung and R. Mosincat. Stochastic nonlinear Schr\"{o}dinger equations on tori. \emph{Stoch. Partial Differ. Equ. Anal. Comput.} 7 (2019), no. 2, 169--208.
 
 \bibitem{CKSTT} J. Colliander, M. Keel, G. Staffilani, H. Takaoka, and T. Tao. Almost Conservation Laws and Global Rough Solutions to a Nonlinear Schrödinger Equation. \emph{Math. Res. Letters} 9 (2002), no. 5, 659-682.
 \bibitem{CO} J. Colliander and T. Oh. Almost sure well-posedness of the cubic nonlinear Schr\"{o}dinger equation below $L^2(\Tb)$. \emph{Duke Math. J.} 161 (2012), no. 3, 367--414.
 \bibitem{CG} C. Collot and P. Germain. On the derivation of the homogeneous kinetic wave equation. \emph{arXiv preprint} 1912.10368.
 \bibitem{DaDe} G. Da Prato and A. Debussche. Two-dimensional Navier-Stokes equations driven by a space-time white noise. \emph{J. Funct. Anal.} 196 (2002), no. 1, 180--210.
 \bibitem{DaDe2} G. Da Prato and A. Debussche. Strong solutions to the stochastic quantization equations. \emph{The Annals of Probability} (2003) 31, no. 4, 1900--1916.
\bibitem{deD} A. de Bouard and A. Debussche. The Stochastic Nonlinear Schr\"{o}dinger Equation in $H^1$. \emph{Stochastic Anal. Appl.} 21 (2003), no. 1, 97--126.
 
 \bibitem{Deng} Y. Deng. Two dimensional nonlinear Schr\"{o}dinger equation with random radial data. \emph{Anal. PDE} 5 (2012), no. 5, 913--960.
 \bibitem{DH} Y. Deng and Z. Hani. On the derivation of the wave kinetic equation for NLS. \emph{arXiv preprint} 1912.09518.
 \bibitem{DNY0}Y. Deng, Andrea R. Nahmod and H. Yue. Optimal local well-posedness for the periodic derivative nonlinear
Schr\"{o}dinger equation. \emph{arXiv preprint} 1905.04352.
 \bibitem{DNY} Y. Deng, A. Nahmod and H. Yue. Invariant Gibbs measures and global strong solutions for nonlinear Schr\"{o}dinger equations in dimension two. \emph{arXiv preprint} 1910.08492.

\bibitem{FX} C. Fan and W. Xu. Global well-posedness for the defocusing mass-critical stochastic nonlinear Schr\"{o}dinger equation on $\mathbb{R}$ at $L^ 2$ regularity. \emph{arXiv preprint} 1810.07925.
 
 
 \bibitem{FGH} E. Faou, P. Germain, and H. Hani. The weakly nonlinear large-box limit of the 2D cubic nonlinear Schr\"{o}inger equation. \emph{J. Amer. Math. Soc.} 29 (2016), no. 4, 915--982.
 \bibitem{FOW} J. Forlano, T. Oh, and Y. Wang. Stochastic nonlinear Schr\"{o}dinger equation with almost space-time white noise. \emph{J. Aust. Math. Soc.} (2018), 1--24.
 \bibitem{Fro} J. Fr\"{o}hlich. On the triviality of $\lambda\Phi_d^4$ theories and the approach to the critical point in $d_{(-)}>4$ dimensions. \emph{Nuclear Physics B} 200 (1982), issue 2, 281--296.
 
 \bibitem{FM} M. Furlan and M. Gubinelli. Paracontrolled quasilinear SPDEs. \emph{arXiv preprint} 1610.07886.
 \bibitem{GH} M. Gerencs\'{e}r and M. Hairer. A solution theory for quasilinear singular SPDEs. \emph{Comm. Pure Appl. Math.} 72 (2019), no. 9, 1983--2005.
 \bibitem{GJ1} J. Glimm and A. Jaffe. {\em Quantum physics,  A functional integral point of view}, Second edition, Springer-Verlag, New York, 1987. xxii+535 pp.
 \bibitem{GrH} A. Gr\"{u}nrock and S. Herr. Low regularity local well-posedness of the derivative nonlinear Schr\"{o}dinger equation with periodic initial data. \emph{SIAM J. Math. Anal.} 39 (2008), no. 6, 1890--1920.
 \bibitem{GIP} M. Gubinelli, P. Imkeller and N. Perkowski. Paracontrolled distributions and singular PDEs. \emph{Forum Math Pi} 3 (2015), e6, 75 pp.
 \bibitem{GKO} M. Gubinelli, H. Koch and T. Oh. Paracontrolled approach to the three-dimensional stochastic nonlinear wave equation with quadratic nonlinearity. \emph{arXiv preprint} 1811.07808.
  \bibitem{GP}  M. Gubinelli and N. Perkowski. Lectures on singular stochastic PDEs. \emph{Ensaios Matem\'aticos, Mathematical Surveys}, 29. Sociedade Brasileira de Matem\'atica, Rio de Janeiro. (2015), 89 pp.
\bibitem{GP2} M. Gubinelli and N. Perkowski. KPZ reloaded. \emph{Comm. Math. Phys.} 349 (2017), no. 1, 165--269.
\bibitem{GP4} M. Gubinelli and N. Perkowski. An introduction to singular SPDEs. in \emph{Stochastic partial differential equations and related fields}, 69--99, \emph{Springer Proc. Math. Stat.},  229, Springer, Cham, 2018.
  \bibitem{GO} Z. Guo and T. Oh. Non-Existence of Solutions for the Periodic Cubic NLS below $L^2$. \emph{Int. Math. Res. Notices} 2018, no. 6, 1656--1729.
\bibitem{Hairer} M. Hairer. A theory of regularity structures. \emph{Invent. Math.} 198 (2014), no. 2, 269--504.
\bibitem{Hairer2}  M. Hairer. Singular Stochastic PDE. \emph {Proceedings of the ICM-Seoul}, Vol. I, (2014), 685--709.
\bibitem{Hairer3} M. Hairer. Introduction to regularity structures. \emph{ Braz. J. Probab. Stat.} 29 (2015), no. 2, 175--210. 
\bibitem{Hairer4} M. Hairer.  Regularity structures and the dynamical $\Phi^4_3$ model. \emph{Current Developments in Mathematics 2014}, Int. Press, Somerville, MA,  (2016), 1--49.
\bibitem{HR+} M. Hairer, M. D. Ryser and H. Weber. Triviality of the 2D stochastic Allen-Cahn equation. \emph{Electron. J. Probab} 17 (2012), no. 39, p. 1--14.
\bibitem{KRS} S. Klainerman, I. Rodnianski and J. Szeftel. The bounded $L^2$ curvature conjecture. \emph{Invent. Math. 202} no. 1 (2015), 91--216.
\bibitem{Kupia}  A. Kupiainen. Renormalization group and stochastic PDEs. \emph{Annales Henri Poincar\'e} 17 (2016), no. 3, 497--535.
\bibitem{LRS} J. Lebowitz, R. Rose and E. Speer. Statistical mechanics of the nonlinear Schr\"{o}dinger equation. \emph{J. Statist. Phys.} 50 (1988), 657--687.
\bibitem{MRRS} F. Merle, P. Raphael, I. Rodnianski and J. Szeftel. On blow up for the energy super critical defocusing non linear Schr\"{o}dinger equations. \emph{arXiv preprint} 1912.11005.
\bibitem{MoWe} A. Moinat and H. Weber. Space-time localisation for the dynamic $\Phi_3^4$ model. \emph{arXiv preprint} 1811.05764.
\bibitem{MouWe3}  J.C. Mourrat and H. Weber. The dynamic  $\Phi^4_3$ model comes down from infinity. \emph{Comm. Math. Phys.} 356 (2017), no. 3 673--753.
\bibitem{Nel2} E. Nelson. Construction of quantum fields from Markoff fields, \emph{J. Functional Analysis} 12 (1973), 97--112.
\bibitem{PW} G. Parisi and Y. S. Wu. Perturbation theory without gauge fixing. \emph{Scientia Sinica. Zhongguo Kexue} 24 (1981), no.4, 483--496.
\bibitem{Simon} B. Simon.  \emph{The $P(\varphi)_2$ Euclidean (quantum) field theory}, Princeton Series in Physics. Princeton University Press, Princeton, N.J., 1974. xx+392 pp.
 \end{thebibliography}
 \end{document}